\numberwithin{equation}{section}
\title{Geometry of the fixed points loci and
discretization of Springer fibers in classical types}
\begin{document}
\author{Do Kien Hoang }
%\affil{Yale University, New Haven, 06520 CT, USA, Partially supported by NSF grant DMS-2001139\\ Email \href{mailto:dokien.hoang@yale.edu}{dokien.hoang@yale.edu}}
\date{}
\maketitle

\begin{abstract}

     Consider a simple algebraic group $G$ of classical type and its Lie algebra $\fg$. Let $(e,h,f) \subset \fg$ be an $\fsl_2$-triple and $Q_e= C_G(e,h,f)$. The torus $T_e$ that comes from the $\fsl_2$-triple acts on the Springer fiber $\spr$. Let $\fix$ denote the fixed point loci of $\spr$ under this torus action. Our main geometric result is that when the partition of $e$ has up to $4$ rows, the derived category $D^b(\fix)$ admits a complete exceptional collection that is compatible with the $Q_e$-action. The objects in this collection give us a finite set $Y_e$ that is naturally equipped with a $Q_e$-centrally extended structure. We prove that the set $Y_e$ constructed in this way coincides with a finite set that has appeared in various contexts in representation theory. For example, a direct summand $J_c$ of the asymptotic Hecke algebra is isomorphic to $K_0(Sh^{Q_e}(Y_e\times Y_e)$ (\cite[]{Cell4}, \cite[]{bezrukavnikov2001tensor}). The left cells in the two-sided cell $c$ corresponding to the orbit of $e$ are in bijection with the $Q_e$-orbits in $Y_e$ (\cite[]{bezrukavnikov2020dimensions}). Our main numerical result is an algorithm to compute the multiplicities of the $Q_e$-centrally extended orbits that appear in $Y_e$.
 
\end{abstract}
\tableofcontents
\section{Introduction}
\subsection{Representations of Lie algebras in positive characteristics and Grothendieck groups of the Springer fibers}
Consider a connected reductive algebraic group $G$ and its Lie algebra $\mathfrak{g}$. Let $\mathfrak{h}$ be a Cartan subalgebra of $\mathfrak{g}$. The Weyl group $W$ acts on its dual $\mathfrak{h}^*$ by the $\rho$-shifted action $w\cdot \lambda= w(\lambda+ \rho)- \rho$. Let $\mathbb{F}$ be an algebraically closed field of characteristic $p\gg 0$. We write $G_\mathbb{F}$ and $\mathfrak{g}_\mathbb{F}$ for the $\mathbb{F}$-forms of $G$ and $\mathfrak{g}$.

According to \cite{center}, the center of the enveloping algebra $\mathcal{U}_\mathbb{F}:= \cU(\mathfrak{g}_\mathbb{F})$ is generated by the Harish-Chandra center $\mathcal{U}^{G}_\mathbb{F}= \mathbb{F}[\mathfrak{h}]^{W}$ and the $p$-center $S(\mathfrak{g}_{\mathbb{F}}^{(1)})$, where $^{(1)}$ denotes the Frobenius twist. For each pair $\lambda\in \mathfrak{h}^*/W$ and $\chi\in {\mathfrak{g}^{*}_{\mathbb{F}}}^{(1)}$, let $\mathcal{U}^{\chi}_{\mathbb{F},\lambda}$ denote the quotient of $\mathcal{U}_\mathbb{F}$ by the ideal generated by the maximal ideal corresponding to $(\lambda, \chi)$ in the center. Each irreducible $\cU_\bF$-module factors through exactly one central reduction $\cU^\chi_{\bF,\lambda}$. Thus, to understand the irreducible modules of $\mathcal{U}_\mathbb{F}$, it is enough to study the irreducible modules of $\mathcal{U}^{\chi}_{\mathbb{F},\lambda}$.

Using standard techniques from \cite{KW}, we can first reduce the consideration of the general case to the case where $\chi$ is nilpotent. Thanks to \cite{bezrukavnikov2006localization}, we can further assume that $\lambda$ is regular. For fixed $\lambda$, the algebra $\mathcal{U}^\chi_{\mathbb{F},\lambda}$ depends only on the $G_\mathbb{F}$-orbit of $\chi$. Since $p$ is sufficiently large, the nilpotent orbits in $\fg$ and $\fg_{\bF}^{(1)}$ are in a natural bijection. Let $e$ be a nilpotent element of $\mathfrak{g}$ lying in the orbit corresponding to $\chi$. Let $G_\chi$ be the centralizer of $\chi$ in $G_\mathbb{F}$, and let $A_\chi$ be the component group of $G_\chi$. Let $\mathcal{D}^\lambda$ be the sheaf of $\lambda$-twisted differential operators on the flag variety $\mathcal{B}$ of $G_\mathbb{F}$. Let $\cU_\lambda$ be the central reduction of $\mathcal{U}_\mathbb{F}$ by the Harish-Chandra character $\lambda$.
From \cite[Theorem 3.2]{bezrukavnikov2006localization}, known as the localization theorem in positive characteristics, we first have a derived equivalence of coherent $\mathcal{D}^\lambda$-modules and finitely generated $\cU_\lambda$-modules:
\begin{equation}\label{localization char p}
    D^b(\cU_\lambda- \text{mod}) \xrightarrow{\sim} D^b(Coh(\mathcal{D}^\lambda)).
\end{equation}
This equivalence is given by $V\mapsto V\otimes^{L}_{\cU^\lambda} \cD^\lambda$ and the quasi-inverse functor is taking global sections of $\cD^\lambda$-modules. The next two paragraphs follow from \cite[Sections 2, 5]{bezrukavnikov2006localization}, an exposition can be found in \cite[Lectures 5,6]{Loseulecseries}.

The sheaf of algebras $\mathcal{D}^\lambda$ is an Azumaya algebra over $T^*\mathcal{B}^{(1)}$. This sheaf of algebras $\mathcal{D}^\lambda$ splits on a neighborhood of $\mathcal{B}_e^{(1)}$ in the following sense. Let $\hat{\cO}_{\cN^{(1)}, e}$ be the complete local ring of $e\in \cN^{(1)}$, then we have the embedding $\Spec \hat{\cO}_{\cN^{(1)}, e}\hookrightarrow \cN^{(1)}$. Let $\hat{\spr}^{(1)}$ denote the fiber product $T^*\cB^{(1)}\times_{\cN^{(1)}} \Spec \hat{\cO}_{\cN^{(1)}, e}$. The restriction of the Azumaya algebra $\mathcal{D}^\lambda$ to $\hat{\spr}^{(1)}$ splits, let $\mathcal{V}^e$ be a splitting bundle for $\mathcal{D}^\lambda|_{\hat{\spr}^{(1)}}$. We then have the equivalence $D^b(Coh_{\hat{\spr}} T^*\cB^{(1)}) \cong D^b(Coh \, \cD^\lambda |_{\hat{\spr}^{(1)}})$ by tensoring with $\cV^e$. Write $\cU_{\lambda, \bF}^{\hat{\chi}}$ for the completion of $\cU_{\lambda, \bF}$ at the central element $\chi$. From the equivalence (\ref{localization char p}), we deduce $D^b(Coh \, \cD^\lambda |_{\hat{\spr}^{(1)}})\cong D^b(\cU_{\lambda, \bF}^{\hat{\chi}}- \text{ mod})$. Thus, we have obtained the equivalence 
\begin{equation} \label{splitting bundle}
  D^b(Coh_{\hat{\spr}} T^*\cB^{(1)})\cong D^b(\cU_{\lambda, \bF}^{\hat{\chi}}- \text{mod}), \quad \mathcal{F}\mapsto R\Gamma(\mathcal{V}^e\otimes \mathcal{F}).  
\end{equation}
Let $D^b(Coh_{\spr} T^*\cB^{(1)})$ and $D^b(\cU_{\lambda, \bF}- \text{ mod}^{\chi})$ be the derived categories of coherent sheaves of $T^*\mathcal{B}^{(1)}$ set-theoretically supported on the Springer fiber $\mathcal{B}_e$ and of finitely generated $\mathcal{U}_{\lambda, \mathbb{F}}$-modules with generalized character $\chi$. We have two full embeddings $D^b(Coh_{\spr} T^*\cB^{(1)})\rightarrow$ $D^b(Coh_{\hat{\spr}} T^*\cB^{(1)})$ and $D^b(\cU_{\lambda, \bF}- \text{mod}^{\chi})\rightarrow$ $D^b(\cU_{\lambda, \bF}^{\hat{\chi}}- \text{mod})$. Furthermore, the equivalence (\ref{splitting bundle}) sends $D^b(Coh_{\spr} T^*\cB^{(1)})$ to $D^b(\cU_{\lambda, \bF}- \text{mod}^{\chi})$, hence inducing an equivalence between these two categories. Taking the Grothendieck groups, we obtain $K_0(\mathcal{U}_{\lambda,\mathbb{F}}- \text{mod}^\chi)= K_0(\mathcal{B}_e)$. As $K_0(\mathcal{U}_{\lambda,\mathbb{F}}- \text{mod}^\chi)$ and $K_0(\mathcal{U}^\chi_{\lambda,\mathbb{F}}- \text{mod})$ are the same, we have
\begin{equation} \label{1.3}
    K_0(\mathcal{U}^\chi_{\lambda,\mathbb{F}}- \text{mod})= K_0(\mathcal{B}_e).
\end{equation}
Let $Q_\mathbb{F}\subset G_\mathbb{F}$ be the reductive part of the centralizer of $e\in \mathfrak{g}$. Let $T_0$ be a maximal torus of $Q_\mathbb{F}$, and let $\underline{Q}_\mathbb{F}$ be the centralizer of $T_0$ in $Q_\mathbb{F}$. According to \cite[Section 3.4]{bezrukavnikov2020dimensions}, we can equip $\mathcal{V}$ with a $\underline{Q}_\mathbb{F}$-equivariant structure. The equivalence (\ref{splitting bundle}) is compatible with the natural actions of $\underline{Q}_\mathbb{F}$ on both sides. Therefore, similarly to how we obtain (\ref{1.3}), we have an isomorphism between equivariant K-groups.
\begin{equation} \label{1.4}
    K_0(\mathcal{U}^\chi_{\lambda, \mathbb{F}}-mod^{\underline{Q}_\mathbb{F}})\cong K_0^{\underline{Q}_\mathbb{F}}(\mathcal{B}_e)
\end{equation}

\subsection{The finite set $Y_e$ and its centrally extended structure} \label{subsect 1.2}
Starting with this section, we assume that the Lie algebra $\fg$ is of classical type. Write $Y_e$ for the finite set of isomorphism classes of irreducible $\mathcal{U}^\chi_{\mathbb{F},\lambda}$-modules. This section explains the close relationship between $Y_e$ and $\spr$.

First, consider the case where $e$ is a distinguished nilpotent element. Let $A_\chi$ be the component group of $Q_\bF$, then $Q_\mathbb{F}$ and $\underline{Q}_\mathbb{F}$ coincide with $A_\chi$. The group $A_\chi$ permutes the elements of $Y_e$. For $y\in Y_e$, let $V_y$ be the corresponding irreducible $\mathcal{U}^\chi_{\mathbb{F},\lambda}$-module. Write $A_y$ for the stabilizer of $y$ in $Y_e$.

The group $A_\chi$ acts on $\mathcal{U}^\chi_{\mathbb{F},\lambda}$ by algebra automorphisms. Consequently, each element $a\in A_y$ gives rise to an isomorphism $\varphi_a$ of $\mathcal{U}^\chi_{\mathbb{F},\lambda}$-modules: $V_y\rightarrow V_y$ defined up to a scalar by the Schur lemma. For an arbitrary choice of the set $\{\varphi_a\}_{a\in A}$, the ratios of the two isomorphisms $\varphi_{a'}\circ\varphi_{a}$ and $\varphi_{a'a}$ give us a $2$-cocycle of the group $A_y$. The corresponding Schur multiplier $\psi_y\in H^2(A_y, \bC^\times)$ of this $2$-cocycle depends only on $V_y$. Thus, we obtain an $A_\chi$-equivariant assignment $y\mapsto \psi_y \in H^2(A_y, \mathbb{C}^\times)$. This assignment (together with the action of $A_\chi$ on $Y_e$) is called an $A_\chi$-\textit{centrally extended} structure of the finite set $Y_e$. 

Consider a subgroup $A\subset A_\chi$. The set $Y_e$ has a natural $A$-centrally extended structure by restricting its $A_\chi$-centrally extended structure. With respect to this centrally extended structure, we have the following category. Let $\Sh^A(Y_e)$ denote the category of $A$-equivariant sheaves of projective representations on $Y_e$ where the fiber over a point $y\in Y_e$ has the corresponding class $\psi_y|_{A\cap A_y}$. This category $\Sh^A(Y_e)$ is equivalent to the category of semisimple objects in $\mathcal{U}^\chi_{\lambda, \mathbb{F}}-mod^A$. Thus, we have $K_0(Sh^A(Y_e))= K_0(\mathcal{U}^\chi_{\lambda, \mathbb{F}}-mod^A)$. Since the isomorphism (\ref{1.4}) holds for any subgroup of $\underline{Q}_\mathbb{F}= A_\chi$, we obtain $K_0(Sh^A(Y_e))= K_0(\mathcal{U}^\chi_{\lambda, \mathbb{F}}-mod^A)= K^A_0(\mathcal{B}_e)$. 

For a general nilpotent element $e$ (not necessarily distinguished), we have a realization of the finite set $Y_e$ coming from the noncommutative Springer resolution as follows (see, e.g., \cite[Section 2.5]{cocenter} for more details). In \cite[Section 1.5, 1.6]{bezrukavnikov2012representations}, the authors construct an algebra $\cA^0$ by taking the endomorphism algebra of a tilting generator $\cE$ on the Springer resolution $\tilde{\cN}$. This noncommutative algebra is equipped with a derived equivalence 
$$D^b(\tilde{\cN})\cong D^b(\cA^0-\text{mod}).$$

For each nilpotent element $e\in \cN$, we have the corresponding maximal ideal $\fm_e\subset \cO(\cN)$. Since $\cO(\cN)$ lies in the center of $\cA^0$, we can consider $\cA^{0}_e:=$ $\cA^0/\fm_e\cA^0$, the fiber of $\cA^0$ over $e\in \cN$. From \cite[Theorem 5.1.1]{bezrukavnikov2012representations}, we have a bijection between the set of irreducible $A_e^{0}$-modules and the set of irreducible $\mathcal{U}^{\chi}_{\lambda, \mathbb{F}}$-modules. Thus, we can consider the set $Y_e$ as the set of classes of irreducible modules of $\cA_e^{0}$. From \cite[Section 6.3.2]{bezrukavnikov2012representations}, we have a natural $G$-equivariant structure on $\cA^0$. Hence, the centralizer of $e$ in $G$ acts on $\cA_e^{0}$ by algebra automorphisms. Let $Q_e$ be the reductive part of the centralizer of $e$. Then the set $Y_e$ admits a $Q_e$-centrally extended structure (for the definition of a centrally extended set for a general algebraic group, see \cite[Section 4.2]{bezrukavnikov2001tensor} or \Cref{subsect 2.1}). For the same reason as in the case where $e$ is distinguished, we have natural isomorphisms $$K_0(\Sh^{Q_e}(Y_e))\cong K_0(\cA_0^{e}-\text{mod}^{Q_e})\cong K^{Q_e}_0(\mathcal{B}_e).$$

These equalities of the K-groups imply a close relationship between $\spr$ and $Y_e$. The theme of this paper is that we study the geometry of $\spr$ to understand $Y_e$. Our objective is to obtain a description of the $Q_e$-centrally extended structure of $Y_e$. In particular, we want to determine which centrally extended orbits appear in $Y_e$, and then find their multiplicities. A motivation for this task is to understand how simple $\cU^\chi_{\bF,\lambda}$-modules are related to simple equivariant $\cU^\chi_{\bF,\lambda}$-modules, the latter are parametrized in \cite[Section 8]{bezrukavnikov2020dimensions}. In addition, information about $Y_e$ has applications in studying the asymptotic Hecke algebra $J$ and cells in affine Weyl groups as follows. Let $c$ be the two-sided cell corresponding to the orbit of $e$. The direct summand $J_c$ of $J$ is isomorphic to $K_0(Sh^{Q_e}(Y_e\times Y_e)$ (\cite[Section 10]{Cell4}, \cite[Theorem 4]{bezrukavnikov2001tensor}). The left cells in $c$ are in bijection with the $Q_e$-orbits in $Y_e$ (\cite[Proposition 8.25]{bezrukavnikov2020dimensions}).

\subsection{Content of the paper}
In Section 2, we compute the groups of Schur multipliers of subgroups of $Q_e$. Using this computation, we show that the $Q_e$-centrally extended structure of $Y_e$ is controlled by a certain subgroup $A_e$ of $Q_e$. We choose this subgroup $A_e$ so that it projects surjectively to $A_\chi$ and $A_e$ is elementary $2$-abelian (when $e$ is distinguished, we have $A_e\cong Q_e= A_\chi$). The problem is then reduced to understanding the centrally extended structure of $Y_e$ with respect to the action of an elementary $2$-group as in the distinguished case. In this paper, we mainly work with type C (i.e., $G= Sp(2n)$). We often assume that the partition of $e$ consists only of even parts. The case of a general nilpotent orbit in $Sp(2n)$ and the case of types B, D will be explained in Section 9.

In Section 3, we study certain numerical invariants of $\spr$ that help to determine $Y_e$. For a finite abelian group $A$ and an $A$-variety $X$, we define two sets of numbers $\{F^{A}_a(X)\}_{a\in A}$ and $\{S_i^{A}(X)\}_{i\geqslant 0}$ as follows. Let $R(A)$ be the representation ring of $A$. Then $\bC\otimes_\bZ K_0^{A}(X)$ is a $\bC\otimes_\bZ R(A)$-module. The set $\{F^{A}_a(X)\}_{a\in A}$ records the multiplicities of the irreducible summands of $\bC\otimes_\bZ K_0^{A}(X)$. Next, let $F_X^{A}: K_0^{A}(X)\rightarrow K_0(X)^{A}$ be the forgetful map from the equivariant K-group to the $A$-invariant part of the K-group of $X$. The set $\{S_i^{A}(X)\}_{i\geqslant 0}$ consists of the multiplicities of the cyclic summand $\cyclic{2^i}$ in the decomposition of coker$F_X^{A}$ into the direct sum of cyclic groups. In Section 3.1, finite models $Y$ of $X$ are defined as certain centrally extended sets so that $X$ and $Y$ have the same numerical invariants $\{F_a^{A}\}$ and $S_i^{A}$ (\Cref{finite model}). The main result of Section 3 is a recursive algorithm to compute the numbers $\{F^{A}_a(X)\}_{a\in A}$ when $X$ is the Springer fiber $\spr$ and $A\subset A_e$. 

For a topological space $X$, let $\chi(X)$ denote its Euler characteristic.
\begin{thm*}[\Cref{recursive}] We have the following formula:
    $$F_a^{A}(\spr)= \frac{1}{|A|}(\sum_{a'\in A}\chi(\spr^a\cap \spr^{a'})).
    $$
Here the connected components of the variety $\spr^{a,a'}:= \spr^a\cap \spr^{a'}$ are all isomorphic. In particular, they are isomorphic to the products of certain smaller Springer fibers.
\end{thm*}

For any subgroup $A\subset A_e\subset Q_e$, we have $K_0(Sh^A(Y_e)) =$ $K^A_0(\mathcal{B}_e)$. Therefore, the numbers $\{F^{A}_a(\spr)\}_{a\in A}$ calculated in Section 3 give (partial) information on the $A_e$-centrally extended structure of $Y_e$. In Section 4, we first recall the explicit Springer correspondences in \cite[chapter 13]{carter1993finite}. We use these correspondences to compute the characters of $A_e$ that appear in the $A_e$-module $\bC\otimes_\bZ K_0(\spr)$. This character computation gives some restrictions on the types of $A_e$-orbits in $Y_e$. In particular, we can determine the structure of $Y_e$ when the partition of $e$ has $2$ or $3$ rows. When the associated partition of $e$ has more rows, finer restrictions are discussed in Section 5.

Section 5 starts with a brief introduction and some facts about the theory of Kazhdan-Lusztig cells and the asymptotic affine Hecke algebra $J$. This algebra has a decomposition $J= \oplus J_\chi$ where $\chi$ runs over the nilpotent orbits of $\fg$. Each direct summand $J_\chi$ is shown to be isomorphic to $K_0(\Sh^{Q_e}(Y_e\times Y_e))$ by Bezrukavnikov and Ostrik in \cite{bezrukavnikov2001tensor}. Simple $J_\chi$-modules are classified in \cite{Cell4}. When $Q_e$ is finite abelian (equivalently, $e$ is distinguished), \cite[Section 10]{etingof2009fusion} provides another classification of simple modules in terms of the same parameters. We show that these two parameterizations match numerically (in terms of dimensions of irreducibles). We then obtain some restrictions on the types of $A_e$-orbit that appear in $Y_e$. Applying these restrictions to the case where the partition of $e$ has $4$ rows, we deduce that the structure of $Y_e$ is completely determined by the numerical invariants $\{F_a^{A}\}$ and $\{S_i^A\}$. As we have mentioned above, in the case of $3$-row partition, knowing the numbers $\{F_a^{A}\}$ is enough to understand $Y_e$. In the case of $4$-row partitions, knowledge of $\{S_i^A\}$ is crucial.

Consider an $\fsl_2$ triple $(e,h,f)$ in $\fg$. Let $T_e\subset G$ be the torus with the Lie algebra $\bC h$. The adjoint action of $T_e$ scales $e$, so $T_e$ naturally acts on $\spr$. Let $\fix:= \spr^{T_e} \subset \spr$ be the corresponding fixed point subvariety. The actions of $Q_e$ and $T_e$ on $\spr$ commute, so $Q_e$ and $A_e$ naturally act on $\fix$. In Section 6, we show that the two varieties $\fix$ and $\spr$ share the same numbers $S_i^A$ and $F_a^{A}$ for $A\subset A_e$. This result is an application of an "integral version" of the localization theorem in equivariant K-theory from \cite{intloc}. The equalities of the numerical invariants suggest that $\fix$ and $\spr$ share the same finite model $Y_e$. A result of similar flavor has been proved in \cite[Theorem A]{bezrukavnikov2023geometric} and \cite[Theorem 1.5.2]{Propp}. In particular, they show that $J_\chi\cong $ $K_0^{Q_e}(\fix\times \fix)$, (so $K_0(\Sh^{Q_e}(Y_e\times Y_e))\cong $  $K_0^{Q_e}(\fix\times \fix)$). 

While $\spr$ is connected, $\fix$ has many connected components. The group $A_e$ permutes the components of $\fix$. The $A_e$-orbits of components of $\fix$ are parameterized by tuples of weights of $T_e$. For each tuple $\alpha$, we write $(\fix)_\alpha$ for the corresponding variety. Then we have the following conjecture (for the definition of finite models, see \Cref{finite model}). 
\begin{conj*}[\Cref{union}]
    Each variety $(\fix)_\alpha$ admits a unique $A_e$-finite model $Y_\alpha$. Furthermore, $Y_e$ is the disjoint union of all $Y_\alpha$.
\end{conj*}
 
Section 7 develops the tools to obtain the set $Y_\alpha$ from the bounded derived category $D^{b}((\fix)_\alpha)$. Consider an $A$-variety $X$. Assume that $D^b(X)$ admits a full exceptional collection that is compatible with the action of $A$ in a suitable sense (see \Cref{compatible}). Then the objects of such a collection give us a finite set $Y$, equipped with a natural $A$-centrally extended structure. We call $Y$ a categorical finite model of $X$. The following is one of our main theorems.
\begin{thm*}[\Cref{main}]
    Assume that the partition $\lambda$ of $e$ has up to $4$ parts. There exists a full exceptional collection in $D^b((\fix)_\alpha)$ that is compatible with the action of $A_e$ (and $Q_e$). The corresponding categorical finite model $Y_\alpha$ is the unique finite model of $(\fix)_\alpha$.
\end{thm*}

This theorem and the results of Section 5 show that \Cref{union} is true when $\lambda$ has up to four rows. Another application is that we can construct a basis of $K_0^{Q_e}(\fix\times \fix)$ so that $J_\chi\cong $ $K_0^{Q_e}(\fix\times \fix)$ as two based algebras over $\bZ$. Therefore, in this case, we obtain a slightly stronger version of \cite[Theorem A]{bezrukavnikov2023geometric}.

\Cref{main} follows from our geometric description of $\fix$ in Section 8. In \Cref{2 row}, we show that when $\lambda$ has two parts, the connected components of $\fix$ are towers of $\bP^1$-bundles and the varieties $(\fix)_\alpha$ have either one or two connected components. When $\lambda$ has three or four parts, \Cref{geometric 3 rows} and \Cref{geometric main 4 rows} show that the varieties $(\fix)_\alpha$ belong to two collections of varieties, $\cC_3$ and $\cC_4$. These two collections (see Definitions \ref{collection C_3}, \ref{collection C_4}) are generated from some base varieties by constructing towers of projective bundles and blowing up along smooth subvarieties. The finite set $Y_e$ is then assembled from the finite models of these base varieties. The first consequence is that we can completely determine the centrally extended orbits that appear in $Y_e$ when $\lambda$ has four parts. The second consequence is that we obtain an algorithm to compute the multiplicities of these orbits from the numerical invariants $\{F_{a}^{A}\}$. Compared to the results in Section 5 (where we need to make use of $\{S_i^{A}\}$), this fact is somewhat surprising. It shows evidence for the conjecture about the linear independence of different classes of left cell modules in the corresponding Grothendieck groups (\Cref{combi conjecture}). 

Section 9 concerns the structure of $Y_e$ in the general case and related conjectures. We first show that for $e\in \fsp_{2n}$, the centrally extended structure of $Y_e$ can be recovered from the centrally extended structure of $Y_{e'}$ where $e'$ is an even nilpotent element of $\fsp_{2n'}$ for some $n'\leqslant n$. We then discuss the case where $\fg$ is of classical type B or D. Next, we state the conjecture that the structure of $Y_e$ can be understood by studying some base cases. Lastly, we construct a set $S_e$ of centrally extended orbits that is likely to appear in $Y_e$. This set of orbits is conjectured to be a complete list when $\lambda$ has five parts. We plan to return to this case in the future. When $\lambda$ has more parts, there are orbits outside of $S_e$ that appear in $Y_e$, this phenomenon is under investigation. The primary technical difficulty is that the author does not have good control over the set of base varieties (and whether the corresponding derived categories admit full exceptional collections) when $\lambda$ has six parts or more.  

Appendix A consists of the geometric descriptions of the base varieties of the collection $\cC_4$.
\subsection{Acknowledgements}
I am grateful to Ivan Losev for introducing this problem and for many stimulating discussions and countless valuable pieces of advice. I would like to thank Roman Bezrukavnikov, Pablo Boixeda Alvarez,
Stefan Dawydiak, Gurbir Dhillon, Ivan Karpov, George Lusztig, Victor Ostrik, Oron Propp, Eric Sommers, and Yaochen Wu for insightful conversations and comments related to the project. I also thank Hang Le, Yutong Nie, and Elad Zelingher for their help with coding. This work is partially supported by the NSF
under grant DMS-2001139.

\section{The Schur multipliers of the subgroups of $Q_e$}
From this section to the end of Section 8, we will consider $G= Sp(2n)$ and consider a nilpotent element $e\in \fg$ with the associated partition $2x_1+...+2x_k= 2n$. Let $Q_e$ be the reductive part of the centralizer of $e$ in $G$. Let $Q_e^{o}$ be the identity component of $Q_e$. For an algebraic group $Q$, we write $M(Q)$ for the group of (algebraic) Schur multipliers of $Q$ (see Section 2.2). As explained in Section 1.2, the $Q_e$-centrally extended structure of $Y_e$ is recorded by a dataset $\{(Q_y, \psi_y\in M(Q_y))\}_{y\in Y_e}$. Hence, to study $Y_e$, the first task is to understand the group $M(Q_y)$. This is done in Section 2.2 and Section 2.3. In particular, we compute the groups $M(Q)$ for $Q_e^{o}\subset Q\subset Q$.

Another goal of Section 2 is to introduce a finite abelian subgroup $A_e\subset Q_e$ with the following properties.
\begin{enumerate}
    \item $A_e\cong (\cyclic{2})^{\oplus k}$, and $A_e$ surjects to the component group $A_\chi:= Q_e/Q_e^{o}$.
    \item For $y\in Y_e$, let $A_y$ be $Q_y\cap A_e$. Then the natural pullback $f_y^{*}: M(Q_y)\rightarrow M(A_y)$ is injective.
\end{enumerate}
We explain the meaning of these conditions. First, $Y_e$ is naturally an $A_e$-centrally extended set. The surjectivity of $A_e\rightarrow A_\chi$ means that every $Q_e$-orbit in $Y_e$ is a single $A_e$-orbit. Consequently, $A_y$ is the stabilizer of $y$ in $A_e$, and $Q_y$ is generated by $Q_e^{o}$ and $A_y$. Next, the injectivity of $f_y^{*}: M(Q_y)\rightarrow M(A_y)$ means that we can recover $\psi_y$ from the knowledge of $f_y^*(\psi_y)$. In other words, this group $A_e$ satisfies the following proposition.
\begin{pro} \label{Ae determines Qe}
    The $Q_e$-centrally extended structure of $Y_e$ is fully recovered from its $A_e$-centrally extended structure.
\end{pro}

In Section 2.1, we first describe $Q_e$ and a particular choice of $A_e$. The property $A_e\twoheadrightarrow A_\chi$ will follow from our choice. The maps $f_y^{*}: M(Q_y)\rightarrow M(A_y)$ will be described explicitly in our computations of $M(Q_y)$. 

\subsection{Setting for $G= \Sp_{2n}$} \label{subsect 2.1}
Consider the case  $G= \Sp_{2n}$. Let $e\in \fg$ be a nilpotent element with the associated partition $ \lambda= (2x_1,2x_2,...,2x_k)$ where $x_1\geqslant x_2...\geqslant x_k$. The transpose partition of $\lambda^t$ has the form $(y_1,y_{-1}, y_2, y_{-2},...,y_M, y_{-M})$ where $y_i = y_{-i}$ and $y_1\geqslant y_2...\geqslant y_M$. For convenience, we sometimes write $N_\lambda$ for $e$. Let $\mathcal{B}_e$, $\mathcal{B}_\lambda$ or $\mathcal{B}_{(2x_1,...,2x_k)}$ denote the Springer fiber over $e$.

Now we give a concrete realization of $\fg$ and $e$ in terms of matrices. For $1\leqslant i\leqslant k$, consider $k$ symplectic vector spaces $V^i$ of dimensions $2x_i$. Let $\langle, \rangle^i$ be the symplectic form of $V^i$. We choose a basis $\{v^i_j\}_{1\leqslant j\leqslant 2x_i}$ of $V^i$ so that
\begin{itemize}
    \item $\langle v^i_j,v^i_l \rangle^{i}= 0$ if $j+l\neq 2x_i+1$,
    \item $\langle v^i_j,v^i_l \rangle^i= 1$ if $j+l= 2x_i+1$ and $j<l$.
\end{itemize}

Let $V_\lambda$ be the direct sum $\oplus_{i=1}^k V^i$. We identify $\fg= \mathfrak{sp}_{2n}$ with $\mathfrak{sp}(V_\lambda)$. The action of $e$ on $V_\lambda$ is given by $ev^i_{j}= ev^i_{j-1}$. In other words, the matrix of $e= N_\lambda$ is in Jordan form with all eigenvalues $0$, and the number $2x_i$ records the size of the $i$-th block.
\begin{exa} \label{basis}
Consider the element $e$ with the associated partition $(2,2,4,6)$ in $\fsp_{14}$. We give an illustration by the "centered" Young diagram of the basis $\{v_j^{i}\}$. $$\ytableausetup{centertableaux, boxsize= 3em}
\begin{ytableau}
\none & \none & v^{4}_{1} & v^{4}_{1} &\none &\none \\
\none & \none & v^{3}_{1} & v^{3}_{1} &\none &\none \\
\none &v^{2}_{1} &v^{2}_{2} &v^{2}_{3} &v^{2}_{4} &\none \\
v^{1}_{1} & v^{1}_{2} &v^{1}_{3} &v^{1}_{4} &v^{1}_{5} &v^{1}_{6} \\
\end{ytableau}$$\\ 
In this diagram, the $i$-th row from the bottom represents a basis of $V^i$. And the action of $e$ sends a basis vector in a box to the one on its left.
\end{exa}
From \cite[Chapter 5]{collingwood1993nilpotent}, the reductive part $Q_e$ of the centralizer of $e$ in $G$ is isomorphic to $\prod_{j=1}^M O_{y_j- y_{j+1}}$ (some factors of this product may be trivial). We specify a realization of $Q_e$ in $\Sp(V_\lambda)$ with respect to the basis $\{v^i_j\}$ as follows. Let $\ell$ be the number of distinct parts of $\lambda$. Let $i_1<...< i_{\ell-1}< i_\ell$ be the indexes that satisfy $x_{i_m}> x_{i_m+ 1}$ for $1\leqslant m\leqslant \ell$, here $i_\ell= k= y_1$. As in \cite[Page 71, 72]{collingwood1993nilpotent}, the symplectic form on $V_\lambda$ induces a nondegenerate symmetric bilinear form on each vector space Span$(v_1^{i_m+ 1},..., v_1^{i_{m+1}})$. 

With the choice of our symplectic form on $V_\lambda$, the induced bilinear forms on these vector spaces are the standard dot products. The group $Q_e$ is then realized as $\prod_{m= 1}^{\ell} O_{i_{m}- i_{m-1}}$ with respect to these dot products. In \Cref{basis}, we have $i_1= 1, i_2= 2, i_3= 4$. Hence $Q_e\cong O_1\times O_1\times O_2= (\cyclic{2})^{\oplus 2}\times O_2$.

We choose a subgroup $A_e$ of $Q_e$ as follows. Consider the elements $z_i\in Q_e$ that act as $\Id$ on $V^i$ and $-\Id$ on $V^j$ for $j\neq i$. Let $A_e$ be the group generated by $z_i$ for $1\leqslant i\leqslant k$. In \Cref{basis}, we have $Q_e\cong O_2\times (\cyclic{2})^{\oplus 2}$, so $A_e\cong (\cyclic{2})^{\oplus 4}$. In terms of matrices, $A_e$ lives inside $Q_e$ as diagonal matrices with entries $1$ or $-1$. For convenience, we write $z_{i_1i_2...i_m}$ for the element $z_{i_1}z_{i_2}...z_{i_m}\in A_e$.

Write $A_\chi$ for the component group of $Q_e$, then $A_\chi \cong (\cyclic{2})^{\oplus \ell}$. We write $Q_e$ as a semi-product $(\prod_{m= 1}^{\ell} (\SO_{i_{m}- i_{m-1}})) \rtimes (\cyclic{2})^{\oplus \ell}$ $\cong Q_e^{o}\rtimes A_\chi$ . The second factor can be realized as the group generated by $\{ z_{i_m}\}_{1\leqslant m\leqslant \ell}$ living in $Q_e$. This realization gives us a lift of $A_\chi$ in $A_e\subset Q_e$. Consider the projection $A_e \hookrightarrow Q_e \rightarrow A_\chi$. With the chosen lift of $A_\chi$, this projection sends $z_j$ to $z_{i_m}$ where $m$ is the index that satisfies $i_m+1\leqslant j\leqslant i_{m+1}$. In the context of \Cref{basis}, the component group $A_\chi$ is realized as the subgroup generated by $z_{1}, z_3$ and $z_4$. The projection $A_e\rightarrow A_\chi$ is given by $z_1\mapsto z_1, z_2\mapsto z_1, z_3\mapsto  z_3$ and $z_4\mapsto z_4$. In summary, we have a chain of maps with the composition being an isomorphism:
\begin{equation} \label{first chain}
    A_\chi \hookrightarrow A_e\hookrightarrow Q_e\twoheadrightarrow A_\chi.
\end{equation}

For a point $y\in Y_e$, its stabilizer $Q_y$ must contain $Q_e^{o}$. So $Q_y$ has the form $(\prod_{j=1}^\ell \SO_{y_j- y_{j-1}})\rtimes (\cyclic{2})^{\oplus m}$ for some $m\leqslant \ell$. The next section develops techniques for computing the group of Schur multipliers of a semiproduct of algebraic groups.
\subsection{Schur multipliers of algebraic groups} \label{subsect 2.2}
Let $G$ be an algebraic group over $\mathbb{C}$. By an algebraic projective representation of $G$ we mean a finite-dimensional vector space $V$ and an algebraic group homomorphism $\rho: G\rightarrow PGL(V)$. Let $M(G)$ be the set of equivalence classes of algebraic projective representations of $G$. The equivalence relation is given as follows: $V_1$ and $V_2$ are equivalent if and only if $V_1\otimes V_2^*$ is a linear representation of $G$, here $V_2^*$ is the dual projective representation of $V_2$. The set $M(G)$ has a group structure induced from the tensor product of projective representations. When $G$ is a finite group, $M(G)$ is simply $H^2(G, \mathbb{C}^\times)$.  

Next, consider the case where $G$ is the semidirect product of two algebraic groups $G_1 \rtimes G_2$. From \cite[Lemma 2.3]{Schur}, the obstruction of lifting a projective representation $W$ of $G$ to an honest representation $W$ is recorded by a certain regular function $\alpha_W$ (not defined canonically). The function $\alpha_W: G\times G\rightarrow \mathbb{C}^\times$ has the property that $\alpha_W(g_1g_2, h_1h_2)= \alpha_W(g_2,h_2)\alpha_W(g_2, h_1)\alpha_W(g_1, h_1)$ for $g_i, h_i\in G_i$, $i= 1,2$. In particular, this function is determined by its restriction to $G_1\times G_1$, $G_2\times G_2$, and $G_2\times G_1$. On the other hand, given $\alpha_W$, we can reconstruct the class of projective representations of $W$. The group of Schur multiplier of a semidirect product of two finite groups is studied in \cite[Section 2.2]{Schur}. Here, we have similar results for the case of algebraic groups under certain conditions in \Cref{semi} and \Cref{direct product} below.

Let $G_1$ be an algebraic group, and let $G_2$ be a finite group acting on $G_1$. We write $\Hom(G_1, \mathbb{C}^\times)$ for the set of algebraic group homomorphisms from $G_1$ to $\bC^\times$. We then have a natural action of $G_2$ on the set $\Hom(G_1, \mathbb{C}^\times)$. Thus, it makes sense to talk about $\gH^1(G_2, \Hom(G_1, \mathbb{C}^\times))$, the first group cohomology of $G_2$ with coefficients in $\Hom(G_1, \mathbb{C}^\times)$. 
\begin{lem} \label{semi}
 Let $G=$ $G_1\rtimes G_2$ be the semidirect product with respect to the action of $G_2$ on $G_1$. We have a set injection
$$M(G)\hookrightarrow M(G_1)\times M(G_2)\times \gH^1(G_2, \Hom(G_1, \mathbb{C}^\times))$$
\end{lem}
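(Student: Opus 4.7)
The plan is to decompose the obstruction $2$-cocycle of a projective representation along the semidirect product structure, reading off each piece as a cohomology class on the appropriate factor.

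Given $[W] \in M(G)$, I first lift the projective representation $\rho: G\to PGL(W)$ to a set-theoretic map $\tilde{\rho}: G\to GL(W)$ and adjust by a regular $\mathbb{C}^\times$-valued coboundary so that the associated $2$-cocycle $\alpha_W$ satisfies the normalization $\alpha_W(g_1 g_2, h_1 h_2)= \alpha_W(g_2,h_2)\alpha_W(g_2, h_1)\alpha_W(g_1,h_1)$ stated before the lemma. Such a normalization can be produced by separately adjusting the values of $\tilde{\rho}$ on elements of $G_1$, of $G_2$, and of the section $g_1 g_2 \mapsto \tilde{\rho}(g_1)\tilde{\rho}(g_2)$, using that $G_2$ is finite and that $\mathbb{C}^\times$ is divisible.

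With $\alpha_W$ so normalized, the map $M(G)\to M(G_1)\times M(G_2)\times \gH^1(G_2, \Hom(G_1, \mathbb{C}^\times))$ is defined by three restrictions: $[W]$ is sent to $([\alpha_W|_{G_1\times G_1}],\,[\alpha_W|_{G_2\times G_2}],\,[\beta])$, where $\beta(g_2, g_1):=\alpha_W(g_2, g_1)$. The $2$-cocycle relation on $G$ applied to triples $(g_2, g_1, g_1')$ in $G_2\times G_1\times G_1$ forces $\beta(g_2,-)$ to be an algebraic homomorphism $G_1\to \mathbb{C}^\times$ once the $M(G_1)$-piece has been accounted for; applied to triples $(g_2, g_2', g_1)$ it yields the $1$-cocycle identity for $g_2\mapsto \beta(g_2,-)$ with respect to the natural $G_2$-action on $\Hom(G_1,\mathbb{C}^\times)$ coming from the action of $G_2$ on $G_1$. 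I would then verify independence of the lift: any change $\tilde{\rho}\mapsto \phi\cdot\tilde{\rho}$ can be split as $\phi=\phi_1\cdot\phi_2$ along the product decomposition of $G$, and the resulting changes on the three components are coboundaries in $M(G_1)$, $M(G_2)$, and $\gH^1(G_2,\Hom(G_1,\mathbb{C}^\times))$ respectively. The homomorphism property is then immediate, since tensor product of projective representations multiplies their cocycles.

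For injectivity I would suppose the three components are all trivial, choose regular coboundaries realizing this triviality on each piece, and then use the multiplicative identity above to reassemble them into a single regular coboundary on $G$ that trivializes $\alpha_W$ on all of $G\times G$; this forces $[W]=0$ in $M(G)$. The main obstacle I anticipate is the cross-term analysis: one must simultaneously show that, after the $M(G_1)$-piece is accounted for, $\beta(g_2,-)$ becomes an algebraic character of $G_1$, confirm the $1$-cocycle condition, and show that the resulting class in $\gH^1$ is unchanged by the coboundary adjustments pinning down the other two pieces. The bookkeeping here, though elementary, must be done in the right order to avoid circular modifications of the lift.
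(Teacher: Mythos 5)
Your map coincides with the paper's: both send $[W]$ to the restrictions of the obstruction cocycle $\alpha_W$ to $G_1\times G_1$, $G_2\times G_2$, and $G_2\times G_1$, so the overall approach is the same. Where the paper delegates the verification of the needed identities and the injectivity to \cite[Lemmas 2.2.3, 2.2.4]{Schur}, you propose to derive them directly from the $2$-cocycle relation on $G$ after normalizing the lift, which is a reasonable self-contained alternative. The one place where the ``right order'' issue you flag has genuine content: the $(g_2,g_1,g_1')$-instance of the cocycle relation reduces the failure of $\beta(g_2,-)$ to be multiplicative to the identity $\alpha_W(g_1,h_1)=\alpha_W(g_2.g_1,g_2.h_1)$, i.e., to $G_2$-invariance of $\alpha_W|_{G_1\times G_1}$. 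This is not automatic for a chosen lift; one must first make a coboundary adjustment on $G_1$ to arrange it (possible because the class $[\alpha_W|_{G_1\times G_1}]\in M(G_1)$ is $G_2$-fixed, being restricted from $G$), and only then does $\beta(g_2,-)$ become a character of $G_1$ and $[\beta]$ a class in $\gH^1(G_2,\Hom(G_1,\mathbb{C}^\times))$. This adjustment, done before pinning down the $M(G_2)$-piece, is exactly the content that the paper outsources to \cite{Schur}, so your anticipated obstacle is the correct one to worry about.
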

\begin{proof}
We first define a set injection $M(G)\rightarrow M(G_1)\times M(G_2)\times \text{Map}(G_2\times G_1\rightarrow \mathbb{C}^\times)$ as follows. For each class of $M(G)$, we pick a representative $W$ and send it to $W|_{G_1}\times W|_{G_2}\times \beta_W$ where $W|_{G_i}$ are the classes of $W$ naturally viewed as a projective representation of $G_i$, and $\beta_W$ is obtained by restricting the function $\alpha_W $ on $G\times G= G_1\rtimes G_2\times G_1\rtimes G_2$ to $1\times G_2\times G_1\times 1$. From part (iii) of \cite[Lemma 2.2.4 ]{Schur}, we find that $\beta$ satisfies $\beta(g_2h_2,g_1)=$ $\beta(h_2,g_1)\beta(g_2, h_2.g_1)$ and $\beta(g_1, h_1)\beta(g_2.g_1,g_2.h_1)^{-1}=$ $\beta(g_2, g_1)\beta(g_2,g_1h_1)^{-1}\beta(g_2,h_1)$ for $g_1, h_1\in G_1$ and $g_2, h_2\in G_2$. We realize $\beta$ as a map from $G_2$ to $\Hom(G_1, \bC^\times)$ by $\beta(g_1): g_2\mapsto \beta(g_2,g_1)$. From \cite[Page 34]{Schur}, $\beta$ can be considered an element of $\gH^1(G_2, \Hom(G_1, \mathbb{C}^\times))$. Thus, the map we have defined has the image in $\Hom(G_1, \gH^1(G_2, \Hom(G_1, \mathbb{C}^\times)))$ $\times$ $M(G_1)$ $\times M(G_2)$. The injectivity of this map follows from part (ii) of Lemma 2.2.3 in \cite{Schur}.
\end{proof}
 \begin{rem} \label{direct product} We can think of $\Hom(G_1, \bC^\times)$ as an abelian group where the multiplication is given by $(f_1\times f_2)(x)= f_1(x)f_2(x)$ for $f_1, f_2\in \Hom(G_1, \bC^\times)$. When $G= G_1\times G_2$, the group cohomology $\gH^1(G_2, \Hom(G_1, \mathbb{C}^\times))$ is $\Hom(G_2, \Hom(G_1, \mathbb{C}^\times))$, the set of group homomorphisms between $G_2$ and $\Hom(G_1, \mathbb{C}^\times)$. The latter expression makes sense without the assumption that $G_2$ is finite. By the same argument as in the proof of Lemma 2.2 we have an injection  
$$M(G)\hookrightarrow M(G_1)\times M(G_2)\times \Hom(G_2, \Hom(G_1, \mathbb{C}^\times)).$$ 
 In this paper, we always choose $G_1$ so that $\Hom(G_1, \mathbb{C}^\times)$ is a finite abelian group or $\bZ$. Moreover, our $G_2$ is of classical type, so the set $\Hom(G_2, \Hom(G_1, \mathbb{C}^\times))$ has finite cardinality.
 \end{rem}
We have a straightforward numerical consequence of \Cref{semi} and $\Cref{direct product}$.
 \begin{cor} \label{upperbound}
When $G_2$ is finite or when $G_2$ acts trivially on $G_1$, we have
$$|M(G_1\rtimes G_2)|\leqslant |M(G_1)|\times |M(G_2)|\times |\gH^1(G_2, \Hom(G_1, \mathbb{C}^\times))|,$$
in which we assume that all the sets on the right side have finite cardinalities.
 \end{cor}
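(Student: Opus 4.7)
The strategy is to deduce the stated inequality directly from the two injectivity results established immediately before it, namely \Cref{semi} and the variant recorded in \Cref{direct product}. Both results produce a set-theoretic injection of $M(G_1\rtimes G_2)$ into a product of three sets, and a set injection automatically gives the cardinality inequality $|A|\leqslant |B|$ whenever both sides are finite. So the corollary reduces to choosing the appropriate injection for each of the two hypotheses and then observing that under the stated finiteness assumption both sides make sense as natural numbers.

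Concretely, I would split into the two cases indicated by the hypothesis. In the first case, when $G_2$ is finite, I apply \Cref{semi} verbatim: this gives an injection
$$M(G_1\rtimes G_2)\hookrightarrow M(G_1)\times M(G_2)\times \gH^1(G_2,\Hom(G_1,\bC^\times)),$$
and taking cardinalities (using the finiteness assumption on the right-hand side) yields the desired bound. In the second case, when $G_2$ acts trivially on $G_1$, the semidirect product collapses to the direct product $G_1\times G_2$, and \Cref{direct product} produces an injection
$$M(G_1\times G_2)\hookrightarrow M(G_1)\times M(G_2)\times \Hom(G_2,\Hom(G_1,\bC^\times)).$$
Under trivial action the group cohomology $\gH^1(G_2,\Hom(G_1,\bC^\times))$ agrees with $\Hom(G_2,\Hom(G_1,\bC^\times))$, so the right-hand side of the injection is exactly the one displayed in the corollary. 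Passing to cardinalities finishes this case as well.

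There is essentially no obstacle here beyond bookkeeping: the entire content of the corollary is the passage from an injection of sets to an inequality of cardinalities, and the only thing to check is that the two hypotheses of the corollary correspond precisely to the two settings in which the preceding lemma and remark produce such an injection. The finiteness assumption stated in the corollary (that the three factors on the right are finite) is exactly what is needed so that the inequality of cardinalities is meaningful; as noted in \Cref{direct product}, this is automatic in all situations encountered in the sequel because $\Hom(G_1,\bC^\times)$ is chosen to be finite or isomorphic to $\bZ$ and $G_2$ is of classical type, so no separate verification is required here.
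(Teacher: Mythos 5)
Your proposal is correct and follows exactly the route the paper intends: the paper presents \Cref{upperbound} as "a straightforward numerical consequence of \Cref{semi} and \Cref{direct product}," and your two cases (apply \Cref{semi} when $G_2$ is finite; apply \Cref{direct product} when $G_2$ acts trivially, noting that $\gH^1(G_2,\Hom(G_1,\bC^\times))=\Hom(G_2,\Hom(G_1,\bC^\times))$ in that case) with a passage to cardinalities are precisely that argument.
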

 
In addition to the lemma, we need results on the Schur multipliers of some basic groups. The following groups of Schur multipliers are known. We have $M(O_k)= M(\SO_h)= \mathbb{Z}/2\mathbb{Z}$ for $k\geqslant 2$ and $h\geqslant 3$, and $M(\SO_2)= M(\mathbb{C}^\times)= 0$.

The result $M(\bC^\times)= 0$ can be explained as follows. Consider an algebraic homomorphism $\phi: \bC^{\times} \rightarrow PGL(V)$. Then $\phi$ can be lifted to an algebraic homomorphism $\phi': \bC^{\times} \rightarrow \GL(V)$. Indeed, let $t\subset \bC^\times$ be a generic element, the morphisms $\phi$ and $\phi'$ are determined by the images of $t$. We choose $\phi'(t)$ to be a representative of $\phi(t)$ in $\GL(V)$. Then $\phi'$ is a lift of $\phi$.

The results about projective representations of $O_k, \SO_h$ for $k\geqslant 2, h\geqslant 3$ come from representations of their simply connected covers, the Pin groups and Spin groups. More details about these groups are explained in \Cref{subsect 7.3.1}. 
\begin{rem} \label{Schurz2}
Another well-known result is about the Schur multipliers of elementary $2$-groups. As in \cite{Schur}, the group $M((\cyclic{2})^{\oplus N})$ can be identified with the $\bF_2$-vector space of the $2$-forms on $A= \bF_{2}^{N}$. Here, we point out a realization of a basis of $M((\cyclic{2})^{\oplus N})$ which will be useful for the next results. Let $z_1,...,z_N$ be the generators of the group $A= (\cyclic{2})^{\oplus N}$, they can be viewed as a basis for the vector space $\bF_{2}^{N}$. We consider the 2-forms $\omega_{[i,j]}$ on $A$ with $1\leqslant i< j\leqslant N$,
 $$\omega_{[i,j]}(z_k, z_l)= \bigg\{
 \begin{aligned}
     1 \text{ if } i\leqslant k\neq l\leqslant j, \\     
     0 \text{ in other cases}.
 \end{aligned}
$$
We are considering forms with coefficients in $\bF_2$, so $-1= 1$, and the above expression makes sense. The elements $\omega_{[i,j]}$ form a basis of $M((\cyclic{2})^{\oplus N})$. Each $\omega_{[i,j]}$ is obtained by pulling the unique Schur multiplier of $O_{j-i+1}$ back along the map $$A=\Span(z_1,...,z_N)\rightarrow\Span(z_i,...,z_j)\rightarrow O_{j-i+1}.$$ 
The first map is the projection sending $z_k$ to $0$ if $k\notin [i,j]$, the last inclusion is obtained by sending $z_i$ to the diagonal matrix with $i$-th entry $-1$ and other entries $1$. 
 \end{rem}

\subsection{The Schur multipliers of $Q_y$} \label{subsect 2.3}
Next, we demonstrate the computation of $M(G)$ for various types of groups $G$ of interest. In particular, all the formulas for $M(Q_y)$ can be worked out.

First, we summarize some notation from \Cref{subsect 2.1}. We let $k$ (resp. $\ell$) denote the number of parts (resp. distinct parts) of the partition $\lambda$ of $e$. And we have realized $Q_e$ as $(\prod_{j= 1}^{\ell} (\SO_{i_{m}- i_{m-1}})) \rtimes A_\chi$. This lift of $A_\chi\subset A_e\subset Q_e$ is generated by $\{z_{i_m}\}_{1\leqslant m\leqslant \ell}$. Write $Z_m$ for $z_{i_m}$, then in this semidirect product, $Z_l$ acts trivially on $\SO_{i_{m}- i_{m-1}}$ if $l\neq m$, and the action of $Z_l$ on $\SO_{i_l-i_{l-1}}$ is by conjugation.

For a point $y\in Y_e$, its stabilizer $Q_y$ is isomorphic to $(\prod_{m= 1}^{\ell} (\SO_{i_{m}- i_{m-1}})) \rtimes (\cyclic{2})^{\oplus p}$ for some $p\leqslant \ell$. And the group $A_y= Q_y\cap A_e$ is isomorphic to $(\cyclic{2})^{\oplus k-\ell+p}$. In the semiproduct decomposition of $Q_y$, we write $Q_e^{\circ}$ for $\prod_{m= 1}^{\ell} (\SO_{i_{m}- i_{m-1}})$, and write $A_{\chi,y}$ for the factor $(\cyclic{2})^{\oplus p}\subset A_\chi$. The action of $A_{\chi,y}$ on $\prod_{m= 1}^{\ell} (\SO_{i_{m}- i_{m-1}})$ is induced by the action of $A_\chi$. 

Let $\ell'$ be the number of indexes $m$ such that 
\begin{enumerate}
    \item $y_{i_m}- y_{m_{i-1}}= 1$, or
    \item $y_{i_m}- y_{m_{i-1}}= 2$ and $A_{\chi,y}$ acts trivially on this factor $\SO_2$.
\end{enumerate}
We have the following lemma. 
\begin{lem} \label{Schur}
    Consider the group $Q_y= Q_e^{\circ}\rtimes A_{\chi, y} \cong$ $(\prod_{m=1}^\ell \SO_{i_m- i_{m-1}})\rtimes (\cyclic{2})^{\oplus p}$ in the above setting. Then we have a group isomorphism $M(Q_y)\cong $ $\bF_2^{\ell+ p(p-1)/2-\ell'}$. And the pullback map $M(Q_y)\rightarrow M(A_y)$ is injective.
\end{lem}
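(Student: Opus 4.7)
The plan is to prove both claims in tandem: obtain the upper bound $|M(Q_y)|\le 2^{\ell-\ell'+p(p-1)/2}$ from \Cref{upperbound}, then exhibit exactly that many classes in $M(Q_y)$ whose restrictions to $A_y$ are linearly independent. Since the two counts match, the upper bound is sharp, $M(Q_y)\cong\bF_2^{\ell+p(p-1)/2-\ell'}$, and the pullback $M(Q_y)\to M(A_y)$ is forced to be injective.

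For the upper bound I would apply \Cref{upperbound} with $G_1=Q_e^\circ=\prod_{m=1}^\ell \SO_{i_m-i_{m-1}}$ and $G_2=A_{\chi,y}\cong (\bZ/2)^p$, and compute the three factors. Using $M(\SO_h)=\bZ/2$ for $h\ge 3$ and $M(\SO_h)=0$ otherwise, the first factor contributes $2^s$, where $s$ is the number of blocks with $i_m-i_{m-1}\ge 3$. By \Cref{Schurz2} the second factor is $2^{p(p-1)/2}$. For the cohomology, $\Hom(\SO_h,\bC^\times)$ vanishes unless $h=2$, where $\SO_2\cong\bC^\times$ makes it $\bZ$, so the $H^1$ decomposes into a product over the $\SO_2$-blocks; on each such block $A_{\chi,y}$ acts trivially or by inversion, and a short $1$-cocycle computation gives $H^1((\bZ/2)^p,\bZ)=0$ in the first case and $\bZ/2$ in the second. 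Letting $n_2'$ be the number of $\SO_2$-blocks with nontrivial action, the cohomology contributes $2^{n_2'}$, and the identity $s+n_2'=\ell-\ell'$ (from the definition of $\ell'$) delivers the claimed upper bound.

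For the lower bound I would construct $\ell-\ell'+p(p-1)/2$ classes in $M(Q_y)$ whose pullbacks to $A_y$ are distinct basis $2$-forms $\omega_{[i,j]}$ in the basis of \Cref{Schurz2}, organized into three disjoint families. (i) For each of the $\binom{p}{2}$ unordered pairs of generators of $A_{\chi,y}$, project $Q_y\twoheadrightarrow A_{\chi,y}\hookrightarrow O_2$ (sending the two generators to the two standard reflections of $O_2$) and pull back the nontrivial class of $M(O_2)$. (ii) For each of the $s$ blocks with $i_m-i_{m-1}\ge 3$, pull back $M(O_{i_m-i_{m-1}})$ along the projection $Q_y\to O_{i_m-i_{m-1}}$ onto the $m$-th factor of $Q_e$, lifting the semidirect-product action of $Z_m$ via the Pin cover as needed. (iii) For each of the $n_2'$ blocks of size $2$ with some $Z_l\in A_{\chi,y}$ acting by inversion, pull back $M(O_2)$ from the subgroup $\SO_{i_m-i_{m-1}}\rtimes \langle Z_l\rangle\cong O_2$ inside $Q_y$. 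Restricted to $A_y$, the classes from (i), (ii), (iii) are $2$-forms supported on $A_{\chi,y}$-image pairs, intra-block pairs within blocks of size $\ge 3$, and mixed pairs between a block of size $2$ and an $A_{\chi,y}$-generator, respectively. These supports are pairwise disjoint in the basis of \Cref{Schurz2}, so the constructed $2$-forms are linearly independent in $M(A_y)$.

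The main obstacle is verifying case (ii): extending the Spin-type projective representation of $\SO_{i_m-i_{m-1}}$ to a projective representation of $Q_y$ when the generator $Z_m\in A_{\chi,y}$ acts on this block by an outer involution. This should follow from the Pin double cover of $O_{i_m-i_{m-1}}$ (whose structure is discussed in \Cref{subsect 7.3.1}): a reflection representative of $Z_m$ lifts to $\mathrm{Pin}(i_m-i_{m-1})$, so the projective representation extends across the outer involution, while any remaining generators of $A_{\chi,y}$ commute with the $m$-th block and act trivially. Once this is in hand, the three families yield $s+n_2'+\binom{p}{2}=\ell-\ell'+p(p-1)/2$ linearly independent classes in the image of $M(Q_y)\to M(A_y)$; combined with the upper bound from \Cref{upperbound}, this gives both the isomorphism and the injectivity of the pullback in one stroke.
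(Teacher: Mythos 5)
Your proposal follows essentially the same strategy as the paper: obtain the upper bound $2^{\ell+p(p-1)/2-\ell'}$ from \Cref{upperbound} (the $H^1$ computation and the identity $s+n_2'=\ell-\ell'$ match the paper's Step~1), then exhibit $p(p-1)/2 + (\ell-\ell')$ classes whose restrictions to $A_y$ are independent basis $2$-forms (the paper's Steps~2--3). The paper splits the $\ell-\ell'$ block classes by triviality of the $A_{\chi,y}$-action rather than by block size, but this only reshuffles your families (ii) and (iii) and the total count is the same. The ``main obstacle'' you flag is not actually an obstacle, and the paper sidesteps it cleanly: you never need to extend a projective representation of $\SO_{k_m}$ across an outer involution, because the map $Q_y\to O_{k_m}$ is just the restriction of the product projection $Q_e\to O_{k_m}$ (or, when $Z_m\in A_{\chi,y}$, the composite with $\SO_{k_m}\rtimes\langle Z_m\rangle\cong O_{k_m}$); you pull back the already-existing nontrivial class of $M(O_{k_m})$ along this group homomorphism, and Pin enters only to justify $M(O_{k_m})\cong\bZ/2$. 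For the linear-independence step, your ``disjoint supports'' heuristic is on the right track but glosses over the cross-terms $(c)$ between a block of size $\ge 3$ and its $Z_m\in A_{\chi,y}$; the paper handles this by first using the chain $A_{\chi,y}\hookrightarrow A_y\hookrightarrow Q_y\twoheadrightarrow A_{\chi,y}$ to show $\ker f_y^*\subseteq \ker i_y^*$, which decouples the $A_{\chi,y}$-supported classes from the block classes, and then checks independence of the latter by restricting to the witness summands $(\cyclic{2})^{\oplus k_m-1}\oplus A_{\chi,y}$ one block at a time. You should adopt something like this two-stage reduction rather than claiming outright disjointness of supports.
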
 
The proof of this lemma will be given later. As mentioned in \Cref{Schurz2}, we have $M(A_{\chi, y})= M((\cyclic{2})^{\oplus p})\cong \bF_2^{\frac{p(p-1)}{2}}$. The first step of the proof is to understand the contribution of the factor $Q_e^{\circ}$. The next lemma is a special case of \Cref{Schur}. From now on we write $k_m$ for $y_{i_m}- y_{m_{i-1}}$. 
\begin{lem} \label{schur product of SO}
    Consider the case $Q_y= \prod_{m=1}^\ell \SO_{k_m}$. Let $\ell'$ be the number of indexes $m$ such that $k_m\geqslant 3$. Then $M(Q_y)= \bF_2^{\ell'}$.
\end{lem}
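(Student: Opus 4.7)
The plan is to induct on $\ell$, using \Cref{upperbound} to bound $|M(Q_y)|$ from above and then exhibiting $\ell'$ linearly independent Schur multipliers to match. The base case $\ell=1$ follows from the stated values $M(\SO_k)=\bF_2$ for $k\geqslant 3$ and $M(\SO_2)=M(\SO_1)=0$. For the inductive step, write $Q_y = H\times \SO_{k_\ell}$ with $H=\prod_{m<\ell}\SO_{k_m}$; since $\SO_{k_\ell}$ acts trivially on $H$, \Cref{direct product} and \Cref{upperbound} give
\[
|M(Q_y)|\;\leqslant\; |M(H)|\cdot |M(\SO_{k_\ell})|\cdot |\Hom(\SO_{k_\ell},\Hom(H,\bC^\times))|.
\]

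The crux is showing the third factor is trivial. Since $H$ is a direct product, $\Hom(H,\bC^\times)\cong \prod_{m<\ell}\Hom(\SO_{k_m},\bC^\times)$, and I would compute each factor: for $k_m\geqslant 3$ the group $\SO_{k_m}$ is semisimple so has no algebraic characters; for $k_m=2$ one uses $\SO_2\cong\bC^\times$ to get a factor $\bZ$; for $k_m=1$ the source is trivial. Hence $\Hom(H,\bC^\times)\cong \bZ^{r}$ for some $r\geqslant 0$. Then $\Hom(\SO_{k_\ell},\bZ^r)=0$, because $\SO_{k_\ell}$ is connected and $\bZ^r$ is discrete (or $k_\ell=1$ makes the source trivial). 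Combining this with $|M(\SO_{k_\ell})|=2^{[k_\ell\geqslant 3]}$ and the inductive hypothesis $|M(H)|=2^{\ell'-[k_\ell\geqslant 3]}$ yields $|M(Q_y)|\leqslant 2^{\ell'}$.

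For the matching lower bound, for each $m$ with $k_m\geqslant 3$ let $\xi_m\in M(Q_y)$ be the pullback of the nontrivial Schur multiplier of $\SO_{k_m}$ along the projection $Q_y\twoheadrightarrow \SO_{k_m}$. To verify that the $\xi_m$'s are $\bF_2$-linearly independent, I would pull any nontrivial combination $\xi=\sum_{m\in S}\xi_m$ back along the inclusion $\SO_{k_{m_0}}\hookrightarrow Q_y$ as the $m_0$-th factor, for some $m_0\in S$. The projection-then-inclusion on the $m_0$ factor is the identity, and is the constant map on the other factors, so the restriction equals the nontrivial class of $M(\SO_{k_{m_0}})$; hence $\xi\neq 0$. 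This gives $|M(Q_y)|\geqslant 2^{\ell'}$, and the two bounds together yield $M(Q_y)\cong \bF_2^{\ell'}$ with basis $\{\xi_m: k_m\geqslant 3\}$.

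I expect the main subtlety to be the vanishing of the cross term $\Hom(\SO_{k_\ell},\Hom(H,\bC^\times))$; this is the only spot where one must leave the purely formal framework of \Cref{upperbound} and invoke specific structural facts about $\SO_k$ (semisimplicity for $k\geqslant 3$, the identification $\SO_2\cong\bC^\times$, and connectedness). Everything else—the induction, the pullback construction of the $\xi_m$, and checking their independence—is essentially bookkeeping once this vanishing is in hand.
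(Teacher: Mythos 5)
Your proof is correct and follows essentially the same inductive skeleton as the paper's — bounding $|M(Q_y)|$ above with \Cref{upperbound} while producing $\ell'$ independent pullback classes — but the two proofs diverge in two places worth noting. For the upper bound, the paper peels off the $\SO_2\cong\bC^\times$ factors first (noting they don't change $M$) and then handles the $\SO_j$ with $j\geqslant 3$, whereas you do a single uniform step; when applying \Cref{direct product} you let $\Hom(G_1,\bC^\times)\cong\bZ^r$ with $r$ possibly $>1$, which is outside the "finite abelian or $\bZ$" case the paper flags in that remark, but the set injection still holds and your vanishing argument $\Hom(\SO_{k_\ell},\bZ^r)=0$ (by connectedness versus discreteness) is sound. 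For the lower bound, you verify independence of the $\xi_m$ by restricting to a single factor $\SO_{k_{m_0}}\hookrightarrow Q_y$, which is cleaner and entirely adequate for the stated lemma; the paper instead restricts to the finite subgroup $A_y$ of diagonal $\pm1$ matrices and identifies $M(A_y)$ with $2$-forms over $\bF_2$. The paper's more elaborate choice is deliberate: it simultaneously establishes that the restriction map $p\colon M(Q_y)\to M(A_y)$ is injective and describes its image, facts that are reused verbatim in Step 3 of the proof of the surrounding Lemma \ref{Schur}. Your proof buys a shorter independence check at the cost of not recording that extra information, so if you later need the injectivity of $p$ you would have to rerun the independence argument through $A_y$.
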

\begin{proof}
Recall that $\SO_2= \bC^\times$, and $M(\mathbb{C}^\times)= 0$. For any algebraic group $G$ such that the set of characters of $G$ is finite, we have that the group $\Hom(\mathbb{C^\times},\Hom(G,\mathbb{C}^\times))$ is trivial. Applying \Cref{direct product}, we see that the cardinalities of $M(G\times \mathbb{C}^\times )$ and $M(G)$ are the same. Thus, the classes of algebraic projective representations of $G\times \mathbb{C}^\times$ are obtained by pulling back the classes of $G$.

For $j\geqslant 3$, the set $\Hom (\SO_j,\bC^\times)$ consists only of the trivial character. Applying \Cref{upperbound}, we have $|M(G\times \SO_j)|\leqslant |M(G)|\times |M(\SO_j)|= 2|M(G)|$. Hence, by induction, we have $M(\prod_{m=1}^\ell \SO_{k_m})\leqslant 2^{\ell'}$. Next, we can construct $\ell'$ elements of $M(\prod_{m=1}^\ell \SO_{k_m})$ as follows. For each index $m$ such that $k_m\geqslant 3$, consider the projection $\prod_{m=1}^\ell \SO_{k_m}\rightarrow \SO_{k_m}$, we have $\ell'$ such projections. Pulling back $\ell'$ nontrivial Schur multipliers of these $\SO_{k_m}$, we obtain $\ell'$ elements of $M(Q_y)$. 

Recall that we have $Q_y\subset Q_e= \prod_{m=1}^\ell O_{k_m}$, and we let $A_y$ denote $Q_y\cap A_e$. As $A_e$ lives in $Q_e$ as the set of diagonal matrices with diagonal entries $1$ or $-1$, we have $A_y$ lives in $Q_y$ as diagonal matrices with diagonal entries $1$ or $-1$ in $\SO_{k_i}$. From the $\ell'$ elements of $M(Q_y)$ we have constructed, the pullback map $p: M(Q_y)\rightarrow M(A_y)$ gives us $\ell'$ elements of $M(A_y)$. These elements are linearly independent in $M(A_y)$, one can verify this by realizing $M(A_y)$ as the space of 2-forms as in \Cref{Schurz2}. 

Since $p$ is a group homomorphism, the image of $p$ contains subgroup of $M(A_y)$ generated by these $\ell'$ linear independent elements. Thus, the image of $p: M(Q_y)\rightarrow M(A_y)$ has cardinality at least $2^{\ell'}\geqslant |M(Q_y)|$. So $p$ is injective and $M(Q_y)$ is realized as an $\bF_2$-subspace of $M(A_y)$ of dimension $\ell'$.  
\end{proof} 

Let $\ell_1$ (resp. $\ell_2$) be the number of indexes $m$ such that $i_{m}-i_{m-1}=$ $1$ (resp. $2$). Then we have $M(\prod_{m=1}^{\ell}\SO_{i_{m}-i_{m-1}})= \bF_2^{\ell- \ell_1- \ell_2}$. Thus, by \Cref{upperbound}, we have $|M(Q_y)|\leqslant$ $2^{\ell-\ell_1- \ell_2 + \frac{m(m-1)}{2}}$ $\times$ $|H^1((\cyclic{2})^{\oplus p}, \Hom((\prod_{m=1}^{\ell}\SO_{i_{m}-i_{m-1}}), \bC^\times))|$. Now, we proceed to the proof of \Cref{Schur}.

\begin{proof} The proof consists of three steps.

    \textbf{Step 1}.We first show that the cardinality of $M(Q_y)$ is at most $2^{\ell+ p(p-1)/2-\ell'}$.\\  
    Consider the set $\Hom((\prod_{m=1}^{\ell}\SO_{i_{m}-i_{m-1}}), \bC^\times)$. We have nontrivial homomorphims from $\SO_{i_{m}-i_{m-1}}$ to $\bC^\times$ if and only if $i_{m}-i_{m-1}= 2$. In that case, $\Hom(\SO_2= \bC^\times, \bC^\times)= \bZ$. Hence $\Hom((\prod_{m=1}^{\ell}\SO_{i_{m}-i_{m-1}}), \bC^\times)$ can be identified with $(\bZ)^{\oplus \ell_2}$. Recall that the action of an element $x\in A_{\chi,y}\subset A_\chi$ on a factor $SO_{i_{m}-i_{m-1}}$ of $Q_e^\circ$ is either the trivial action or the conjugation action. As a consequence, the actions of elements of $A_{\chi, y}$ on a summand $\bZ$ of $(\bZ)^{\oplus \ell_2}$ are either trivial or $x\mapsto -x$. 
    
    Now we proceed to compute the group cohomology $\gH^1(A_{\chi, y}, \bZ^{\oplus \ell_2})$. For a summand $\bZ$ with trivial $A_{\chi,y}$-action, we have $\gH^1{(A_{\chi,y}, \bZ)}= \Hom((\cyclic{2})^{\oplus p}, \bZ)= 0$. For a summand $\bZ$ that $A_{\chi, y}$ acts nontrivially on, we have a decomposition of $A_{\chi, y}$ as $\cyclic{2}\times (\cyclic{2})^{\oplus p-1}$ so that the factor $\cyclic{2}$ acts on $\bZ$ by $x\mapsto -x$, and the factor $(\cyclic{2})^{\oplus p-1}$ acts trivially on $\bZ$. As a consequence, we have $\gH^1(A_{\chi, y}, \bZ)= \gH^1(\cyclic{2}, \bZ)= \bF_2$ when the action of $A_{\chi,y}$ on $\bZ$ is nontrivial. Thus, we obtain the upper bound $2^{\ell_1+ \ell_2- \ell'}$ for the cardinality of $\gH^1(A_{\chi,y}, \Hom(Q_e^{\circ}, \bC^\times))$. As a consequence of \Cref{upperbound}, we get $|M(Q_y)|\leqslant 2^{\ell+ p(p-1)/2-\ell'}$.
    
    \textbf{Step 2}. We construct $\ell-\ell'+ \frac{p(p-1)}{2}$ elements of $M(Q_y)$.\\
    Recall that $A_{\chi,y}$ is the component group of $Q_y$ and $A_{\chi, y}\subset A_e$ is realized as a subgroup of $Q_y\subset Q_e$. So we have two natural maps $A_{\chi, y}\xhookrightarrow{i_y} Q_y$ and $Q_y\xrightarrow{i^y} A_{\chi, y}$ satisfying $i^y\circ i_y= \Id$. Hence, the map $i_y^{*}: M(Q_y)\rightarrow M(A_{\chi, y})$ is surjective, and the map $(i^y)^*: M(A_{\chi, y})\rightarrow M(Q_y)$ is injective. We choose a basis of $M(A_{\chi, y})\cong \bF_2^{p(p-1)/2}$. The image of this basis under $(i^y)^*$ gives us $\frac{p(p-1)}{2}$ elements of $M(Q_y)$.
    
    Next, we find $\ell- \ell'$ elements in the kernel of the map $i_y^*$. To do this, we construct several homomorphisms from $Q_y$ to orthogonal groups as follows. For indexes $j$ such that $A_{\chi,y}=(\cyclic{2})^{\oplus p}$ acts trivially on $\SO_{y_j- y_{j-1}}$ and $y_j- y_{j-1}\geqslant 3$, consider the homomorphism $Q_y\rightarrow \SO_{y_j- y_{j-1}}\rightarrow O_{y_j-y_{j-1}}$. If $A_{\chi,y}$ acts nontrivially on $\SO_{y_j- y_{j-1}}$ and $y_j- y_{j-1}\geqslant 2$, we have a decomposition of $A_{\chi,y}$ as $\cyclic{2}\times (\cyclic{2})^{\oplus p-1}$ so that the first factor acts by conjugation and the second factor acts trivially on $\SO_{y_j- y_{j-1}}$. Then we have a projection $Q_y\rightarrow \SO_{y_j- y_{j-1}}\rtimes \cyclic{2}\cong O_{y_j- y_{j-1}}$. Pulling these unique Schur multipliers of $O_{y_j- y_{j-1}}$ back, we obtain $\ell-\ell'$ elements of $M(Q_y)$. They are in the kernel of the pullback map $M(Q_y)\xrightarrow{i_y^*} M(A_{\chi, y})$. 

    \textbf{Step 3}. Write $f_y$ for the inclusion $A_y\hookrightarrow Q_y$. Recall that $A_y$ and $M(A_y)$ are finite dimensional $\bF_2$-vector spaces. In Step 2, we have constructed $\frac{p(p-1)}{2}+ \ell- \ell'$ elements of $M(Q_y)$. We show that the images of these elements under the pullback map $f_y^{*}: M(Q_y)\rightarrow M(A_y)$ are linearly independent in $M(A_y)$. A consequence is that $f_y^{*}$ is injective and $M(Q_y)$ is a $\bF_2$-subspace of $M(A_y)$ of dimension $\frac{p(p-1)}{2}+ \ell- \ell'$.
    
    Recall that from (\ref{first chain}) we have an isomorphism
    $$A_\chi \hookrightarrow A_e\hookrightarrow Q_e\rightarrow A_\chi.$$ 
    Taking the intersections of these groups with $Q_y$, we obtain a chain of maps 
    \begin{equation} \label{chain of maps}
        A_{\chi, y}\hookrightarrow A_y\xhookrightarrow{f_y} Q_y\rightarrow A_{\chi, y}
    \end{equation}
    with the composition being an isomorphism. Therefore, the following composition of pullback maps is an isomorphism $$ M (A_ {\chi, y})\rightarrow M(Q_y)\xrightarrow{f_y^{*}} M(A_y)\rightarrow M(A_{\chi, y}).$$
    As a consequence, the kernel of the map $f_y^{*}$ must lie in the kernel of the pullback map $i_{y}^{*}: M(Q_y)\rightarrow M(A_{\chi ,y})$. 
    
    From (\ref{chain of maps}), we have a direct sum decomposition $A_y= \oplus_{m=1}^{\ell} ((\cyclic{2})^{\oplus k_m- 1})\oplus (\cyclic{2})^{\oplus p}$ that is compatible with the inclusion $f_y: A_y\xhookrightarrow{} Q_y$ $= \prod_{m=1}^{\ell}\SO_{k_m}\rtimes A_{\chi, y}$ in the following sense. We have that $f_y$ maps each summand $ ((\cyclic{2})^{\oplus k_m- 1})$ to the diagonal matrices with entries $\pm 1$ in the corresponding $\SO_{k_m}$; and $f_y$ maps $(\cyclic{2})^{\oplus p}$ isomorphically to $A_{\chi, y}$.

    In Step 2, we have constructed $\ell-\ell'$ elements of $M(Q_y)$ which are in the kernel of $i_{y}^{*}: M(Q_y)\rightarrow M(A_{\chi ,y})$. It is left to show that these elements remain linearly independent after being pulled back by $f_y$. Let $b_m$ denote the element obtained by pulling back the nontrivial Schur multipliers of $O_{k_m}$ along the maps $Q_y\rightarrow O_{k_m}$. The element $f_y^*(b_m)$ has the property that, as a 2-form on $A_y$, its restriction to the summand $(\cyclic{2})^{\oplus k_j- 1}\oplus A_{\chi, y}$ of $A_y$ is nonzero iff $j= m$ (see the direct sum decomposition of $A_y$ in the previous paragraph). Therefore, for a set of different $\ell- \ell'$ indexes $m$, the pullbacks of the corresponding elements are linearly independent in $M(A_y)$.
\end{proof}
We often make use of the following applications of \Cref{Schur}
\begin{cor} \label{easy schur} \leavevmode

\begin{itemize}
    \item Let $G$ be a group whose group of characters is finite. Then the pullback map $M(G)\rightarrow M(G\times \bC^\times)$ is an isomorphism. 
    \item Let $G_1$ be a group with $M(G_1)= \{1\}$ and $\Hom(G_1, \bC^\times)= \{1\}$. Then the pullback map $M(G)\rightarrow M(G\times G_1)$ is an isomorphism. In particular, for $G_1= \Sp_{2l}$ for some $l$, we have $M(G\times \Sp_{2l})= M(G)$.
    \item Assume $k_i\geqslant 2$ for $1\leqslant k\leqslant n$, we have $M(\prod_{i=1}^{n} O_{k_i})\cong$ $\mathbb{F}_2^{\frac{n(n+1)}{2}}$.
\end{itemize}    
\end{cor}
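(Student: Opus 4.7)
The plan is to derive all three items from the structural results already established in this section, namely \Cref{direct product} (for the first two bullets) and \Cref{Schur} (for the third bullet). In all three cases, the pullback map along the projection to a factor is automatically injective, because it admits a left inverse given by restriction along the inclusion $g\mapsto (g,1)$. Thus the only substantive content is to bound the target from above and match dimensions.

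For the first bullet, I would apply \Cref{direct product} with $G_1=G$ and $G_2=\bC^\times$, obtaining
$$M(G\times \bC^\times)\hookrightarrow M(G)\times M(\bC^\times)\times \Hom(\bC^\times,\Hom(G,\bC^\times)).$$
The middle factor vanishes by the explicit argument $M(\bC^\times)=0$ recalled in \Cref{subsect 2.2}. By hypothesis $\Hom(G,\bC^\times)$ is finite, and since $\bC^\times$ is connected while a finite group is discrete, every algebraic homomorphism $\bC^\times \to \Hom(G,\bC^\times)$ is trivial. Hence $|M(G\times\bC^\times)|\leqslant |M(G)|$, which combined with the injectivity above yields the isomorphism.

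The second bullet repeats the same argument: \Cref{direct product} together with the hypotheses $M(G_1)=\{1\}$ and $\Hom(G_1,\bC^\times)=\{1\}$ immediately forces $|M(G\times G_1)|\leqslant |M(G)|$. For the specialization $G_1=\Sp_{2l}$, I would invoke the standard facts that $\Sp_{2l}$ is simply connected (so every algebraic projective representation lifts, giving $M(\Sp_{2l})=\{1\}$) and semisimple (so $\Hom(\Sp_{2l},\bC^\times)=\{1\}$), which reduces this case to the statement just proved.

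For the third bullet, I would recognize $\prod_{i=1}^n O_{k_i}$ as a special instance of the groups $Q_y$ treated in \Cref{Schur}. Writing $O_{k_i}\cong \SO_{k_i}\rtimes \cyclic{2}$, we have $\prod_{i=1}^n O_{k_i}\cong \bigl(\prod_{i=1}^n \SO_{k_i}\bigr)\rtimes (\cyclic{2})^{\oplus n}$, so in the notation of \Cref{Schur} both $\ell$ and $p$ equal $n$. The invariant $\ell'$ is zero: the assumption $k_i\geqslant 2$ rules out the first defining condition, while for indices with $k_i=2$ the $i$-th copy of $\cyclic{2}$ acts by conjugation on $\SO_2$, hence nontrivially, ruling out the second. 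Substituting $\ell=p=n$ and $\ell'=0$ into the formula $M(Q_y)\cong \bF_2^{\ell+p(p-1)/2-\ell'}$ yields $\bF_2^{n(n+1)/2}$. I do not foresee any genuine obstacle in this corollary; it is a direct application of the machinery built in \Cref{subsect 2.2} and \Cref{subsect 2.3}, and the only mild bookkeeping is checking the three vanishing statements above, each of which is standard.
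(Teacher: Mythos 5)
Your proof is correct and follows the approach the paper implies; the argument for the first bullet appears essentially verbatim inside the paper's proof of \Cref{schur product of SO}, and the third bullet is the natural specialization of \Cref{Schur}. One point to tighten for the second bullet: if you literally "repeat the same argument" by placing $G$ in the first slot of \Cref{direct product} and $G_1$ in the second (as you did with $\bC^\times$ in the first bullet), the cohomology factor is $\Hom(G_1, \Hom(G,\bC^\times))$, whose triviality does not follow from the stated hypotheses on $G_1$ alone. You should instead place $G_1$ in the first slot, so the relevant factor becomes $\Hom(G, \Hom(G_1,\bC^\times)) = \Hom(G,\{1\})=\{1\}$, which is immediate, and $M(G_1)=\{1\}$ kills the other extraneous factor. (For $G_1=\Sp_{2l}$ either order works because $\Sp_{2l}$ is connected and perfect, which is presumably why the asymmetry is easy to miss.) Also, rather than invoking a cardinality count, the cleaner phrasing is that the restriction map $M(G\times G_1)\to M(G)$ is injective once the other two factors vanish, and it is a left inverse of the pullback, so both are isomorphisms; this avoids any tacit finiteness assumption on $M(G)$. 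Your computation for the third bullet, taking $\ell=p=n$ and $\ell'=0$ in \Cref{Schur}, is exactly right, including the observation that for $k_i=2$ the $i$-th copy of $\cyclic{2}$ acts nontrivially on $\SO_2$, so the second defining condition for $\ell'$ fails.
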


Next, we have the following lemma that fits the context of the first two cases of \Cref{easy schur}. Consider two algebraic groups $G$ and $G_1$. Assume that $G_1$ is connected and that the pullback map $M(G'\times G_1)\rightarrow M(G')$ is an isomorphism for any subgroup $G'\subset G$. Let $Y$ be a $G\times G_1$-centrally extended finite set. 
\begin{lem} \label{centrally extended structure of a product}
    The $G\times G_1$-centrally extended structure of $Y$ is determined by its $G$-centrally extended structure. 
\end{lem}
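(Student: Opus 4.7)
The plan is to exploit the fact that a connected algebraic group cannot act nontrivially on a finite discrete set, and then reduce the statement to the pullback hypothesis pointwise.

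First, I would observe that since $G_1$ is connected and $Y$ is a finite (hence discrete) set, the $G_1$-action on $Y$ factors through the trivial group. Consequently, for each $y \in Y$ the stabilizer in $G \times G_1$ splits as a direct product
\[
    (G \times G_1)_y \;=\; G_y \times G_1,
\]
where $G_y$ denotes the $G$-stabilizer of $y$. In particular, the $G \times G_1$-orbit structure on $Y$ coincides with the $G$-orbit structure on $Y$, so the underlying action data is already encoded by the $G$-action.

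Second, recall that the $G \times G_1$-centrally extended structure of $Y$ is by definition the assignment $y \mapsto \psi_y \in M((G \times G_1)_y) = M(G_y \times G_1)$, equivariant with respect to the action. Similarly the $G$-centrally extended structure on $Y$ is the assignment $y \mapsto \psi_y^G \in M(G_y)$. The two are related by the pullback along the inclusion $\iota_y : G_y \hookrightarrow G_y \times G_1$, namely $\psi_y^G = \iota_y^{*}(\psi_y)$. Now apply the standing hypothesis with $G' = G_y \subset G$: the pullback map
\[
    \iota_y^{*} : M(G_y \times G_1) \longrightarrow M(G_y)
\]
is an isomorphism. Hence $\psi_y$ is uniquely recovered from $\psi_y^G$.

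Finally, I would verify that this pointwise recovery is compatible with the $G \times G_1$-equivariance of the assignment $y \mapsto \psi_y$: since $G_1$ acts trivially on $Y$, equivariance reduces to $G$-equivariance, and the pullback maps $\iota_y^{*}$ are visibly natural in $y$ under the $G$-action (conjugation by $g \in G$ carries $G_y \times G_1$ to $G_{gy} \times G_1$ and commutes with $\iota_y$). Assembling these bijections over the (finitely many) $G$-orbits of $Y$ gives the claim. I do not anticipate a serious obstacle here; the only subtlety is checking that the hypothesis on Schur multipliers really does apply to \emph{every} stabilizer, which is guaranteed by the phrasing of the assumption for arbitrary subgroups $G' \subset G$.
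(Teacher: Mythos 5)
Your proof is correct and matches the paper's argument essentially verbatim: both observe that $G_1$, being connected, acts trivially on the finite set $Y$, identify $(G\times G_1)_y = G_y\times G_1$, and then invoke the hypothesized isomorphism $M(G_y\times G_1)\xrightarrow{\;\sim\;} M(G_y)$ to recover the Schur multiplier at each point. The extra paragraph you include on $G$-equivariance of the recovery is a reasonable elaboration of a step the paper leaves implicit.
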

\begin{proof}
    As $G_1$ is connected, it acts trivially on the finite set $Y$. Hence, for a point $y\in Y$, its $G$ orbit coincides with its $G\times G_1$-orbit. Let $G_y$ be the stabilizer of $y$ in $G$. The central extensions associated to the orbit of $y$ can then be read from the isomorphism $M(G_y\times G_1)\rightarrow M(G_y)$.

    Conversely, this proof shows that we can naturally equip a $G$-centrally extended finite set $Y$ with a $G\times G_1$-centrally extended structure. We call this centrally extended structure the lift of the $G$-centrally extended structure on $Y$.
\end{proof}

When proving \Cref{Schur}, we have shown in the third step that the pullback map $M(Q_y)\rightarrow M(A_y)$ is injective. Moreover, we have obtained an explicit description of the map $M(Q_y)\rightarrow M(A_y)$ (see the basis in Step 2 of the proof of \Cref{Schur}). Therefore, similar to \Cref{centrally extended structure of a product}, knowing the $A_e$-centrally extended structure of $Y_e$ allows us to know the $Q_e$-centrally extended structure of $Y_e$. An advantage of working with $A_e$ is that we can understand centrally extended structures through some numerical invariants; this is the theme of the next section.

Lastly, we introduce a subgroup of $A_e$ that will be useful for explicit calculations. Let $A_{e}^{'}$ be the subgroup of $A_e$ generated by $z_1,...,z_{k-1}$. It is noted that the element $z_{12...k}$ acts trivially on $\spr$, so acts trivially on $Y_e$. Therefore, to determine $Y_e$ as an $A_e$-centrally extended set is equivalent to determine $Y_e$ as an $A_{e}^{'}$-centrally extended set. 

\section{Some numerical invariants of the Springer fibers}
Let $A$ be a finite abelian group acting on a quasi-projective variety $X$. The equivariant K-group $K^A_0(X)$ is a module over the representation ring $R(A)$ of $A$. We obtain a decomposition $\mathbb{C}\otimes_\bZ R(A)= \bigoplus_{a\in A}\mathbb{C}_a$ as a direct sum of $\bC$-algebras. Consequently, the complexified equivariant K-group of $X$ decomposes as $\mathbb{C}\otimes_\bZ K^A_0(X)= \bigoplus_{a\in A} K^A_0(X)_{(a)}$. Similar constructions apply to any subgroup $A'\subseteq A$ and $a\in A'$. We then obtain a set of numbers as follows.
\begin{defin} \label{numeric F}
   For $a\in A'$, let $F_a^{A'}(X)$ denote the dimension of $K^{A'}_0(X)_{(a)}$ over $\mathbb{C}$. 
\end{defin}

Another set of numerical invariants comes from forgetful maps from equivariant K-groups to invariant parts of K-groups of $X$, $F_X^{A'}: K_0^{A'}(X)\rightarrow K_0(X)^{A'}$. The cokernel of this map is an abelian group that is annihilated by the order of $A'$ (see \Cref{annihilated by cardinality}). For a subgroup $A'$ of $A$ and $1\leqslant i\leqslant \log_2|A'|$, let $S^{A'}_i(X)$ be the multiplicities of the abelian group $\cyclic{2^i}$ in the decomposition of coker$F_X^{A'}$ into the direct sum of cyclic groups. 

When $a$ and $A'$ vary over all elements and subgroups of $A$, the numbers $F_a^{A'}$ and $S_{i}^{A'}(X)$ give us a set of numerical invariants of $X$. The definitions of the numbers $F_a^{A'}$ and $S_{i}^{A'}$ of an $A$-variety $X$ carry over to an $A$-centrally extended set $Y$. In particular, we substitute the K-groups $K_0^{A'}(X)$ in those definitions by $K_0(Sh^{A'}(Y))$. 

In Section 3.1, we introduce the notion of a finite model $Y$ (not uniquely defined) of an $A$-variety $X$. This finite model $Y$ will share the same numbers $\{F_{a}^{A'}, S_i^{A'}\}$ with $X$. In \Cref{subsect 3.2}, we consider $X= \spr$ and $A= A_e$ (see \Cref{subsect 2.1} for a detailed description of $A_e$). Then we specify how to compute the numbers $F_a^{A'}(\mathcal{B}_e)$. The numbers $S_i^{A'}(\spr)$ are discussed in later sections.
\subsection{Finite models of an $A$-variety} \label{subsect 3.1}
In this subsection, we consider a reductive group $A$, and we do not require $A$ to be finite or abelian. Consider a quasi-projective variety $X$ acted on by $A$. For any algebraic subgroup $A'$ of $A$, write $\Coh^{A'}(X)$ for the category of $A'$-equivariant coherent sheaves on $X$. For an element $a\in A$, we have an $A'$-equivariant morphism $a:X\rightarrow X$, $a(x)= a.x$. The pullback $a^*$ gives an autoequivalence of the category $\Coh^{A'}(X)$. Hence, the space $\bC\otimes K_0^{A'}(X)$ becomes a representation of $A$ (where $A'$ acts trivially). For a subgroup $A''$ of $A'$, we have the forgetful map $K_0^{A'}(X)\rightarrow K_0^{A''}(X)$.

For a finite $A$-set $Y$ and a point $y\in Y$, write $A_y$ for the stabilizer of $y$. When $A$ is finite, recall from \Cref{subsect 1.2} that an $A$-centrally extended structure is an $A$-equivariant assignment $y\mapsto \psi_y\in H^2(A_y, \bC^\times)$. When $A$ is reductive, we substitute $H^2(A_y, \bC^\times)$ by $M(A_y)$ defined in \Cref{subsect 2.2}.

An equivalent definition of finite centrally extended sets is given in \cite[Page 9]{bezrukavnikov2001tensor}. From \cite[Theorem 3]{bezrukavnikov2001tensor}, the category $\Sh^{A}(Y)$ (defined in \Cref{subsect 1.2}) is equivalent to $Mod_{\text{Rep}(A)}(B)$, the category of $A$-equivariant modules of some semisimple $\bC$-algebra $B$ equipped with a rational action of $A$ by automorphisms. Thus, for each subgroup $A'\subset A$, we have a natural action of $A/A'$ on $\Sh^{A'}(Y)$. This action realizes $\bC\otimes K_0(\Sh^{A'}(Y))$ as a representation of $A/A'$. 

We define a finite model of an $A$-variety $X$ as follows.
\begin{defin} \label{finite model}
    Let $X$ be an $A$-variety. An $A$-centrally extended finite set $Y$ is called an $A$-\textit{finite model} of $X$ if the following conditions are satisfied.
    \begin{itemize}
        \item For each subgroup $A'\subset A$, we have an $A/A'$-equivariant isomorphism of K-groups $e_{A'}: K_0^{A'}(X)\xrightarrow{\sim} K_0(\Sh^{A'}(Y))$.
        \item For $A''\subset A'\subset A$, we have the following commutative diagram of forgetful maps.

            \begin{center}
            \begin{tikzcd}
K_0^{A'}(X) \arrow{d}{F_X^{A', A''}} \arrow{r}{e_{A'}} \& K_0(\Sh^{A'}(Y)) \arrow{d}{F_Y^{A', A''}} \\
K_0^{A''}(X) \arrow{r}{e_{A''}} \& K_0(\Sh^{A''}(Y))
            \end{tikzcd}
            \end{center}
    \end{itemize}
\end{defin}
Next, we give some examples of finite models.
\begin{exa} \label{finite model of a product}(A finite model of the product). For $i= 1,2$, let $X_i$ be an $A_i$-variety. Assume that $X_i$ admits an $A_i$-finite model $Y_i$. Now we have a natural action of $A= A_1\times A_2$ on $X= X_1\times X_2$. Consider the finite set $Y=Y_1\times Y_2$. For a point $y= (y_1, y_2)\in Y$, recall that we have two pairs $(A_{1, y}, \psi_1\in M(A_{1, y}))$ and $(A_{2, y}, \psi_2\in M(A_{2, y}))$ that record the centrally extended structures of $Y_1$ and $Y_2$. 

Let $p_1$ and $p_2$ be the projections from $A$ to $A_1$ and $A_2$, respectively. Write $A_y$ for $A_{1,y}\times A_{2,y}$. Let $\psi$ be $(p_1^*\psi_1).(p_2^*\psi_2)\in $ $M(A_y)$. Assigning to the point $y= (y_1,y_2)$ the pair $(A_y, \psi)$, we obtain an $A$-centrally extended structure on $Y$.

By the Kunneth formula, we have an isomorphism $K_0^{A_1}(X_1)\otimes_{\bZ} K_0^{A_2}(X_2)$ $\cong$$ K_0^{A_1\times A_2}(X_1\times X_2)$ sending $([\cF],[\cG]$) to $[\cF\boxtimes \cG]$. This isomorphism commutes with the forgetful maps in \Cref{finite model}. Therefore, we get $Y$ as an $A$-finite model of $X= X_1\times X_2$.
\end{exa}

\begin{exa} \label{finite model of the square set}(A finite model of the square set). Consider an $A$-variety $X$ with an $A$-finite model $Y$. Consider the finite set $Y\times Y$ and the diagonal action of $A$ on this set. For each pair of points $y, y'\in Y$, we have correspondingly two pairs $(A_y, \psi_y\in M(A_y))$ and $(A_{y'}, \psi_{y'}\in M(A_{ y'}))$. We give $Y\times Y$ an $A$-centrally extended structure by assigning to each point $(y,y')\in Y\times Y$ the pair $(A_{y}\cap A_{y'}, \psi_y (\psi_{y'})^{-1})$. In Section 7, we will consider certain varieties $X$ with nice properties. In those cases, $Y\times Y$ with this centrally extended structure is an $A$-finite model of $X\times X$ with respect to the diagonal action of $A$.   \end{exa}
    
\begin{rem} \label{same numerics unique}
    Assume that $A$ is an abelian $2$-group. From the definition, a finite model $Y$ of $X$ satisfies $F_{a}^{A'}(X)= F_{a}^{A'}(Y)$ and $ S_i^{A'}(X)= S_i^{A'}(Y)$. In general, both the existence and the uniqueness of a finite model are not guaranteed. From \Cref{subsect 1.2}, $Y_e$ serves as an $A_e$-finite model (and a $Q_e$-finite model) of $\spr$. Throughout this paper, the theme is that we consider the cases in which we can verify the following conditions.
    \begin{enumerate}
        \item The finite model of $\spr$ is unique. Therefore, it must be $Y_e$.
        \item The structure of $Y_e$ is recovered from the numerical invariants $\{F_{a}^{A'}, S_i^{A'}\}$.
    \end{enumerate}
The next subsection discusses an algorithm to determine the numbers $F_{a}^{A'}(\spr)$. 
\end{rem}

\subsection{An algorithm to compute $F_{a}^{A'}(\spr)$} \label{subsect 3.2}
We first recall the setting from \Cref{subsect 2.1} and introduce some new notations. We make the assumption that the associated partition of $e$ is $\lambda= (2x_1,..., 2x_k)$. The group $A_e$ is isomorphic to $(\cyclic{2})^{\oplus k}$ with a set of generators $\{z_1,z_2,...,z_k\}$. Write $[1,k]$ for the set $\{1,2,...,k\}$. The elements of $A_e\cong \bF_2^{k}$ can be regarded as subsets of $[1,k]$ as follows. If $a=z_{i_1i_2...i_h}$ with $1\leqslant i_1<i_2<...< i_h\leqslant k$, then $a$ corresponds to the subset $\{i_1,...,i_h\}$ of $[1,k]$. 

For an arbitrary subset $a\subset [1,k]$, let $\lambda_a$ denote the partition $(2x_i)_{i\in a}$ and write $|\lambda_a|= \sum_{i\in a} x_i$. Since $\lambda$ consists only of even parts, $\lambda_a$ has the same property. Therefore, $\lambda_a$ is a partition of type C. Let $N_{\lambda_a}\in \mathfrak{sp}_{2|\lambda_a|}$ be a nilpotent element with partition $\lambda_a$. Write $\cB_{\lambda_a}$ for the corresponding Springer fiber.  

Let $\chi(X)$ denote the Euler characteristic of a variety $X$. Consider two elements $a,a'\in A_e$. Since we can regard both $a$ and $a'$ as subsets of $[1,k]$, it makes sense to use set operations for $a$ and $a'$. For example, we have $a\cap a'$, $a\setminus a'$, $a'\setminus a$, and $[1,k]\setminus(a\cup a')$ as subsets of $[1,k]$. And we have the corresponding partitions $\lambda_{a\cap a'}$, $\lambda_{a\setminus a'}$, $\lambda_{a'\setminus a}$, $\lambda_{[1,k]\setminus(a\cup a')}$. With this notation, we now state the main theorem of this section. 
\begin{thm} \label{recursive} \leavevmode
\begin{enumerate}
    \item The number $F_a^{A'}(\spr)$ is computed by
    $$F_a^{A'}(\spr)= \frac{1}{|A'|}(\sum_{a'\in A'}\chi(\spr^a\cap \spr^{a'})).
    $$
    \item The connected components of the variety $\spr^{a,a'}$ are all isomorphic. They are isomorphic to $\cB_{\lambda_{a\cap a'}}\times$ $\cB_{\lambda_{a\setminus a'}}\times$ $\cB_{\lambda_{a'\setminus a}}\times$ $\cB_{\lambda_{[1,k]\setminus(a\cup a')}}$.
\end{enumerate}      
\end{thm}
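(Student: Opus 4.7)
The plan is to establish part (2) first, since its geometric decomposition feeds directly into the proof of part (1). For part (2), the key point is that each $z_i \in A_e$ acts on $V_\lambda = \bigoplus_j V^j$ as $+1$ on $V^i$ and $-1$ on the other summands, so any commuting pair $a, a' \in A_e$ simultaneously diagonalizes $V_\lambda$ into the four joint eigenspaces $V^S := \bigoplus_{i \in S} V^i$ indexed by $S \in \{a \cap a',\, a \setminus a',\, a' \setminus a,\, [1,k] \setminus (a \cup a')\}$. Because the summands $V^i$ are mutually orthogonal symplectic subspaces preserved by $e$, each $V^S$ is itself a symplectic, $e$-stable subspace that is isomorphic as such to $(V_{\lambda_S}, N_{\lambda_S})$, and the four $V^S$ are mutually orthogonal.

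Any $e$-stable complete isotropic flag $F_\bullet$ fixed by both $a$ and $a'$ must split along these joint eigenspaces as $F_j = \bigoplus_S (F_j \cap V^S)$. The orthogonality $V^S \perp V^T$ for $S \neq T$ turns the duality $F_j^\perp = F_{2n-j}$ into the separate conditions $(F_j \cap V^S)^{\perp_{V^S}} = F_{2n-j} \cap V^S$ for each $S$, so in particular $F_n \cap V^S$ is Lagrangian in $V^S$. Such a flag is therefore equivalent data to (i) a point $F^S_\bullet \in \cB_{\lambda_S}$ for each $S$, and (ii) a shuffle $\sigma \in \{S_1, S_2, S_3, S_4\}^n$ with label $S$ appearing exactly $|\lambda_S|$ times, recording which eigenspace contributes each of the first $n$ basis vectors (the second half of the flag is then forced by the duality). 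Since $\sigma$ is a locally constant discrete invariant of a flat family, different shuffles give disjoint connected components, while within one component the $F^S_\bullet$ vary independently, producing the product $\cB_{\lambda_{a\cap a'}} \times \cB_{\lambda_{a\setminus a'}} \times \cB_{\lambda_{a'\setminus a}} \times \cB_{\lambda_{[1,k]\setminus(a\cup a')}}$ predicted by part (2).

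For part (1) the approach is to invoke the Atiyah--Segal-type decomposition of equivariant K-theory for a finite abelian group: for any suitably regular $A'$-variety $X$ there is a natural isomorphism
$$\bC \otimes K_0^{A'}(X) \cong \bigoplus_{a \in A'} \bigl(\bC \otimes K_0(X^a)\bigr)^{A'}$$
in which the $a$-summand is exactly the $(a)$-isotypic piece for the $R(A')$-module structure of the left-hand side. Hence $F_a^{A'}(X) = \dim(\bC \otimes K_0(X^a))^{A'}$, and writing the $A'$-invariants via the averaging projector together with the Lefschetz fixed-point formula for the action of $a'$ on $X^a$ converts this into $\frac{1}{|A'|}\sum_{a' \in A'}\chi((X^a)^{a'}) = \frac{1}{|A'|}\sum_{a' \in A'}\chi(X^a \cap X^{a'})$, which is the formula claimed.

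The main obstacle is the cohomological hypothesis: the identification $\bC \otimes K_0(Y) \cong H^\ast(Y, \bC)$ and the Lefschetz trace formula in this naive ``Euler characteristic'' form both require odd cohomology to vanish for $Y = \spr$ and for every intersection $Y = \spr^a \cap \spr^{a'}$. For $\spr$ itself this is the classical theorem of Spaltenstein and De Concini--Procesi. For the intersection loci, part (2) reduces the question to the smaller Springer fibers $\cB_{\lambda_S}$, to which the same classical vanishing applies; odd cohomology vanishing is then inherited through finite products and finite disjoint unions. This is precisely why the theorem is most naturally proved in the order (2) then (1), even though it is stated in the opposite order.
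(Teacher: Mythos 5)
Your proposal is correct and follows the paper's overall strategy (Part 2 first, then Part 1), with differences mainly in packaging rather than in substance. For Part 2, you decompose flags directly along the four simultaneous eigenspaces $V^S$ of $a,a'$ inside $V_\lambda$, while the paper proves the single-element case first (\Cref{fix}, via Borel subalgebras and the pseudo-Levi centralizer of $a$) and then applies it twice; these are two presentations of the same symplectic geometry, and both yield the same multinomial count of components. For Part 1, the paper proves the averaging formula (\Cref{fiber at 1}) by passing through the finite model $Y$ of $X$ --- treating $K_0(Y)$ as a permutation representation of $A'$ and using a second application of localization to identify $\chi(a)=\chi(X^a)$ --- whereas you cite the Atiyah--Segal decomposition of $K_0^{A'}(X)\otimes\bC$ and the Lefschetz fixed-point formula directly. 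These are equivalent: Atiyah--Segal is a packaged form of the \Cref{CG_localization} localization the paper uses, and Lefschetz plays the role of the paper's second localization step. Your route is slightly more self-contained in that it bypasses the finite model, which the paper invokes here partly because that structure is thematically central elsewhere. You correctly flag the vanishing of odd cohomology of the intersection loci (handled in the paper by \Cref{no_odd} together with Part 2) as the genuine hypothesis that makes the passage from $K$-theory traces to Euler characteristics legitimate.
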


We first prove Statement 2 of the theorem. The first step is to describe the variety $\spr^a$.
\begin{lem} \label{fix}
    The fixed-point variety $\spr^a$ is the disjoint union of ${n\choose |\lambda_a|}$ copies of the product $\cB_{\lambda_a}\times \cB_{\lambda_{[1,k]\setminus a}}$.
\end{lem}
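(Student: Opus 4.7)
The plan is to decompose $V_\lambda$ into $\pm 1$-eigenspaces of $a$, stratify $\spr^a$ by how isotropic flags split across this decomposition, and identify each stratum with the claimed product of smaller Springer fibers.

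For $a = z_{i_1}\cdots z_{i_h}$ corresponding to the subset $S\subseteq[1,k]$, a direct computation shows that on each $V^j$ the element $a$ acts by the scalar $(-1)^{h-1}$ if $j\in S$ and by $(-1)^h$ if $j\notin S$. So, setting $V' := \bigoplus_{j\in S} V^j$ and $V'' := \bigoplus_{j\notin S} V^j$, the decomposition $V_\lambda = V'\oplus V''$ realizes the $\pm 1$-eigenspace decomposition of $a$ (with the assignment of signs depending on the parity of $h$). Because $a\in\mathrm{Sp}(V_\lambda)$ has eigenvalues $\pm 1$, the identity $\langle av,aw\rangle = \langle v,w\rangle$ forces $V'\perp V''$, so the restriction of $\langle\,,\,\rangle$ to each of $V'$ and $V''$ is nondegenerate. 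Since $e$ preserves every $V^j$, it preserves $V'$ and $V''$, with restrictions of Jordan types $\lambda_a$ and $\lambda_{[1,k]\setminus a}$ respectively.

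A complete isotropic flag $F_\bullet$ in $V_\lambda$ is fixed by $a$ iff each $F_i$ is $a$-stable, iff $F_i = (F_i\cap V')\oplus(F_i\cap V'')$. I would stratify $\spr^a$ by the discrete invariant $p_\bullet := (\dim(F_i\cap V'))_{i=0}^n$. Since $F_n$ is maximal isotropic in $V_\lambda$ and $V'\perp V''$, both $F_n\cap V'$ and $F_n\cap V''$ must be maximal isotropic in their respective ambient spaces, forcing $p_n = |\lambda_a|$. The admissible sequences $p_\bullet$ are lattice paths from $0$ to $|\lambda_a|$ with each step in $\{0, 1\}$, so there are exactly ${n\choose|\lambda_a|}$ strata.

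On a fixed stratum, collecting the distinct values of $F_i\cap V'$ and $F_i\cap V''$ produces a complete isotropic $e|_{V'}$-stable flag $G'_\bullet$ in $V'$ and a complete isotropic $e|_{V''}$-stable flag $G''_\bullet$ in $V''$; conversely, any such pair together with the jump pattern $p_\bullet$ reconstructs $F_\bullet$ uniquely via $F_i = G'_{p_i}\oplus G''_{i-p_i}$. This identifies each stratum with $\cB_{\lambda_a}\times\cB_{\lambda_{[1,k]\setminus a}}$. The main technical point is transferring the isotropic and $e$-stability conditions between the two pictures, which is straightforward given $V'\perp V''$ and the $e$-stability of each $V^j$. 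Once this is set up, connectedness of classical-type Springer fibers makes each stratum connected; since $p_\bullet$ is locally constant on $\spr^a$, distinct strata lie in distinct connected components, so the ${n\choose|\lambda_a|}$ strata are precisely the connected components.
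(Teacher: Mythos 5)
Your proof is correct and follows essentially the same strategy as the paper: both decompose $V_\lambda$ into the two $a$-eigenspaces (identified with $V_{\lambda_a}$ and $V_{\lambda_{[1,k]\setminus a}}$) and factor the Springer fiber accordingly. The only difference is presentation: the paper phrases the argument in terms of Borel subalgebras and counts $W_{\mathfrak{l}_a}$-cosets in $W$ to get $\binom{n}{|\lambda_a|}$, whereas you work directly with flags and count lattice paths (jump patterns) — an equivalent and somewhat more elementary bookkeeping of the same decomposition.
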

\begin{proof}
    Consider $a= z_{i_1}z_{i_2}...z_{i_h}$, $i_1<...<i_h$, then we can regard $a$ as a set $\{i_1,...,i_h\}\subset [1,k]$. As $a$ acts on $V_\lambda$ with two eigenvalues $1$ and $-1$, we have a decomposition $V_\lambda= V_{\lambda_a}\oplus V_{\lambda_{[1,k]\setminus a}}$. Here $V_{\lambda_a}$ is the invariant space of $a$, having dimension $\sum_{i\in a} 2x_i$. The centralizer of $a$ in $\fsp(V_\lambda)$ is the pseudo-Levi $\fl_a= \fsp(V_{\lambda_a})\times \fsp(V_{\lambda_{[1,k]\setminus a}})$. Write $W$ and $W_{\fl_a}$ for the Weyl groups of $\fsp(V_\lambda)$ and $\fl_a$.
    
    Recall that $\mathcal{B}_e^a$ consists of the Borel subalgebras of $\fsp(V_\lambda)$ stabilized by $a$. We then have a map $\mathcal{B}_e^a\rightarrow \cB_{\lambda_a}\times \cB_{\lambda_{[1,k]\setminus a}}$ sending a Borel subalgebra $\fb$ to $\fb\cap \fl_a$. This map is an isomorphism on each connected component of $\spr^a$, and the fiber of this map consists of ${n\choose |\lambda_a|}$ points. Here, we obtain the number ${n\choose |\lambda_a|}$ because we are counting the $W_{\fl_a}$ cosets of $W$. Therefore, $\spr^a$ is the disjoint union of ${n\choose |\lambda_a|}$ copies of the product $\cB_{\lambda_a}\times \cB_{\lambda_{[1,k]\setminus a}}$.

\end{proof}
Next, we analyze the action of $A_e$ on $\spr^a$.
\begin{rem}(An $A_e$-finite model of $\spr^a$) \label{A_e action}
\Cref{fix} shows that each connected component of $\spr^a$ is isomorphic to $\cB_{\lambda_a}\times \cB_{\lambda_{[1,k]\setminus a}}$. For the two nilpotent elements $N_{\lambda_a}\in \mathfrak{sp}(2|\lambda_a|)$ and $N_{\lambda_{[1,k]\setminus a}}\in \mathfrak{sp}(2|\lambda_{[1,k]\setminus a}|)$, we can construct two vector spaces $V_{\lambda_a}$ and $V_{\lambda_{[1,k]\setminus a}}$ as in \Cref{subsect 2.1}. We have two corresponding groups $A_{\lambda_a}$ and $A_{\lambda_{[1,k]\setminus a}}$. 

From the proof of \Cref{fix}, we can identify the two spaces $V_{\lambda_a}$ and $V_{\lambda_{[1,k]\setminus a}}$ with the two eigenspaces of eigenvalue $1$ and $-1$ with respect to the action of $a$ on $V_{\lambda}$. Then $A_e$ is naturally identified with $A_{\lambda_a}\times A_{\lambda_{[1,k]\setminus a}}$. Furthermore, the actions of $A_e$ on each component $\cB_{\lambda_a}\times \cB_{\lambda_{[1,k]\setminus a}}$ of $\spr^a$ come from the actions of $A_{\lambda_a}$ and $ A_{\lambda_{[1,k]\setminus a}}$ on the corresponding factors.

The Springer fibers $\cB_{\lambda_a}$ and $ \cB_{\lambda_{[1,k]\setminus a}}$ admit finite models $Y_{N_{\lambda_a}}$ and $ Y_{N_{\lambda_{[1,k]\setminus a}}}$. As in \Cref{finite model of a product}, we have a centrally extended structure on $Y_{N_{\lambda_a}}\times Y_{N_{\lambda_{[1,k]\setminus a}}}$ that gives a finite model of $\cB_{\lambda_a} \times  \cB_{\lambda_{[1,k]\setminus a}}$. Therefore, we have an $A_e$-finite model $Y_e^{a}$ of $\spr^a$ consisting of $n\choose |\lambda_a|$ copies of the product $Y_{N_{\lambda_a}}\times Y_{N_{\lambda_{[1,k]\setminus a}}}$.

\end{rem}

\begin{proof}[Proof of part 2 of \Cref{recursive}]
    From \Cref{A_e action}, the fixed-point variety $(\cB_{\lambda_a}\times \cB_{\lambda_{[1,k]\setminus a}})^{a'}$ is the same as $\cB_{\lambda_a}^{a'}\times \cB_{\lambda_{[1,k]\setminus a}}^{a'}$. Applying \Cref{fix} to each factor, we have the following. 
    \begin{enumerate}
        \item $\cB_{\lambda_a}^{a'}$ is isomorphic to $|\lambda_a| \choose |\lambda_{a\cap a'}|$ copies of $\cB_{\lambda_{a\cap a'}}\times $ $\cB_{\lambda_{a\setminus a'}}$.
        \item $\cB_{\lambda_{[1,k]\setminus a}}^{a'}$ is isomorphic to $n- |\lambda_a| \choose |\lambda_{a'\setminus a}|$ copies of $\cB_{\lambda_{a'\setminus a}}\times$ $\cB_{\lambda_{[1,k]\setminus(a\cup a')}}$.
    \end{enumerate}
    Thus, $\spr^{a,a'}$ is a disjoint union of copies of $\cB_{\lambda_{a\cap a'}}\times$ $\cB_{\lambda_{a\setminus a'}}\times$ $\cB_{\lambda_{a'\setminus a}}\times$ $\cB_{\lambda_{[1,k]\setminus(a\cup a')}}$.
\end{proof}

\begin{rem} \label{no_odd}
    In \cite[Theorem 3.9]{DLP}, it was shown that Springer fibers do not have odd cohomologies. It follows that the varieties $\spr^{a,a'}$ do not have odd cohomologies. For this reason, we have $\dim K_0(\spr^{a,a'})= \chi(\spr^{a,a'})$.  
\end{rem}
To prove the remaining parts of \Cref{recursive}, we will make use of \cite[Theorem 5.10.5]{CG} multiple times. We recall this theorem below.

Let $A$ be an abelian reductive group and let $R(A)$ be its representation ring. Elements of $\bC\otimes R(A)$ can be regarded as functions on $A$ by taking the traces of the representations. For $a\in A$, let $S_a$ be the multiplicative set of functions in $\bC\otimes R(A)$ that do not vanish at $a$. We write $R(A)_a$ for the localization of $\bC\otimes R(A)$ at $S_a$. For a $R(A)$-module $M$, let $M_a$ denote $M\otimes_{R(A)}R(A)_a$. In this subsection, all K-groups are complexified K-groups.
\begin{thm}\label{CG_localization}(Localization theorem in equivariant-K-theory)
Given an element $a\in A$, let $X^a$ be the $a$-fixed point locus of an $A$-variety $X$. Write $i$ for the natural embedding $X^a\hookrightarrow X$. Then the pushforward
$$i_*: K^A(X^a)_a\cong K^A(X)_a$$
induces an isomorphism between localized K-groups.
\end{thm}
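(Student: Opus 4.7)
The plan is to prove the theorem by the standard strategy: reduce (via the long exact localization sequence in equivariant $K$-theory) to a vanishing statement on the complement of the fixed-point locus, and then verify that vanishing by Noetherian induction down to a single closed orbit, where it becomes a simple character-theoretic calculation.

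First, I would set $U := X \setminus X^a$ and invoke the excision/localization long exact sequence associated to the closed embedding $i: X^a \hookrightarrow X$ and its open complement $j: U \hookrightarrow X$:
$$\cdots \to K_1^A(U) \to K_0^A(X^a) \xrightarrow{i_*} K_0^A(X) \xrightarrow{j^*} K_0^A(U) \to 0.$$
Ring-theoretic localization at the multiplicative set $S_a$ is exact, so to conclude that $i_*$ becomes an isomorphism after localization it suffices to prove the following stronger vanishing: \emph{if $Y$ is any $A$-variety on which $a$ has no fixed points, then $K_n^A(Y)_a = 0$ for $n = 0, 1$} (and in fact for all $n \geq 0$, which streamlines the induction).

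Second, to establish the vanishing I would proceed by Noetherian induction on closed $A$-invariant subvarieties of $Y$. Pick a closed $A$-orbit $Z \cong A/H$ inside $Y$; since $A$ is abelian and $a$ has no fixed points on $Y$, we have $a \notin H$. The base case is then the computation $K^A(A/H) \cong R(H)$ as an $R(A)$-module, where $R(A)$ acts by restriction $R(A) \to R(H)$. Because $A$ is abelian reductive, its characters separate points, so there exists a character $\chi$ of $A$ with $\chi|_H \equiv 1$ but $\chi(a) \neq 1$. Then $1 - \chi \in R(A)$ maps to $0$ in $R(H)$, while its image in $\mathbb{C} \otimes R(A)$ does not vanish at $a$ and hence lies in $S_a$; multiplying by its inverse in $R(A)_a$ shows $1 = 0$ in $R(H)_a$, i.e.\ $K^A(Z)_a = 0$. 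Inductively, from the excision sequence
$$\cdots \to K_n^A(Z) \to K_n^A(Y) \to K_n^A(Y \setminus Z) \to K_{n-1}^A(Z) \to \cdots,$$
together with the inductive hypothesis $K_*^A(Y \setminus Z)_a = 0$ and the base case $K_*^A(Z)_a = 0$, exactness of localization forces $K_*^A(Y)_a = 0$.

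The main obstacle is twofold. On the one hand, the reduction through the closed/open excision sequence genuinely uses higher equivariant $G$-theory (at least $K_1^A(U)_a$) to upgrade surjectivity of $i_*$ to an isomorphism, so some Quillen-style localization machinery for equivariant coherent sheaves has to be in place. On the other hand, for a general quasi-projective $A$-variety the Noetherian induction requires equivariant stratifications with orbit-like strata; this is handled via Sumihiro's theorem on the existence of $A$-invariant affine open covers (and equivariant Noetherian induction on closed $A$-subvarieties), which is standard for reductive $A$ but is where the geometric content of the proof sits. Once these two foundational inputs are accepted, the character-theoretic killing $1-\chi$ on an orbit $A/H$ with $a \notin H$ is the true heart of the statement.
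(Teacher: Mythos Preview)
The paper does not prove this theorem; it is quoted verbatim from Chriss--Ginzburg \cite[Theorem 5.10.5]{CG} and then applied as a black box. Your sketch is essentially the standard proof one finds there (or in Thomason's foundational work): reduce via the localization long exact sequence to the vanishing of $K^A_*(U)_a$ on $U = X \setminus X^a$, and then kill $K^A_*$ on a fixed-point-free variety by d\'evissage down to orbits $A/H$ with $a \notin H$, where a character $\chi$ trivial on $H$ but nontrivial at $a$ furnishes the annihilating element $1-\chi \in S_a$.

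One small point of care: the induction as you literally phrase it---pick a closed orbit $Z \subset Y$ and apply the inductive hypothesis to the open complement $Y \setminus Z$---does not terminate as stated, because $Y \setminus Z$ is not ``smaller'' in the Noetherian ordering on closed subvarieties (removing a point from a curve does not decrease dimension or component count). The standard fix, which you allude to in your obstacle paragraph, is to run the induction the other way: use Sumihiro to find an $A$-invariant affine open, linearize to exhibit an open stratum that is (equivariantly) a vector bundle over an orbit, handle that stratum directly by homotopy invariance plus the orbit calculation, and apply Noetherian induction to the closed complement, which genuinely has smaller dimension. With that reorientation your argument is complete and matches the cited source.
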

We now consider $A$ to be a finite abelian group, recall that $R(A)\otimes \bC= \oplus_{a\in A} \bC_a$. For $a'\neq a$, the summand $\bC_{a'}$ is annihilated by $1\in \bC_a$, so $R(A)_a= \bC_a$. Therefore, a numerical consequence of \Cref{CG_localization} is $F_a^{A}(X)= F_a^{A}(X^a)$. Since $a$ acts trivially on $X^a$, we have $F_a^{A}(X^a)= F_1^{A}(X^a)$. Therefore,
\begin{equation} \label{character}
    F_a^{A}(X)= F_a^{A}(X^a)= F_1^{A}(X^a).
\end{equation} 
We further assume that $X$ has an $A$-finite model $Y$ and $X$ has no odd cohomologies. An application of \Cref{CG_localization} is the following lemma.
\begin{lem} \label{fiber at 1}
    For a subgroup $A'$ of $A$, we have 
    $$F_1^{A'}(X)= \frac{1}{|A'|}(\sum_{a'\in A'}\chi(X^{a'})).$$
\end{lem}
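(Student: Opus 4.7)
The plan is to identify both sides of the equality with $\dim K_0(X)^{A'}_{\bC}$.

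For the right-hand side, I would apply the topological Lefschetz fixed-point formula. The no-odd-cohomology hypothesis collapses it to $\chi(X^{a'}) = \mathrm{tr}(a'^{*} \mid H^{*}(X, \bC))$ for each $a' \in A'$, and the $A$-equivariant Chern character identifies $H^{*}(X, \bC)$ with $K_0(X)_{\bC}$, giving $\chi(X^{a'}) = \mathrm{tr}(a'^{*} \mid K_0(X)_{\bC})$. Averaging over $a' \in A'$ then extracts the invariants, yielding $\frac{1}{|A'|} \sum_{a'} \chi(X^{a'}) = \dim K_0(X)^{A'}_{\bC}$.

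For the left-hand side, I would use the $A$-finite model $Y$ in two ways. Applying \Cref{finite model} with the trivial subgroup gives an $A$-equivariant isomorphism $K_0(X)_{\bC} \cong \bC^{Y}$, whence $\dim K_0(X)^{A'}_{\bC} = |Y/A'|$, the number of $A'$-orbits. Applying it with $A'$ itself gives $F_1^{A'}(X) = F_1^{A'}(Y)$, so the desired equality reduces to the combinatorial identity $F_1^{A'}(Y) = |Y/A'|$ for an arbitrary $A'$-centrally extended finite set $Y$. I would verify this orbit by orbit: decomposing $K_0(\Sh^{A'}(Y))_{\bC} = \bigoplus_{\mathcal O} K_0(\mathrm{Rep}^{\psi_y}(A'_y))_{\bC}$, I would show that on each orbit $\mathcal O \cong A'/A'_y$ the symmetrizer idempotent $e_1 = \frac{1}{|A'|} \sum_{\chi \in \widehat{A'}}[\chi] \in R(A')_{\bC}$ has one-dimensional image — it sends every $\psi_y$-projective irreducible of $A'_y$ to a common scalar multiple of the $\psi_y$-projective regular representation (independent of the chosen irreducible). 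Summing over orbits gives $F_1^{A'}(Y) = |Y/A'|$, which matches the right-hand side via Burnside's lemma applied to the $A'$-action on $Y$.

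I expect the main obstacle to be ensuring that the finite-model isomorphism is $R(A')$-linear, not merely $A/A'$-equivariant as \Cref{finite model} makes explicit. The stated compatibility with forgetful maps between subgroups should suffice: the $(1)$-piece of $K_0^{A'}$ is characterized as the annihilator of the augmentation ideal of $R(A')$, and this characterization is recovered from the action of each character via restriction to cyclic subgroups, so the forgetful-map compatibility transports it.
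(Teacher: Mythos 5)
Your argument is correct in its broad strokes and takes a genuinely different route at the one place where a nontrivial theorem is needed. The paper and your proposal agree on the reduction to the finite model $Y$ (both show $F_1^{A'}(X) = F_1^{A'}(Y) = |Y/A'|$ by the orbit-by-orbit idempotent computation, and both average by Burnside), but they diverge on how to establish $\chi(X^{a'}) = \operatorname{tr}(a'^* \mid K_0(X)_{\bC})$. You invoke the topological Lefschetz fixed-point formula for a finite-order automorphism of a compact space, using the vanishing of odd cohomology of $X$ to turn the alternating sum into an honest trace. The paper instead re-applies the K-theoretic localization theorem (\Cref{CG_localization}) a second time with the cyclic subgroup $\{1, a'\}$: it identifies $|Y^{a'}|$ with $F_{a'}^{\{1,a'\}}(Y) = F_{a'}^{\{1,a'\}}(X) = F_1^{\{1,a'\}}(X^{a'}) = \dim K_0(X^{a'})$, and then needs $\dim K_0(X^{a'}) = \chi(X^{a'})$. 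Note that this last equality really requires $X^{a'}$ — not $X$ — to have no odd cohomology, a condition the paper verifies separately in the applications (\Cref{no_odd}) but does not state as a hypothesis of the lemma. Your Lefschetz route side-steps that entirely, since the formula $\chi(X^{a'}) = \sum_i(-1)^i\operatorname{tr}(a'^*\mid H^i(X,\bC))$ needs only the cohomology of $X$, so in this respect your argument is actually cleaner (at the cost of importing a result external to the K-theoretic machinery the paper otherwise works with, and of needing $X$ compact, which is fine in the applications here since $X = \spr^a$ is projective).

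Your closing worry about $R(A')$-linearity is a real one, but it is not specific to your proof: the paper writes $\dim K_0^{A'}(X)_{(1)} = \dim K_0^{A'}(Y)_{(1)}$ in line (\ref{fiber is invariant}), and \Cref{same numerics unique} asserts $F_a^{A'}(X) = F_a^{A'}(Y)$ for finite models as though it were part of the definition. For these to make sense, the isomorphisms $e_{A'}$ in \Cref{finite model} must respect the $R(A')$-module structures, not just the $A/A'$-action; this is presumably intended but not spelled out. Your proposed repair via forgetful-map compatibility and cyclic subgroups is heuristic rather than a complete argument (the forgetful map to the trivial subgroup does not itself remember $R(A')$-linearity), but since the paper relies on the same implicit assumption, flagging it is the right move and does not constitute a gap peculiar to your proof.
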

\begin{proof}
    Let $H^*(X)^{A'}$ denote the subspace of fixed points of $H^*(X)$ under the $A'$-action. We claim $F_1^{A'}(X)=\dim K^{A'}_0(X)_{(1)}= \dim H^*(X)^{A'}$. Indeed, we have 
\begin{equation} \label{fiber is invariant}
    \dim K^{A'}_0(X)_{(1)}= \dim K^{A'}_0(Y)_{(1)}= \dim K_0(Y)^{A'}= \dim K_0(X)^{A'} =\dim H^*(X)^{A'}.
\end{equation}
The second equality follows from the fact that each $A'$-orbit contributes $1$ to the fiber over the identity element and also $1$ to the dimension of the $A$-fixed point subspace of $K_0(Y)$. Next, we explain how to calculate $\dim H^*(X)^{A'}$.

Since $A'$ acts on the finite set $Y$, we can regard $K_0(Y)$ as a permutation representation of the group $A'$. So we have 
$$\dim K_0(Y)^{A'}= \frac{1}{|A'|}\sum_{a\in A'}\chi(a),$$
in which $\chi(a)$ is the character of $a$ in the representation $K_0(Y)= H^*(X)$. To conclude the proof, we show $\chi(a)= \chi(X^a)$. This is another application of the localization theorem. Consider the action of the group $\{1,a\}$ on the finite set $Y$ (note that $M(\bZ/2\bZ)= \{1\}$, so $Y$ is an ordinary $\cyclic{2}$-set). We see that both $\dim K^{\{1,a\}}_0(Y)_{a}$ and $\chi(a)$ count the number of points in $Y$ fixed by $a$. Therefore, $\chi(a)= F_a^{\{1,a\}}(X)$. The latter is identified with $F_1^{\{1,a\}}(X^{a})$ by the equality (\ref{character}). As the group $\{1,a\}$ acts trivially on $X^a$, we have $F_1^{\{1,a\}}(X^{a})=\dim K_0(X^a)$. With the assumption that $X$ does not have odd cohomologies, we get $\chi(a)= \chi(X^a)$.
\end{proof}

We now present the proof of the statement $F_a^{A'}(\spr)= \frac{1}{|A'|}(\sum_{a'\in A'}\chi(\spr^a\cap \spr^{a'}))$ in \Cref{recursive}.

\begin{proof} 
From the description of $\spr^a$ in \Cref{fix} and from \Cref{no_odd}, we see that $X= \spr^a$ does not have odd cohomologies. From the analysis in \Cref{A_e action}, $\spr^a$ admits a finite model as in \Cref{finite model of a product}. Hence $X= \spr^a$ and $A= A_e$ satisfy the hypothesis of \Cref{fiber at 1}.

From (\ref{character}), we get $F_a^{A'}(\spr)= F_1^{A'}(\spr^a)$. Now, applying \Cref{fiber at 1} to $X=\spr^a$, we have $$F_1^{A'}(\spr^a)= \frac{1}{|A'|}(\sum_{a'\in A'}\chi((\spr^{a})^{a'}))= \frac{1}{|A'|}(\sum_{a'\in A'}\chi(\spr^a\cap \spr^{a'})).$$  
\end{proof}

\begin{rem}
    From the beginning of this section, we have assumed that all parts of the associated partition $\lambda$ are even. In fact, we can consider the case of a general partition $\lambda$, and let $A'$ be an arbitrary finite abelian subgroup of $Q_e$. The numerical result of \Cref{recursive} remains correct, while the description of the components of the fixed-point variety depends on the decomposition of the eigenspaces of $V_\lambda$ under the action of $A'$.
\end{rem}

A simple application of \Cref{recursive} is a formula for the number of left cells in a two-sided cell in affine Weyl group. In particular, let $c$ the two-sided cell corresponding to the orbit of $e$ (see Section 5.1). Then the number of left cells in $c$ is the number of $Q_e$-orbits (and hence $A_e$-orbit) in $Y_e$ (see, e.g., \cite[Proposition 8.25]{bezrukavnikov2020dimensions}). This number is $F_{1}^{A_e}$, so we have obtained the following corollary.
\begin{cor}
    The number of left cells in $c$ is 
    $$F_{1}^{A_e}(\spr)= \dim H^*(\spr)^{A_e}= \frac{1}{|A_e|}(\sum_{a\in A_e} \chi(\spr^a)).$$
\end{cor}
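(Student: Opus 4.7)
The plan is to chain together three facts that are already essentially in hand: the identification of left cells with $A_e$-orbits in $Y_e$, the identification of the number of $A_e$-orbits with $F_1^{A_e}$, and \Cref{fiber at 1} specialized to $X = \spr$.

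First, I would invoke \cite[Proposition 8.25]{bezrukavnikov2020dimensions} (together with the observation in \Cref{subsect 2.1} that every $Q_e$-orbit in $Y_e$ is a single $A_e$-orbit, since $A_e \twoheadrightarrow A_\chi$) to reduce the problem to computing the number of $A_e$-orbits in $Y_e$. Since $K_0(Y_e)$ is a permutation representation of $A_e$ (the central extensions are irrelevant for the trivial isotypic component), the number of orbits equals $\dim K_0(Y_e)^{A_e}$, which by the chain of equalities in display (\ref{fiber is invariant}) is precisely $F_1^{A_e}(Y_e)$. Because $Y_e$ is an $A_e$-finite model of $\spr$ (see \Cref{subsect 1.2}), we have $F_1^{A_e}(Y_e) = F_1^{A_e}(\spr)$, and moreover the same chain of equalities gives $F_1^{A_e}(\spr) = \dim H^*(\spr)^{A_e}$.

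Second, for the final equality I would apply \Cref{fiber at 1} with $X = \spr$ and $A' = A_e$. The hypotheses of that lemma are satisfied: $\spr$ has an $A_e$-finite model (namely $Y_e$), and by \Cref{no_odd} (i.e., \cite[Theorem 3.9]{DLP}) the Springer fiber has no odd cohomology. This immediately yields
\[
F_1^{A_e}(\spr) \;=\; \frac{1}{|A_e|} \sum_{a \in A_e} \chi(\spr^a).
\]

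There is no real obstacle here; the result is a direct specialization of the machinery built in \Cref{subsect 3.2}. The only subtlety worth double-checking is the equality between the number of $A_e$-orbits in $Y_e$ and $F_1^{A_e}(\spr)$, which requires the compatibility of the finite-model isomorphism with forgetful maps (built into \Cref{finite model}) so that the $A_e$-action on $K_0(Y_e)$ really does correspond to the action on $\bC \otimes K_0^{A_e}(\spr)$ whose trivial isotypic component is $K_0(\spr)^{A_e} \cong H^*(\spr)^{A_e}$. Once this is noted, the corollary follows by assembling the three displayed equalities.
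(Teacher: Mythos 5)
Your proof is correct and takes essentially the same route as the paper: the paper also cites \cite[Proposition 8.25]{bezrukavnikov2020dimensions} to identify left cells with $A_e$-orbits in $Y_e$, observes that this count is $F_1^{A_e}$, and then obtains the formula as "a simple application of \Cref{recursive}," which in the case $a=1$ is precisely \Cref{fiber at 1} applied to $\spr$ as you do. The only cosmetic difference is that you invoke \Cref{fiber at 1} directly rather than specializing \Cref{recursive}, but these coincide at $a=1$ since $\spr^1\cap\spr^{a'}=\spr^{a'}$.
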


\subsection{Some explicit formulas}
\Cref{recursive} has provided a recursive formula to calculate $F_{a}^{A'}$ in terms of $\chi(\spr)$ with various $e$. We now cite the results on the Euler characteristics of the Springer fibers. 

Let $\lambda= (1^{r_1},2^{r_2},...)$ be a partition corresponding to a nilpotent adjoint orbit of $\fsp_{2n}$. Let $EC(\lambda)$ denote the Euler characteristic of $\mathcal{B}_{N_\lambda}$. According to \cite[Theorem 3.1]{Eulerchar}, we have the recursive formula
\begin{equation} \label{Euler recursive}
  EC(\lambda)= \sum_{i\geqslant 2, r_i odd} EC(\lambda^{h,i})+ \sum_{i\geqslant 1, r_i\geqslant 2}2\floor{\frac{r_i}{2}}EC(\lambda^{v,i}),
\end{equation}
in which $\lambda^{h,i}$ (resp. $\lambda^{v,i}$) are the partitions obtained by removing a horizontal (resp. vertical) domino in a row (resp. two consecutive rows) of length(s) $i$ from the Young diagram of $\lambda$. In particular, with $\lambda= (1^{r_1},2^{r_2},...,(i-2)^{r_{i-2}},(i-1)^{r_{i-1}}i^{r_i},...)$, we have $\lambda^{h,i}= (1^{r_1},2^{r_2},...,(i-2)^{r_{i-2}+ 1},(i-1)^{r_{i-1}},i^{r_i- 1},...)$ and $\lambda^{v,i}= (1^{r_1},2^{r_2},...,(i-2)^{r_{i-2}},(i-1)^{r_{i-1}+2},i^{r_i- 2},...)$, provided $r_i\geqslant 1, 2$, respectively. For example, if $\lambda= (2k, 2j), j< k$, then we have $\lambda^{h, 2j}= (2k, 2j-2), \lambda^{h, 2k}= (2k-2, 2j)$. If $\lambda= (k,k)$, then $\lambda^{v,k}= (k-1, k-1)$. 

In principle, these relations allow us to calculate the dimension of the total cohomology recursively. In the example below, we first give a closed formula for $EC(\lambda)$ when $\lambda$ is a $2$-row partition, and then discuss the finite model $Y_{e}$.
\begin{exa} \label{numeric 2 row}(The case of $2$-row partitions).
The recursive formula for $\lambda= (2k, 2j)$ reads:
$$EC(2k, 2j)= EC(2k-2, 2j)+ EC(2k, 2j-2) \indent \text{if } j\neq k, $$
$$EC(2j, 2j)= 2EC(2j-1, 2j-1).$$
For partitions $\lambda= (2k, 2j)$, we assume $j\leqslant k$. With the base case $EC(2,0)= 1, EC(1,1) = 2$, it is easy to prove by induction that $$ EC(j,j)= 2^j,\indent EC(2k, 2j)= 2\sum_{i\leqslant j}{j+k\choose i}+ {j+k\choose j}. $$
For $e=N_{(2k, 2j)}$, the group $A_e$ $= (\cyclic{2})^{\oplus 2}$ is generated by two elements $z_1$ and $z_2$. The product $z_1z_2$ acts trivially on $\spr$ because it acts as $-\Id$ on $V_\lambda$. Hence, we have two possibilities for irreducible summands of the $A_e$-module $K_0(\spr)= H^*(\spr)$. We call $(1,1)$ the trivial character and $(-1,-1)$ the sign character. The result of \cite[Proposition 8.3]{Eulerchar} gives us the multiplicities of the trivial representation and the sign representation of $A_e$ in $H^*(\mathcal{B}_{(2k, 2j)})$ as follows:
\begin{equation} \label{trivial}
 \dim H^*(\mathcal{B}_{(2k, 2j)})^{A_e}= \dim H^*(\mathcal{B}_{(2k, 2j)})_{Id}= \sum_{i\leqslant j} {k+j\choose i} ,  
\end{equation}
\begin{equation} \label{sign}
  \dim H^*(\mathcal{B}_{(2k, 2j)})_{sgn}= \sum_{i< j} {k+j\choose i} . 
\end{equation}
We now discuss the structure of $Y_e$ in this case. The centrally extended action of $A_e$ on $Y_e$ factors through the action of the quotient $A_e/\{1,z_{12}\}\cong \cyclic{2}$, so there is no nontrivial central extension. The two possible types of $A_e$-orbits in $Y_e$ are the trivial orbit and the 2-point orbit fixed by $z_{12}$. The multiplicities of the characters in Equations (\ref{trivial}) and (\ref{sign}) show that both types of orbits appear in $Y_e$. The multiplicity of the 2-point orbit is the multiplicity of the sign representation, $\sum_{i< j} {k+j\choose i}$. The multiplicity of the trivial orbit is ${k+j}\choose{j}$.
This agrees with Lusztig's calculation for the subregular case of $\fsp_{2n}$ (e.g. see \cite{lusztig_subregular}). 
\end{exa}

\begin{exa}\label{characters 3 rows}(The case of $3$-row partitions).
Consider the case $\lambda =(2k, 2j, 2i)$. In this case $A_e$ is generated by $z_1, z_2$ and $z_3$. From the proof of \Cref{recursive}, we see that $\chi(z_{1})= \dim H^*(\mathcal{B}^{z_{1}}_{(2k, 2j, 2i)})$. From \Cref{fix}, $\cB_{(2k, 2j, 2i)}^{z_1}$ is the disjoint union of $i+j+k\choose i$ copies of $\cB_{(i)}\times \cB_{(2k, 2j)}$. And since $\cB_{(i)}$ is a single point, we get that
$$\chi(z_{1})= \dim H^*(\mathcal{B}^{z_{1}}_{(2k, 2j, 2i)})= {i+ j+ k \choose i}\dim H^*(\mathcal{B}_{(2k, 2j)})= {i+j+k\choose i}EC(2k, 2j).$$
Similarly,  
$$\chi(z_{2})= \dim H^*(\mathcal{B}^{z_{2}}_{(2k, 2j, 2i)})= {i+ j+ k \choose j}\dim H^*(\mathcal{B}_{(2k, 2i)})= {i+j+k\choose j}EC(2k, 2i),$$
$$\chi(z_{3})= \dim H^*(\mathcal{B}^{z_{3}}_{(2k, 2j, 2i)})= {i+ j+ k \choose k}\dim H^*(\mathcal{B}_{(2j, 2i)})= {i+j+k\choose k}EC(2j, 2i).$$
\end{exa}

Now we give the formula of $\chi(1)$, the dimension of $H^*(\cB_{(2k, 2j,2i)})$. In principle, one can compute $EC(2k,2j,2i)$ using the recursive formula in (\ref{Euler recursive}). In the following, we state a result that directly leads to a closed formula. 
\begin{lem} \label{3 rows}
For $\lambda= (2k, 2j, 2i)$, $k\geqslant j\geqslant i$, we have
$$\chi(1)= \chi(z_1)- \chi(z_2)+ \chi(z_3).$$
Hence, 
$$\dim H^*(\spr)= \dim K_0(\spr)= {i+j+k\choose i}EC(2k, 2j)- {i+j+k\choose j}EC(2k, 2i)+ {i+j+k\choose k}EC(2j, 2i),$$
in which $EC(2k, 2j)= 2\sum_{i< j}{j+k\choose i}+ {j+k\choose j}$.
\end{lem}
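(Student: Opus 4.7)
The plan is to first reformulate the claim as a character-multiplicity statement and then prove the equivalent numerical identity by induction on the total size.

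Since $z_{123}$ acts trivially on $\spr$, the $A_e$-action on $H^*(\spr) = K_0(\spr)$ factors through $A_e/\{1, z_{123}\} \cong (\bZ/2)^2$, generated by the images of $z_1, z_2$, with $z_3 = z_1 z_2$. Denote by $\chi_{ab}$ the character sending $z_1 \mapsto (-1)^a$, $z_2 \mapsto (-1)^b$, and by $m_{ab}$ its multiplicity in $H^*(\spr)$. By Lefschetz (using \Cref{no_odd}, so that the trace of $z_i$ on $H^*(\spr)$ equals $\chi(z_i) = \chi(\spr^{z_i})$ as computed in \Cref{characters 3 rows}), orthogonality of characters yields
\[
m_{10} = \tfrac{1}{4}\bigl[\chi(1) - \chi(z_1) + \chi(z_2) - \chi(z_3)\bigr].
\]
Hence the asserted identity $\chi(1) = \chi(z_1) - \chi(z_2) + \chi(z_3)$ is equivalent to $m_{10} = 0$: the character $\chi_{10}$, with sign pattern $(-,+,-)$ on $(z_1, z_2, z_3)$, does not occur in $H^*(\spr)$.

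To prove the equivalent numerical identity $EC(2k, 2j, 2i) = \binom{n}{i}EC(2k, 2j) - \binom{n}{j}EC(2k, 2i) + \binom{n}{k}EC(2j, 2i)$, I would proceed by strong induction on $n = i + j + k$, using the recursion (\ref{Euler recursive}) for $EC(\lambda)$ and the closed form $EC(2a, 2b) = 2\sum_{l<b}\binom{a+b}{l} + \binom{a+b}{b}$ from \Cref{numeric 2 row}. The base case $i = 0$ is immediate, since the two middle terms on the right-hand side cancel thanks to $\binom{n}{j} = \binom{n}{k}$. In the generic inductive step $k > j > i \geqslant 2$, the recursion gives
\[
EC(2k, 2j, 2i) = EC(2(k{-}1), 2j, 2i) + EC(2k, 2(j{-}1), 2i) + EC(2k, 2j, 2(i{-}1));
\]
applying the inductive hypothesis to each summand on the left, and expanding the right-hand side via Pascal's identity $\binom{n}{r} = \binom{n-1}{r-1} + \binom{n-1}{r}$ together with the two-row recursion $EC(2a, 2b) = EC(2(a{-}1), 2b) + EC(2a, 2(b{-}1))$ (valid when $a > b$), produces a term-by-term match.

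The main obstacle is the case analysis for degenerate configurations. When parts of $\lambda$ coincide, or when $i = 1$ so that horizontal-domino removal from the smallest row eliminates that row, the recursion instead picks up vertical-domino contributions and may produce partitions with odd parts such as $(2k{-}1, 2k{-}1, 2i)$, to which the stated form of the inductive hypothesis does not directly apply. Each such special case ($j = k$, $j = i$, $k = j = i$, and the small values $i \in \{0,1\}$) must be verified separately, typically by observing that the right-hand side also collapses (e.g., the two middle terms cancel when $j = k$ or $j = i$) and reducing to a two-row identity or a direct recursive computation. A cleaner alternative that avoids all of this bookkeeping is to invoke the explicit Springer correspondence for type $C$ developed in Section 4 (following Carter, Chapter 13), from which the $A_e$-characters appearing in $H^*(\spr)$ are identified combinatorially with "contiguous" sign patterns on the blocks ordered by size; since $\chi_{10}$ has its two minus signs on the smallest and largest blocks with a plus in the middle, it is non-contiguous and hence has zero multiplicity, yielding the identity at once.
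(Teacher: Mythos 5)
Your reformulation — that $\chi(1)=\chi(z_1)-\chi(z_2)+\chi(z_3)$ is, by orthogonality on $A_e/\langle z_{123}\rangle\cong(\bZ/2)^2$ together with the Lefschetz identity $\operatorname{tr}(g\mid H^*(\spr))=\chi(\spr^g)$ (valid thanks to \Cref{no_odd}), equivalent to the vanishing of the multiplicity of the character $(-1,1,-1)$ on $(z_1,z_2,z_3)$ — is exactly the paper's first step. And your ``cleaner alternative'' \emph{is} the paper's actual proof: following \Cref{subsect 4.1} the symbol $S_\lambda$ for $(2k,2j,2i)$ has first row $(i,\,k+2)$ and second row $(j+1)$, and realizing $(-1,1,-1)$ would force all three entries $i,\,j+1,\,k+2$ into the one-slot second row, which is impossible. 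So you have found the intended argument; you just present it last.

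The inductive route you sketch first is a genuinely different strategy, and your hesitation about it is warranted. The generic step with $k>j>i\geqslant 2$ does close up via Pascal's rule and the two-row recursion $EC(2a,2b)=EC(2(a{-}1),2b)+EC(2a,2(b{-}1))$, exactly as you describe. But the degenerate cases are not just bookkeeping: when $j=k$, formula (\ref{Euler recursive}) gives $EC(2k,2k,2i)=EC(2k,2k,2(i{-}1))+2\,EC(2k{-}1,2k{-}1,2i)$, introducing odd-part partitions outside the scope of the stated inductive hypothesis. One would have to prove companion identities for mixed-parity partitions (or reduce them to even ones), which is a real extension of the argument, not a routine check. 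By contrast, the symbol computation dispatches the claim uniformly for all $i\leqslant j\leqslant k$ in a few lines, which is why the paper takes that route.

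One small imprecision in the last paragraph of your proposal: the symbol combinatorics does not deliver the biconditional ``appears iff contiguous.'' For $j=k$ the symbol becomes $(i,\,k+2)$ over $(k+1)$, and the contiguous pattern $(-1,-1,1)$ is also excluded, since $k+1$ and $k+2$ cannot lie in the same row (entries in a row must differ by at least $2$). The only implication you actually use — that $(-1,1,-1)$ cannot occur because the second row has a single slot — is robust across all $i\leqslant j\leqslant k$, so your conclusion stands, but the stronger ``contiguity'' characterization should be stated as a one-way implication.
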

The proof is given in the next section, together with the analysis of the structure of $Y_e$ when $\lambda$ has three parts.

\section{The $A_e$-centrally extended structure of $Y_{(2k, 2j, 2i)}$}
In \Cref{subsect 4.1}, we study the irreducible characters of $A_e$ that appear in the representation $\bC\otimes K_0(Y_e)$. As an application of this result, we prove \Cref{3 rows} in \Cref{subsect 4.2}. We then proceed to give a full description of the $A_e$-centrally extended structure of $Y_e$ when $\lambda= (2k, 2j, 2i)$. 
\subsection{The characters of $A_e$ in $K_0(Y_e)$} \label{subsect 4.1}
In \Cref{subsect 2.1}, we have a natural surjection from $A_e$ to $A_\chi$, the component group of $Q_e$. The action of $A_e$ on $H^*(\spr)$ factors through $A_\chi$. In \cite[Theorem 2.5]{Shoji1983}, Shoji showed that a character of $A_\chi$ appears in $H^*(\spr)$ if and only if it appears in $H^{2\dim \spr}(\spr)$, the Springer representation. The task now is to analyze the characters of $A_\chi$ in the Springer representation. This information is given by the explicit Springer correspondence in \cite[Section 13.3]{carter1993finite}. 

Here, we give a brief exposition for the case of the group $\Sp(V_\lambda)$. This is done in two steps. The first is to assign a symbol $S_\lambda$ to the partition $\lambda$. The characters of $A_\chi$ that appear in $H^{2\dim \spr}(\spr)$ are then parametrized by certain permutations of the entries of $S_\lambda$. The second step is to describe how one can read the characters from the permutations.

\textbf{Step 1}. In this step, we follow the notation of \cite[Section 13.3]{carter1993finite}. Write the associated partition of $e$ in the form $\lambda= (\lambda_{2k}\geqslant...\geqslant \lambda_1)$. We add the number $0$ if necessary to get an even number of terms in the partition. Define $\lambda_{i}^{*}= \lambda_i +i -1$, and we separate the set $\{\lambda_i^*\}$ into its even and odd parts. Since $\lambda$ is a partition of type C, we get exactly $k$ odd terms and $k$ even terms in $\{\lambda_i^*\}$. Let them be $2\xi_{1}^*+1<...< 2\xi_{k}^*+1$ and $2\eta_{1}^*<...< 2\eta_{k}^*$. We then define $\eta= \eta^*- (i-1)$ and $\xi= \xi^*- (i-1)$. We now have
$$0\leqslant \xi_1\leqslant...\leqslant \xi_k,$$
$$0\leqslant \eta_1\leqslant...\leqslant \eta_k.$$

If $\eta_1= 0$, we omit this part and write $\eta_{i-1}$ for $\eta_i$. Otherwise, we add a number $0$ as $\xi_1$, and let $\xi_{i+1}$ be $\xi_i$. After this process, we obtain a symbol $S_\lambda$
$$\bigg(
 \begin{aligned}
     \xi_1 \indent \xi_2+2 \indent \xi_3+4 \indent \xi_4+6 ...\\
      \eta_1+1 \indent \eta_2+ 3 \indent \eta_3 +5 ... 
 \end{aligned} \bigg).
$$

The irreducible characters of $A_\chi$ in $H^{2\dim \spr}(\spr)$ are parametrized by all the symbols
$$
\bigg(
 \begin{aligned}
     \xi'_1 \indent \xi'_2+2 \indent \xi'_3+4 \indent \xi'_4+6 ...\\
     \eta'_1+1 \indent \eta'_2+ 3 \indent \eta'_3 +5 ... 
 \end{aligned} \bigg),
$$
in which $0\leqslant \xi'_1\leqslant...\leqslant \xi'_k$, $1\leqslant \eta'_1\leqslant...\leqslant \eta'_k$ so that the two multisets $\{\xi_1 \text{  ,  } \xi_2+2 \text{  ,  } \xi_3+4 \text{  ,  } \xi_4+6 \text{  ,  } ...,
     \eta_1+1 \text{  ,  } \eta_2+ 3 \text{  ,  } \eta_3 +5 ...\}$ and $\{\xi'_1 \text{  ,  } \xi'_2+2 \text{  ,  } \xi'_3+4 \text{  ,  } \xi'_4+6 \text{  ,  } ...,
     \eta'_1+1 \text{  ,  } \eta'_2+ 3 \text{  ,  } \eta'_3 +5 ...\}$ are equal.
     
In other words, from the symbol $S_\lambda$, we consider the permutations of its entries that satisfy the following two conditions.
\begin{enumerate}
    \item If one of the entry of $S_\lambda$ is $0$, the number $0$ must stay at the first row, since the smallest entries of the second row is $\eta'+1\geqslant 1$.
    \item If $x$ and $y$ are in the same row, then $|x-y|\geqslant$ $2$. 
\end{enumerate}
We give an example of producing the symbol $S_\lambda$.
\begin{exa} \label{symbol}
    Consider $\lambda= (2,4,6,6)$, we get a corresponding nilpotent element $e\in \fsp(18)$. Following Step 1, we have 
    $$(2, 4, 6, 6)\rightarrow (2, 5, 8, 9)\rightarrow (2, 4) \text{  and   } (1, 4).$$
    Hence the symbol is $\bigg( \begin{aligned}  0 \quad 4 \quad 8\\ 2 \quad 7 \end{aligned} \bigg)$. Now we list the permutations that parameterize irreducible characters of $A_\chi$. The conditions are that $0$ is in the first row and that the two numbers $7$ and $8$ are not in the same row. 
    With these restrictions, we obtain the following permutations: $\bigg( \begin{aligned} 0 \quad 4 \quad 8\\ 2 \quad 7 \end{aligned} \bigg)$, $\bigg( \begin{aligned} 0 \quad 2 \quad 8\\ 4 \quad 7 \end{aligned} \bigg)$, $\bigg( \begin{aligned} 0 \quad 2 \quad 7\\ 4 \quad 8 \end{aligned} \bigg)$, $\bigg( \begin{aligned} 0 \quad 4 \quad 7\\ 2 \quad 8 \end{aligned} \bigg)$. The interpretation is that there are $4$ different irreducible characters of $A_\chi$ in $H^*(\spr)$.
\end{exa}
\textbf{Step 2}.
We first recall some notation related to $A_e$ in \Cref{subsect 2.1}. Consider $e\in \fsp(V_\lambda)$ with partition $\lambda= (2x_1,...,2x_k)$. The group $A_e$ is isomorphic to $(\cyclic{2})^{\oplus k}$ with the generators $z_1,...,z_k$. We now explain how to obtain the characters of $A_e$ from the permutations.

Consider the symbol $(\xi, \eta)$ obtained in Step 1. The non-zero entries of the symbol are pairwise distinct. Assume that these entries are $a_1< a_2<...< a_m$. An observation is that the number of nonzero entries in $S_\lambda$ is $k$, dimension of $A_e$ as an $\bF_2$-vector space. Consider a permutation $(\xi', \eta')$ of the symbol $(\xi, \eta)$, and let $\psi$ be the corresponding character of $A_e$. We obtain this character $\psi$ by comparing the positions of the entries $a_i$' in the original symbol and the permutation as follows. We assign $\psi(z_i)= 1$ if $a_i$ remains in the same row and $\psi(z_i)= -1$ otherwise. As $\{z_1,...,z_k\}$ generates $A_e$, the character $\psi$ is uniquely determined in this way.

As an example, we apply this step to the permutations listed in \Cref{symbol}.  
\begin{exa}    
    We are in the context of \Cref{symbol}. Recall that we have the elements $z_1,z_2,z_3,z_4$ defined in \Cref{subsect 2.1}. The group $A_e= (\cyclic{2})^{\oplus 4}$ is generated by these elements. Because the element $z_3z_4$ belongs to the connected component of $C_G(e)$, it acts trivially on $H^*(\spr)$. For the component group $A_\chi$, we can choose a set of generators as $z_1, z_2, z_3$. Then the surjection $A_e\rightarrow A_\chi$ has the kernel $\{1, z_3z_4\}$. In the table below, we give the correspondence between the permutations of the symbol and the characters. The characters are presented by their value at the elements $z_i$'s.
    \begin{center}
\begin{tabular}{| c| c| c| }
\hline
 Permutations & Characters of $A_\chi$ & Characters of $A_e$ \\ 
 \hline
 $\bigg( \begin{aligned} 0 \quad 4 \quad 8\\ 2 \quad 7 \end{aligned} \bigg)$ & $(1,1,1)$ & $(1,1,1,1)$ \\  
 \hline
 $\bigg( \begin{aligned} 0 \quad 2 \quad 8\\ 4 \quad 7 \end{aligned} \bigg)$ & $(-1,-1,1)$ & $(-1,-1,1,1)$ \\
 \hline
 $\bigg( \begin{aligned} 0 \quad 2 \quad 7\\ 4 \quad 8 \end{aligned} \bigg)$ & $(-1,-1,-1)$ & $(-1,-1,-1,-1)$ \\
 \hline
 $\bigg( \begin{aligned} 0 \quad 4 \quad 7\\ 2 \quad 8 \end{aligned} \bigg)$ & $(1,1,-1)$ & $(1,1,-1,-1)$ \\
 \hline
\end{tabular}
\end{center}
\end{exa}

\subsection{The structure of $Y_{(2k, 2j, 2i)}$} \label{subsect 4.2}
We begin with the proof of \Cref{3 rows}.
\begin{proof}
We first claim that the character $\psi(z_1,z_2,z_3)$ $=$ $(-1,1,-1)$ does not appear in the total Springer representation $H^*(\spr)$. Following the steps in \Cref{subsect 4.1}, from the partition $(2k, 2j, 2i)$, $k\geqslant j\geqslant i$, we have
$$(0,2i, 2j, 2k)\rightarrow (0,2i+1, 2j+2, 2k+3)\rightarrow (i, k+1) \text{  and   } (0, j+1).$$
Hence we obtain the symbol
$$
\bigg(
 \begin{aligned}
     i \quad k+2\\
     j+1
 \end{aligned} \bigg).
$$
For the character $(-1,1,-1)$ to appear, we need to have all three entries $i$, $j+1$ and $k+2$ on the second row, which cannot happen. 

Consequently, the three possible characters of $A_\chi$ that can appear are $(1,1,1)$, $(-1, -1, 1)$, and $(1,-1,-1)$. We write $m_\chi$ for the multiplicities of the characters $\chi$ in $H^*(\spr)$. Then we have $$\chi(1)= m_{(1,1,1)}+ m_{(-1,-1,1)}+ m_{(1,-1,-1)},$$ 
$$\chi(z_1)=  m_{(1,1,1)}- m_{(-1,-1,1)}+ m_{(1,-1,-1)},$$ 
$$\chi(z_2)=  m_{(1,1,1)}- m_{(-1,-1,1)}- m_{(1,-1,-1)},$$ 
$$\chi(z_3)= m_{(1,1,1)}+ m_{(-1,-1,1)}- m_{(1,-1,-1)}.$$ 
Therefore, $\chi(z_1)-\chi(z_2)+\chi(z_3)= \chi(1)$.
\end{proof}

Now we have enough information to study $Y_{e}$ when the partition of $e$ is $(2k, 2j, 2i)$. First, the action of $A_e= \langle z_1, z_2, z_3 \rangle$ factors through its quotient $A_e/\langle 1, z_{123}\rangle$ $\cong(\cyclic{2})^{\oplus 2}$. Because we do not have the character $(-1,1,-1)$, there are at most four types of $A_e$-orbits in the set $Y_e$ as follows.
\begin{enumerate}
    \item The trivial orbit.
    \item The ordinary $2$-point orbit with stabilizer $\langle z_1, z_{23}\rangle$.
    \item The ordinary $2$-point orbit with stabilizer $\langle z_3, z_{12}\rangle$.
    \item The special orbit fixed by $A_e$ with the Schur multiplier pulled back from the unique Schur multiplier of $A_e/\langle 1, z_{123}\rangle$.
\end{enumerate}
Here, we use \textit{special} (resp. \textit{ordinary}) to describe the orbits with (resp. without) nontrivial Schur multiplier. Let the multiplicities of these orbits in $Y_e$ be $a_0, a_{1}, a_{3}$ and $s$, respectively.

We then have a system of equations:
$$
\systeme*{a_0+ s+ 2a_{1}= \chi(z_{1}) , a_0+ s= \chi(z_{2}) , a_0+ 2a_{12}+ s= \chi(z_{12}) , a_0+ a_{1}=F_{z_1}^{A_e}  }
$$
We explain why these equalities are true by examining the structure of $Y_e$. For example, only the orbits fixed by $z_1$ have nontrivial contributions to the value of the character at $z_1$. For those orbits, the contributions to $\chi(z_1)$ equal to the numbers of points in the orbits. Therefore, $\chi(z_{1})= a_0+ s+ 2a_{1}$, the other equations are obtained in a similar way. 

Recall that the computation of the characters was given in \Cref{characters 3 rows}. Solving the system of equations then gives us a closed formula for the multiplicities of the orbits in $Y_e$ as follows. 
$$s= {i+j+k\choose j}\sum_{l=0}^{i-1} {i+k\choose l}$$
$$a_0= {i+j+k\choose j}\sum_{l=0}^{i} {i+k\choose l}$$
$$a_{1}= \frac{{i+j+k\choose i}EC(2j,2k)- {i+j+k\choose j}EC(2i,2k)}{2}$$ 
$$a_{12}= \frac{{i+j+k\choose k}EC(2i,2j)- {i+j+k\choose j}EC(2i,2k)}{2}$$
The multiplicity of the orbit with nontrivial Schur multiplier is $s= {i+j+k\choose j}\sum_{l=0}^{i-1} {i+k\choose l}$. An observation is that this number is positive unless $i= 0$, so we always have the presence of special orbits when the partition has three rows.
\begin{rem}
    From \Cref{recursive}, for $A'= \{1, z_i\}$ and $a= z_i$, we have $\chi(z_i)= F_{z_\alpha}^{\{1,z_\alpha\}}$. Therefore, the structure of $Y_{(2k,2j,2i)}$ is determined by the numbers $\{F_a^{A'}\}_{a\in A'\subset A_e}$.
\end{rem}
\begin{rem} \label{not same fix point}
    For $a\in A_e$, we have $K_0(Y_e^a)= K_0(\spr^{a})$ as two vector spaces, since both their dimensions are $\chi(a)$. As $A_e$ is abelian, it acts on $Y_e^{a}$ and $\spr^{a}$. Unfortunately, it is not true that $K_0(Y_e^a)$ and $K_0(\spr^{a})$ are isomorphic as representations of $A_e$.
    
    In particular, take $a= z_2$. In the four types of orbit in $Y_e$ we listed above, the only two types of orbit fixed by $z_2$ are the trivial orbit and the special orbit. Both are fixed by $A_e$, so the action of $A_e$ on $Y^{z_2}$ is trivial. As explained in \Cref{fix}, the fixed-point locus $\spr^{z_2}$ consists of disjoint copies of the product of two Springer fibers, $\mathcal{B}_{{(k,i)}}\times \mathcal{B}_{{(j)}}$. This product is identified with the Springer fiber $\mathcal{B}_{{(k,i)}}$ because $\cB_{{(j)}}$ is a point. In \Cref{numeric 2 row}, we have seen that the sign character appears in the total cohomology of this Springer fiber. Hence $A_e$ does not act trivially on $K_0(\spr^{z_2})$.
\end{rem}
\section{Cell theory from different perspectives}
In Section 4, we have deduced restrictions on the types of orbit in $Y_e$ from the characters of $A_e$ that appear in $H^*(\spr)$. This section introduces some finer restrictions. These conditions come from the list of $A_e$-characters in $H^*(\spr^a)$ for $a\in A_e$. 

We start by realizing the algebra $K_0(\Sh^{Q_e}(Y_e\times Y_e))$ as a direct summand $J_c$ of the asymptotic Hecke algebra $J$. We then compare two parameterizations for the irreducibles of $K_0(\Sh^{Q_e}(Y_e\times Y_e))$ and $J_c$. This comparison results in certain restrictions for the centrally extended structure orbits that appear in $Y_e$.

\subsection{Cells in affine Weyl groups and the asymptotic affine Hecke algebra} \label{subsect 5.1}
Let $G$ be a connected reductive group. Write $P$ for its weight lattice. Let $W_f$ be the Weyl group of $G$. Let $W= W \ltimes P$ be the extended afffine Weyl group. Let $\mathcal{H}$ be the Hecke algebra associated with the group $W$. In particular, the affine Hecke algebra $\cH$ is a free $\bC[v^\pm]$-module generated by the elements $T_w$ for $w\in W$. The relations are $T_wT_{w'}= T_{ww'}$ if $l(ww')= l(w)+ l(w')$ and $T_s^2 = (v-v^{-1})T_s +1$ for simple reflections $s\in W_f\subset W$.

Over $\mathbb{C}[v^\pm]$, the algebra $\mathcal{H}$ has another basis $C_w, w\in W$, the \textit{Kazhdan-Lusztig basis}, see, e.g., \cite[Chapter 5]{lusztig2014hecke}. Let $h_{x,y,z}\in\mathbb{C}[v^\pm]$ be the structure constants of $\mathcal{H}$ with respect to this basis: $C_xC_y=\sum_{z\in W} h_{x,y,z}C_z$. In \cite{Cell4}, Lusztig defines two functions $a: W\rightarrow \mathbb{Z}_{\geqslant 0}$ and $\gamma: W\times W\times W\rightarrow \mathbb{Z}_{\geqslant 0}$ such that $v^{a(z)}h_{x,y,z}-\gamma_{x,y,z^{-1}}\in v\mathbb{Z}[v]$. The asymptotic affine Hecke algebra $J$, as a module over $\bZ$, is freely generated by a basis $\{t_w\}_{w\in W}$. The multiplication structure is given by $t_xt_y= \sum_{z\in W}\gamma_{x,y,z}t_{z^{-1}}$. These relations define $J$ as a \textit{based ring}.

Next, we define some equivalence relations in $W$. We say that a left (right, two-sided) ideal $I\subset \mathcal{H}$ is a \textit{KL-ideal} if $I$ has a basis over $\mathbb{C}[v^\pm]$ consisting of some elements $C_x$. For an element $z\in W$, let $I_{z}^{L}$ (resp. $I_{z}^{R}$, $I_z$) be the smallest left (right, two-sided) KL-ideal that contains $C_z$. For $x,y\in W$, we say $x\leqslant_L y$ if $I^{L}_{x}\subset I^{L}_{y}$. The relations $x \leqslant_R y$ and $x\leqslant_{LR} y$ are defined in a similar way.

These preorders induce three equivalence relations $\sim_L, \sim_R, \sim_{LR}$. These equivalence relations partition $W$ into left (right, two-sided) cells. For any two-sided cell $c$, let $J_c\subset J$ be the $\mathbb{Z}$-submodule generated by $t_x, x\in c$. The following properties were proved by Lusztig in \cite{Cell4}.
\begin{itemize}
    \item Each $J_c$ has a natural structure of a based ring. We have a decomposition of based algebras $J=\oplus_c J_c$.
    \item There is a bijection between the set of two-sided cells and the set of nilpotent orbits of $\fg$.
    \item For a two-sided cell $c$, let $e$ be an element in the corresponding nilpotent orbit. Let $C_G(e)$ be the centralizer of $e$ in $G$. Fix a choice of the maximal reductive subgroup $Q_e$ of $C_G(e)$. Let $s$ be a semisimple element of $Q_e$. Let $A_s$ be the component group of the centralizer of $s$ in $C_G(e)$. Write $\spr^s$ for the $s$-fixed-point subvariety of $\spr$. Then $A_s$ naturally acts on $H^*(\spr^s)$. The irreducible modules of $\mathbb{Q}\otimes J_c$ are classified by conjugacy classes of pairs $(s,\rho)$ in which $\rho$ is an irreducible representation of $A_s$ that appears in $H^*(\spr^s)$. 
    \item These irreducible modules can be realized as multiplicity spaces $ \Hom_{A_s}(\rho, H^*(\spr^s))$.
\end{itemize}
Fix a two-sided cell $c$ and a nilpotent element $e$ in the corresponding orbit. The left cells of $c$ are stable under the multipilication of $J_c$ from the left. Therefore, the span of elements in a left cell naturally becomes a $J_c$-modules. This is called a left cell module. In the following, we explain the realization of $J_c$ and its left cell modules in terms of $Y_e$.

The finite set $Y_e\times Y_e$ has a natural $Q_e$-centrally extended action on it (see \Cref{finite model of the square set}). We have an algebra structure by convolution on $K_0(\Sh^{Q_e}(Y_e\times Y_e))$ (see, e.g., \cite[Section 5]{bezrukavnikov2001tensor}). This algebra has a basis given by the classes of the simples in $\Sh^{Q_e}(Y_e\times Y_e)$. Then we have the following facts (see, e.g., \cite[Proposition 8.25]{bezrukavnikov2020dimensions}).
\begin{enumerate}
    \item There is an isomorphism of based algebras $J_c\cong K_0(\Sh^{Q_e}(Y_e\times Y_e))$.
    \item The left cells in the two-sided cell $c$ are in bijection with the $Q_e$-orbits in $Y_e$.
    \item Consider a left cell $c_L\subset c$, let $\bO$ be the corresponding $Q_e$-orbit in $Y_e$. Then the left cell module of $c_L$ is isomorphic to $K_0(\Sh^{Q_e}(Y_e\times \bO))$. Here we view $\bO$ as a centrally extended set whose centrally extended structure is restricted from $Y_e$.
\end{enumerate}
These facts explain one motivation for the study of $Y_e$. In particular, knowing which centrally extended orbits appear in $Y_e$ helps us to understand the left cell modules up to isomorphism.

\subsection{Irreducible representations of $K_0(\Sh^{Q_e}(Y_e\times Y_e))$, distinguished case}
When $e$ is distinguished, we have $Q_e= A_e$, so $J_c\cong K_0(\Sh^{Q_e}(Y_e\times Y_e))$ $= K_0(\Sh^{A_e}(Y_e\times Y_e))$. Recall that $A_e$ is an elementary 2-group (see \Cref{subsect 2.1}). According to \cite[Section 10.3]{etingof2009fusion}, the algebra $\mathbb{Q}\otimes K_0(\Sh^{A_e}(Y_e\times Y_e))$ is semisimple over $\mathbb{Q}$. In this subsection, we first recall the classification of simple modules of $K_0(\Sh^{A_e}(Y_e\times Y_e))$ in \cite{etingof2009fusion}. Then, we explain how the result in \cite{etingof2009fusion} agrees with Lusztig's classification of simple modules of $J_c$.

Let $A'$ be a subgroup of $A_e$, view it as an $\bF_2$-vector space. Recall from \Cref{Schurz2} that we have a group isomorphism $M(A')\cong H^2(A',\mathbb{C}^\times)$. The latter is parameterized by the set $\{\psi\}$ of 2-forms on $A'$. Recall that an $A_e$-centrally extended orbit is determined by a pair $(A'\subset A_e,\psi\in H^2(A',\mathbb{C}^\times))$. Here, $A$ is the stabilizer of a point in the orbit and $\psi$ records the centrally extended structure. Let $\cC(A)$ be the set of all pairs $(A',\psi)$. Let $A_e^*$ be the group of characters of $A_e$. Then $A_e\times A_e^{*}$ is naturally a symplectic vector space over $\bF_2$. From \cite[Proposition 10.3]{etingof2009fusion}, we have a bijection from $\cC(A)$ to the set of Lagrangian subspaces of $A_e\times A_e^*$ given by
\begin{equation} \label{L}
    L: (A',\psi)\rightarrow \{(x,\rho)|x\in A', \psi(x, y)= \rho(y) \forall y\in A'\}.
\end{equation}
The irreducible modules of $\mathbb{Q}\otimes K_0(\text{Sh}^{A_e}(Y_e\times Y_e))$ are classified by certain pairs $(s,\rho)\in A_e\times A_e^*$ with $s\in A_e$ and $\rho(s)= 1$. For such a pair $(s,\rho)$, let $V_{(s,\rho)}$ be the corresponding irreducible module. For each pair $(s,\rho)$, following \cite[Section 10.3]{etingof2009fusion}, we have a proposition describing how to compute $\dim V_{(s,\rho)}$.
\begin{pro} \label{equality of dimensions}
    We say that an orbit $(A', \psi)$ is associated with the pair $(s,\rho)$ if the set $L(A', \psi)$ defined by the map in (\ref{L}) contains $(s,\rho)$. Then the dimension of $V_{(s,\rho)}$ is the number of its associated orbits in $Y_e$. \footnote{The author thanks Roman Bezrukavnikov and Victor Ostrik for sharing the email correspondence that explains this result.}   
\end{pro}

\Cref{equality of dimensions} and the results from \Cref{subsect 5.1} give two parameterizations for the irreducibles of $J_c= K_0(\text{Sh}^{A_e}(Y_e\times Y_e))$. The two methods of constructing these simple modules are in similar fashions (see, e.g., \cite[Section 3.6]{dawydiak2023asymptotic}). The two sets of labels are the same. The next proposition shows that two irreducibles with the same label have the same dimension. This is done by relating both quantities to the numerical invariants of $Y_e$ defined at the beginning of Section 3.

\begin{pro} \label{dimensions from F_i}
    We have $\dim V_{(s,\rho)}= \dim \Hom_{A_e}(\rho, H^*(\spr^s))$.
\end{pro}

\begin{proof}
Fix $s$ and $\rho$ and let $A_\rho\subset A_e$ be the kernel of $\rho$. The number $\dim \Hom_{A_e}(\rho, K_0(\spr^s))$ can be expressed in terms of $F^{A_e}_{1}(\spr^s)$  and $F^{A_\rho}_{1}(\spr^s)$ as follows.

$$\dim \Hom_{A_e}(\rho, K_0(\spr^s))= \bigg\{
\begin{aligned}
    F^{A_e}_{1}(\spr^s) \, \text{if} \, \rho= 1\\
    F^{A_\rho}_{1}(\spr^s)- F^{A_e}_{1}(\spr^s) \, \text{if} \,   \rho \neq 1
\end{aligned}
$$
(We have written $1$ for the trivial character on $A_e$. Since the context is clear, it should not be confused with the element $1$ of $A_e$)

For any subgroup $A\subset A_e$, we have $F^{A}_{1}(\spr^s)$ $= F^{A}_s(\spr)$ from (\ref{character}). Hence, we have $F^{A_e}_{1}(\spr^s)=$ $F^{A_e}_s(\spr)$ and $F^{A_\rho}_{1}(\spr^s)- F^{A_e}_{1}(\spr^s)=$ $F^{A_\rho}_s(\spr)- F^{A_e}_s(\spr)$. The numerical invariants $F^{A_\rho}_s(\spr)$ and $F^{A_e}_s(\spr)$ have interpretations on the side of the finite set $Y_e$ as follows. $F_s^{A_e}(\spr)$ $= F_s^{A_e}(Y_e)$ is the number of orbits $(A',\psi)$ with $s\in A'$ and $\psi(s,x)= 1$ for $x\in A'$. Thus $F_s^{A_e}(\spr)$ $=\dim V_{(s,1)}$. When $\rho\neq 1$, the equality $F^{A_\rho}_s(Y_e)- F^{A_e}_s(Y_e)$ $= \dim V_{(s,\rho)}$ is verified in a similar way.
\end{proof}

We then have an important corollary.
\begin{cor} \label{vanish}
    If a character $\rho$ does not appear in $H^*(\spr^s)$, the set $Y_e$ does not contain any orbit associated with the pair $(s,\rho)$.
\end{cor}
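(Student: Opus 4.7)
The plan is to derive this immediately by chaining the two preceding propositions. First, I would observe that by \Cref{equality of dimensions}, the number of $A_e$-centrally extended orbits in $Y_e$ associated with the pair $(s,\rho)$ equals $\dim V_{(s,\rho)}$, the dimension of the corresponding irreducible module of $\bQ \otimes K_0(\Sh^{A_e}(Y_e \times Y_e))$. So it suffices to show that $\dim V_{(s,\rho)} = 0$ under the hypothesis.

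Next, I would apply \Cref{dimensions from F_i}, which gives the identity
\[
\dim V_{(s,\rho)} = \dim \Hom_{A_e}(\rho, H^*(\spr^s)).
\]
If $\rho$ does not occur as a constituent of the $A_e$-representation $H^*(\spr^s)$, then by semisimplicity of the $A_e$-action (here $A_e$ is a finite abelian group, so every $A_e$-module over $\bC$ decomposes as a direct sum of characters), the multiplicity space $\Hom_{A_e}(\rho, H^*(\spr^s))$ is zero. Hence $\dim V_{(s,\rho)} = 0$, and combining with the first step, no orbit in $Y_e$ is associated with the pair $(s,\rho)$.

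There is no real obstacle here: the corollary is a formal consequence of \Cref{equality of dimensions} together with \Cref{dimensions from F_i}. The substantive content was already proved in establishing those two propositions, namely the matching of the Etingof--Ostrik parametrization of simple modules of $J_c \cong K_0(\Sh^{A_e}(Y_e \times Y_e))$ with Lusztig's parametrization via pairs $(s,\rho)$ occurring in $H^*(\spr^s)$. The only thing worth being careful about is to confirm the distinguished-case hypothesis (so that $Q_e = A_e$ and \Cref{equality of dimensions} applies); under that hypothesis the argument is a one-line deduction.
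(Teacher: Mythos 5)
Your proof is correct and takes essentially the same route as the paper: the corollary is stated there as an immediate consequence of \Cref{equality of dimensions} (identifying $\dim V_{(s,\rho)}$ with the count of associated orbits) and \Cref{dimensions from F_i} (identifying that dimension with $\dim \Hom_{A_e}(\rho, H^*(\spr^s))$), so the vanishing of the multiplicity space forces the absence of such orbits. Your remark about the distinguished-case hypothesis is a sensible sanity check but does not change anything.
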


\begin{rem} \label{dimensions are known}
    From the proof of Proposition 5.3, we have obtained the description of the dimensions of the irreducible modules of $J_c$ in terms of $\{F_a^{A'}(\spr)\}_{a\in A'\subset A_e}$. We have an algorithm to compute these numbers from \Cref{recursive}. Thus, when the two-sided cell $c$ corresponds to a distinguished nilpotent orbit, the structure of $J_c\otimes \bQ$ as a direct sum of matrix algebras is known. In particular, we can compute the cardinality of the corresponding two-sided cells for distinguished nilpotents $e$ (when $e$ is not distinguished, the two-sided cell is not finite).

\end{rem}
\subsection{Possible types of orbits in $Y_{(2x_1, 2x_2, 2x_3, 2x_4)}$} \label{4 row Uniqueness}
In \Cref{subsect 3.1} we have seen that for an element $a\in A_e\subseteq Q_e$, the fixed point locus $\spr^a$ is realized as the disjoint union of several copies of the product of two smaller Springer fibers $\mathcal{B}_{e_1}\times \mathcal{B}_{e_2}$. Since $A_e$ is commutative, we have an action of $A_e$ on $\spr^a$. Consequently, we have an $A_e$-action on $K_0(\spr^a)$. From the analysis in \Cref{A_e action}, this action of $A_e$ is naturally identified with the action of $A_{e_1}\times A_{e_2}$ on $K_0(\mathcal{B}_{e_1}\times \mathcal{B}_{e_2})$, which is $K_0(\mathcal{B}_{e_1})\times K_0(\mathcal{B}_{e_2})$ by the Kunneth formula.

The explicit Springer correspondence (see \Cref{subsect 4.1}) gives us the list of irreducible characters of $A_i$ in $K_0(\mathcal{B}_{e_i})$, $i= 1, 2$. Therefore, the characters of $A_e$ that appear in $K_0(\spr^a)$ are known. We consider the set $\{\rho\}$ of $A_e$-characters that do not appear in $K_0(\spr^a)$. \Cref{vanish} implies the absence of orbits associated with $(a,\rho)$ in $Y_e$. We now demonstrate in examples how this relation works.
\begin{exa} \label{Example 5.6}
Consider the case where the associated partition of $e$ is $\lambda= (2k, 2j, 2i)$. In the notation of \Cref{subsect 4.2}, let $s$ denote the multiplicity of the special orbit. Let $a_0, a_1, a_2, a_3$ and $a$ denote the number of ordinary orbits whose stabilizers are $A_e$, $\langle z_1, z_{23}\rangle$, $\langle z_2, z_{13}\rangle$, $\langle z_3, z_{12}\rangle$, $\langle z_1, z_{23}\rangle$, and $\langle z_{123}\rangle$, respectively. Let $\rho$ be the character of $A_e$ taking values $(-1,1,-1)$ on $(z_1, z_2, z_3)$. 

From the analysis in \Cref{subsect 4.2}, we have $\dim \Hom_{A_e}(\rho, K_0(\spr))= 0$, so $\dim V_{(1, (-1,1,-1))}= 0$. In terms of multiplicities of orbits, the dimension of $V_{(1, (-1,1,-1))}$ is given by $a_2+ a$, so we get $a_2= a= 0$. This gives an explanation for the fact that there are at most $4$ types of orbit that appear in $Y_e$. As we know from \Cref{subsect 4.2}, there are precisely $4$ different types of orbit in $Y_e$ when $e$ is distinguished. 
\end{exa}
We now discuss the case where the associated partition of $e$ has $4$ rows. We will demonstrate how various choices of $(s,\rho)$ give restrictions on the types of $A_e$-orbit in $Y_e$.

We consider $\lambda= (2l, 2k, 2j, 2i)$ where $l\geqslant k\geqslant j\geqslant i$. We have $A_e$ is $(\cyclic{2})^{\oplus 4}$, generated by the elements $z_1, z_2, z_3$ and $z_4$ as in \Cref{subsect 2.1}. From the discussion at the end of \Cref{subsect 2.3}, the centrally extended action of $A_e$ factors through the centrally extended action of a smaller group $A_{e}^{'}$ generated by $z_1, z_2, z_3$. This example studies $Y_e$ as a centrally extended $A_{e}^{'}$-set. We first give the notation to work with.
\begin{notation}[Orbits and multiplicites] \label{Notation for types of $A_{e}^{'}$ orbits}
Recall that we write $z_{i_1...i_h}$ for $z_{i_1}...z_{i_h}$ in $A_e^{'}$. Hence, an element of $A_{e}^{'}$ has the form $1, z_i, z_{ij}$ or $z_{ijk}$. We think of $i$ or $ij$ or $ijk$ as the indexes of these elements. Let $\alpha$ be a set of indexes; for example, $\alpha= \{1\}$, $\alpha= \{12\}$, $\alpha= \{1, 2\}$,... We write $\bO_\alpha$ for the ordinary orbit with the stabilizer generated by the elements indexed by $\alpha$. For example, $\bO_{1,2}$ is the ordinary orbit of $A_{e}^{'}$ that is isomorphic to $A_{e}^{'}/\{1, z_1, z_2, z_{12}\}$. Any ordinary orbits can be written as $\bO_\alpha$ for some $\alpha$ (non-canonically, for example $\bO_{1,2}$ is the same as $\bO_{1,12}$). For the trivial orbit, we simply write $\bO_0$. 

We now proceed to the notation for special orbits (those having nontrivial centrally extended structure). Write $\bO(y)$ for the orbit of a point $y\in Y_e$. For $\bO(y)$ to be special, the stabilizer $A_y$ must have at least $4$. If $|A_y|= 4$, then there is a unique nontrivial Schur multiplier. Let $\bO_{\alpha}^{+}$ denote the special orbit of this type, here we still use $\alpha$ as the indexes to describe the stabilizer. For example, $\bO_{1,2}^{+}$ is the orbit stabilized by $\langle z_1, z_2\rangle$ with the corresponding nontrivial Schur multiplier.

Lastly, we consider orbits with the stabilizer is $A_e^{'}$. We have $H^2(A_{e}^{'},\bC^\times)\cong $ $\bigwedge^2 A'_e$ (see \Cref{Schurz2}). The vector space $A_{e}^{'}$ has dimension $3$ over $\bF_2$, so each nontrivial $2$-form has a one-dimensional kernel. We write $\bO_{0}^{\alpha}$ for the orbit with the Schur multiplier having $z_{\alpha}$ in the kernel. For example, $\bO_{0}^{12}$ denotes the orbit $(A_{e}^{'}, z_3^{*}\wedge(z_1^{*}+ z_2^{*}))$.

We let $a_\alpha$, $a_\alpha^+$, and $a_{0}^{\alpha}$ denote the multiplicities of the orbits $\bO_\alpha$, $\bO_\alpha^+$ and $\bO_{0}^{\alpha}$ in the finite set $Y_e$.
\end{notation}
Next, we make use of the list of characters of $A_e'$ in the representations $H^*(\spr^s)$ to detect variables that are zero.
For $s=1$, following the explicit Springer correspondence for $\lambda= (2l, 2k, 2j, 2i)$ (see \Cref{subsect 4.1}), we find that the characters of $A'_{e}$ that appear in $K_0(\spr)$ are $(1,1,1)$, $(-1,-1,1)$, $(1,-1,-1)$, $(-1,-1,-1)$, $(1,1,-1)$, and $(-1,1,1)$. Therefore, the characters that do not appear in $H^*(\spr)$ are $(-1,1,-1)$ and $(1,-1,1)$. In other words, $\dim V_{(1, (1,-1, 1))}$ $=\dim V_{(1, (-1,1,-1))}$ $=0$.

For $s= z_1$, the characters of $\langle z_2, z_3\rangle$ that do not appear in $H^*(\cB_{(2k, 2j, 2i)})$ are $(-1,1)$ (see \Cref{Example 5.6}), so we get $\dim V_{(z_1, (1,-1, 1))}=0$. Similarly, the dimensions of $V_{(z_2, (-1,1,1))}$, $V_{(z_3, (-1,1,1))}$, and $V_{(z_{123}, (-1,1,-1))}$ are $0$. From \Cref{equality of dimensions}, we get the following equalities in terms of the multiplicities of the centrally extended orbits in $Y_e$.
$$ \dim V_{(1,(-1,1,-1))}= a_{2}+ a_{13}+ a_{2, 13}+ a_{123}$$
$$ \dim V_{(1,(1,-1,1))}= a_{1}+ a_{3}+ a_{13}+ a_{1,3}$$
$$ \dim V_{(z_1,(1,-1,1))}= a_{1}+ a_{1,3}+ a_{1,2}^{+}+ a_{1,23}^{+} +a_0^{3} +a_0^{13}$$
$$ \dim V_{(z_2,(-1,1,1))}= a_{2}+ a_{2,3}+ a_{1,2}^{+}+ a_{2,13}^{+} + a_0^{3}+ a_0^{23}$$
$$ \dim V_{(z_3,(-1,1,1))}= a_{3}+ a_{2,3}+ a_{1,3}^{+}+ a_{3,12}^{+} + a_0^{2}+ a_0^{23}$$
By \Cref{vanish}, the multiplicities that appear in these five equations are all $0$. Excluding them, we are left with $12$ types of centrally extended orbits that may appear in $Y_e$. They are $\bO_{0}$, $\bO_{12}$,  $\bO_{23}$, $\bO_{1,2}$, $\bO_{1,23}$, $\bO_{3,12}$, $\bO_{12,23}$, $\bO_{0}^{1}$, $\bO_{0}^{12}$, $\bO_{0}^{123}$, $\bO_{2,3}^+$, and $\bO_{12,23}^+$. By \Cref{equality of dimensions}, the multiplicities of these orbits satisfy the following system of equation.
\begin{equation} \label{all equations}
  \left\{
    \begin{aligned}
&\dim V_{(z_1,(1,1,1))}= a_0+ a_{1,2}+ a_{1,23}+ a_0^{1}\\
&\dim V_{(z_1,(1,1,-1))}= a_{1,2}+ a_{0}^{12}\\ 
&\dim V_{(z_1,(1,-1,-1))}= a_{1,23}+ a_{0}^{123}\\
&\dim V_{(z_2,(1,1,1))}= a_0+ a_{1,2}\\
&\dim V_{(z_2,(1,1,-1))}= a_{1,2}+ a_0^{1}+ a_{12}+ a_{2,3}^{+}\\
&\dim V_{(z_2,(-1,1,-1))}= a_0^{123}+ a_{2,3}^{+}\\
&\dim V_{(z_3,(1,1,1))}= a_0+ a_{3,12}\\
&\dim V_{(z_3,(1,-1,1))}= a_0^{1}+a_{2,3}^+\\
&\dim V_{(z_3,(-1,-1,1))}= a_0^{123}+ a_{2,3}^{+}+ a_{0}^{12}+a_{3,12}\\
&\dim V_{(z_{12},(-1,-1,1))}= a_{12}+ a_{3,12}+ a_{12,23}^+ + a_0^{123}\\
&\dim V_{(z_{13},(1,1,1))}= a_0+ a_{12,23}\\
&\dim V_{(z_{23},(1,1,1))}= a_0+ a_{12,23}+ a_{23}+ a_{1,23}\\
&\dim V_{(z_{12},(1,1,1))}= a_0+ a_{12}+ a_{1,2}+ a_{12,23}+ a_{3,12}+ a_0^{12}
    \end{aligned}
  \right.
\end{equation}
This system breaks into two separate systems
\begin{equation} \label{system 1}
  \left\{
    \begin{aligned}
&a_{1,2}+ a_{0}^{12}= \dim V_{(z_1,(1,1,-1))}\\
&a_0+ a_{1,2}= \dim V_{(z_2,(1,1,1))}\\
&a_0+ a_{3,12}= \dim V_{(z_3,(1,1,1))}\\
&a_0+ a_{12,23}= \dim V_{(z_{13},(1,1,1))}\\
&a_{12}+a_{3,12}=\dim V_{(z_{12},(1,1,1))}- \dim V_{(z_1,(1,1,-1))}- \dim V_{(z_{13},(1,1,1))}
    \end{aligned}
  \right.
\end{equation}
and 
\begin{equation} \label{system 2}
  \left\{
    \begin{aligned}
&a_{1,23}+ a_{0}^{123}= \dim V_{(z_1,(1,-1,-1))}\\
&a_0^{123}+ a_{2,3}^{+}= \dim V_{(z_2,(-1,1,-1))}\\
&a_0^{1}+a_{2,3}^+= \dim V_{(z_3,(1,-1,1))}\\
&a_{23}+ a_{1,23}= \dim V_{(z_{23},(1,1,1))}- \dim V_{(z_{13},(1,1,1))}\\
&a_{12,23}^{+} + a_{0}^{123}= \dim V_{(z_{12},(-1,-1,1))}+ \dim V_{(z_1,(1,1,-1))}+ \dim V_{(z_{13},(1,1,1))}- \dim V_{(z_{12},(1,1,1))}
    \end{aligned}
  \right.
\end{equation}
We now have two systems of linear equations, each having $6$ variables and $5$ equations. We explain how the numerical invariants $\{S_i^{A'}\}$ will help us to determine the multiplicities. Since $A'_e$ is $(\cyclic{2})^{\oplus 3}$, the dimension of a genuine projective representation of any subgroup $A'\subset A'_e$ is $2$. Hence, the cokernel of the forgetful map $F_{Y_e}^{A'}:$ $ K_0(\Sh^{A'}(Y_e))\rightarrow $ $K_0(Y_e)^{A'}$ is a $2$-torsion group. In other words, the cokernel of $F_{Y_e}^{A'}$ takes the form $(\cyclic{2})^{\oplus S_1^{A'}}$. Therefore, the number $S_1^{A'}$ counts the number of special orbits with respect to the centrally extended action of $A'$ on $Y_e$. Let $A'$ vary, we get the following relations:
\begin{equation} 
    \begin{cases}
    S_1^{A'_e}= a_{12,23}^+ + a_{2,3}^+ + a_0^{1}+ a_0^{12}+ a_0^{123}\\
    S_1^{1,2}=a_0^{1}+ a_0^{12}+ a_0^{123}\\
    S_1^{1,3}= a_0^{12}+ a_0^{123}\\
    S_1^{2,3}= 2a_{2,3}^+ + a_0^{1}+ a_0^{12}+ a_0^{123}\\
    S_1^{12,3}= a_0^{1}\\
    S_1^{1,23}= a_0^{12}\\
    S_1^{2,13}=a_0^{1}+a_0^{12}\\
    \end{cases}
\end{equation}
In the above system, for notational convenience, we use the indexes of the generators to refer to $S^{A'}_1$. For example, $S^{1,2}_1$ is $S^{\langle z_1, z_2\rangle}_1$.
Solving this system of equations, we then get 
\begin{equation}  
    \begin{cases}
    a_{12,23}^+= S_1^{A_e'}- \frac{1}{2}(S_1^{2,3}+ S_1^{1,2})\\
    a_0^{123}= S_1^{1,3}- S_1^{1,23}\\
    a_{2,3}^+ = \frac{1}{2}(S_1^{2,3}- S_1^{1,2})\\
    a_0^{1}= S_1^{12,3}\\
    a_0^{12}= S_1^{1,23}\\   
    \end{cases}
\end{equation}
Therefore, we obtain all the multiplicities of the special orbits from $\{S^{A'}\}$, and the structure of $Y_e$ is determined from the remaining equations of the two systems (\ref{system 1}) and (\ref{system 2}). In summary, the centrally extended structure of $Y_e$ is determined by the numerical invariants $\{S_i^{A'}, F_a^{A'}\}$. 

In \Cref{subsect 3.1}, we have defined the notion of finite models of an $A$-variety $X$. Consider $A= A_e$ and $X= \spr$. Let $Y'$ be an arbitrary $A_e$-finite model of $\spr$, then $Y'$ share the same set of numerical invariants with $\spr$ and $Y_e$. Using the same argument, we can obtain the structure of $Y'$ as we did for $Y_e$ from the same set of numerical invariants. Thus, we have the following corollary.
\begin{cor} \label{unique}
    When the associated partition of $e$ has up to $4$ rows. The centrally extended set $Y_e$ is the unique $A_e$-finite model of $\spr$.
\end{cor}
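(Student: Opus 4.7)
The plan is to observe that the derivations used throughout Sections 3, 4 and the preceding subsection to pin down the orbit multiplicities of $Y_e$ never used any intrinsic feature of $Y_e$ beyond its numerical invariants $\{F_a^{A'}, S_i^{A'}\}_{a\in A'\subset A_e}$. By \Cref{finite model}, any two $A_e$-finite models of $\spr$ share these invariants with $\spr$, and hence with one another. So the same systems of linear equations determine the multiplicities of every centrally extended orbit in an arbitrary $A_e$-finite model $Y'$, forcing $Y'\cong Y_e$ as $A_e$-centrally extended sets.

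To carry this out I would proceed by the number of parts $k$ of the partition $\lambda$. When $k\leqslant 2$ (\Cref{numeric 2 row}), only the trivial orbit $\bO_0$ and the ordinary orbit $\bO_{12}$ can appear; their multiplicities are read off from $F_{1}^{A_e}$ and $F_{z_1}^{A_e}$, so $Y'$ is determined. When $k=3$ (\Cref{subsect 4.2}), the explicit Springer correspondence eliminates the character $(-1,1,-1)$ and \Cref{vanish} leaves only four candidate orbit types; the multiplicities $a_0, a_1, a_{12}, s$ satisfy the explicit triangular linear system in the $F_a^{A'}$ written out there, which has a unique solution.

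For $k=4$ I would repeat verbatim the analysis carried out in this subsection: the five vanishing identities coming from \Cref{vanish} force eight multiplicities to be zero, leaving exactly twelve candidate orbit types. The thirteen equations (\ref{all equations}) decouple into the two systems (\ref{system 1}) and (\ref{system 2}). The numbers $S_1^{A'}$ for the seven index-two subgroups $A'\subset A_e'$ then determine the five special-orbit multiplicities $a_{12,23}^+, a_{2,3}^+, a_0^1, a_0^{12}, a_0^{123}$, as displayed above; substituting these back into (\ref{system 1}) and (\ref{system 2}) determines every remaining $a_\alpha$. Because $Y'$ satisfies the same identities with the same right-hand sides, the same values result.

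The main obstacle is purely bookkeeping: one must confirm that the enumeration of twelve candidate orbit types is exhaustive (i.e.\ no admissible pair $(A_y,\psi_y)$ has been overlooked) and that the two $6\times 5$ linear systems, once augmented by the $S_1^{A'}$-equations, do invert cleanly to give every multiplicity as an integer combination of the invariants. Both checks have already been performed in the preceding computation, so no further technical ingredients are needed and the corollary follows.
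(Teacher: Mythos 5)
Your proposal is correct and takes essentially the same approach as the paper: both argue that the multiplicity computations in \Cref{4 row Uniqueness} (and the earlier sections for fewer rows) invoke only the numerical invariants $\{F_a^{A'}, S_i^{A'}\}$, and by \Cref{finite model} any $A_e$-finite model $Y'$ of $\spr$ shares these with $\spr$ and hence with $Y_e$. Running the same linear-algebraic elimination for $Y'$ therefore yields the same orbit multiplicities, so $Y'\cong Y_e$, which is exactly the reasoning the paper gives immediately before stating the corollary.
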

In Section 8, the geometric analysis of the Springer fibers will show that $a_{2,3}^{+}=0$ and $a_{0}^{12}= 0$ (see \Cref{geometric main 4 rows} and \Cref{types of orbits}). With this information, two systems (\ref{system 1}) and (\ref{system 2}) become two linear systems of five equations in five variables. Now we can state the main theorem of this section.
\begin{thm} \label{can solve equations}
    The $A_e$-centrally extended structure of $Y_e$ is determined by the numerical invariants $\{F_a^{A'}\}$ when the associated partition of $e$ has $4$ rows. Hence, we have an algorithm to compute the multiplicities of the types of orbit in $Y_e$.
\end{thm}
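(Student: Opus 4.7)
The plan is to combine the two geometric vanishing results cited from Section~8, namely $a_{2,3}^{+}=0$ and $a_{0}^{12}=0$, with the algebraic identities already set up in \eqref{system 1} and \eqref{system 2}, and then check that the resulting reduced linear systems are triangular (and therefore uniquely solvable) in terms of the dimensions $\dim V_{(s,\rho)}$. The final ingredient is to use \Cref{dimensions from F_i} together with \Cref{recursive} to rewrite every $\dim V_{(s,\rho)}$ appearing on the right-hand side purely in terms of the numerical invariants $\{F_a^{A'}\}$.

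First I would recall from \Cref{Notation for types of $A_{e}^{'}$ orbits} the twelve candidate centrally extended orbits, and from the reduction at the end of \Cref{subsect 2.3} the fact that the $A_e$-centrally extended structure on $Y_e$ is equivalent to the $A_e'$-centrally extended structure. Then I would substitute $a_{2,3}^{+}=0$ and $a_{0}^{12}=0$ into \eqref{system 1} and \eqref{system 2}. In \eqref{system 1}, the first equation immediately yields $a_{1,2}$; then in order the next equations yield $a_{0}$, $a_{3,12}$, $a_{12,23}$, and finally $a_{12}$, so every variable is determined. In \eqref{system 2}, the second equation gives $a_{0}^{123}$, the third gives $a_{0}^{1}$, and then the first, fourth, and fifth give $a_{1,23}$, $a_{23}$, and $a_{12,23}^{+}$ respectively. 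In both cases the coefficient matrix of the reduced $5\times 5$ system is upper triangular after a suitable reordering of variables, so solvability is automatic and uniqueness is a one-line check.

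Next I would translate the right-hand sides into numerical invariants. By \Cref{dimensions from F_i}, each $\dim V_{(s,\rho)}$ equals $\dim \Hom_{A_e}(\rho, H^*(\mathcal{B}_{e}^{s}))$, and the proof of that proposition shows this in turn equals $F_{s}^{A_\rho}(\mathcal{B}_e)-F_{s}^{A_e}(\mathcal{B}_e)$ (with the obvious modification $F_{s}^{A_e}(\mathcal{B}_e)$ when $\rho$ is trivial), where $A_\rho=\ker\rho\subset A_e'$. Every such expression is an $\mathbb{F}$-linear combination of members of the family $\{F_a^{A'}\}$, and \Cref{recursive} supplies an explicit recursive algorithm to compute each one as a signed sum of Euler characteristics of products of smaller Springer fibers, which in turn are given by the recursion \eqref{Euler recursive}. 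Assembling these substitutions into the solved systems produces a closed formula for each multiplicity, so an algorithm follows.

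The only nontrivial step is the appeal to the two vanishing identities $a_{2,3}^{+}=0$ and $a_{0}^{12}=0$; these are \emph{not} detectable from $\{F_a^{A'}\}$ alone (a priori one needed the finer invariants $\{S_i^{A'}\}$, as explained just before the theorem), and their proof is the geometric content of \Cref{geometric main 4 rows} and \Cref{types of orbits} in Section~8, which describe the components $(\mathcal{B}_e^{T_e})_\alpha$ as iterated projective bundles and blowups inside the collection $\mathcal{C}_4$ and extract the categorical finite model via \Cref{main}. Once these are in hand the proof is a purely formal linear-algebra exercise, and I would end by emphasizing that the surprising part of the statement is precisely that, although Section~5 required $\{S_i^{A'}\}$ to pin down $Y_e$ abstractly, the extra geometric knowledge of which Schur-multiplier orbits can occur removes that need.
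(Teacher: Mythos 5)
Your proposal matches the paper's own proof essentially step for step: substitute the two geometric vanishings $a_{0}^{12}=0$ and $a_{2,3}^{+}=0$ into \eqref{system 1} and \eqref{system 2} respectively, solve the resulting triangular $5\times 5$ systems by back-substitution to express every remaining multiplicity in terms of the $\dim V_{(s,\rho)}$, and then convert those dimensions to $\{F_a^{A'}\}$ via \Cref{dimensions from F_i} and \Cref{recursive} (the paper invokes \Cref{dimensions are known} for this last step, but the content is the same). Your additional remark that the two vanishings cannot be seen from $\{F_a^{A'}\}$ alone and genuinely depend on the Section~8 geometry is exactly the point the paper makes in the surrounding discussion, so the proposal is correct and takes the same route.
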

\begin{proof}
    Plugging in $a_{0}^{12}= 0$ in the system (\ref{system 1}), we solve for the multiplicities and get
    \begin{equation}
  \left\{
    \begin{aligned}
&a_{1,2}= \dim V_{(z_1,(1,1,-1))}\\
&a_0= \dim V_{(z_2,(1,1,1))}- \dim V_{(z_1,(1,1,-1))}\\
&a_{3,12}= \dim V_{(z_3,(1,1,1))}-(\dim V_{(z_2,(1,1,1))}- \dim V_{(z_1,(1,1,-1))})\\
&a_{12,23}= \dim V_{(z_{13},(1,1,1))}- (\dim V_{(z_2,(1,1,1))}- \dim V_{(z_1,(1,1,-1))})\\
&a_{12}=\dim V_{(z_{12},(1,1,1))}- 2\dim V_{(z_1,(1,1,-1))} - \dim V_{(z_{13},(1,1,1))}- \dim V_{(z_3,(1,1,1))} \\& \quad \quad +\dim V_{(z_2,(1,1,1))}
    \end{aligned}
  \right.
\end{equation}
Plugging in $a_{2,3}^{+}= 0$ in the system (\ref{system 2}), we solve for the multiplicities and get
\begin{equation}
  \left\{
    \begin{aligned}
&a_{1,23}= \dim V_{(z_1,(1,-1,-1))}- \dim V_{(z_2,(-1,1,-1))}\\
&a_0^{123}= \dim V_{(z_2,(-1,1,-1))}\\
&a_0^{1}= \dim V_{(z_3,(1,-1,1))}\\
&a_{23}= \dim V_{(z_{23},(1,1,1))}- \dim V_{(z_{13},(1,1,1))}- (\dim V_{(z_1,(1,-1,-1))}- \dim V_{(z_2,(-1,1,-1))})\\
& a_{12,23}^{+}= \dim V_{(z_{12},(-1,-1,1))}- \dim V_{(z_{12},(1,1,1))} + \dim V_{(z_1,(1,1,-1))}+ \dim V_{(z_{13},(1,1,1))}\\& \quad \quad - \dim V_{(z_2,(-1,1,-1))}
    \end{aligned}
  \right.
\end{equation}
    As mentioned in \Cref{dimensions are known}, the dimensions of the irreducible representations $V_{(s,\rho)}$ can be calculated from the numerical invariants $\{F_a^{A'}\}$. Therefore, the theorem is proved. 
\end{proof}

\section{The variety $\fix$ and its properties}
We first introduce a subvariety of $\spr$ as follows. According to the Jacobson-Mozorov theorem, there exists an $\mathfrak{sl}(2)$-triple $(e,f,h)\in \mathfrak{g}$. Let $T_e$ be the image of the embedding $\mathbb{C}^\times \rightarrow G$ sending $t$ to $t^h$. The adjoint action of $T_e$ on $\mathfrak{g}$ rescales $e$, so $T_e$ acts on $\mathcal{B}_e$. Let $\fix$ be the fixed-point locus of the $T_e$-action on $\mathcal{B}_e$. While $\spr$ is connected and not irreducible, $\fix$ is smooth and has many components.

As we have seen in \Cref{4 row Uniqueness}, the numerical invariants$\{F_{a}^{A'}$ and $\{S^{A'}\}$ are crucial in determining the centrally extended structure of $Y_e$. The main theorem of this section is that $\spr$ and $\fix$ share the same set of numerical invariants $\{F_a^{A'}\}$ and $\{S^{A'}\}$. We then shift the focus to this subvariety in later sections.

\subsection{Integral localization theorem} \label{subsect 6.1}
We first give an exposition of an integral version of the localization theorem in equivariant K-theory following \cite[page 75]{intloc}.

All the schemes here are over $\Spec(\mathbb{C})$. Let $D$ be a commutative reductive algebraic group. Let $M= \Hom(D, \mathbb{G}_m)$, the abelian group of characters of $D$. Consequently, the representation ring of $D$ is $R(D)= \mathbb{Z}M$. For a prime ideal $\mathfrak{p}\subset \mathbb{Z}M$, let $M_\mathfrak{p}$ be the maximal subgroup of $M$ such that $\mathfrak{p}$ belongs to the image of the natural map $\Spec(\mathbb{Z}[M/M_p])\rightarrow \Spec(\mathbb{Z}M)$. A concrete description of $M_\fp$ is that it consists of elements $m\in M$ such that $m- 1\in \mathfrak{p}$. Let $D_\mathfrak{p}$ be the unique closed subgroup of $D$ such that the inclusion $D_\mathfrak{p}\hookrightarrow D$ induces the quotient map $\mathbb{Z}M\rightarrow \mathbb{Z}[M/M_\mathfrak{p}]$. 

In this section, for notational convenience, we will simply write $K$ instead of $K_0$ for the $0$-th Grothendieck group.
\begin{thm} \label{integral loc}
Consider a $\mathbb{C}$-variety $X$ with a $D$-action on it. Let $X^{(\mathfrak{p})}$ be the $D_\mathfrak{p}$-fixed point scheme of $X$ with its reduced scheme structure. Consider the embedding $i_X: X^{(\mathfrak{p})}\rightarrow X$, then 
$$(i_X)_*: K^D(X^{(\mathfrak{p})})_\mathfrak{p} \rightarrow K^D(X)_\mathfrak{p}$$
is an isomorphism of $R(D)_\mathfrak{p}$-modules. Here the subscript $\mathfrak{p}$ denotes the localization of the $R(D)$-modules at the prime $\mathfrak{p}$.
\end{thm}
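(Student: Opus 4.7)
The plan is to follow the classical Thomason-style approach to localization in equivariant K-theory, adapted to the integral setting where one localizes at a general prime $\mathfrak{p} \subset R(D)$ rather than inverting specific characters. The main structural tool is the localization exact sequence: for any $D$-stable closed embedding $Z \hookrightarrow X$ with open complement $U$, we have
$$K^D(Z) \xrightarrow{j_*} K^D(X) \xrightarrow{} K^D(U) \to 0.$$
Taking $Z = X^{(\mathfrak{p})}$ and localizing at $\mathfrak{p}$ (which preserves exactness), the surjectivity of $(i_X)_*$ reduces to showing $K^D(U)_{\mathfrak{p}} = 0$, where $U = X \setminus X^{(\mathfrak{p})}$ has no $D_\mathfrak{p}$-fixed points.

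For this vanishing, I would stratify $U$ by $D$-orbit type and use Noetherian induction together with the five-lemma to reduce to the case of a single orbit $D/H$ with $D_\mathfrak{p} \not\subset H$. Then $K^D(D/H) \cong R(H)$ as an $R(D)$-module via restriction, so it suffices to show $R(H)_\mathfrak{p} = 0$. The essential point is this: since $D_\mathfrak{p} \not\subset H$, the subgroup of characters of $D$ trivial on $H$ is not contained in $M_\mathfrak{p}$, so there exists $\lambda \in M$ with $\lambda|_H = 1$ but $\lambda - 1 \notin \mathfrak{p}$. Thus $\lambda - 1$ becomes a unit in $R(D)_\mathfrak{p}$ while its image in $R(H)$ is zero, which forces $R(H)_\mathfrak{p} = 0$.

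The injectivity of $(i_X)_*$ is the more delicate half. The plan is to construct a one-sided inverse using a self-intersection computation: after localization, $i_X^* \circ (i_X)_*$ acts on $K^D(X^{(\mathfrak{p})})_\mathfrak{p}$ as multiplication by the equivariant class $\lambda_{-1}(N^\vee)$ of the conormal bundle of $X^{(\mathfrak{p})}$ in $X$. Since $D_\mathfrak{p}$ fixes $X^{(\mathfrak{p})}$ pointwise but its normal directions in $X$ cannot be fixed (otherwise the fixed locus would be larger), the weights $\chi_i$ of $D$ on $N$ all satisfy $\chi_i \notin M_\mathfrak{p}$, so $1 - \chi_i \notin \mathfrak{p}$. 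Hence $\lambda_{-1}(N^\vee)$ is invertible in $R(D)_\mathfrak{p}$, providing a left inverse to $(i_X)_*$.

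The main obstacle is the injectivity step under the reduced scheme structure assumption: the fixed locus $X^{(\mathfrak{p})}$ need not be smooth, so the normal bundle is not globally defined. The remedy is to stratify $X^{(\mathfrak{p})}$ further into smooth $D$-stable locally closed pieces and run the Euler class argument stratum by stratum, or equivalently to use deformation to the normal cone in the style of Thomason to bypass the smoothness hypothesis. This stratification argument, combined with the $R(H)_\mathfrak{p} = 0$ vanishing above applied to the normal directions, is where all the genuine work of the proof is concentrated.
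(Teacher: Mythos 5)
The paper does not prove this theorem — it is cited verbatim from the reference labelled \texttt{intloc} (Thomason's integral localization/concentration theorem; the paper says ``we first give an exposition of an integral version of the localization theorem in equivariant K-theory following [page 75, intloc]''). So there is no in-paper proof to compare against, and your proposal must be judged on its own.

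Your surjectivity argument is correct and is essentially the standard one: Noetherian induction via the localization exact sequence reduces matters to orbits $D/H$, and the vanishing $R(H)_\fp = 0$ when $D_\fp \not\subset H$ follows exactly as you say, because one can find $\lambda \in M$ with $\lambda|_H = 1$ and $\lambda - 1 \notin \fp$, so $\lambda - 1$ acts invertibly on the $R(D)_\fp$-module $R(H)_\fp$ while acting by zero.

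The injectivity half is where your proposal develops a genuine gap. The self-intersection argument $i_X^* (i_X)_* = \lambda_{-1}(N^\vee) \cdot (-)$ requires $i_X$ to be a regular closed immersion between reasonable (e.g.\ smooth) schemes; when $X$ is singular, or when $X^{(\fp)}$ is singular, the conormal sheaf is not a bundle and the formula does not hold as stated. You flag this, but the patches you propose (stratifying $X^{(\fp)}$ further, or invoking deformation to the normal cone) are not developed and are exactly where the difficulty lies — the excess intersection formula in the singular case is precisely what makes Thomason's Lefschetz--Riemann--Roch paper substantial. There is also a small imprecision: $\lambda_{-1}(N^\vee)$ lives in $K^D(X^{(\fp)})_\fp$, not in $R(D)_\fp$; one needs to argue that multiplication by this class is invertible, which requires checking all weights at all points, not merely that the ``Euler class'' is a nonzero element of $R(D)_\fp$.

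The cleaner and more standard route — the one that actually goes through — is to observe that the orbit-by-orbit vanishing argument you gave for surjectivity proves $K_i^D(U)_\fp = 0$ for \emph{all} $i \ge 0$, not just $i = 0$ (the identification $K_i^D(D/H) \cong K_i^H(\mathrm{pt})$ and the module structure over $R(H)$ work in every degree). Feeding $K_1^D(U)_\fp = 0$ into the long exact localization sequence
\[
K_1^D(U)_\fp \longrightarrow K_0^D(X^{(\fp)})_\fp \xrightarrow{(i_X)_*} K_0^D(X)_\fp \longrightarrow K_0^D(U)_\fp \longrightarrow 0
\]
gives injectivity of $(i_X)_*$ immediately, with no smoothness hypotheses and no Euler class. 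This is how Thomason proves the theorem, and I would recommend replacing your injectivity argument with this one.
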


\begin{rem} \label{relate localizations}
\Cref{CG_localization} is a consequence of \Cref{integral loc} in the following sense. First, consider $\bC M= \bC\otimes_\bZ R(D)$ instead of $\bZ M$. For each point $x\in D$, we have the maximal ideal $\fp_x\subset \bC M$ corresponding to $x$. Localizing at this prime ideal $\fp_x$ gives us the result of \Cref{CG_localization}. 
\end{rem}
We now specify the context to apply \Cref{integral loc}. Let $D$ be an abelian subgroup of $T_e\times A_e$ $= \bC^\times \times A_e$ that contains $T_e$. Then $D$ has the form $\bC^\times \times A$ with $A= (\cyclic{2})^{\oplus m}$ for some $m\leqslant k$. We have $M= \bZ\oplus (\cyclic{2})^{\oplus m}$, and $R(D)= R(\bC^\times)\otimes R(A)= \mathbb{Z}[v^{\pm 1}]\otimes R(A)=  \mathbb{Z}M$. In other words, $R(D)= \mathbb{Z}[v^{\pm 1}, \chi_1,...,\chi_m]/(\chi_i^2-1)_{i=1,\ldots,m}$. The prime ideal $\mathfrak{p}$ we consider here is the ideal generated by $\chi_1- 1,...,\chi_m- 1$, and $2$.

Next, we show that with this choice of $\fp$, the group $D_\fp$ in the statement of \Cref{integral loc} is the torus $T_e$. An equivalent task is to identify $M_\mathfrak{p}$ with $(\mathbb{Z}/2\mathbb{Z})^{\oplus m}$, embedded in $M$ as $\{1\}\times A\rightarrow T_e\times A$. 

Recall that $M_\mathfrak{p}$ consists of elements $m\in M$ such that $m- 1\in \mathfrak{p}$. Therefore, it follows from $\chi_i-1\in \fp$ for $1\leqslant i\leqslant m$ that $A\subset M_\fp$. Next, assume that $M_\mathfrak{p}$ has an element of the form $(a,a_1,...,a_m)\in \mathbb{Z}\oplus (\mathbb{Z}/2\mathbb{Z})^{\oplus m}$ with $a\neq 0$, then $v^a\prod \chi_i^{a_i}- 1\in \mathfrak{p}$. And from $\chi_i -1\in \mathfrak{p}$, we have $\prod \chi_i^{a_i}-1\in \mathfrak{p}$, so $v^a- 1\in \mathfrak{p}$, this is a contradiction.

Thus $M_\mathfrak{p}= (\mathbb{Z}/2\mathbb{Z})^{\oplus m}$, so $M/M_\mathfrak{p}= \mathbb{Z}$. We get that $D_\mathfrak{p}$ is $T_e\times \{1\}\subset T_e\times A$. Applying \Cref{integral loc}, we obtain an isomorphism 
\begin{equation} \label{localize TA}
    i_*: K^{T_e\times A}(\fix)_\mathfrak{p}\xrightarrow{\sim}K^{T_e\times A}(\mathcal{B}_e)_\mathfrak{p}.
\end{equation}
In addition, we consider $D= T_e$. Localizing at the prime $\mathfrak{q}=(2)\subset \mathbb{Z}[v^{\pm}]$ gives us an isomomorphism
\begin{equation} \label{localize T}
    i_*:K^{T_e}(\fix)_{(2)}\xrightarrow{\sim} K^{T_e}(\mathcal{B}_e)_{(2)}.
\end{equation}
Since the actions of $A$ and $T_e$ on $\spr$ commute, $A$ acts on both sides. The isomorphism in (\ref{localize T}) is $A$-equivariant by construction.
\subsection{Applications}
We have obtained the two isomorphisms (\ref{localize TA}) and (\ref{localize T}). We can relate the left (resp. right) hand sides of these two isomorphisms by forgetful maps as follows.

For an $A$-module $P$, we write $P^A$ for its invariant submodule. We have a natural action of $R(T_e)$ on $(K^{T_e}(\mathcal{B}_e))^A$. Next, letting all $\chi_i$ act trivially, we obtain an $R(D)$-module structure on $(K^{T_e}(\mathcal{B}_e))^A$. This module structure makes the forgetful map $F_1: K^{A\times T_e}(\mathcal{B}_e)\rightarrow (K^{T_e}(\mathcal{B}_e))^A$ an $R(D)$-homomorphism. We then localize at $\mathfrak{p}$ to get $F_1: K^{A\times T_e}(\mathcal{B}_e)_\mathfrak{p} \rightarrow (K^{T_e}(\mathcal{B}_e))^A_{(2)}$. A similar construction gives $F_2: K^{A\times T_e}(\fix)_\mathfrak{p}\rightarrow (K^{T_e}(\fix))^A_{(2)}$.

Note that $(K^{T_e}(\mathcal{B}_e))_{(2)}^A$ here is obtained by first taking $A$-fixed points and then taking the localization at the prime ideal $(2)$. The actions of $A$ and of $R(T_e)$ are compatible in the sense $a.(f\mathcal{F})= f(a.\mathcal{F})$ for $a\in A, f\in R(T_e)$. Hence, we can think of $(K^{T_e}(\mathcal{B}_e))_{(2)}^A$ as the $A$-fixed points of $(K^{T_e}(\mathcal{B}_e))_{(2)}$. A similar identification holds for $(K^{T_e}(\fix))_{(2)}^A$. With these identifications we have the following commutative diagram that involves two forgetful maps $F_1$ and $F_2$.
\begin{equation} \label{cokernel diagram}
    \begin{tikzcd} 
K^{A\times T_e}(\fix)_\mathfrak{p} \arrow{d}{i_*} \arrow{r}{F_1} \&  ((K^{T_e}(\fix))_{(2)})^A \arrow{d}{i_*} \\
K^{A\times T_e}(\mathcal{B}_e)_\mathfrak{p} \arrow{r}{F_2} \&  ((K^{T_e}(\mathcal{B}_e))_{(2)})^A
\end{tikzcd}
\end{equation}

In this diagram, $i_*$ on the left is the isomorphism in (\ref{localize TA}), and the isomorphism $i_*$ on the right is obtained by restricting (\ref{localize T}) to the $A$-invariant parts. Consequently, the cokernels of $F_1$ and $F_2$ are isomorphic as $R(T_e\times A)_\mathfrak{p}$-modules. Next, the actions of $\chi_i\in R(T_e\times A)_\mathfrak{p}$ on $K^{T_e}(\mathcal{B}_e)^A$ and $K^{T_e}(\fix)^A$ are trivial. Consider $\mathbb{Z}[v,v^{-1}]_{(2)}$ as the quotient of $R(T_e\times A)_\mathfrak{p}$ by the ideal generated by $\{\chi_i-1\}$. Then the cokernels of $F_1$ and $F_2$ are isomorphic as $\mathbb{Z}[v,v^{-1}]_{(2)}$-modules. We will use this to prove the following proposition.
\begin{pro}
The cokernels of the two forgetful maps $F_1^{A}: \forget{\fix}{A}$ and $F_2^{A}: \forget{\mathcal{B}_e}{A}$ are isomorphic as $\mathbb{Z}$- modules.
\end{pro}
\begin{proof}

Since $T_e$ acts trivially on $\fix$, the top row of (\ref{cokernel diagram}) becomes $$R(T_e)\otimes_\mathbb{Z}K^A(\mathcal{B}_e)_{\mathfrak{p}}\xrightarrow{F_1}R(T_e)\otimes_\mathbb{Z}(K(\fix))^A_{(2)}.$$ 
Furthermore, the map $F_1$ decomposes as $\operatorname{Id}\otimes F^A_1$. Thus, $\text{coker}F_1= (R(T_e)\otimes_\mathbb{Z} \text{coker} F^A_1)_\mathfrak{p}$. As $R(T_e)_{(2)}$-modules, we have $$\text{coker}F_1= R(T_e)_{(2)}\otimes_\mathbb{Z} \text{coker} F^A_1= \mathbb{Z}[v, v^{-1}]_{(2)}\otimes_\mathbb{Z} \text{coker} F^A_1.$$

Next, we describe $\text{coker}F_2$. We have a natural $A\times T_e$-finite centrally extended structure on $Y_e$ so that $K^{A\times T_e}(\mathcal{B}_e)= K(\text{Sh}^{A\times T_e}(Y_e))$. From \Cref{easy schur}, the pullback map $M(A)\rightarrow M(A\times T_e)$ is an isomorphism. So the $A\times T_e$-centrally extended structure on $Y_e$ is a lift from its $A$-centrally extended structure (see \Cref{centrally extended structure of a product}). Recall that this lift has the property that $T_e$ acts trivially on points of $Y_e$, and the centrally extended structure is read from the isomorphism $M(A)\rightarrow M(A\times T_e)$. Hence, we get the decomposition $K(\text{Sh}^{A\times T_e}(Y_e))= R(T_e)\otimes_\mathbb{Z} K(\text{Sh}^A(Y_e))= R(T_e)\otimes_\mathbb{Z}K^A(\mathcal{B}_e)$. We then obtain
$K^{A\times T_e}(\mathcal{B}_e)=  R(T_e)\otimes_\mathbb{Z}K^A(\mathcal{B}_e)$ and $F_2= \operatorname{Id}\otimes F^A_2$.

In summary, we get
$$\mathbb{Z}[v, v^{-1}]_{(2)}\otimes_\mathbb{Z} \text{coker} F^A_1= \text{coker }F_1=\text{coker }F_2=\mathbb{Z}[v, v^{-1}]_{(2)}\otimes_\mathbb{Z} \text{coker} F^A_2.$$
Evaluating at $v= 1$, we have $\mathbb{Z}_{(2)}\otimes_\mathbb{Z} \text{coker} F^A_1 =\mathbb{Z}_{(2)}\otimes_\mathbb{Z} \text{coker} F^A_2$. Thus, for any $l\in \mathbb{N}$, the multiplicities of the summand $\cyclic{2^l}$ in coker$F^A_{1}$ and in coker$F^A_{2}$ are the same. We then finish the proof of the proposition thanks to the following lemma.
\end{proof}

\begin{lem} \label{annihilated by cardinality}
Let $A$ be a finite group, $X$ be an $A$-variety. The cokernel of the map $F^A_{X}: K^A(X)\rightarrow K(X)^A$ is annihilated by $|A|$.
\end{lem}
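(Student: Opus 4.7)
The plan is to construct a ``wrong-way'' lift $\mathrm{Ind}\colon K(X)\to K^A(X)$ of the forgetful map and use it to produce, for each $A$-invariant class, a preimage whose image under $F^A_X$ is $|A|$ times the original class. Concretely, to a coherent sheaf $\mathcal{F}$ on $X$ I would associate $\mathrm{Ind}(\mathcal{F}):=\bigoplus_{a\in A} a^*\mathcal{F}$, equipped with the canonical $A$-equivariant structure in which $b\in A$ permutes the summands via the isomorphisms $b^*(a^*\mathcal{F})\cong(ab)^*\mathcal{F}$. The cocycle condition is a routine verification, and this is additive on short exact sequences, so it descends to a homomorphism of abelian groups $\mathrm{Ind}\colon K(X)\to K^A(X)$. (Conceptually, $\mathrm{Ind}$ is the left/right adjoint of the forgetful functor from $A$-equivariant coherent sheaves to coherent sheaves; these adjoints agree when $A$ is finite.)

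The next step is to evaluate the composition $F^A_X\circ \mathrm{Ind}\colon K(X)\to K(X)$. Because forgetting the equivariant structure of $\mathrm{Ind}(\mathcal{F})$ simply produces the underlying direct sum, this composition sends $[\mathcal{F}]$ to $\sum_{a\in A}[a^*\mathcal{F}]$, i.e.\ the sum over $A$ of the $A$-translates of $[\mathcal{F}]$ in $K(X)$.

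Restricting this identity to the invariant subgroup $K(X)^A\subset K(X)$ finishes the proof: for $[\mathcal{F}]\in K(X)^A$ we have $[a^*\mathcal{F}]=[\mathcal{F}]$ for every $a\in A$, so $F^A_X\bigl(\mathrm{Ind}([\mathcal{F}])\bigr)=|A|\cdot[\mathcal{F}]$. Thus $|A|\cdot K(X)^A\subset \operatorname{Im} F^A_X$, and $\operatorname{coker} F^A_X$ is annihilated by $|A|$.

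The main obstacle is essentially bookkeeping rather than mathematical: one must be careful that the equivariant structure on $\bigoplus_{a} a^*\mathcal{F}$ is set up correctly so that \emph{forgetting} this structure genuinely recovers $\sum_a [a^*\mathcal{F}]$ in $K(X)$, and that $\mathrm{Ind}$ is additive on exact sequences so as to descend to $K$-groups. Both points are standard; no deeper input (such as the localization theorem or properties of $\mathcal{B}_e$) is needed for this lemma.
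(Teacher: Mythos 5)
Your proposal is correct and follows essentially the same argument as the paper: construct the induction map $\mathrm{Ind}_{\{1\}}^A\colon K(X)\to K^A(X)$, observe that its composite with the forgetful map sends $[\mathcal{F}]$ to $\sum_{a\in A}[a^*\mathcal{F}]$, and conclude that for $A$-invariant classes this equals $|A|\cdot[\mathcal{F}]$. No meaningful difference in approach.
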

\begin{proof}
We have an induction map ind: $K(X)\rightarrow K^A(X)$: $[\cF]\mapsto$ [Ind$_{\{1\}}^A\mathcal{F}$]. Note that [Ind$_{\{1\}}^A\mathcal{F}$] $=  [\oplus_{a\in A} a.\cF$] in $K(X)$. Therefore, for $\cF \in K(X)^A$, we see that [Ind$_{\{1\}}^A \cF$] $= [\cF^{\oplus |A|}$] in $K(X)^A$. In other words, the element $[\cF]+ [\cF]+...+ [\cF]$ ($|A|$ times) is in the image of $F^A_X$, the lemma is proved.
\end{proof}

An important numerical application of Proposition 6.3 is that the two sets of numbers $S_i^{A}(\spr)$ and $S_i^{A}(\fix)$ coincide for any subgroup $A$ of $A_e$. Furthermore, we can show that $\spr$ and $\fix$ share the same set of $\{F_{a}^{A}\}$ using a similar technique as follows.

For any element $a\in A$, we have $\chi_i(a)\in \{\pm 1\}$. Let $\fp$ be the ideal of $\bZ M$ generated by the elements $\chi_i \pm 1$ that vanish at $a$; we have $\bZ M/\fp= \bZ[v,v^{-1}]$ so $\fp$ is prime. In this case, $M_\fp$ is $T_e\times \{1, a\}$. Apply \Cref{integral loc} for this prime ideal $\fp$ and then tensor both localized modules with $\bC$. We get an isomorphism of $R(T_e)\times \bC_a$-modules 
$$\bC \otimes K^{T_e\times A}(\spr)_{\fp}\cong \bC\otimes K^{T_e\times A}((\fix)^a)_{\fp}.$$

Recall that $T_e$ acts trivially on both $H^*(\spr)$ and $H^*((\fix)^a)$. Hence, both modules are free over $R(T_e)= \bZ[v^\pm]$. Forgetting the $R(T_e)$-modules structure, in other words, specializing $v$ to $1$, we get $\bC\otimes K^A(\spr)_{(a)}= \bC\otimes K^A((\fix)^a)_{(a)}$. The latter module is isomorphic to $\bC\otimes (K^A(\fix))_{(a)}$ by \Cref{CG_localization}. Therefore, we have $F_{a}^A(\spr)= F_{a}^A(\fix)$.
\begin{cor} \label{same numerics}
    The two varieties $\spr$ and $\fix$ share the same set of numerical invariants $\{F_a^{A'}, S_i^{A'}\}$.
\end{cor}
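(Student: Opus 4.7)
The plan is to treat the two families of invariants separately, both via the integral localization theorem (\Cref{integral loc}) applied with carefully chosen primes of $R(T_e \times A')$. For the numbers $S_i^{A'}$, I would invoke the preceding proposition, which already identifies the cokernels of $F_{\fix}^{A'}$ and $F_{\spr}^{A'}$ as $\bZ$-modules: since $S_i^{A'}(X)$ is defined as the multiplicity of $\cyclic{2^i}$ in the primary decomposition of $\mathrm{coker}(F^{A'}_X)$, equality of these cokernels is exactly equality of the $S_i^{A'}$ on the nose, with no further work needed.

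For the numbers $F_a^{A'}$, fix $a \in A'$ and take $D = T_e \times A'$ with character lattice $M = \bZ \oplus (A')^{*}$. I would choose the prime $\mathfrak{p} \subset R(D)$ generated by the elements $\chi - \chi(a)$ as $\chi$ ranges over a set of generators of $(A')^{*}$. One checks that $\bZ M / \mathfrak{p} \cong \bZ[v^{\pm}]$ is a domain, so $\mathfrak{p}$ is prime, and then $D_\mathfrak{p} = T_e \times \langle a \rangle$, whose fixed locus in $\spr$ is precisely $(\fix)^a$. Applying \Cref{integral loc} then produces an isomorphism of $R(D)_\mathfrak{p}$-modules
$$K^{T_e \times A'}(\spr)_\mathfrak{p} \;\cong\; K^{T_e \times A'}((\fix)^a)_\mathfrak{p}.$$

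I would then tensor with $\bC$ and specialize $v = 1$: both sides are free over $R(T_e) = \bZ[v^{\pm}]$ since $T_e$ acts trivially on $\fix$ and the Springer fiber has no odd cohomology (\Cref{no_odd}). This converts the integral localization at $\mathfrak{p}$ into the complex localization at the maximal ideal of $\bC \otimes R(A')$ corresponding to $a$, yielding $\bC \otimes K^{A'}(\spr)_{(a)} \cong \bC \otimes K^{A'}((\fix)^a)_{(a)}$; combining this with \Cref{CG_localization} applied to the $A'$-variety $\fix$ identifies the right side with $\bC \otimes K^{A'}(\fix)_{(a)}$, and taking $\bC$-dimensions produces $F_a^{A'}(\spr) = F_a^{A'}(\fix)$. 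The main obstacle will be the transition from integral to complex localization: one must verify that the $R(T_e)$-freeness genuinely allows specialization to $v = 1$ without losing information, and that this specialization carries $\mathfrak{p}$ to the maximal ideal corresponding to $a$; once these identifications are cleanly set up, the argument is essentially formal.
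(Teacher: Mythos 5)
Your proposal is correct and follows essentially the same route as the paper: for $S_i^{A'}$ you cite the preceding proposition on cokernels of forgetful maps, and for $F_a^{A'}$ you pick the same prime $\mathfrak p = (\chi_i - \chi_i(a))$, identify $D_{\mathfrak p}=T_e\times\langle a\rangle$ so that $\spr^{D_{\mathfrak p}}=(\fix)^a$, apply \Cref{integral loc}, specialize $v=1$ using freeness over $R(T_e)$, and finish with \Cref{CG_localization} applied to $\fix$ at $a$. The step you flag as the "main obstacle" — that $R(T_e)$-freeness licenses the specialization — is resolved exactly as you suspect: $T_e$ acts trivially on $(\fix)^a$, so $K^{T_e\times A'}((\fix)^a)=R(T_e)\otimes_{\bZ} K^{A'}((\fix)^a)$, and the isomorphism transports this freeness to the $\spr$ side.
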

The next subsection discusses finite models of $\spr$ and $\fix$.
\subsection{Finite models of $\spr$ and $\fix$}
In Section 3.1, for a reductive group $A$, we have defined $A$-finite models of an $A$-variety $X$. A finite model is an $A$-centrally extended set that is comparable to $X$ in some senses (see \Cref{finite model}). When $A=A_e$ and $X= \spr$, $Y_e$ serves as an $A_e$-finite model of $\spr$. From the results of Section 6.2 and how we define finite models, we get that any $A_e$-finite model of $\spr$ is an $A_e$-model of $\fix$ and vice versa. Therefore, $Y_e$ is an $A_e$-finite model of $\fix$. 

While the variety $\spr$ is connected, the variety $\fix$ has many connected components, and the action of $A_e$ permutes them. Let $(\fix)_\alpha$ be an $A_e$-orbit of connected components of $\fix$. It is natural to expect that each $(\fix)_\alpha$ admits a finite model $Y_e^\alpha$. We then have a conjecture that is a weaker form of the conjectures stated in \cite[Section 1.3]{lusztig2021discretization}.
\begin{conj} \label{union}
For each $A_e$ orbit $(\fix)_\alpha$ of connected components of $\fix$, there exists a finite model $Y^{\alpha}_{e}$ of $(\fix)_\alpha$ so that $\cup_{\alpha}Y_{e}^{\alpha}$ is isomorphic to $Y_e$ as $A_e$-centrally extended finite sets.
\end{conj}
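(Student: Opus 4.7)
The plan is to construct each $Y_e^\alpha$ categorically from $D^b((\fix)_\alpha)$ and then invoke a uniqueness-of-finite-model statement to identify $\sqcup_\alpha Y_e^\alpha$ with $Y_e$ as $A_e$-centrally extended sets. First I would fix an $A_e$-orbit $(\fix)_\alpha$ of connected components of $\fix$ and search for a full exceptional collection in $D^b((\fix)_\alpha)$ that is compatible with the $A_e$-action in the sense formalized in Section 7. Such a collection produces a finite set $Y^\alpha$ naturally equipped with an $A_e$-centrally extended structure, and its classes furnish a $\bZ$-basis of $K_0^{A'}((\fix)_\alpha)$ compatible with the forgetful maps $K_0^{A'} \to K_0^{A''}$; by \Cref{finite model}, this is precisely the data of an $A_e$-finite model.

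Second, I would assemble $Y := \sqcup_\alpha Y^\alpha$. Because the decomposition $\fix = \sqcup_\alpha (\fix)_\alpha$ is $A_e$-stable, one has a canonical $A_e$-equivariant splitting $K_0^{A'}(\fix) = \bigoplus_\alpha K_0^{A'}((\fix)_\alpha)$ that respects forgetful maps, so $Y$ is an $A_e$-finite model of $\fix$. By \Cref{same numerics} the numerical invariants $\{F_a^{A'}, S_i^{A'}\}$ of $\fix$ and $\spr$ coincide, and $Y_e$ is itself an $A_e$-finite model of $\spr$; hence $Y$ and $Y_e$ share the same numerical invariants. The final step would be to appeal to uniqueness of the $A_e$-finite model of $\spr$, established in \Cref{unique} when the partition has at most four parts, to identify $Y \cong Y_e$ and thereby realize $Y_e$ as the disjoint union of the $Y^\alpha$.

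The principal obstacle is the geometric construction of the $A_e$-compatible exceptional collection on each $(\fix)_\alpha$. The strategy would exploit the geometric description developed in Section 8: when $\lambda$ has two parts each component is a tower of $\bP^1$-bundles; when $\lambda$ has three or four parts the components arise from short lists of base varieties ($\cC_3$ and $\cC_4$) by iterated projective bundles and blow-ups along smooth centers, and Orlov's blow-up and projective-bundle formulas propagate full exceptional collections through both of these operations. The real work is therefore to (i) exhibit $A_e$-equivariant full exceptional collections on the finitely many base varieties and (ii) track the $A_e$-action through each projective-bundle/blow-up step so that the resulting objects can be grouped into $A_e$-orbits with well-defined Schur multipliers read off from Section 2. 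For partitions with at most four parts this is exactly what Section 8 and Appendix A achieve, yielding \Cref{main}.

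Beyond four parts the main obstruction is twofold. On the geometric side, the author does not yet have sufficient control over the base varieties that appear when $\lambda$ has six or more parts, so step (i) above is genuinely open. On the combinatorial side, uniqueness of the finite model is itself conjectural, so even a successful construction of $Y^\alpha$ would not automatically identify $\sqcup_\alpha Y^\alpha$ with $Y_e$; additional orbit-vanishing input from the explicit Springer correspondence (in the spirit of \Cref{vanish}) would almost certainly be required to pin down the centrally extended set from its numerical invariants alone.
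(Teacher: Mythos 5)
Your proposal reproduces the paper's own strategy: Theorem~\ref{main} proves the conjecture for partitions with at most four parts by exactly the chain you describe --- build an $A_e$-compatible full exceptional collection on each $(\fix)_\alpha$ via the $\cC_3$/$\cC_4$ description in Section 8 and Lemmas \ref{projective bundle}, \ref{finite model blow up}, pass to categorical finite models via \Cref{categorical finite model}, and then identify $\sqcup_\alpha Y_\alpha$ with $Y_e$ by \Cref{same numerics}, \Cref{unique}, and \Cref{same finite model}. You also correctly diagnose why this does not settle the general conjecture: beyond four parts neither the existence of suitable exceptional collections on the base varieties nor the uniqueness of the finite model is known, which is precisely the status reported in the paper.
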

\begin{rem} \label{same finite model}
When the partition of $e$ has up to $4$ rows, we have from \Cref{unique} that $Y_e$ is the unique finite model of $\spr$. Hence, $Y_e$ is the unique finite model of $\fix$. Thus, the statement of \Cref{union} reduces to the existence of $Y^e_{\alpha}$. This is done in Sections 7 and 8. The method is to use exceptional collections to construct finite models of $(\fix)_\alpha$. 

\end{rem}

\section{Exceptional collections and finte models of an $A$-variety}
\subsection{Exceptional collection, semi-orthogonal decomposition, and mutation}
For more about exceptional collections, the readers are referred to \cite[Section 2]{böhning2005derived} and \cite{Bondal_1990} for detailed expositions. In this subsection, we only present selective definitions and properties of exceptional collections that are needed in the paper.  
\subsubsection{Definitions}
Let $\mathcal{A}$ be a triangulated category.
\begin{defin}
An object $E$ in $\cA$ is called \textit{exceptional} if
$$\Hom(E,E)=\mathbb{C} \quad \text{and} \quad  \Hom(E,E[i])= 0 \quad \forall i\neq 0 $$ 
\end{defin}
\begin{defin}
An $n$-tuple $(E_1,...,E_n)$ of exceptional objects in $\cA$ is called an \textit{exceptional sequence} or an \textit{exceptional collection} if:
$$\Hom(E_j, E_l[i])= 0 \indent \indent \forall 1\leqslant l<j\leqslant n \indent \text{and} \indent \forall i\in \mathbb{Z}.$$ 
If the collection $(E_1,E_2,...,E_n)$ generates $\cA$ as a triangulated category, we say it is a \textit{complete exceptional sequence} or a \textit{full exceptional collection} .
\end{defin}
The following is a classical example of exceptional sequences in the derived category of coherent sheaves on projective spaces given by Beilinson in \cite{Beilinson1978CoherentSO}.
\begin{exa} \label{Pn}
In the category $\cA= D^b(\Coh(\mathbb{P}^n))$, we have a full exceptional collection of line bundles $(\mathcal{O}, \mathcal{O}(1),..., \mathcal{O}(n))$. 
\end{exa}
A more general notion to describe $\cA$ in terms of blocks is \textit{semi-orthogonal decomposition}. Our exposition below follows \cite[Page 19]{böhning2005derived}. 

Let $\cB$ be a full triangulated subcategory of $\cA$. The \textit{left orthogonal} of $\cB$ in $\cA$ is defined to be the full subcategory $\cC$ of $\cA$ consisting of the objects $C$ such that $\Hom(C,B)= 0$ for all $B\in \cB$. We say that a triangulated subcategory $\cB$ is \textit{left-admissible} if for arbitrary $A\in \cA$, we have a distinguished triangle 
$$C\rightarrow A\rightarrow B\rightarrow C[1]$$
for some $B\in \cB$ and $C\in \cC$. Similarly, we have the definition of a \textit{right orthogonal} subcategory and a \textit{right-admissible} subcategory of $\cA$. A full triangulated subcategory of $\cA$ is \textit{admissible} if it is both left-admissible and right-admissible. We then have the following definition.
\begin{defin} \label{SOD}
    A triangulated category $\cA$ admits a semi-orthogonal decomposition $\langle \cA_1, \cA_2,...,\cA_n \rangle$ if the following conditions are satisfied.
    \begin{enumerate}
        \item The subcategories $\cA_i$ are admissible, and they generate $\cA$.
        \item The subcategory $\cA_j$ belongs to the left orthogonal of $\cA_i$ for $1\leqslant i< j\leqslant n$. 
    \end{enumerate}   
\end{defin}

In such a decomposition, if each subcategory $\cA_i$ has a complete exceptional sequence, then $\cA$ has one. Next, we proceed to a functorial construction of \textit{mutation} following \cite[Section 2.2.1]{jiang2021derived}. Let $\cB$ be an admissible subcategory of $\cA$. Let $\cB^\perp$ and $^\perp\cB$ be the left orthogonal and right orthogonal of $\cB$ in $\cA$. Write $i_{\cB^\perp}$ and $i_{^\perp\cB}$ for the corresponding embeddings.
\begin{defin}
    The \textit{left (resp. right) mutation through $\cB$} is $L_{\cB}:= i_{\cB^\perp} i^*_{^\perp\cB}$ (resp. $R_{\cB}:= i_{^\perp\cB} i^*_{\cB^\perp}$).
\end{defin}
For an object $A\in \cA$, we write $L_A$ (resp.$R_A$) for the mutation through the full subcategory of $\cA$ generated by $A$.

We now consider the case $\cA= D^b(X)$, the bounded derived category of coherent sheaves on a smooth projective variety $X$. We introduce some properties of mutations of exceptional objects. For two objects $A, B\in D^b(X)$, there is a canonical morphism $$\operatorname{lcan}: \RHom(A,B)\otimes^\text{L} A\rightarrow B.$$
The left mutation of $B$ via $A$, $L_A(B)$ then fits in the distinguished triangle below 
\begin{equation} \label{triangle}
\RHom(A, B)\otimes A\rightarrow B\rightarrow L_A(B)\rightarrow.    
\end{equation}

When $A$ is an exceptional object, applying the functor $\RHom(A, *)$ to (\ref{triangle}), we obtain $\RHom(A, L_AB)= 0$. With a further assumption that $(A,B)$ is an exceptional sequence, the following properties were proved in \cite{gorodentsev_1990},
\begin{equation} \label{mutation property}
  \RHom(B,C)\cong \RHom(L_AB, L_AC) \indent \forall C\in D^b(X), \quad \text{and } L_{L_AB}L_AC\cong L_AL_BC.  
\end{equation}
In particular, if we have $\RHom(A, B)= 0$, then $L_{B}L_{A}C\cong L_{A}L_{B}C$.

For an exceptional sequence $(E_1,...,E_n)$ in $D^b(X)$, consider the following sequences of mutations.
$$E_i^{\vee}:= L_{E_1}L_{E_2}...L_{E_{n-i}}E_{n-i+1}$$
The collection $(E_1^{\vee},...,E_n^{\vee})$ is called the \textit{right dual} of $(E_1,...,E_n)$.
According to \cite[Proposition 2.1.12]{böhning2005derived}, we have that
\begin{equation} \label{property of dual collection}
    \Ext^k(E_i, E_j^\vee)= \begin{cases}
\mathbb{C} \text{ if } i+j= n+1, i=k+1, \\ 0 \text{ otherwise. }
\end{cases}
\end{equation}

Moreover, if $(E_1,...,E_n)$ is complete, then its right dual is complete (\cite[Lemma 2.2]{Bondal_1990}).
\begin{exa}
    In $D^b(\bP^n)$, we have a complete exceptional sequence $(\Omega^n(n),...., \Omega^1(1), \cO)$ of sheaves of differential forms twisted by $\cO(k)$. The right dual of this collection is the collection $(\cO,\cO(1)..., \cO(n))$ in \Cref{Pn}. More details are given in \cite[Example 2.1.13]{böhning2005derived}.
\end{exa}
\subsubsection{Interactions with group actions}
We discuss some interactions between group actions on a variety $X$ and exceptional collections in $D^b(X)$. Let $X$ be a smooth projective variety, and let $A$ be a finite group acting on $X$. Then $A$ acts on $D^b(X)$ by exact equivalences. We define the compatibility of a full exceptional sequence and this $A$-action as follows.
\begin{defin} \label{compatible}
    Let $\mathcal{E}$ be a full exceptional collection of $D^b(X)$. We say that this collection is \textit{compatible} with the action of $A$ on $X$ if $A$ permutes the objects of $\cE$ (up to isomorphisms).
\end{defin}
Consider a collection $\cE$ that is compatible with the $A$-action. We divide the objects of $\cE$ into blocks as follows. First, we claim that the $\Ext^\bullet$ groups between two different objects in an $A$-orbit of $\cE$ vanish. Consider an $A$-orbit $E_{i_1},...,E_{i_k}$ in $\cE$ with $i_1<...< i_k$,  then $\RHom (E_{i_l}, E_{i_1})= 0$ for $2\leqslant l\leqslant k$ as $\cE$ is exceptional. For each $j$, choose an $a_j\in A$ so that $a_j E_{i_j}= E_{i_1}$. Then for $l\neq j$, we have $\RHom(E_{i_l}, E_{i_{j}})= \RHom (a_jE_{i_l}, a_jE_{i_{j}})$ $= \RHom (a_jE_{i_l}, E_{i_1})= 0$ since $a_jE_{i_l}\cong E_{i_{l'}}$ for some $1< l'\leqslant k$.

Next, consider another $A$-orbit $E_{i'_1},...E_{i'_{k'}}$ in $\cE$ with $i_1<...< i_{k'}$. Assuming $i_1< i'_1$, arguing in a similar way as above, we then have $\RHom(E_{i'_l}, E_{i_{j}})= 0$ for any $1\leqslant l \leqslant k'$, $1\leqslant j\leqslant k$. Hence, given a compatible full exceptional collection $\cE$, we can arrange the objects of $E$ in blocks:
$$\mathcal{E}= \begin{pmatrix}
E_1^{(1)} & E_1^{(2)} & & & E_1^{(n)}\\
. & . & & & .\\
. & . & & & .\\
. & . &. &. & .\\
E_{k_1}^{(1)} & E_{k_2}^{(2)} & & & E_{k_n}^{(n)}
\end{pmatrix}.$$
Here each block is an $A$-orbit, the linear order of two objects in the same block is given by their subscripts. The linear order of two objects in two different blocks is given by the order of the superscripts. The next proposition shows that this block decomposition is compatible with the process of mutations.

\begin{pro} \label{dual is compatible}
The right dual of $\mathcal{E}$ admits a block decomposition with respect to the action of $A$,
$$\mathcal{E}^\vee= 
\begin{pmatrix}
{E_{k_n}^{(n)}}^\vee & {E_{k_{n-1}}^{(n-1)}}^\vee & & & {E_{k_1}^{(1)}}^\vee\\
. & . & & & .\\
. & . & & & .\\
. & . &. &. & .\\
{E_{1}^{(n)}}^\vee & {E_{1}^{(n-1)}}^\vee & & & {E_{1}^{(1)}}^\vee
\end{pmatrix}.
$$
Morever, for an element $a\in A$, if $aE_{j_1}^{(i)}= E_{j_2}^{(i)}$ in the $i$-th block, then $a {E_{j_1}^{(i)}}^\vee= {E_{j_2}^{(i)}}^\vee$.
\end{pro}
\begin{proof}
Consider the $i$-th block of $\mathcal{E}$. For $1\leqslant j_1\neq j_2\leqslant k_i$, we have $\RHom(E_{j_1}^{(i)},E_{j_2}^{(i)})= 0$ (see the argument after \Cref{compatible}). By \Cref{mutation property}, $L_{E_l^{(i)}}E_{j}^{(i)}= E_{j}^{(i)}$ and the mutations $L_{E^{(i)}_j}$ commute (up to isomorphisms) for $1\leqslant j\leqslant k_i$. Hence, it makes sense to write $L_i$ for the sequence of mutations $L_{E_1^{(i)}}L_{E_2^{(i)}}...L_{E_{k_i}^{(i)}}$.

From the definition of mutation, we see that the action of the group $A$ commutes with the mutation: $aL_C(B)\cong L_{aC}(aB)$. Thus for any $B\in D^b(X)$, we have
$$a(L_iB)= a(L_{E_1^{(i)}}L_{E_2^{(i)}}...L_{E_{k_i}^{(i)}}B)= L_{aE_1^{(i)}}L_{aE_2^{(i)}}...L_{aE_{k_i}^{(i)}}(aB)= L_{E_1^{(i)}}L_{E_2^{(i)}}...L_{E_{k_i}^{(i)}}(aB)= L_i(aB).$$
The last equality follows from that $a$ permutes the objects of the block $\{E^{(i)}_j| 1\leqslant j\leqslant k_i\}$ and $L_{E^{(i)}_j}$ commute. Recall that the dual object ${E_{j}^{(i)}}^\vee$ is obtained by applying $L_1L_2...L_{i-1}L_{E_1^{(i)}}...L_{E_{j-1}^{(i)}}$ to $E_{j}^{(i)}$. As noted above, $L_{E_l^{(i)}}E_{j}^{(i)}= E_{j}^{(i)}$ for any $l\neq j$; thus ${E_{j}^{(i)}}^\vee= L_1L_2...L_{i-1}E_{j}^{(i)}$. And $a{E_{j}^{(i)}}^\vee= aL_1L_2...L_{i-1}E_{j}^{(i)}= L_1L_2...L_{i-1}(aE_{j}^{(i)})= (aE_{j}^{(i)})^\vee$.
\end{proof}
\subsection{From full exceptional collections to finite models}
\subsubsection{Cocycles, twisted representations, and twisted sheaves}
This subsection provides the necessary terminology for the next subsections. We follow the notation from \cite{Equi}.

Let $G$ be a finite group. Let $\alpha$ be a $2$-cocycle of $G$ with coefficient in $\mathbb{C}^\times$. Let $V$ be a vector space. An $\alpha$-\textit{representation} structure on $V$ is a map $R: G\rightarrow GL(V)$ such that $R(g)R(h)=\alpha(g,h)R(gh)$ for any $g,h\in G$.

Let $X$ be an algebraic variety over $\mathbb{C}$ acted on by $G$. An $\alpha$-$G$-\textit{equivariant sheaf} on $X$ is a coherent sheaf $\mathcal{F}$ together with a set of isomorphisms $\theta_g: \mathcal{F}\rightarrow g^* \mathcal{F}$ for all $g\in G$ such that $\alpha(g,h)\theta_{gh}= h^*{\theta_g}\circ \theta_h$. In other words, $\mathcal{F}$ has the structure of an $\alpha$-$G$-equivariant sheaf means that there exists an $\alpha^{-1}$-representation $V$ so that $\mathcal{F}\otimes V$ can be equipped with a $G$-equivariant structure. When the group is fixed, we will call $\mathcal{F}$ an $\alpha$-sheaf for convenience.

An $\alpha$-representation $V$ has the structure of an honest representation of $G$ if and only if the class of $\alpha$ in $H^2(G,\mathbb{C}^\times)$ is trivial. Consider two cocycles $\alpha$ and $\beta$ with the same cohomology class. Then a sheaf $\mathcal{F}$ admits an $\alpha$-sheaf structure if and only if it admits a $\beta$-sheaf structure.

\subsubsection{Twisted equivariant structure on exceptional objects} 

Suppose that $E$ is an exceptional coherent sheaf on $X$ and $g^*E\cong E$. According to \cite[Section 2]{Equi}, we can equip $E$ with an $\alpha$-sheaf structure for some $2$-cocycle $\alpha$ of $G$ as follows. Fix an isomorphism $\theta_g: E\rightarrow g^*E$ for every $g$. Since $E$ is exceptional, the two isomorphisms $\theta_{gh}$ and $h^*(\theta_g)\circ \theta_h$ : $E\rightarrow (gh)^*E$ differ by a nonzero scalar $\alpha(g,h)$. In this way, we obtain a $2$-cocyle $\alpha$ of $G$. The isomorphisms $\theta_g$ define an $\alpha$-sheaf structure on $E$.

Similarly, assume that we have a $G$-oribt of exceptional coherent sheaves $E_1,..., E_k$. Let $H$ be the stabilizer of the isomorphism class of $E_1$. The same argument shows that there is an $\alpha_H$-sheaf structure on $E_1$ for some $2$-cocyle $\alpha_H$ of $H$. Let $V$ be an $\alpha_H^{-1}$-representation of $H$. Then $E_1\otimes V$ is a $H$-equivariant sheaf, and $\Coind_{H}^G(E_1\otimes V)$ is a $G$-equivariant sheaf. If we forget the $G$-equivariant structure, the latter object is just $(E_1\oplus E_2\oplus... \oplus E_k)\otimes V$. In the next section, equivariant sheaves on $X$ are produced in this way.
\subsubsection{Categorical finite models}
The main result of this subsection is a corollary of \cite[Theorem 2.3]{Equi}. We first cite the theorem here. Let $X$ be a smooth projective variety acted on by a finite group $G$. Assume that we have a full exceptional collection $\cE$ of $D^b(X)$ that is compatible with the $G$-action. By the discussion after \Cref{compatible}, we have a block decomposition of $\cE$ of the form
$$\mathcal{E}= \begin{pmatrix}
E_1^{(1)} & E_1^{(2)} & & & E_1^{(n)}\\
. & . & & & .\\
. & . & & & .\\
. & . &. &. & .\\
E_{k_1}^{(1)} & E_{k_2}^{(2)} & & & E_{k_n}^{(n)}
\end{pmatrix},$$
in which each column consists of objects in a single $G$-orbit. 
\begin{thm} \label{main Elagin}
Let $H_i$ be the stabilizer of the isomorphism class of $E_1^{(i)}$ in the above block decomposition. Make $E_1^{(i)}$ an equivariant sheaf with respect to some $2$-cocycle $\alpha_i$ of the group $H_i$. Denote the $\alpha_i$-$H_i$-sheaf we obtain by $\mathcal{E}^{(i)}$. Let $V_1^{(i)},..., V_{k_i}^{(i)}$ be the pairwise non-isomorphic irreducible $\alpha_i^{-1}$-representations of $H_i$ over $k$. Then the collection
$$\mathcal{E}_A= \begin{pmatrix}
\Coind_{H_1}^G(\mathcal{E}^{(1)}\otimes V_1^{(1)}) & \Coind_{H_2}^G(\mathcal{E}^{(2)}\otimes V_1^{(2)}) & & & \Coind_{H_n}^G(\mathcal{E}^{(n)}\otimes V_1^{(n)})\\
. & . & & & .\\
. & . & & & .\\
. & . &. &. & .\\
\Coind_{H_1}^G(\mathcal{E}^{(1)}\otimes V_{k_1}^{(1)}) & \Coind_{H_2}^G(\mathcal{E}^{(2)}\otimes V_{k_2}^{(2)}) & & & \Coind_{H_n}^G(\mathcal{E}^{(n)}\otimes V_{k_n}^{(n)})
\end{pmatrix}$$
is a full exceptional collection in the category $D^G(X)$. 
\end{thm}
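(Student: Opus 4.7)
The plan is to deduce this statement directly from \cite[Theorem 2.3]{Equi}: the hypotheses needed there are exactly the existence of a full exceptional collection on $X$ compatible with the $G$-action in the sense of \Cref{compatible}, together with the block decomposition produced by $G$-orbits. The argument after \Cref{compatible} already establishes that each $G$-orbit on $\cE$ forms a mutually $\Ext$-orthogonal block and that blocks are semi-orthogonal to one another, so our setup matches the input of Elagin's theorem. Thus the main task is to verify directly that the collection $\cE_A$ displayed in the statement is exceptional, semi-orthogonal, and full in $D^G(X)$.

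First, I would compute $\RHom$ between coinduced objects using the standard adjunction for a finite group acting on an algebraic variety:
\begin{equation*}
\RHom_{D^G(X)}\bigl(\Coind_{H_i}^G(\cE^{(i)}\otimes V),\, \Coind_{H_{i'}}^G(\cE^{(i')}\otimes V')\bigr) \;\cong\; \RHom_{D^{H_i}(X)}\bigl(\cE^{(i)}\otimes V,\; \Res_{H_i}^G \Coind_{H_{i'}}^G(\cE^{(i')}\otimes V')\bigr).
\end{equation*}
A Mackey-type decomposition expresses the right-hand restriction as a sum over $(H_i,H_{i'})$-double cosets, each summand being a coinduction of a twisted sheaf supported on the corresponding translate of the orbit. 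When $i\neq i'$, semi-orthogonality of the block decomposition of $\cE$ kills every such summand in the relevant direction. When $i=i'$ and we pick representatives so that $E_1^{(i)}$ itself appears, the summands coming from cosets different from the identity involve $\RHom$ between distinct sheaves inside the same $G$-orbit, which vanish by the block-orthogonality argument following \Cref{compatible}; only the identity coset contributes.

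For $i=i'$, the surviving term reduces to $\RHom_{D^{H_i}(X)}(\cE^{(i)}\otimes V_j^{(i)},\,\cE^{(i)}\otimes V_{j'}^{(i)})$. Since $E_1^{(i)}$ is exceptional, $\RHom_{D(X)}(E_1^{(i)},E_1^{(i)})=\bC$ concentrated in degree zero, and the residual computation collapses to $\Hom_{H_i}^{\alpha_i,\alpha_i^{-1}}(V_j^{(i)},V_{j'}^{(i)})$ in the category of $\alpha_i^{-1}$-twisted $H_i$-representations, where it equals $\bC\cdot\delta_{jj'}$ by Schur's lemma for the twisted group algebra $\bC^{\alpha_i^{-1}}[H_i]$. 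This simultaneously proves exceptionality of each coinduced object and the semi-orthogonality of the whole collection $\cE_A$ (in the required order, using the linear order inherited from the blocks).

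Finally, fullness: since $\cE$ generates $D^b(X)$ and every object of $\cE$ is a direct summand (after forgetting equivariance) of one of the $\Coind_{H_i}^G(\cE^{(i)}\otimes V_j^{(i)})$, the triangulated subcategory generated by $\cE_A$ in $D^G(X)$ restricts to the whole of $D^b(X)$ under the forgetful functor; together with stability under the $G$-action this forces it to equal $D^G(X)$. Numerically this matches the fact that $\sum_i k_i$ equals the rank of $K_0(D^G(X))$ predicted by the decomposition of $K_0(X)$ into $G$-isotypic components. The step I expect to be the most delicate is the Mackey decomposition combined with the twisted Schur computation, as one has to track the cocycles $\alpha_i$ carefully through coinduction and restriction so that the twisted representation theory of $H_i$ really governs the residual $\Hom$ space; once that is set up cleanly, the rest is formal.
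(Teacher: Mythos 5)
The paper does not prove this theorem at all: it is stated verbatim as a citation of \cite[Theorem 2.3]{Equi} (Elagin's work on equivariant exceptional collections), and the surrounding text makes clear it is being imported as a black box. So there is no in-paper argument to compare your proposal against, and the "paper's proof" is simply the external reference.

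That said, your sketch is a plausible reconstruction of how Elagin's theorem is proved, and it hits the right structural points: the (co)induction/restriction adjunction (note that for finite groups in characteristic zero $\Ind\cong\Coind$, so your formula, which is really the left-adjoint adjunction, is harmless), the Mackey-type double-coset decomposition of $\Res_{H_i}^G\Coind_{H_{i'}}^G$, the use of block orthogonality of the original collection to kill off-diagonal and off-identity-coset contributions, and twisted Schur's lemma to compute the surviving $\RHom$ as $\bC\cdot\delta_{jj'}$. Your fullness argument is slightly loose as written; the cleaner route is that every $M\in D^G(X)$ is a direct summand of $\Ind_{\{1\}}^G\Res M$, and $\Ind_{\{1\}}^G E_j^{(i)}=\Coind_{H_i}^G\bigl(\cE^{(i)}\otimes\bC^{\alpha_i^{-1}}[H_i]\bigr)$ decomposes into the listed objects; but your formulation conveys essentially the same idea. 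Since the paper delegates this result entirely to \cite{Equi}, your work amounts to filling in the external reference rather than paralleling or diverging from an internal proof.
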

Recall that we have defined finite models of a $G$-variety (\Cref{finite model}). A direct consequence of this theorem is the following corollary. 
\begin{cor} \label{categorical finite model}
With the same hypothesis for $X$ and $G$, we have a finite model $Y_\cE$ of $X$ as follows. This set $Y_\cE$ consists of points $e_1^{(1)},..., e_{k_1}^{(1)}, $ $e_1^{(2)},..., e_{k_2}^{(2)}$ $,..., e_1^{(n)},..., e_{k_n}^{(n)}$. Each subset $\mathbb{O}_i= \{e_1^{(i)},..., e_{k_i}^{(i)}\}$ is a $G$-orbit with the stabilizer $H_i$, and the centrally extended structure on this orbit is given by the $2$-cocycle $\alpha_i^{-1}$. We then have a natural bijection between the objects of $\cE_A$ and the classes of simple objects of $\Sh^A(Y_\cE)$.
\end{cor}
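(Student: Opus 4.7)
The plan is to deduce the corollary essentially mechanically from Theorem \ref{main Elagin}, reading off the finite model from the very block-and-stabilizer data that already goes into constructing $\mathcal{E}_A$. The definition of $Y_\cE$ is tautological: its $G$-orbits, stabilizers, and $2$-cocycles are exactly $\{(\mathbb{O}_i, H_i, \alpha_i)\}$, so simple objects of $\Sh^G(Y_\cE)$ are by definition parametrized by pairs consisting of an orbit $\mathbb{O}_i$ and an irreducible $\alpha_i^{-1}$-representation of $H_i$. By Theorem \ref{main Elagin}, these pairs also index $\mathcal{E}_A$, which is a full exceptional collection in $D^G(X)$ and therefore gives a $\mathbb{Z}$-basis of $K_0^G(X)$. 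Matching these bases yields a canonical isomorphism $e_G \colon K_0^G(X) \xrightarrow{\sim} K_0(\Sh^G(Y_\cE))$.

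To promote this to a finite model in the sense of Definition \ref{finite model}, I would repeat the same recipe for each subgroup $A' \subset G$. The collection $\cE$ remains compatible with the restricted $A'$-action (Definition \ref{compatible}), but its block decomposition refines: each $G$-orbit $\mathbb{O}_i$ splits into $A'$-orbits indexed by $A' \backslash G / H_i$, with stabilizers of the form $A' \cap {}^g H_i$. Applying Theorem \ref{main Elagin} to this refined decomposition gives a full exceptional collection $\mathcal{E}_{A'}$ in $D^{A'}(X)$, which in exactly the same way produces the isomorphism $e_{A'} \colon K_0^{A'}(X) \xrightarrow{\sim} K_0(\Sh^{A'}(Y_\cE))$. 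The quotient $G/A'$ acts on both sides by permuting these refined blocks (equivalently, by permuting the $A'$-orbits inside each $\mathbb{O}_i$), and the bijection is manifestly equivariant under this permutation.

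For compatibility with the forgetful maps, consider $A'' \subset A' \subset G$. The underlying forgetful functor $D^{A'}(X) \to D^{A''}(X)$ sends an object of the form $\Coind_{H}^{A'}(\mathcal{E}^{(i)} \otimes V)$, coming from an $A'$-orbit with stabilizer $H$, to $\bigoplus \Coind_{H \cap {}^g A''}^{A''}(\mathcal{E}^{(i),g} \otimes V|_{\mathrm{res}})$, where the sum runs over representatives of the decomposition of the original $A'$-orbit into $A''$-orbits. This is precisely the formula defining the forgetful map $\Sh^{A'}(Y_\cE) \to \Sh^{A''}(Y_\cE)$ on simples, whence the commutative square of Definition \ref{finite model} follows.

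The main obstacle will be making the choices of $2$-cocycles $\alpha_i$ and the isomorphisms $\theta_g \colon E \to g^*E$ that underlie them coherent enough that all of the isomorphisms $e_{A'}$ fit together simultaneously, rather than merely pointwise on isomorphism classes. Since Elagin's construction is canonical up to coboundary, the bijection on simple classes is unambiguous and \emph{this} is what the definition of finite model actually requires, but some care is needed in verifying that restricting the cocycle $\alpha_i$ from $H_i$ down to a subgroup $H_i \cap A'$ agrees with the cocycle obtained by re-running the construction for the smaller group; this will follow because both cocycles classify the same obstruction to linearizing the chosen $\theta_g$ on the sheaf $E_1^{(i)}$.
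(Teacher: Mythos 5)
Your proposal is correct and fills in exactly the details the paper leaves implicit when it states the corollary as "a direct consequence" of Theorem \ref{main Elagin}. The core move --- applying the theorem not just for $G$ but for each subgroup $A' \subset G$, reading off the refined block decomposition via the Mackey double coset decomposition $A' \backslash G / H_i$, matching bases with simple objects of $\Sh^{A'}(Y_\cE)$, and then verifying the forgetful-map squares by Mackey's formula on both sides --- is the intended argument, and you have identified the one genuinely delicate point: the coherence of the cocycle choices. Your resolution is the right one, though it is worth stating it more assertively than you do: once a single system of isomorphisms $\theta_g : E_1^{(i)} \to g^* E_1^{(i)}$, $g \in H_i$, is fixed, the cocycle for any subgroup $A' \cap {}^g H_i$ is \emph{by construction} the appropriate restriction and conjugation of $\alpha_i$ --- there is no separate "re-run" of the construction to compare against --- so the coherence is automatic rather than something that must "follow." The paper's Remark \ref{categorical finite model}$'$s successor addresses the complementary independence-of-choice question (changing $\theta_g$ by scalars changes $\alpha_i$ only by a coboundary, so $Y_\cE$ is well-defined), and combined with your fixed-$\theta$ observation this closes the argument. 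One small caution: the $A/A'$-equivariance clause in Definition \ref{finite model} is used in the paper in situations where $A$ is abelian (typically $A = A_e$), so the action of $G/A'$ on $\Sh^{A'}(Y_\cE)$ that you invoke in general requires either normality of $A'$ or a mild reinterpretation; for the applications in this paper this is not an obstruction, but your write-up should flag it rather than assert the action unconditionally.
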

\begin{rem}
Recall from Section 7.2.2 that the cocycle assigned to a $G$-invariant exceptional object $E$ was defined up to a choice of a set of isomorphisms $\theta_g: E\rightarrow g^*E$, $g\in G$. As we rescale each of these isomorphisms $\theta_g$ by a scalar $b_g$, the induced cocyle is changed by the image of the $1$-coboundary $f: G\rightarrow \mathbb{C}^\times$, $f(g)= b_g$. Thus, once we fix an exceptional collection $\mathcal{E}$ of $X$, the centrally extended set $Y_\cE$ in \Cref{categorical finite model} is independent of the choice of the cocycles $\alpha_i$.
\end{rem}
With this remark, it makes sense to define $Y_\cE$ as the finite model associated with $\cE$.
\begin{defin}
We call $Y_\cE$ defined in \Cref{categorical finite model} the \textit{categorical finite model} of $X$ with respect to the exceptional collection $\mathcal{E}$. 
\end{defin}

\begin{rem}
There is a more general version of \Cref{main Elagin} in which $G$ is a reductive group (see \cite[Theorem 2.12]{Equi}). This theorem requires some slight technical changes for the setting; the readers who are interested can find the details in \cite[Section 2]{Equi}. 

In summary, the author defines the notions of cocycles, twisted representations, and twisted $G$-equivariant sheaves so that they make sense for reductive groups $G$. Then we have a version of \Cref{main Elagin} for $G$ reductive. In particular, the construction of categorical finite models in \Cref{categorical finite model} works well for the case $G$ is reductive, provided that $X$ has a full exceptional collection that is compatible with the action of $G$.

\end{rem}
\subsubsection{Categorical finite models of projective bundles and blow-ups}
\Cref{categorical finite model} gives us a method to produce finite models from exceptional collections that are compatible with group actions. In this subsection, we make use of classical results on derived categories of smooth varieties to construct finite models of equivariant projective bundles and equivariant blow-ups.

Let $A$ be a reductive group. Let $X$ be an $A$-variety. Assume that $D^b(X)$ has a full exceptional collection $\cE$ that is compatible with the action of $A$. Let $Y_\mathcal{E}$ be the corresponding categorical finite model. Let $\cV$ be an $A$-equivariant vector bundle of rank $r$ over $X$. Consider the corresponding projective bundle $\mathbb{P}(\cV)\xrightarrow{\pi} X$.
\begin{lem} \label{projective bundle}
The category $D^b(\mathbb{P}(\cV))$ has a full exceptional collection $\langle\ \mathcal{O}(-r+ 1)_{\mathbb{P}(E)/X}\otimes \pi^*\mathcal{E},..., \mathcal{O}(-1)_{\mathbb{P}(E)/X}\otimes \pi^*\mathcal{E}, \pi^*\mathcal{E} \rangle$. This collection is compatible with the $A$-action on $\mathbb{P}(\cV)$. The corresponding finite model of $\mathbb{P}(E)$ is the disjoint union of $r$ copies $Y_\mathcal{E}$.
\end{lem}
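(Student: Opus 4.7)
\medskip

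\noindent\textbf{Proof proposal.} The plan is to reduce the statement to Orlov's projective bundle formula and then track the $A$-equivariance through the construction. Recall that for a vector bundle $\cV$ of rank $r$ on a smooth variety $X$, Orlov's theorem furnishes a semi-orthogonal decomposition
$$D^b(\mathbb{P}(\cV)) = \langle \pi^*D^b(X)\otimes \cO(-r+1),\ \ldots,\ \pi^*D^b(X)\otimes \cO(-1),\ \pi^*D^b(X)\rangle,$$
in which each piece $\pi^*D^b(X)\otimes\cO(-i)$ is equivalent to $D^b(X)$ via $F\mapsto \pi^*F\otimes\cO(-i)$. First I would plug the full exceptional collection $\cE$ into each block: applying the equivalence, each $\cO(-i)\otimes\pi^*\cE$ is a full exceptional collection of its block, and concatenating them in the order prescribed by the decomposition produces the claimed full exceptional collection in $D^b(\mathbb{P}(\cV))$.

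Next I would check $A$-compatibility. Because $\cV$ is $A$-equivariant, the projection $\pi$ is $A$-equivariant and the tautological line bundle $\cO(-1)_{\mathbb{P}(\cV)/X}$ carries a canonical $A$-equivariant structure; in particular each $\cO(-i)$ is $A$-fixed as an isomorphism class. For any $a\in A$, the commutativity $a^*\pi^* \cong \pi^* a^*$ together with $a^*\cO(-i)\cong \cO(-i)$ gives $a^*(\cO(-i)\otimes\pi^*E)\cong \cO(-i)\otimes \pi^*(a^*E)$. Hence $A$ permutes the objects of the new collection: within each ``twist level'' $\cO(-i)\otimes \pi^*\cE$ the $A$-action mirrors the permutation of $\cE$, and different twist levels are never mixed. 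By the block analysis following \Cref{compatible}, this is exactly the meaning of compatibility.

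It then remains to identify the categorical finite model. Fix an $A$-orbit $\{E^{(j)}_1,\ldots,E^{(j)}_{k_j}\}$ of $\cE$ with stabilizer $H_j$ and cocycle $\alpha_j\in H^2(H_j,\bC^\times)$. For each $0\le i\le r-1$, the set $\{\cO(-i)\otimes \pi^*E^{(j)}_1,\ldots,\cO(-i)\otimes \pi^*E^{(j)}_{k_j}\}$ is an $A$-orbit in the new collection, again with stabilizer $H_j$. The key point is to compare cocycles. Fix isomorphisms $\theta_a\colon E^{(j)}_1\to a^*E^{(j)}_1$ for $a\in H_j$ giving rise to $\alpha_j$; then $\id_{\cO(-i)}\otimes \pi^*\theta_a$ is a compatible system of isomorphisms for $\cO(-i)\otimes\pi^*E^{(j)}_1$, and since $\pi^*$ is a monoidal functor while the canonical $A$-linearization of $\cO(-i)$ has trivial associated cocycle, the resulting cocycle is again $\alpha_j$. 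Consequently \Cref{categorical finite model} produces, for each original orbit of $\cE$, exactly $r$ new orbits of the same stabilizer and the same centrally extended structure, proving that $Y_{\cE_{\mathbb{P}(\cV)}}$ is the disjoint union of $r$ copies of $Y_\cE$.

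The main obstacle I anticipate is the cocycle-matching step: one must verify that the canonical $A$-equivariant structure on $\cO(-1)_{\mathbb{P}(\cV)/X}$, inherited tautologically from that of $\cV$, really does contribute the trivial Schur class when tensored against $\pi^*E^{(j)}_1$. The cleanest way is to observe that $\cO(-1)_{\mathbb{P}(\cV)/X}$ genuinely \emph{is} $A$-equivariant (not merely $A$-invariant), so it represents the zero class in $M(H_j)$ via the pullback $M(A)\to M(H_j)$; tensoring an $\alpha_j$-equivariant sheaf by a genuinely equivariant one preserves the cocycle, which is exactly what is needed.
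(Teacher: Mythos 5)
Your proof is correct and takes essentially the same route as the paper: both invoke Orlov's projective-bundle semi-orthogonal decomposition for the first assertion and deduce the finite-model statement from the construction of categorical finite models (Corollary \ref{categorical finite model}). The paper's proof is terse; your proposal usefully spells out the equivariance of the tautological line bundle and the cocycle-matching argument that the paper leaves implicit.
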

\begin{proof}
The first assertion is a direct consequence of \cite[Corollary 2.7]{projbundle} in which we have a semi-orthogonal decomposition $D^b(\bP(E))= \langle\ \mathcal{O}(-r+ 1)_{\mathbb{P}(E)/X}\otimes \pi^*D^b(X),..., \mathcal{O}(-1)_{\mathbb{P}(E)/X}\otimes \pi^*D^b(X), \pi^*D^b(X) \rangle$. The second statement follows from the construction of the corresponding categorical finite model in \Cref{categorical finite model}.
\end{proof}
A simple example is as follows. Let $V$ be a $r$-dimensional representation of a group $A$, then $A$ naturally acts on the projective space $\bP(V)$. The categorical finite model corresponding to the collection $\langle \cO(1-r),...,\cO(-1), \cO \rangle$ is the set of $r$ points with trivial $A$-action.
\begin{notation}
In Section 8, we will often realize the connected components of $\fix$ as towers of projective bundles over some varieties. We use the notation $(\bP^{i_1},\bP^{i_2},...,\bP^{i_l}, X)$ for a tower of projectizations of vector bundles over $X$. For example, $(\bP^1, \bP^2)$ denotes a projective $\bP^1$-bundle over $\bP^2$; $(\bP^2, \bP^1, X)$ denotes a projective $\bP^2$-bundle over a projective $\bP^1$-bundle over $X$.

In an equivariant setting, when we write $(\bP^{i_1},\bP^{i_2},...,\bP^{i_l}, X)$, it is implicitly understood that each layer of the tower comes from an equivariant vector bundle. A corollary of \Cref{projective bundle} is that we can construct a categorical finite model of $(\bP^{i_1},\bP^{i_2},...,\bP^{i_l}, X)$ by taking $\prod_{j=1}^{l}(i_j+1)$ copies of a categorical finite model of $X$.  
\end{notation}
We proceed to categorical finite models of equivariant blow-ups. Let $Z$ be a smooth subvariety of a smooth projective variety $X$. Let $\tilde{X}$ denote the smooth projective variety obtained by blowing up $X$ along the center $Z$. Consider the Cartesian square
$$
\begin{tikzcd}
\tilde{Z} \arrow{r}{j} \arrow{d}{p} \& \tilde{X} \arrow{d}{\pi}\\
Z \arrow{r}{i} \& X
\end{tikzcd}
$$
We consider an action of a reductive group $A$ on $X$ and assume that $Z$ is stable under this action. Then we have a natural action of $A$ on $\tilde{X}$. The map $\tilde{Z}\xrightarrow{p} Z$ is the projection from an $A$-linearizable projective bundle of rank $r-1$ with $r= \text{Codim}_Z(X)$. 

We assume that both $X$ and $Z$ admit exceptional collections $\mathcal{E}_X, \mathcal{E}_Z$ that are compatible with the actions of $A$. Let $Y_{\mathcal{E}_X}$ and $Y_{\mathcal{E}_Z}$ be the corresponding categorical finite models of $X$ and $Z$.
\begin{lem} \label{finite model blow up}
 The blow-up variety $\tilde{X}$ admits a full exceptional collection $\langle j_*(\mathcal{O}(-r+ 1)_{\tilde{Z}/Z}\otimes p^*\mathcal{E}_Z),...,$ \\ $ j_*(\mathcal{O}(-1)_{\tilde{Z}/Z}\otimes p^*\mathcal{E}_Z), \pi^*\mathcal{E}_X \rangle$. This collection is compatible with the $A$-action on $\tilde{X}$. The corresponding finite model of $\tilde{X}$ is then realized as the disjoint union of $Y_{\mathcal{E}_X}$ and $r- 1$ copies of $Y_{\mathcal{E}_Z}$.
\end{lem}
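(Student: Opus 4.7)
The plan is to invoke Orlov's blow-up formula and then transport the $A$-equivariant structure and the finite model through the standard fully faithful functors that appear in that decomposition.

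First I would apply Orlov's theorem on derived categories of blow-ups, which gives a semi-orthogonal decomposition
$$D^b(\tilde X) = \langle j_*(\mathcal{O}(-r+1)_{\tilde Z/Z}\otimes p^*D^b(Z)),\ \ldots,\ j_*(\mathcal{O}(-1)_{\tilde Z/Z}\otimes p^*D^b(Z)),\ \pi^*D^b(X)\rangle,$$
in which each of the functors $\Phi_k := j_*(\mathcal{O}(-k)_{\tilde Z/Z}\otimes p^*(-))$ for $1\leqslant k\leqslant r-1$ and $\pi^*$ is fully faithful. Substituting the full exceptional collections $\mathcal{E}_Z$ and $\mathcal{E}_X$ into their respective blocks and concatenating in the displayed order produces the claimed full exceptional collection in $D^b(\tilde X)$; full faithfulness ensures that each block remains an exceptional sequence, and the semi-orthogonality across blocks is built into Orlov's decomposition.

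Next I would verify $A$-compatibility. Because $Z\subset X$ is $A$-stable, the entire Cartesian square in the paragraph preceding the lemma is $A$-equivariant, and the line bundle $\mathcal{O}(1)_{\tilde Z/Z}$ inherits a canonical $A$-equivariant structure as the tautological quotient on $\mathbb{P}_Z(\mathcal{N}^*_{Z/X})$. Hence the functors $\pi^*$, $p^*$, $j_*$, and tensoring by $\mathcal{O}(-k)_{\tilde Z/Z}$ all intertwine the $A$-actions on the source and target derived categories. Consequently, for any $a\in A$, an isomorphism $a^*E\cong E$ of an object in $\mathcal{E}_X$ (resp. $\mathcal{E}_Z$) is transported to an isomorphism $a^*(\pi^*E)\cong \pi^*E$ (resp. $a^*(\Phi_k E)\cong \Phi_k E$). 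In particular, the $A$-orbit partition of the new collection is a disjoint union of one copy of the orbit partition of $\mathcal{E}_X$ (from the $\pi^*\mathcal{E}_X$ block) and $r-1$ copies of the orbit partition of $\mathcal{E}_Z$ (one for each $k=1,\ldots,r-1$).

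Finally I would apply \Cref{categorical finite model} to read off the categorical finite model from this compatible collection. For each block, the stabilizer $H_i$ of the first exceptional object is unchanged by $\pi^*$ or $\Phi_k$ since these functors are fully faithful and $A$-equivariant; and the $2$-cocycle $\alpha_i$ is determined by the composition law of chosen isomorphisms $\theta_a\colon E\to a^*E$, which are also transported coherently by $\pi^*$ and $\Phi_k$. Therefore the block associated with $\pi^*\mathcal{E}_X$ contributes exactly the centrally extended $A$-set $Y_{\mathcal{E}_X}$, and each block $\Phi_k\mathcal{E}_Z$ contributes a copy of $Y_{\mathcal{E}_Z}$, yielding the claimed finite model $Y_{\mathcal{E}_X}\sqcup (r-1)\cdot Y_{\mathcal{E}_Z}$.

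The main obstacle I anticipate is the last step: one must check that the choice of $\theta_a$ for $\Phi_k E$ inherited from $E$ really does give the same cocycle $\alpha_i$ as the one on $\mathcal{E}_Z$. This amounts to a compatibility diagram for the canonical $A$-linearisation of $\mathcal{O}(1)_{\tilde Z/Z}$ and the base change isomorphisms for $j_*$ and $p^*$; it is essentially formal but needs to be carried out to ensure that the Schur multiplier class, and not merely the underlying orbit, of each block is preserved.
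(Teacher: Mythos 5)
Your proof takes essentially the same approach as the paper: invoke Orlov's blow-up formula (the paper cites Corollaries 4.3 and 4.4 of the same reference) to get the semi-orthogonal decomposition, observe that the functors are $A$-equivariant since $Z$ is $A$-stable, and read off the finite model via \Cref{categorical finite model}. The paper's own proof is terser — it leaves the $A$-compatibility and the Schur-multiplier bookkeeping implicit — so your flagged obstacle at the end is a legitimate point the paper elides; the resolution is as you indicate (full faithfulness of the $A$-equivariant embedding functors preserves $\End(E)=\bC$ and hence the cocycle scalars), and your write-up fills in a gap the paper takes for granted.
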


\begin{proof}
The first assertion is a direct consequence of Corollaries 4.3 and 4.4 in \cite{projbundle} in which we have a semi-orthogonal decomposition of $D^b(\tilde{X})$ by $\langle j_*(\mathcal{O}(-r+ 1)_{\tilde{Z}/Z}\otimes p^*D^b(Z)),..., j_*(\mathcal{O}(-1)_{\tilde{Z}/Z}\otimes p^* D^b(Z)),$ $\pi^*D^b(X) \rangle$. The second statement follows from the construction of categorical finite models in \Cref{categorical finite model}.
\end{proof}

In Section 8, we have a process of describing the geometry of connected components of $\fix$ as follows. We start with a base variety $X$, then in each step we build a tower of projective bundles or blow-up along a smooth subvariety. Therefore, \Cref{projective bundle} and \Cref{finite model blow up} are crucial in understanding categorical finite models of components of $\fix$. The last ingredient is categorical finite models of base varieties, which are often realized as quadrics and towers of quadric fibrations. This topic is discussed in the next subsection.
\subsection{Spin representations and spinor bundles on quadrics}
This subsection discusses categorical finite models of quadrics and quadric fibrations with respect to suitable actions of orthogonal groups $O_n$. We will explain how nontrivial central extension naturally arises in these cases.

In \Cref{subsect 2.2}, we have mentioned that $O_n$ has a unique nontrivial Schur multiplier and $M(O_n)= \cyclic{2}$. Here, we first interpret the unique Schur multiplier of $O_n$ in terms of representation theory. 
\subsubsection{The spin representations} \label{subsect 7.3.1}
Consider an $n$-dimensional vector space $V$ over $\bC$ and a nondegenerate quadratic form $Q$ on $V$. Let $T(V)$ be the tensor algebra of $V$. Let $I_Q$ be the two-sided ideal of $T(V)$ generated by elements of the form $v\otimes v- Q(v),$ $v\in V$. The \textit{Clifford algebra}, $\text{Cl}_n(V)$, is defined as $T(V)/I_Q$. The dimension of $\Clif_n(V)$ is $2^n$.

When $n$ is even, $\Clif_n(V)$ is isomorphic to the matrix algebra $M_{2^\frac{n}{2}}$. When $n$ is odd, $\Clif_n(V)$ is isomorphic to the direct sum of two matrix algebras $M_{(2^\frac{n-1}{2})}\oplus M_{(2^\frac{n-1}{2})}$. Thus, we can write $\Clif_n(V)$ as $\End(W)$ or $\End(W_1)\oplus \End(W_2)$ for some vector spaces $W, W_1, W_2$. Let $\text{Pin}(V)$ be the group that consists of the elements of $\Clif_n(V)$ that have the form $v_1v_2...v_k$ with $v_i\in V, Q(v_i)= 1$. Then $W$ (or $W_1, W_2$) is an irreducible representation of $\text{Pin}(V)$, we call this the \textit{spin representation}. 

It is well known that $\text{Pin}(V)$ is a double cover of $O(V)$, and the identity component of $\text{Pin}(V)$ is a double cover of $\SO(V)$. This identity component is denoted by $\text{Spin}(V)$, the \textit{spin group}. When $n$ is odd, $W_1$ (isomorphic to $W_2$) is an irreducible representation of $\text{Spin}(V)$. When $n$ is even, we have a direct sum decomposition of $W$ into two irreducible $\text{Spin}(V)$-modules $W= W^+\oplus W^-$. The two representations $W^+$ and $W^{-}$ are called the \textit{half-spin representations}. A good reference with a detailed explanation of these classical results is \cite[Chapter 6]{Goodman_book}.

When $\dim V \geqslant 3$, \text{Spin}$(V)$ is simply connected. Therefore, it serves as the unique nontrivial cover of $SO(V)$. Correspondingly, the spin representation $W_1$ (when $n\geqslant 3$ odd) and the half-spin representation $W^+$ (when $n\geqslant 4$ even) represent the unique nontrivial class of projective representation of $SO(V)$. 

Similarly, the unique nontrivial class of projective representation of $O_n$ is represented by $W$ (when $n\geqslant 2$ even) and by $W_1$ (when $n\geqslant 3$ odd). For the case of $SO_2$, as we discussed in \Cref{subsect 2.2}, $SO_2\cong \bC^\times$ and $M(SO_2)= 1$.

\subsubsection{Smooth quadrics and their categorical finite models}
Consider the variety $\cQ(V)\subset \bP(V)$ that parmeterizes the isotropic lines in $V$ with respect to the quadratic form $Q$. The variety $\cQ(V)$ is a quadric of dimension $n- 2$. When $n\geqslant 3$, on this quadric we have the \textit{spinor bundle(s)}: one vector bundle $\cS$ if $n$ odd, and two vector bundles $\cS^\pm$ when $n$ even. The construction can be found in \cite[Section 4.3]{Kapranov1988} and \cite{Ottaviani1988SpinorBO}. The spinor bundle(s) have the following properties:
\begin{itemize}
    \item When $n$ is even, the collection of bundles $\langle \cS^+(-n+2), \cS^-(-n+2), \cO(-n+3),..., \cO \rangle$ is exceptional, generating the category $D^b(\cQ(V))$.
    \item When $n$ is odd, the collection of bundles $\langle \cS(-n+2), \cO(-n+3),..., \cO \rangle$ is exceptional, generating the category $D^b(\cQ(V))$.
    \item Any automorphism of $\cQ(V)$ either fixes the spinor bundles or exchanges $\cS^+$ and $\cS^-$ (up to isomorphisms). As a consequence, any automorphism of $\cQ(V)$ preserves the two exceptional collections above.
\end{itemize}
The following are some examples that we will use in Sections 8 and 9.
\begin{exa}\label{quadrics}(Categorical finite model of $\cQ(\bC^n)$ with respect to the $O_n$-action)

Letting $n= \dim V$, we have a natural action of $O_n$ on $\cQ(V)$. This $O_n$-action fixes $\cS$ when $n$ is odd, and permutes $\cS^\pm$ when $n$ is even. Taking the global section of the spinor bundle(s), we get the spin representation (the half-spin representations) of $SO_n$; see, e.g. \cite[Pages 17-18]{Addington2013SPINORSA}. Because these representations are not linear representations of $SO_n$, the bundles $\cS$ (when $n$ odd) and $\cS^+ \oplus \cS^-$ (when $n$ even) do not admit $O_n$-equivariant structures. Instead, they have structures of $\alpha$-$O_n$ sheaves, where $\alpha$ is the unique nontrivial element of $M(O_n)$. Thus, we have the following categorical finite models.
\begin{itemize}
    \item Consider $n$ is even, $n= 2m$. The model $Y_{2m}$ of $\cQ(\bC^n)$ has $2m$ points. In these, there are $2m-2$ ordinary points fixed by $O_{2m}$, these points come from $2m-2$ line bundles $\cO(-2m+3),...,\cO(1), \cO$. The two other points corresponding to $\cS^\pm(-2m+2)$ are permuted by $O_{2m}$. These two points are special points, and the Schur multiplier comes from the half-spin representations of $\SO_{2m}$ (two half-spin representations give rise to the same class of projective representations).
    \item Consider $n= 2m-1$. The model $Y_{2m-1}$ of $\cQ(\bC^n)$ has $2m-2$ points, all fixed by $O_{2m-1}$. In these, $2m-3$ of them are ordinary, and one is special. The Schur multiplier of this special point comes from the spin representation of $O_{2m-1}$. 
\end{itemize}
\end{exa}
In Section 8, the base varieties that we use to construct the connected components of $\fix$ are the orthogonal Grassmannians $\OG(k,w)$ for $1\leqslant k\leqslant \frac{2}{2}$ and $w\leqslant 4$. For later use, we list their categorical finite models below.
\begin{exa} \label{small OG} \leavevmode
\begin{enumerate}
    \item The variety $\OG(1,2)= \cQ(\bC^2)$ consists of two points which are permuted by $O_2$. The corresponding finite model is a 2-point orbit.
    \item The variety $\OG(1,3)= \cQ(\bC^3)$ is isomorphic to $\bP^1$. Its categorical finite model consists of a trivial orbit and a special orbit.
    \item The variety $\OG(1,4)= \cQ(\bC^4)$ is isomorphic to $\bP^1\times \bP^1$. Its categorical finite model consists of two trivial points and a special 2-point orbit.
    \item The variety $\OG(2,4)$ is a disjoint union of two $\bP^1$. Its categorical finite model consists of an ordinary 2-point orbit and a special 2-point orbit.
\end{enumerate}
In this context, when we say special orbit, we mean that the attached Schur multipliers are the unique nontrivial Schur multipliers of $O_3$ and of $SO_4$. 
\end{exa}

\subsubsection{Quadric fibrations}
Consider a smooth variety $X$ and a vector bundle $\cE$ of rank $n$ on $X$. Let $Q$ be a fiberwise nondegenerate quadratic form on $\cE$. Let $\cQ(\cE)\subset \bP(\cE)$ be the subvariety of the projective bundle $\mathbb{P}(\cE)\xrightarrow{\pi} X$ that is the vanishing locus of $Q$. Under certain technical assumptions (see (A.1) and (A.2) in \cite[pages 43,44]{böhning2005derived}), we have a semi-orthogonal decomposition of the derived category $D^b(\cQ(\cE))$ into blocks that come from $D^b(X)$. In particular, this is the case when $X$ is a homogeneous space. The following is a consequence of \cite[Theorem 3.2.7]{böhning2005derived}.

\begin{lem} \label{quadric fibration}
Assume that $X$ is a homogeneous space of a Lie group $G$ of classical type. We have a semi-orthogonal decomposition of $D^b(\cQ(\cE))$ as follows $$D^b(\cQ(\cE))=\langle \pi^*(D^b(X))\otimes \cS_{X}^{\pm}(-n+2), \pi^*(D^b(X))\otimes \cO(-n+3), ..., \pi^*(D^b(X))\otimes \cO(0) \rangle$$
in which $\cS_{X}^{\pm}$ are certain relative spinor bundle(s) on $X$ (see \cite[Section 3.2]{böhning2005derived}). Thus, if $X$ has a full exceptional collection, then $\cQ(\cE)$ does.
\end{lem}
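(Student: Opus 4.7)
The plan is to reduce the statement to a direct citation of \cite[Theorem 3.2.7]{böhning2005derived}, after verifying that the technical conditions (A.1) and (A.2) of loc. cit. are satisfied when $X$ is a homogeneous space of a classical Lie group. Recall that these conditions require the existence, on $X$, of a relative spinor bundle $\cS_X$ (or the pair $\cS_X^{\pm}$ in the even-rank case) that witnesses a Morita equivalence between the even part of the Clifford algebra $\mathrm{Cl}_0(\cE,Q)$ and an Azumaya algebra on $X$ whose Brauer class vanishes. Equivalently, one needs the projective bundle of spinors to lift to an honest vector bundle on $X$.

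First, I would verify (A.1)–(A.2) in our setting. Write $X = G/P$ for a parabolic $P\subset G$. The quadratic bundle $(\cE,Q)$ is $G$-equivariant on $X$, so the even Clifford algebra $\mathrm{Cl}_0(\cE,Q)$ is a $G$-equivariant sheaf of algebras, and its associated Brauer class is $G$-equivariant as well. The fiber of this algebra over the base point $eP\in X$ is a representation-theoretic object for $P$, and its class in the Brauer group comes from the obstruction to lifting the relevant projective representation of $P$ to an honest representation. Since $G$ is of classical type, the simply connected cover $\tilde{G}\to G$ contains the relevant spin double cover (this is exactly the content of \Cref{subsect 7.3.1} applied fiberwise), and the pullback of $\cE$ to $\tilde{G}/\tilde{P}$ admits honest spinor bundle(s). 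Descending along the cover, one obtains $\cS_X$ (or $\cS_X^{\pm}$) on $X=G/P$, which is the desired trivialization of the Brauer class.

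Second, with (A.1)–(A.2) verified, \cite[Theorem 3.2.7]{böhning2005derived} provides the claimed semi-orthogonal decomposition
\[ D^b(\cQ(\cE))= \langle \pi^*(D^b(X))\otimes \cS_X^{\pm}(-n+2),\; \pi^*(D^b(X))\otimes \cO(-n+3),\; \ldots,\; \pi^*(D^b(X))\otimes \cO(0) \rangle, \]
where $\cS_X^{\pm}$ is as constructed above. The functors $\pi^{*}(-)\otimes \cO(i)$ and $\pi^{*}(-)\otimes \cS_X^{\pm}(-n+2)$ are fully faithful embeddings of $D^b(X)$, and each of the resulting blocks is generated by twisting a fixed bundle on $\cQ(\cE)$ by pullbacks from $X$, just as in the relative projective bundle formula used in \Cref{projective bundle}.

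Finally, to deduce the statement about exceptional collections, suppose $\cE_X=(F_1,\ldots,F_m)$ is a full exceptional collection of $D^b(X)$. Applying $\pi^{*}$ and tensoring with $\cO(i)$ (resp.\ with $\cS_X^{\pm}(-n+2)$) converts $\cE_X$ into a full exceptional collection of each individual block in the semi-orthogonal decomposition, since these functors are fully faithful. Concatenating the resulting collections in the order dictated by the decomposition produces a full exceptional collection of $D^b(\cQ(\cE))$. The main technical point in the entire argument is the verification of (A.1)–(A.2) for $X=G/P$, i.e.\ the vanishing of the Brauer class of the relative even Clifford algebra; this is where the classical-type hypothesis on $G$ enters, since it guarantees the existence of an ambient spin cover that trivializes the obstruction.
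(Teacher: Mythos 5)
The paper itself offers no proof: it states the lemma as "a consequence of \cite[Theorem 3.2.7]{böhning2005derived}" and asserts, in the sentence preceding the lemma, that the conditions (A.1)--(A.2) of that reference hold when $X$ is a homogeneous space. Your proposal follows the same strategy (cite the theorem, verify its hypotheses), so in outline the two approaches coincide, and your third paragraph --- pulling back an exceptional collection of $D^b(X)$ through the fully faithful block embeddings and concatenating --- is exactly the argument the paper uses implicitly and in \Cref{projective bundle}.

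The place where your write-up is genuinely shakier than the underlying claim is the verification of (A.1)--(A.2). The phrase "the pullback of $\cE$ to $\tilde{G}/\tilde{P}$ admits honest spinor bundles. Descending along the cover, one obtains $\cS_X$ on $X=G/P$" is vacuous as written: since $\ker(\tilde{G}\to G)$ is central it lies in $\tilde{P}$, so $\tilde{G}/\tilde{P} = G/P$ and there is no cover of varieties to descend along. What you should be saying instead is one of the following. Either (i) pass to $\tilde{G}$-equivariant sheaves on $X$ (i.e.\ replace representations of $P$ by representations of $\tilde{P}$) and argue that the map $\tilde{P}\to O(\cE_x)$ giving the structure group of the quadratic bundle lifts to $\mathrm{Pin}$ or $\mathrm{Spin}$ --- this is not automatic from "$G$ classical" alone and needs to be checked for the specific $\cE$ at hand (it amounts to $w_2(\cE)=0$, or to the vanishing of the obstruction class of the projective $\tilde{P}$-representation on the spin module of $\cE_x$); or (ii) more efficiently, bypass the representation theory entirely and observe that $G/P$ is a smooth projective rational variety, so its Brauer group vanishes (Brauer groups are birational invariants of smooth projective varieties and $\mathrm{Br}(\mathbb{A}^n)=0$), hence the even Clifford algebra $\mathrm{Cl}_0(\cE,Q)$ --- or, in the even-rank case, its sheafification over the degree-$2$ étale center --- is split, which is precisely what (A.1)--(A.2) require. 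Option (ii) is cleaner and sidesteps the classical-type hypothesis on $G$ in favor of rationality of $X$; option (i) is closer to the spirit of \Cref{subsect 7.3.1} that you invoke, but needs the lifting step spelled out rather than asserted. Either way your conclusion is right and matches the paper; the gap is the mislabeled "cover" and the missing justification that the structure group lift (equivalently, the Brauer obstruction) actually vanishes.
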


As applications of this lemma, we work out categorical finite models of partial flag varieties for $O_n$.
\begin{exa} \label{finite model partial flag}
Let $V$ be $\bC^n$ with the $O_n$-action as in the context of \Cref{quadrics}. For $k\leqslant \frac{n}{2}$, let $X_{[1,k]}$ be the variety that parameterizes the partial isotropic flags $V^1\subset V^2\subset... \subset V^k\subset V$ with $\dim V^i =i$ for $1\leqslant i \leqslant k$. 

Consider the projection $\pi_i: X_{[1,i]}\rightarrow X_{[1,i-1]}$ that forgets the space $V^i$. Let $\cV_i$ be the vector bundle with fiber $(V^{i-1})^\perp/V^{i-1}$ over each partial flag of $X_{[1,i-1]}$. Then $\pi_i$ is realized as a quadric fibration in the projective bundle $\bP(\cV_i)$. Next, we consider the chain of projections $X_{[1,k]}\rightarrow X_{[1,k-1]}\rightarrow... \rightarrow X_{[1]}$. This realizes $X_{[1,k]}$ as a tower of quadric fibrations over $X_{[1]} =\cQ(\bC^n)$. Applying \Cref{quadric fibration}, we can inductively construct an $O_n$-finite model $Y_{[1,k]}$ of $X_{[1,k]}$.

Next, we claim that $Y_{[1,k]}$ is the unique $O_n$-finite model of $X_{[1,k]}$. Consider an $O_n$-finite model $Y$ of $X_{[1,k]}$. An $O_n$-orbit with finite cardinality can only have $1$ or $2$ points. Moreover, $M(O_n)= \cyclic{2}$ and $M(SO_n)= \cyclic{2}$, so $Y$ can only have at most four types of orbit. For $i= 1,2$, we write $\bO_i$ and $\bO_i^{s}$ for the ordinary and special $O_n$ orbits with $i$ points. Write $a_i$ and $a_i^{s}$ for the corresponding multiplicities of these orbits in $Y$. Consider the finite subgroup $A_n\subset O_n$ of diagonal matrices with entries $\pm 1$. As shown in \Cref{Schur}, the $O_n$-centrally extended structure of $Y$ is determined by its $A_n$-centrally extended structure. Hence, the uniqueness of $O_n$-finite models will follow from the uniqueness of $A_n$-finite models. This is proved as follows.

With the action of $A_n$ on $X_{[1,k]}$, we have a corresponding set of numerical invariants for $X_{[1,k]}$ as in \Cref{subsect 3.2}. Write $1$ for $\diag(1,1,...,1)$, $z_1$ for $\diag(-1,1,1,...,1)$ and $z_{12}$ for $\diag(-1,-1,1,...,1)$ in $A_n$. Then in terms of the multiplicities of the orbits, we have a system of linear equations:
$$\left\{ 
\begin{aligned}
    F_{1}^{A_n}= a_1+ a_1^{s}+ a_2+ a_2^{s}\\
    F_{z_1}^{A_n}= a_1\\
    F_{z_1}^{\{1,z_1\}}= a_1+ a_1^{s}\\
    F_{z_{12}}= a_1+ a_2
\end{aligned} 
\right.
$$
This system has unique solution. Therefore, the $A_n$-finite model of $X_{[1,k]}$ is unique (similar to the proof of \Cref{unique}).

\end{exa}
\begin{rem} \label{finite model OG}
    The existence of a full exceptional collection is not guaranteed for general partial flag varieties in types B, C, and D. However, we can obtain $O_n$-finite models of these varieties indirectly as follows. In the context of \Cref{finite model partial flag}, write $X_{i_1,...,i_l}$ for the variety of partial isotropic flags $V^{i_1}\subset...\subset V^{i_l}\subset V$ with $\dim V^{i_j}= i_j$. We think of $X_{i_1,...,i_l}$ as the base and realize $X_{[1,i_l]}$ as a tower of projective bundles over $X_{i_1,...,i_l}$ by maps that forget the spaces $V^j$, $j\notin \{i_1,...,i_l\}$. This suggests that if $X_{i_1,...,i_l}$ admits a finite model $Y_{i_1,...,i_l}$, then we have a finite model $Y_{[1,i_l]}$ of $X_{[1,i_l]}$ consisting of identical copies of $Y_{i_1,...,i_l}$. Conversely, provided that we have a unique finite model $Y_{[1,i_l]}$ of $X_{[1, i_l]}$ from \Cref{finite model partial flag}, we can obtain the (only possible) finite model of $X_{i_1,...,i_l}$ by dividing the multiplicities of orbits in $Y_{[1,i_l]}$ by a constant. One can work out the details to see that this constant is $N= \prod_{j=1}^l (i_j - i_{j-1}- 1)!$ and that the multiplicities of orbits in $Y_{[1,i_l]}$ are divisible by $N$. Thus, we obtain a (unique) finite model of $X_{i_1,...,i_l}$ in this way.
\end{rem}
\begin{exa}
    For $n=4$ and $k=2$, the variety $X_{[1,2]}$ is a disjoint union of two copies of $\OG(1,\bC^4)\cong \bP^1\times \bP^1$. Its categorical finite model consists of two $2$-point ordinary orbits and two $2$-point special orbits. Now $X_{[1,2]}$ has another realization as a $\bP^1$-bundle over $\OG(2,4)$, so it gives another explanation for the finite model of $\OG(2,4)$ we mentioned in \Cref{quadrics}.
\end{exa}
\subsection{Main results} \label{subsect 7.4}
We consider a nilpotent element $e\in \fsp_{2n}$ ($\fso_{2n}$, or $\fso_{2n+1}$) with the associated partition $\lambda$. Let $(\fix)_\alpha$ denote an $A_e$-orbit of connected components of $\fix$.
\begin{thm} \label{main}
\begin{enumerate}
We consider the case where $\lambda$ has up to $4$ parts.
    \item There exists a full exceptional collection in $D^b((\fix)_\alpha)$ that is compatible with the action of $A_e$. The corresponding finite model $Y_\alpha$ is the unique finite model of $(\fix)_\alpha$.
    \item Taking the union over $A_e$-orbits of connected components, we get a finite model $\bigcup_\alpha Y_\alpha$ of $\fix$. This is the unique finite model of $\fix$ and $\spr$. Thus, $Y_e= \bigcup_\alpha Y_\alpha$ as $A_e$-centrally extended sets. 
    \item The conclusions of Parts 1 and 2 hold if we replace $A_e$ by $Q_e$.
\end{enumerate}
\end{thm}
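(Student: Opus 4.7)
The plan is to combine the geometric descriptions of the components $(\fix)_\alpha$ promised in Section 8 with the categorical machinery of Section 7, and then identify the resulting categorical finite models with $Y_e$ via the numerical uniqueness theorem \Cref{unique}.

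For the existence part of (1), I proceed by cases on the number of parts of $\lambda$. When $\lambda$ has two parts, \Cref{2 row} realizes each $(\fix)_\alpha$ as a tower of $\bP^1$-bundles, and iterated application of \Cref{projective bundle} starting from a point yields a full exceptional collection compatible with $A_e$. When $\lambda$ has three or four parts, \Cref{geometric 3 rows} and \Cref{geometric main 4 rows} place $(\fix)_\alpha$ in the collections $\cC_3$ or $\cC_4$: it is obtained from a base variety (an orthogonal Grassmannian $\OG(k,w)$ with $w\leqslant 4$, or a small quadric fibration over one) by a finite sequence of $A_e$-equivariant projectivizations and blow-ups along smooth $A_e$-stable centers. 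I first equip the base variety with a compatible full exceptional collection using \Cref{small OG}, \Cref{finite model partial flag}, and \Cref{quadric fibration}; then I propagate it up the tower by alternately invoking \Cref{projective bundle} at each projective-bundle stage and \Cref{finite model blow up} at each blow-up stage. Compatibility with $A_e$ is preserved at each step because all the bundles and centers arising in Section~8 are $A_e$-equivariant. \Cref{categorical finite model} then produces the categorical finite model $Y_\alpha$.

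For (2), the disjoint union $Y := \bigsqcup_\alpha Y_\alpha$ is automatically a finite model of $\fix = \bigsqcup_\alpha (\fix)_\alpha$. By \Cref{same numerics}, the invariants $\{F_a^{A'}, S_i^{A'}\}$ of $\fix$ and $\spr$ agree for every $A' \subset A_e$, so $Y$ is also an $A_e$-finite model of $\spr$ in the sense of \Cref{finite model}. By \Cref{unique}, the finite model of $\spr$ is unique, hence $Y = Y_e$ as $A_e$-centrally extended sets. The uniqueness statement in (1) is deduced similarly: for any other finite model $Y'_\alpha$ of $(\fix)_\alpha$, the union $\bigsqcup_\alpha Y'_\alpha$ is again a finite model of $\spr$ and hence equals $Y_e$; to match summands index by index, I use the fact that the components $(\fix)_\alpha$ are distinguished by their $T_e$-weight tuples (together with the $A_e$-action on these tuples), and this extra grading is visible on the $K$-theoretic side via pushforward from connected components, forcing $Y'_\alpha = Y_\alpha$.

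For (3), I observe that every ingredient in the construction is already $Q_e$-equivariant: the Section~8 descriptions use $Q_e$-equivariant bundles and centers, and the base varieties admit $Q_e$-compatible categorical finite models (\Cref{small OG}, \Cref{finite model partial flag}, \Cref{finite model OG}), while \Cref{projective bundle}, \Cref{finite model blow up}, and \Cref{quadric fibration} are formulated for arbitrary reductive groups. Hence the full exceptional collection and the associated $Y_\alpha$ carry natural $Q_e$-centrally extended structures; uniqueness as a $Q_e$-finite model then follows from the injectivity of $f_y^*\colon M(Q_y)\rightarrow M(A_y)$ (\Cref{Schur}) together with \Cref{Ae determines Qe}. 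The step I expect to be most delicate is the componentwise uniqueness in (1): global uniqueness of $Y_e$ only gives $\bigsqcup_\alpha Y'_\alpha = \bigsqcup_\alpha Y_\alpha$ a priori, and matching the two decompositions requires using the $T_e$-weight labels of the components as an extra piece of structure not seen by the $A_e$-action alone.
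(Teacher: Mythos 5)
Your overall strategy matches the paper's: existence comes from Section~8's description of each $(\fix)_\alpha$ as a variety in $\cC_3$ or $\cC_4$ (a tower of $A_e$-equivariant projective bundles and blow-ups over a small orthogonal Grassmannian), fed through the Section~7 machinery (\Cref{projective bundle}, \Cref{finite model blow up}, \Cref{categorical finite model}); uniqueness is pulled out of \Cref{unique} and \Cref{same finite model}; and Part~3 is the $Q_e$-to-$A_e$ reduction of Section~2. That is exactly how the paper's proof is organized, and you correctly assembled the ingredients.

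Two places deserve comment. First, for the claim that $\bigsqcup_\alpha Y_\alpha$ is a finite model of $\spr$, you invoked only \Cref{same numerics}, but the definition of finite model (\Cref{finite model}) requires actual isomorphisms of equivariant $K$-groups commuting with forgetful maps, not merely equality of the numbers $\{F_a^{A'}, S_i^{A'}\}$. The correct reference is \Cref{same finite model}, whose stronger statement (``any $A_e$-finite model of $\spr$ is an $A_e$-finite model of $\fix$ and vice versa'') rests on the integral localization theorem of Section~6.1, not on the numerical corollary alone.

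Second, you flagged the componentwise uniqueness in Part~1 as the delicate point and proposed a $T_e$-weight grading to match the summands of $\bigsqcup_\alpha Y'_\alpha$ with those of $\bigsqcup_\alpha Y_\alpha$. This does not quite close the gap: the $T_e$-weight tuple labels the varieties $(\fix)_\alpha$, not the abstract centrally extended sets $Y'_\alpha$, and the equality $\bigsqcup_\alpha Y'_\alpha = Y_e$ coming from \Cref{unique} is an isomorphism of centrally extended sets that need not respect your proposed grading. The cleaner route—which is what the paper's citation of \Cref{unique} is implicitly doing per component—is to rerun the numerical argument of Sections~5.2--5.3 directly on $(\fix)_\alpha$: since $Y'_\alpha$ shares the invariants $\{F_a^{A'}, S_i^{A'}\}$ with $(\fix)_\alpha$ and hence with $Y_\alpha$, the quantities of \Cref{dimensions from F_i} (which are formulas in the $F_a^{A'}$, valid for any centrally extended set) agree, so the same orbit types are forbidden by \Cref{vanish}, and the same linear system in the remaining multiplicities has the same unique solution. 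This gives $Y'_\alpha = Y_\alpha$ without any auxiliary grading.
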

\begin{proof}
The existence of full exceptional collections is proved in Section 8. The uniqueness of the finite models of $\fix$ and $\spr$ is explained in \Cref{unique} and \Cref{same finite model}. Then the two finite models $Y_e$ and $\bigcup_\alpha Y_\alpha$ must coincide, and Part 2 of the theorem follows. For Part 3, the fact that the $Q_e$-centrally extended structure of $Y_e$ can be recovered from its $A_e$-centrally extended structure is discussed in Section 2.
\end{proof}

The existence of a full exceptional collection has further application in the study of $D^b(\fix\times \fix)$. In the following, we construct a basis of $K_0^{Q_e}(\fix\times \fix)$ and an isomorphism of based algebras $K_0(\Sh^{Q_e}(Y_e\times Y_e))\cong $  $K_0^{Q_e}(\fix\times \fix)$.

Consider a reductive group $A$ and an $A$-variety $X$. Assume that $X$ has a full exceptional collection $\cE$ that is compatible with the action of $A$ (see \Cref{compatible}). In the notation of \Cref{dual is compatible}, let $\mathcal{E}^\vee= \{{E_j^{(i)}}^\vee\}$ be the right dual collection of $\cE$. We showed in \Cref{dual is compatible} that $\mathcal{E}^\vee$ is compatible with the action of $A$. Next, we claim that the collection $\mathcal{E}^*$ obtained by applying the functor $\RHom(*, \mathcal{O}_X)$ to objects in $\mathcal{E}$ gives another $A$-compatible exceptional collection of $X$. Indeed,
$$\RHom(\RHom(\mathcal{F}, \mathcal{O}_X), \RHom(\mathcal{G}, \mathcal{O}_X))=\RHom(\mathcal{O}_X, \RHom(\mathcal{G}, \mathcal{O}_X)\otimes \mathcal{F})=\RHom(\mathcal{G}, \mathcal{F}).$$ 
Because $A$ permutes objects of $\cE$, $A$ permutes objects of $\cE^*$. We write ${E_{j}^{(i)}}^*$ for the sheaf $\RHom(E_{j}^{(i)}, \mathcal{O}_X)$ in $\mathcal{E}^*$. Now, according to \cite[Corollary 3.1]{basic}, we have a full exceptional collection $\mathcal{E}^\vee \boxtimes \mathcal{E}^*$ of $D^b(X\times X)$ consisting of objects ${E^{(i)}_j}^\vee \boxtimes {E^{(i')}_{j'}}^*$. And since $A$ permutes objects of $\mathcal{E}^\vee$ and $\mathcal{E}^*$, we have an induced action of $A$ on the collection $\mathcal{E}^\vee \boxtimes \mathcal{E}^*$. This verifies the compatibility of the $A$-action for the collection $\mathcal{E}^\vee \boxtimes \mathcal{E}^*$.

Now, we describe how \Cref{main Elagin} applies to the collection $\mathcal{E}^\vee \otimes \mathcal{E}^*$. Let $E^{(ij)}_{l_ih_j}$ denote ${E^{(i)}_{l_i}}^\vee\boxtimes {E^{(j)}_{h_j}}^*$. The stabilizer of $E^{(ij)}_{l_ih_j}$ in $A$ is $H_{ij}:= H_i\cap H_j$. Consider an $A$-orbit of some sheaf in the collection $E^{(ij)}_{l_ih_j}$. Let $\alpha_{ij}$ be the cocycle of $H_{ij}$ obtained by taking the product of two cocycles $\alpha_i|_{H_{ij}}$ and $\alpha_j^{-1}|_{H_{ij}}$. Make $E^{(ij)}_{l_ih_j}$ an $\alpha_{ij}$-$H_{ij}$-sheaf with respect to the cocycle $\alpha_{ij}$. Let $\mathcal{E}_{l_ih_j}^{(ij)}$ denote this $\alpha_{ij}$-$H_{ij}$-sheaf. Let $W_1,...,W_{m_{ij}}$ denote the set of irreducible $\alpha_{ij}^{-1}$-representations of $H_{ij}$. From \Cref{main Elagin}, the collection $\{ \Coind_{H_{ij}}^A(\mathcal{E}_{l_ih_j}^{(ij)}\otimes W_{1})$,..., $\Coind_{H_{ij}}^A(\mathcal{E}_{l_ih_j}^{(ij)}\otimes W_{m_{ij}}) \}$ is a full exceptional collection of the equivariant derived category $D^{b,A}(X\times X)$. Write $(\mathcal{E}^\vee \otimes \mathcal{E}^*)_A$ for this collection. We obtain a basis of $K_0^A(X\times X)$ by taking the classes of the equivariant sheaves in $(\mathcal{E}^\vee \otimes \mathcal{E}^*)_A$ in the equivariant Grothendieck group.

Let $Y$ be the categorical finite model of $X$ with respect to the collection $\cE^\vee$. We form the set $Y\times Y$, this finite set has an $A$-centrally extended structure induced by $Y$'s (see \Cref{finite model of the square set}). The abelian group $K_0(\Sh^A(Y\times Y))$ has an algebra structure over $\bZ$ defined by twisted convolutions \cite[Section 5]{bezrukavnikov2001tensor}. On the coherent side, we have an algebra structure on $K_0^A(X\times X)$ defined by convolution in equivariant K-theory.
\begin{pro} \label{Xi based proof}
The two $\mathbb{Z}$-algebras $K_0^A(X\times X)$ and $K_0(\Sh^A(Y\times Y))$ are isomorphic as based algebras. 
\end{pro}
\begin{proof}

By \cite[Section 4.2]{bezrukavnikov2001tensor}, the category $\Sh^A(Y)$ has the structure of a module category over the category $Rep(A)$. And $\Sh^{A}(Y\times Y)$ is identified with $Fun_A(Y, Y)$, the category of module functors from $\Sh^A(Y)$ to itself. Now, the algebra $K_0^{A}(X\times X)$ acts on $K_0^{A}(X)= K_0(\Sh^A(Y))$ by the convolution action. Therefore, each equivariant sheaf $\cF$ gives us a map $K_0(\Sh^A(Y))\rightarrow K_0(\Sh^A(Y))$. Next, for each equivariant sheaf $\cF$ in the collection $(\mathcal{E}^\vee \otimes \mathcal{E}^*)_A$, we have a corresponding module functor $\cF'\in Fun_A(Y, Y)$ as follows. 

First, on the level of objects, we want $\cF$ and $\cF'$ to give us the same induced morphism $K_0(\Sh^A(Y))\rightarrow K_0(\Sh^A(Y))$. Consider the general form of $\cF$ as $\Coind_{H_{ij}}^A(\mathcal{E}_{l_ih_j}^{(ij)}\otimes W_{l})$. On the nonequivariant level, the convolution action with the class of $\mathcal{E}_{l_ih_j}^{(ij)}= {E^{(i)}_{l_i}}^\vee\boxtimes {E^{(j)}_{h_j}}^*$ sends the collection $\cE^\vee$ to itself. In particular, let $c_{j,h_j}$ be the position of $E^{(j)}_{h_j}$ in the collection $\cE$ (from the left). By (\ref{property of dual collection}), we have
\begin{equation*} 
    [\mathcal{E}_{l_ih_j}^{(ij)}* {E^{(i')}_{l_{i'}}}^\vee]= 
    \begin{cases}
(-1)^{c_{j,h_j}-1}[{E^{(i)}_{l_i}}^\vee] \, \text{ if } j+ i'=n+1 \, \text{ and } \, h_{j}+ l_{i'}= k_j\\
0 \text{ otherwise. }
\end{cases}
\end{equation*}
Hence, on the equivariant level, the convolution with $\cF$ sends the collection $\cE^\vee_A$ to itself. Recall from the construction in \Cref{categorical finite model} that the objects of the collection $\cE^\vee _A$ can be realized as simple objects in $\Sh^A(Y)$. The category $\Sh^A(Y)$ is semisimple, so the map defined for simple objects gives us a map $\cF': \Sh^A(Y)\rightarrow \Sh^A(Y)$. As $\cF'$ is defined by convolution action, it is a functor, and there is a natural module functor structure of $\cF'$ coming from the equivariant structure of $\cF$ (see, e.g., \cite[Section 5]{bezrukavnikov2001tensor}). 

Taking the corresponding classes in K-groups of $\cF$ and $\cF'$, we have an algebra homomorphism 
$$c: K_0^{A}(X\times X)\rightarrow K_0(Fun_A(Y))= K_0(\Sh^A(Y\times Y))$$ Explicity, we have $c([\Coind_{H_{ij}}^A(\mathcal{E}_{l_ih_j}^{(ij)}\otimes W_{l_{ij}})])$ $= (-1)^{c_{j,h_j}-1}[\Coind_{H_{ij}}^A(W_{p_{ij}})]$. Here $W_{p_{ij}}$ is a projective representation supported on $(e^{(i)}_{l_i},e^{(j)}_{h_j})$ in $Y\times Y$ with respect to the cocycle $\alpha_i^{-1}\alpha_j$. As $c$ maps a basis of $K_0^A(X\times X)$ to a basis of $K_0(\Sh^A(Y\times Y))$, it is an isomorphism of based $R(A)$-algebras. 

\end{proof}

\section{The geometry of $\fix$}
Consider a nilpotent element $e\in \fsp_{2n}$ with associated partition having up to $4$ rows. The primary purpose of this section is to study the connected components of the variety $\fix$ defined in Section 6. Throughout the section, we will introduce various morphisms from the connected components of $\fix$ to some connected components of $\cB_{e'}^{gr}$, the fixed-point loci of possibly different Springer fibers. These morphisms, called reduction maps, are projections from projective bundles or blow-up morphisms along smooth subvarieties. Furthermore, we will see that the centrals of the blow-ups are isomorphic to some connected components of $\cB_{e''}^{gr}$ for some $e''$. This inductive process will give us the proof of \Cref{main}.

We recall the setting and study some basic properties of the connected components of $\fix$ in the first two subsections. The third and fourth subsections are devoted to general results about reduction maps. In the last two subsections, we describe the geometry of $\fix$ in detail when the partitions of $e$ have $3$ and $4$ rows.

\subsection{Setting} \label{new setting} 
We keep most of the notation from \Cref{subsect 2.1} and make a minor modification to the indexes of the basis vectors of $V_\lambda$. We consider a nilpotent element $e\in \fsp_{2n}= \fsp(V_\lambda)$ with the associated partition $ \lambda= (2x_1,2x_2,...,2x_k)$ with $x_1\geqslant x_2...\geqslant x_k$. We write the transpose partition of $\lambda^t$ in the form $(y_1,y_{-1}, y_2, y_{-2},$ $...,y_M, y_{-M})$  where $y_i = y_{-i}$ and $y_1\geqslant y_2...\geqslant y_M$. As in \Cref{subsect 2.1}, we have a direct sum decomposition $V= \oplus_{i=1}^k V^i$ so that $e$ acts on each $V^i$ as a single Jordan block. Furthermore, we can find an $\fsl_2$-triple $(e,h,f)\subset \fsp(V_\lambda)$ such that each $V^i$ is a simple $\fsl_2$-submodule of $V_\lambda$. Write $T_e$ for the image of the embedding $\bC^\times \rightarrow \Sp(V_\lambda)$ sending $t$ to $t^h$. Recall that $\fix$ is the $T_e$-fixed point locus of $\spr$.

The action of $T_e$ on $V_\lambda$ induces the weight space decomposition $V_\lambda= \oplus_{j= 1-M}^{M} V_{2j-1}$ (the number $2M$ is the number of parts in the dual partition $\lambda^t$). The intersection $V^i\cap V_{2j-1}$ has dimension at most $1$ for $1\leqslant i\leqslant k$ and $1-M\leqslant j\leqslant M$. If the dimension is $1$, we pick a nonzero vector $v_{2j-1}^{i}$ in the intersection. The set of vectors $\{v_{2j-1}^{i}\}$ gives us a basis of $V_\lambda$. We can rescale the vectors of this basis to have $ev_{2j-1}^{i}= v_{2j+1}^{i}$. This is precisely the basis we have chosen in \Cref{subsect 2.1} with a slight change of lower indexes to record the weights of $T_e$. 

\begin{exa} \label{example basis}
Let $e= N_{(2,6,6,8)}$. 
The action of $T_e$ on $\mathcal{B}_{(2,6,6,8)}$ induces a weight space decomposition $V_{(2,6,6,8)}= \oplus_{i= -3}^4 V_{2i-1}$. The diagram below illustrates the actions of $e$ and $T_e$ on the basis we have chosen.
$$\ytableausetup{centertableaux, boxsize= 3em}
\begin{ytableau}
\none & \none&\none &v_{1}^{4} &v_{-1}^{4} &\none   &\none &\none \\
\none & v_{5}^{3} &v_{3}^{3} &v_{1}^{3} &v_{-1}^{3} &v_{-3}^{3}   &v_{-5}^{3} &\none\\
\none & v_{5}^{2} &v_{3}^{2} &v_{1}^{2} &v_{-1}^{2} &v_{-3}^{2}   &v_{-5}^{2} &\none\\
 v_{7}^{1} & v_{5}^{1} &v_{3}^{1} &v_{1}^{1} &v_{-1}^{1} &v_{-3}^{1}   &v_{-5}^{1} &v_{-7}^{1}\\
\end{ytableau}\\
$$
The action of $e$ moves the vector in a box to its left.  
\end{exa} 
The partition of $e$ can be read from the sizes of the rows of the diagram. The dimensions of the $T_e$-weight spaces are given by the sizes of the columns. Conversely, given a partition $2x_1+...+ 2x_k= 2n$ with $x_1\geqslant x_2\geqslant...\geqslant x_k$, we have a pyramid with $k$ rows that have sizes $2x_1,...,2x_k$ from bottom to top. The boxes are labeled with $v_{2j-1, i}$ accordingly.
\begin{defin}[Basis diagram] \label{basis diagram}
The tableaux obtained by the above process is called the basis diagram of $V_\lambda$.
\end{defin}
\begin{rem}
Given a basis diagram, we can reconstruct a pair $(V_\lambda, e\in Sp(V_\lambda))$ with the basis and the action of $e$ described above. Later, it will be convenient to refer to a basis diagram by the sizes of its column, so we also write $\mathcal{B}_{(w_{2M-1},...,w_{1-2M})^\intercal}$ for $\mathcal{B}_e$ in which $w_{2j-1}$ is the dimension of $V_{2j-1}$ and the superscript $\intercal$ stands for transpose. For example, for $e= N_{(2,6,6,8)}$ as above, we have two ways to write the Springer fiber over $e$: $\mathcal{B}_{(2,6,6,8)}$ and $\mathcal{B}_{(1,3,3,4,4,3,3,1)^\intercal}$. Note that in this notation we do not have the standard order for the parts of $\lambda^\intercal$. 
\end{rem} 

Consider an $\fsl_2$-triple $(e,h,f)\subset \fg$ and the Slodowy slice $S_e:= (e+ Z_\fg(f))\cap \cN$. Let $\pi: \tilde{\cN}\rightarrow \cN$ be the Springer resolution and let $\tilde{S}_e$ be the preimage $\pi^{-1}(S_e)$. The torus $T_e$ acts on $\tilde{S}_e$, and $\fix$ can be realized as the $T_e$-fixed point locus of $\tilde{S}_e$. Since the variety $\tilde{S}_e$ is smooth, $\fix$ is smooth. The next section describes its connected components.

\subsection{Connected components of $\fix$}
\subsubsection{Associated tuples and standard tuples}
An element of $\mathcal{B}_e$ is an $e$-stable maximal isotropic flag $U^\bullet$ of $V_\lambda$. We represent a flag $U^\bullet$ by a chain of isotropic subspaces $U^1\subset U^2\subset....\subset U^n= U$ or a sequence of vectors $(u_1,u_2,...,u_n)$ so that $U^i=\text{Span}(u_1,...,u_i)$. A flag $U^\bullet \in \fix$ is $T_e$-stable, so it can be represented by a sequence $(u_1,...,u_i)$ with each $u_i$ belonging to some $V_{2j-1}$. 
\begin{notation} \label{flag by vectors}
    In this paper, when we regard an element of $\fix$ as a sequence of vectors $(u_i)$, it is automatically assumed that $u_i$ are eigenvectors of the $T_e$-action.
\end{notation}

Let $d_{i}= \dim (V_{2i-1}\cap U)$. Each flag $(u_1,...,u_n)$ gives rise to a tuple of $n$ numbers in which there are $d_{i}$ numbers $2i-1$. This is done by substituting the vector $u_j$ in the sequence by the number $2i-1$ if $u_j\in V_{2i-1}$. It is clear that two flags with different associated tuples lie in two different connected components of $\fix$.

Conversely, for each tuple $\alpha= (p_1,...,p_n)$ of $n$ weights (not necessarily distinct) of the $T_e$-action on $V_\lambda$, let $(\fix)_\alpha$ denote the subvariety of isotropic flags in $\fix$ that can be represented by $(u_1,...,u_{n})$ with $u_i\in V_{p_{i}}$. We call $(\fix)_\alpha$ the subvariety of $\fix$ associated with $\alpha$. Then $(\fix)_\alpha$ is empty or a union of connected components of $\fix$. We now give a realization of $(\fix)_\alpha$ as a closed subvariety of the product of Grassmannians.

\begin{rem}[An alternative description of $(\fix)_\alpha$] \label{alternative}
Fix an $n$-tuple of $T_e$-weights $\alpha= (p_1,...,p_n)$. Recall that we write $w_i$ for $\dim V_i$, and write $d_{i}$ for the multiplicities of the weights $i$ in the tuple $\alpha= (p_1,...,p_n)$. A flag $U^\bullet \in (\fix)_\alpha$ can be represented by $(u_1,...,u_n)$ with $u_i\in V_{p_i}$. For each weight $i$ of $T_e$, we take the intersection of $U^\bullet$ with $V_i$ to get a partial flag $W_i^1\subset...W_i^{d_i}\subset V_i$. The flag $U^\bullet$ is then completely recovered from the set of subspaces $\{W_{i}^j= V_i\cap U^j\}_{i,j}$. Thus, we obtain an embedding of $(\fix)_\alpha$ into the product of Grassmannians $Gr^{\lambda}_{\alpha}:= \prod_{i= 1-2M}^{2M-1} \prod_{j=1}^{d_{i}} \Gr(j, w_{i})$, the first product runs over odd indexes $i$. 

Recall that we have chosen a basis $\{v_{i}^j\}_{1\leqslant j\leqslant d_i}$ for each $V_i$ in \Cref{new setting}, so it makes sense to talk about the standard inner products (dot products) of $V_i$ and orthogonal Grassmannians of $V_i$. The following conditions characterize $(\fix)_\alpha$ as a subvariety in $Gr^{\lambda}_\alpha$.
\begin{enumerate}
    \item $W_i^{j}\subset W_i^{j+1}$. 
    \item Let $p_l$ be the $(j+1)$-th weight $i$ from the left, and let $j'$ be the number of weights $i-2$ in the set $\{p_1,...,p_l\}$. Then $eW_{i-2}^{j'}\subset W_i^{j}$.
    \item Assume $i>0$. Note that $e^{i}$ is an isomorphism between $V_{-i}$ and $V_{i}$. We require the subspace $e^{i}W_{-i}^{d_{-i}}$ to be the orthogonal complement of the space $W_{i}^{d_{i}}$ in $V_{i}$. A simple consequence is that $d_i+ d_{-i}= w_i$.
\end{enumerate}

The first condition is to make $U^\bullet$ a flag, and the second condition means that $U^\bullet$ is stable under the action of $e$. We elaborate on why the last condition is equivalent to the flag $U^\bullet$ being isotropic. Recall that we have a $T_e$-weight decomposition $V_\lambda=\oplus_{i= 1-2M}^{2M-1} V_i$, $i\notin 2\bZ$. The symplectic form of $V_\lambda$ has the property that $\langle V_i, V_j\rangle= 0$ for $i+j\neq 0$. Therefore, $U^\bullet$ is isotropic if and only if $\langle W_i^{d_i}, W_{-i}^{d_{-i}} \rangle= 0$ for $1-2M\leqslant i\leqslant 2M- 1$. For $u\in W_{i}, v\in W_{-i}$, up to a sign, the pairing $\langle u, v\rangle$ is equal to the dot product of $u$ and $e^{i}v$ (with respect to our basis $\{v_{i}^{j}\}$ of $V_i$). Therefore, we get the third condition. 
\end{rem}
\begin{notation} \label{alpha}
    We write $X_\alpha$ for the variety $(\fix)_\alpha$. \Cref{alternative} shows that, given a tuple $\alpha$, we can recover the dual partition $\lambda^\intercal$ by $w_i= d_i+ d_{-i}$ (the sum of the multiplicities of the weights $i$ and $-i$ in $\alpha$). Therefore, in the expression $X_\alpha$, once $\alpha$ is specified, the element $e$ and the partition $\lambda$ are implicitly understood. We will continue to alternate between the notation $X_\alpha$ and $(\fix)_\alpha$, depending on the context.   
\end{notation}

Keep the assumption that $i\geqslant 0$. A consequence of the second and third conditions in \Cref{alternative} is that $\langle e^{i}u, u\rangle =0$ for any $u\in W_{-i}^{d_{-i}}$. With our chosen basis of $V_i$ and $V_{-i}$ in \Cref{new setting}, $\langle e^{i}u, u\rangle =0$ means that $u$ is an isotropic vector of $V_{-i}$  with respect to the dot product. We use the notation $\OG(j,V_i), 2j\leqslant \dim V_i$ for the variety of $j$-dimensional isotropic spaces of $V_i$. The following is an important corollary that will be used often.
\begin{cor} \label{isotropic}
     We have $W_i^{d_i}\in \OG(d_i,V_i)$ for any $i< 0$. In other words, $W_i^{j}\in \OG(j, V_i)$ for any $i<0$ and $1\leqslant j\leqslant d_i$.
\end{cor}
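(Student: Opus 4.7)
The plan is to observe that the corollary is essentially a polarization of the single-vector isotropy statement established in the paragraph immediately preceding the statement. There, using conditions (2) and (3) of Remark 8.3, the authors already show that $\langle e^{-i}u,u\rangle_{\mathrm{symp}}=0$ for every $u\in W_i^{d_i}$ with $i<0$, and they identify this with $u\cdot u=0$ in $V_i$ via the basis chosen in Section 8.1. Hence the dot-product quadratic form vanishes identically on $W_i^{d_i}$.

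Since we are working in characteristic zero, a symmetric bilinear form is recovered from its quadratic form by polarization, so $u\cdot u=0$ for all $u\in W_i^{d_i}$ immediately upgrades to $u\cdot v=0$ for all $u,v\in W_i^{d_i}$. Thus $W_i^{d_i}$ is an isotropic subspace of $V_i$, that is, $W_i^{d_i}\in \OG(d_i,V_i)$. The ``in other words'' clause then follows because each $W_i^j$ with $1\le j\le d_i$ is a subspace of the already-isotropic $W_i^{d_i}$.

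If a more direct, basis-free argument is preferred, I would proceed as follows. For $u,v\in W_i^{d_i}\subseteq U$, iterating the $e$-stability of $U$ (condition (2)) gives $e^{-i}u\in U$, so isotropy of the symplectic flag $U$ yields $\langle e^{-i}u,v\rangle_{\mathrm{symp}}=0$; the basis identification $e^{-i}v_i^r=v_{-i}^r$ together with $\langle v_{-i}^r,v_i^s\rangle=\delta_{rs}$ (valid because $-i>0$) then rewrites this as $u\cdot v=0$. There is no substantive obstacle here; the only thing to watch is the sign and indexing bookkeeping when converting between the labelling of the basis $\{v_j^r\}$ in Section 2.1 and the weight labelling $\{v_w^r\}$ in Section 8.1.
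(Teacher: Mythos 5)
Your proof is correct and essentially matches the paper's (implicit) argument: the paper establishes $u \cdot u = 0$ for each $u \in W_{-i}^{d_{-i}}$ in the paragraph preceding the corollary, and the upgrade to isotropy of the full subspace is exactly the polarization step you describe. Your alternative direct argument (pairing $e^{-i}u$ against $v$ rather than against $u$) is a minor variant that obtains the bilinear vanishing in one step, but the underlying idea is the same.
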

With \Cref{alternative}, a quick observation is that two different $n$-tuples of weights may have isomorphic associated varieties. This reduces our consideration to a certain type of tuples. 
\begin{defin}[Standard tuple] \label{standard tuple}
An $n$-tuple of $T_e$-weights $(p_1,...,p_n)$ is standard if $|p_j-p_{j-1}|\leqslant 2$ for $j\in \{2,...,n\}$.
\end{defin}

\begin{pro} \label{standard}
For an arbitrary $n$-tuple $\alpha$, we can find a standard tuple $\alpha'$ such that $(\fix)_{\alpha'}$ and $(\fix)_\alpha$ are isomorphic.
\end{pro}
\begin{proof}
Let $i$ be the smallest index such that $p_{i-1}> p_{i}+2$ or $p_{i-1}< p_i-2$, and let $s_i(\alpha)$ be the tuple of weights obtained by swapping $p_i$ and $p_{i-1}$ in $\alpha$. In terms of \Cref{alternative}, since $s_i(\alpha)$ is a permutation of $\alpha$, Conditions 1 and 3 for $X_\alpha$ and $X_{s_i(\alpha)}$ are the same. Given $p_{i-1}> p_{i}+2$ (or $p_{i-1}< p_i-2$), Condition 2 is not affected by the change from $\alpha$ to $s_i(\alpha)$. Therefore $X_\alpha\cong X_{s_i(\alpha)}$. In terms of vector representatives, an isomorphism is given by sending the flag represented by $(u_1,...u_{i-1}, u_{i},...,u_n)$ to the flag represented by $(u_1,...u_{i}, u_{i-1},...,u_n)$. 

Applying this argument inductively, we obtain a standard tuple $\alpha'$ with the desired property.
\end{proof}
\begin{rem} \label{what if the difference is 2}
    In the proof of \Cref{standard}, we have shown that if $|p_{i-1}- p_i|\neq 2$, then $X_\alpha \cong X_{s_i(\alpha)}$. Checking how Condition 2 in \Cref{alternative} changes, we find that if $p_{i-1}= p_i- 2$, then $X_\alpha$ is a subvariety of $X_{s_i(\alpha)}$. Similarly, if $p_{i-1}= p_i+ 2$, then $X_{s_i(\alpha)}$ is a subvariety of $X_\alpha$. In the following sections, we will encounter blow-ups of $X_\alpha$ where the exceptional sets are often given by $X_{s_i(\alpha)}$ for certain $p_{i-1}= p_i+ 2$.
\end{rem}

\begin{rem} \label{good tuples}
    We have mentioned that for an $n$-tuple of weights $\alpha$, the set $(\fix)_\alpha$ is either empty or a union of connected components of $\fix$. If $(\fix)_\alpha$ is nonempty, we call $\alpha$ a \textbf{good tuples}. The alternative description in \Cref{alternative} is enough to determine all good tuples. However, the condition for $\alpha$ to be good is not very relevant to the topics of this paper, so we do not go into the details here. In this paper, once we specify $\lambda$ and consider a tuple $\alpha$, it is understood that we only consider good tuples.
\end{rem}
\begin{exa} Consider $e= N_{(4,4,6)}$.
The weight tuple $\alpha= (5,3,3,3,1,1,-1)$ is standard, while $\alpha= (3,5,1,3,3,1,1)$ and $\alpha= (5,3,1,3,-1,3,1)$ are not standard.

In this case, the good standard tuples $\alpha$ without negative weights are $(5,3,3,3,1,1,1)$, $(3,3,5,3,1,1,1)$, $(3,3,1,1,5,3,1)$, $(3,3,1,5,3,1,1)$, $(3,1,3,1,5,3,1)$, $(3,1,3,5,3,1,1)$, $(3,1,5,3,3,1,1)$, $(3,1,5,3,1,3,1)$, $(3,5,3,3,1,1,1)$, $(3,5,3,1,3,1,1)$, $(3,5,3,1,1,3,1)$.
\end{exa}
\subsubsection{ Embeddings of $(\fix)_\alpha$} \label{subsect 8.2.2}
In this section we realize $(\fix)_\alpha$ as a subvariety of a variety coming from fixed points loci of smaller Springer fibers. Recall that $\lambda^\intercal$ is $(w_{2M-1},...,w_{1-2M})$ with $w_{2M-1}\leqslant ... \leqslant w_1$ and $w_i = w_{-i}=\dim V_{i}$. In the following, we sometimes use the notation $\mathcal{B}_{(w_{2M-1},...,w_{1-2M})^\intercal}$ for $\mathcal{B}_e$.

We say that a partition is symmetric if the multiplicities of its parts are all even. The condition that $\lambda$ consists only of even parts is equivalent to $\lambda^\intercal$ being symmetric. Let $A$ be a nonempty subset of $\{1,3,...,2M-1\}$. The tuple $(w_{i}, |i|\in A)$ gives a symmetric partition of $2n_A= 2\sum_{i\in A} \dim V_{i}$. Hence, the partition $\lambda_A:= (w_{i}, |i|\in A)^\intercal$ is the associated partition of a nilpotent element $e_A$ in $\fsp_{2n_A}$. We then have the Springer fiber $\mathcal{B}_{e_A}$ (or $\mathcal{B}_{(w_i, |i|\in A)^\intercal}$). 

Next, we want to construct a symplectic vector space $V_{\lambda_A}$ such that $e_A\in \fsp(V_{\lambda_A})$. Assuming $A= \{i_1\leqslant ...\leqslant i_l\}$, consider $V_{\lambda_A}= \oplus_{|i|\in A} V_i\subset V_\lambda$. We let $e_A$ act on $V_{\lambda_A}$ by $e_A|_{V_{i_j}}= e^{i_{j+1}-i_j}$; as a consequence, $e_A: V_{i_j}\xrightarrow{} V_{i_{j+1}}$. To have $e_A\in \mathfrak{sp}(V_{\lambda_A})$, we define a symplectic form on $V_{\lambda_A}$ as follows. We use the original symplectic form of $V_{i_1}\oplus V_{-i_1}$ and for $j>1$ we twist the symplectic forms on each $V_{i_j}\oplus V_{-i_j}$ by $-1$ if needed. In terms of basis diagrams (\Cref{basis diagram}), the basis diagram of $e_A$ is obtained from the basis diagram of $e$ by removing the columns corresponding to $\{V_{\pm i}\}_{i\notin A}$.

The construction of $V_{\lambda_A}$ from $V_\lambda$ induces a map $p_A: (\fix)_\alpha \rightarrow \mathcal{B}_{e_A}^{gr}$ as follows. Let $\alpha= (p_1,...,p_n)$ and consider a maximal isotropic flag $U^\bullet\in (\fix)_\alpha$. Regarding $V_{\lambda_A}$ as a vector subspace of $V_\lambda$, this map is $p_A(U^\bullet)= U^\bullet \cap V_{\lambda_A}$. In particular, we describe $p_A$ in terms of representative vectors as follows. Recall that we can represent this flag by weight vectors $(u_1,u_2,...,u_n)$ with $u_i\in V_{p_i}$. Picking out the vectors $(u_i, |p_i|\in A)$, we get a collection of vectors in $V_{\lambda_A}$. This collection represents a maximal isotropic flag in $V_{\lambda_A}$ because the twists by $-1$ of the symplectic forms of $V_{i_j}\oplus V_{-i_j}$ preserve the property that a flag is isotropic. Next, to refine the target of the morphism $p_A$, we introduce the following definition.

\begin{defin}[Induced tuple] \label{induced tuple}
Let $\alpha$ be an $n$-tuple $(p_1,...,p_n)$ of weights with $p_i\in \{\pm 1, \pm 3,..., \pm 2M-1\}$. For a subset $A\subset \{1,3,...,2M-1\}$, write $A$ as $\{i_1\leqslant ...\leqslant i_l\}$. First, we pick out all $p_i\in \pm A$ while keeping their relative order. Then, substituting the numbers $\pm i_j$ by $\pm (2j-1)$, we obtain a tuple of weights $\alpha_A$. 
\end{defin}
\begin{exa}
Consider $\alpha= (7,5,5,3,5,3,1,1,-1,-1,-3)$.
\begin{itemize}
    \item If $A=\{3,5,7\}$, then $\alpha_{A}= (5,3,3,1,3,1,-1)$. Here $i_1= 3$, $i_2= 5$, and $i_3= 7$, so we replace the weights $-3,3,5$ and $7$ by $-1, 1, 3$ and $5$, respectively.
    \item If $A=\{1,3,7\}$, then $\alpha_{A}= (5,3,3,1,1,-1,-1,-3)$. Here $7$ is $i_3$, so we replace $7$ by $5$.
    \item If $A=\{5\}$, then $\alpha_{A}= (1,1,1)$. Here $5$ is $i_1$, so we replace $5$ by $1$.
    \item If $A=\{1,7\}$, then $\alpha_{A}= (3,1,1,-1,-1)$. Here $7$ is $i_2$, so we replace $7$ by $3$.
\end{itemize}    
\end{exa}
Recall that we write $X_\alpha$ for $(\fix)_\alpha$ (see \Cref{alpha}). We see that the image of $p_A$ is contained in the associated variety $X_{\alpha_A}\subset \mathcal{B}_{e_A}^{gr}$. So, for any subset $A$ of $\{1,3,...,2M-1\}$, we have defined a morphism
\begin{equation} \label{pA}
    p_A: X_\alpha \rightarrow X_{\alpha_A}.
\end{equation}
Next, consider a partition of $\{1,3,...,m\}$ into two subsets $A$ and $B$. We have an embedding of $X_\alpha$ as follows. 
\begin{pro} \label{AB}

The morphism $i_{A,B}: x\mapsto (p_A(x), p_B(x))$ gives a closed embedding of $X_\alpha$ into $X_{\alpha_A}\times X_{\alpha_B}$.
\end{pro}
\begin{proof}
From the alternative description of $X_\alpha$ in \Cref{alternative}, the variety $X_\alpha$ is a closed subvariety of the product of Grassmannians $\Gr_{\alpha}^{\lambda}$ defined by three conditions $1, 2$ and $3$. Similarly, we can realize $X_{\alpha_A}\times X_{\alpha_B}$ as a subvariety of $\Gr_{\alpha_A}^{\lambda_A}\times \Gr_{\alpha_B}^{\lambda_B}$. From the way we define $\alpha_A, \alpha_B, \lambda_A$ and $\lambda_B$, we have $\Gr_{\alpha}^{\lambda}\cong \Gr_{\alpha_A}^{\lambda_A}\times \Gr_{\alpha_B}^{\lambda_B}$. In this identification, the set of defining relations of the variety $X_{\alpha_A}\times X_{\alpha_B}$ is obtained by removing certain defining relations of $X_\alpha$. In particular, in the second condition of \Cref{alternative}, we remove the relation $eW_i^{j}\subset W_{i+2}^{j'}$ if $|i|\in A$ and $|i+2|\in B$ or vice versa. Thus, the morphism $i_{A,B}$ is a closed embedding.  
\end{proof}
We continue to study the properties of the morphism $p_A$ defined in (\ref{pA}). For a nonempty subset $A'\subset A\subset \{1,3,...,2M-1\}$, we can reiterate the construction to obtain $X_\alpha\xrightarrow{p_A} X_{\alpha_A}\xrightarrow{p^A_{A'}} X_{\alpha_{A'}}$. Note that $p_{A'}= p^A_{A'}\circ p_A$. 
\begin{pro} \label{Cartesian}
 Consider a partition of $\{1,3,..., 2M-1\}$ into three sets $A, B$ and $C$. Assume that $a-c\neq \pm 2$ for any $a\in A$ and $c\in C$. Then the following is a Cartesian commutative diagram
$$\begin{tikzcd} 
    X_{\alpha_{A\cup B\cup C}} \arrow{r}{i_{A\cup B,C}}\arrow{d}{p_{B\cup C}} \& X_{\alpha_{A\cup B}}\times X_{\alpha_C} \arrow{d}{p^{A\cup B}_{B}\times \Id}\\
     X_{\alpha_{B\cup C}} \arrow{r}{i_{B, C}} \& X_{\alpha_B}\times X_{\alpha_C} 
     \end{tikzcd} $$
Above, note that $X_{\alpha_{A\cup B\cup C}}$ is $X_\alpha$. 

\end{pro}
\begin{proof}
    Similarly to the proof of \Cref{AB}, we embed $X_\alpha, X_{\alpha_{A\cup B}}\times X_{\alpha_C}$ into $\Gr_\alpha^{\lambda}$ and embed $X_{\alpha_{B\cup C}}, X_{\alpha_B}\times X_{\alpha_C}$ into $\Gr_{\alpha_{B\cup C}}^{\lambda_{B\cup C}}$. Then the four varieties in the above diagram are describe by sets of relaltions in \Cref{alternative}. Recall from the proof of \Cref{AB} that the embedding $i_{A\cup B, C}$ comes from forgetting relations $eW_i^{j}\subset W_{i+2}^{j'}$ if $|i|\in A\cup B$ and $|i+2|\in C$ or vice versa. With the hypothesis that $|a-c|\neq 2$ for $a\in A$ and $c\in C$, the condition that $|i|\in A\cup B$ and $|i+2|\in C$ is equivalent to $|i|\in B$ and $|i+2|\in C$. Thus, the embeddings $i_{A\cup B,C}$ and $i_{B,C}$ are defined by forgetting the same set of relations, and the commutative square is Cartesian.
\end{proof}

\begin{exa}
Consider $e= N_{(2,6,6,8)}$ as in \Cref{example basis}. First, we write the Springer fiber $\mathcal{B}_e$ as $\mathcal{B}_{(1,3,3,4,4,3,3,1)^\intercal}$. Here $2M-1= 7$, we choose $A= \{5\}$, $B=\{3,7\}$ and $C= \{1\}$. For a general $8$-tuple $\alpha$ of weights, the Cartesian square in the proposition has the form

$$\begin{tikzcd} 
    (\mathcal{B}^{\text{gr}}_{(1,3,3,4,4,3,3,1)^\intercal})_\alpha \arrow{r}{i_{A\cup B, C}}\arrow{d}{p_{B\cup C}} \& (\mathcal{B}^{\text{gr}}_{(1,3,3,3,3,1)^\intercal})_{\alpha_{A\cup B}}\times (\mathcal{B}^{\text{gr}}_{(4,4)^\intercal})_{\alpha_C} \arrow{d}{p_B^{A\cup B}\times \operatorname{Id}}\\
     (\mathcal{B}^{\text{gr}}_{(1,3,4,4,3,1)^\intercal})_{\alpha_{B\cup C}} \arrow{r}{i_{B,C}} \& (\mathcal{B}^{\text{gr}}_{(1,3,3,1)^\intercal})_{\alpha_{B}} \times  (\mathcal{B}^{\text{gr}}_{(4,4)^\intercal})_{\alpha_{C}} \\
     \end{tikzcd} $$
Consider $\alpha= (7,5,5,3,5,3,1,1,-1,-1,-3)$. We then have $\alpha_{A\cup B}= (5,3,3,1,3,1,-1)$, $\alpha_C= (1,1,-1,-1)$, $\alpha_{B\cup C}= (5,3,3,1,1,-1,-1,-3)$, and $\alpha_{B}= (3,1,1,-1)$. If we use the notation $X_\alpha$, the diagram reads

$$\begin{tikzcd} 
    X_{(7,5,5,3,5,3,1,1,-1,-1,-3)} \arrow{r}{i_{A\cup B,C}}\arrow{d}{p_{B\cup C}} \& X_{(5,3,3,1,3,1,-1)}\times X_{(1,1,-1,-1)} \arrow{d}{p^{A\cup B}_{B}\times \Id}\\
     X_{(5,3,3,1,1,-1,-1,-3)} \arrow{r}{i_{B, C}} \& X_{(3,1,1,-1)}\times X_{(1,1,-1,-1)} 
     \end{tikzcd} $$

\end{exa}

\subsubsection{Reduction maps}
We say that a nilpotent element $e'$ is smaller than a nilpotent element $e$ if we can obtain the basis diagram of $e'$ by removing some boxes from the basis diagram of $e$ (see \Cref{basis diagram} for definition of basis diagram). 
\begin{defin}[Reduction maps] \label{reduction maps}
A \textit{reduction map} is a morphism $F: (\fix)_\alpha\xrightarrow{} (\mathcal{B}_{e'}^{gr})_{\alpha'}$ in which $\alpha\neq \alpha'$ and $e'$ are smaller than or equal to $e$.
\end{defin}
An example of reduction maps is the isomorphisms considered in the proof of \Cref{standard}. We now introduce two other types of reduction maps.

\begin{exa}[Distinguished reduction] \label{distinguished reduction}
 Consider the case where $e$ is distinguished; in other words, $\lambda$ has pairwise distinct parts. Then the intersection of the kernel of $e$ and a weight space $V_i$ has dimension $0$ or $1$. Consider an arrangement of weights $\alpha$ of the form $(p_1,..., p_n)$. Let $v^{l}_{p_1}$ be the basis vector with weight $p_1$ annihilated by $e$. A point of $(\fix)_\alpha$, regarded as a flag $U^\bullet$, has the property $U_1= \bC v^{l}_{p_1}$. Therefore, the quotient flag $U^\bullet/ U^1$ can be considered as an isotropic flag of the symplectic vector space $\Span(v_j^{i})_{(i,j)\neq (l, \pm p_1)}$. Let $e'$ be a nilpotent element of $\fsp_{2n-2}$ having the associated partition $(2x_1,..., 2x_{l-1}, 2x_l-2, 2x_{l+1},...,2x_{k})$. Let $\alpha'$ be the tuple of weights $(p_2,...,p_n)$. The map $U^\bullet \mapsto U^\bullet/ U^1$ is then an isomorphism between $(\fix)_\alpha$ and $(\cB^{gr}_{e'})_{\alpha'}$. 
 
 When $e$ is not necessarily distinguished, similar maps still make sense for certain weight tuples $\alpha$. In particular, we consider the weight $2m+1> 0$ such that $\dim V_{2m+1}= \dim V_{2m+3}+ 1$. And consider a tuple $\alpha$ that contains a weight $2m+1$ that does not have any weights $2m-1$ or $2m+3$ in front of it. Then we can move this weight $2m+1$ to the first position as in the proof of \Cref{standard} to obtain a tuple $\alpha''$ with property $X_{\alpha''}\cong X_\alpha$. Now, for a flag $U^\bullet$ in $X_{\alpha''}$, we have $U^1\subset \ker e\cap V_{2m+1}$. Because $\dim V_{2m+1}= \dim V_{2m+3}+ 1$, the vector space $U^1\subset \ker e\cap V_{2m+1}$ has dimension $1$. Hence, we have $U^1= \ker e\cap V_{2m+1}$. Let $\alpha'$ be the tuple obtained from $\alpha$ by removing this weight $2m+1$. Using the same argument as in the previous paragraph, we have $X_\alpha \cong X_{\alpha''}\cong X_{\alpha'}$. We call this type of isomorphism \textbf{distinguished reduction map}. 
\end{exa}

A simple consequence is the following corollary.    
\begin{cor} \label{distinguished}
    Consider a distinguished element $e\in \fsp_{2n}$ and an $n$-tuple of weights $\alpha$. There exists a smaller, nondistinguished element $e' \in \fsp_{2n-2j}$ so that $(\fix)_\alpha \cong (\cB^{gr}_{e'})_{\alpha'}$ for some $j>0$ and an $n-j$-tuple of weights $\alpha'$.  
\end{cor}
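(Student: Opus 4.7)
The plan is to iteratively apply the distinguished reduction of Example 8.10, stopping the moment the intermediate nilpotent ceases to be distinguished. First I would observe that for \emph{any} nilpotent $e$ and any good tuple $\alpha = (p_1, \ldots, p_n)$, the leading weight $p_1$ must be a top-of-row weight: a flag $U^\bullet \in X_\alpha$ has $U^1 = \bC u_1$ with $u_1 \in V_{p_1}$, and $e$-stability of $U^1$ combined with the fact that $eu_1$ has weight $p_1 + 2 \neq p_1$ forces $eu_1 = 0$, so $u_1 \in \ker e \cap V_{p_1}$; this intersection is nonzero only when $p_1 = 2x_l - 1$ for some row $l$.

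When $e$ is distinguished the parts $x_1 > \cdots > x_k$ are pairwise distinct, so each top-of-row weight occurs exactly once; in particular $\ker e \cap V_{p_1}$ is one-dimensional, and $U^1 = \bC v_{p_1}^l$ is completely determined by $\alpha$. The construction in Example 8.10 then yields an isomorphism $X_\alpha \xrightarrow{\sim} X_{\alpha^{(1)}}$, where $\alpha^{(1)} := (p_2, \ldots, p_n)$ is viewed as a weight tuple on the reduced symplectic space $(U^1)^\perp / U^1$, carrying the induced nilpotent $e^{(1)} \in \fsp_{2n-2}$ whose partition is obtained from $\lambda$ by replacing $2x_l$ with $2x_l - 2$ (and dropping this part if it becomes $0$).

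I would then iterate: if $e^{(j)}$ is nondistinguished, set $e' := e^{(j)}$, $\alpha' := \alpha^{(j)}$ for the least such $j$ and stop; otherwise the previous two steps apply again to $X_{\alpha^{(j)}}$, since the isomorphism transports good tuples. The process strictly decreases $n$ and so terminates in at most $n$ stages. The main (mild) obstacle is the combinatorial termination check: one must verify that a nondistinguished stage is actually reached before the iteration trivializes. The key point is that the sequence of row-shortenings is completely dictated by $\alpha$, so distinct parts of $\lambda$ must eventually collide --- or the iteration exhausts to the empty partition, which we treat as the degenerate nondistinguished terminal object --- producing the desired $(e', \alpha')$ with $j > 0$.
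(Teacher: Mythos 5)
Your proposal is correct and takes the same approach the paper intends: the corollary is stated without proof as ``a simple consequence'' of the distinguished reduction in Example~8.10, and your reconstruction (locate $p_1$ as a top-of-row weight, use distinctness of parts to get a one-dimensional $\ker e \cap V_{p_1}$, quotient out, iterate) is precisely that argument. The termination subtlety you flag is genuine, not cosmetic --- for $\lambda=(4,2)$ and $\alpha=(1,3,1)$ the iteration runs $(4,2)\to(4)\to(2)\to\emptyset$ without two parts ever colliding --- and since the paper's wording does not address it, your fallback of treating the exhausted partition as the degenerate nondistinguished terminal object is the only reading that makes the statement literally true; in the paper's actual uses (partitions with several parts and nontrivial $(\fix)_\alpha$) the collision does occur before exhaustion, which is presumably why the author treats it as immediate.
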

The next example of reduction maps is a special case of the map $p_A$ defined in (\ref{pA}).
\begin{exa}[Reduction maps $F_i$] \label{ Reduction maps F}
First, $p_A$ is a reduction map. Next, consider a weight $i\in \{1,3,...,2M-1\}$. We focus on the case where $A$ has the form $A_i= \{1,3,..., 2M-1\}\setminus \{i\}$ and write $F_i$ for $p_{A_i}$. We write $\alpha_i$ for the induced tuple $\alpha_{A_i}$ (\Cref{induced tuple}). To obtain this induced tuple $\alpha_i$, we first remove all the weights $\pm i$ in $\alpha$, then shift the weights $k\neq \pm i$ by $-2$ (resp. $2$) if $k> i$ (resp. $k<-i$). In terms of the alternative characterization of $(\fix)_\alpha$ in \Cref{alternative}, the map $F_i$ is obtained by forgetting the subspaces $W_i^j$ and $W_{-i}^j$ for $1\leqslant j\leqslant  d_i$ and $1\leqslant j\leqslant  d_{-i}$, respectively.
\end{exa}
The next remark discusses some decompositions of $F_i$ into the composition of simpler maps.

\begin{rem} \label{decompositions}
For simplicity of notation, we assume $d_{-i}= 0$. Then $F_i$ is the map that forgets all subspaces $W_i^{j}$. A natural way to think about $F_i$ is to fix a permutation $(j_1,..., j_{d_i})$ of $(1,2...,d_i)$ and consider the decomposition of $F_i$ into maps that forget a single subspace $W_i^{j_{l}}$ for $1\leqslant l\leqslant d_i$. To describe this type of decomposition, we first define the varieties $X_{\alpha_i}^A$ with $A$ being a subset of $\{1,2,...,d_i\}$.

In \Cref{alternative}, we have the description of $X_\alpha$ in terms of a collection of vector spaces $\{W_p^{j}\}_{|p|\in \{1,3,...,2M-1\}}$ subject to the three sets of relations. These relations can be rewritten equivalently as follows
\begin{enumerate}
    \item $W_i^{j}\subset W_i^{j+r}$ for $r> 0$.
   \item Let $p_l$ be the $j$-th weight $i$ from the left, and let $j_{i+2t}$ be the number of weights $i+2t$ with $t>0$ in the set $\{p_1,...,p_l\}$. Then  $e^{t}W_i^{j}\subset W_{i+2t}^{j_{i+2t}}$.
   \item For $i> 0$, the subspace $e^{i}W_{-i}^{d_{-i}}$ is the orthogonal complement of the space $W_{i}^{d_{i}}$ in $V_{i}$. And $W_{-i}^j\in \OG(j, w_i)$ (see  \Cref{isotropic}).
\end{enumerate}

For a subset $A\subset \{1,2,...,d_i\}$, we let $X_{\alpha_{i}}^{A}$ be the variety that parameterizes the collection of vector spaces $\{W_p^{j}\}$, for $p\neq i$ or $p=i$ and $j\in A$, subject to the relations obtained from the relations of $X_\alpha$ in the following sense. We remove the vector spaces $W_{i}^{j}, j\notin A$ and the relations related to them in the three sets of relations $1, 2$ and $3$ above. With this notation, $X_\alpha= X_{\alpha_{i}}^{\{1,2,...,d_{i}\}}$ and $X_{\alpha_i}\cong X_{\alpha_i}^{\emptyset}$. Now, for $A_1\subset  A\subset \{1,2,...,d_{i}\}$, we have a natural map $X_{\alpha_{i}}^{A}\rightarrow X_{\alpha_{i}}^{A_1}$ that forgets the spaces $W_i^{j}$ with $j\in A\setminus A_1$.

Consider a permutation $(j_1,..., j_{d_i})$ of $(1,2...,d_i)$, let $A_l= \{j_1,...,j_{l}\}$. Then we can realize the map $F_i:X_\alpha \rightarrow X_{\alpha_i}$ as the composition of maps
$$X_{\alpha}= X_{\alpha_i}^{A_{d_i}}\rightarrow X_{\alpha_i}^{A_{d_i- 1}}\rightarrow ....\rightarrow X_{\alpha_i}^{A_1}\rightarrow X_{\alpha_i}^{\emptyset} = X_{\alpha_i}.$$
We will consider the permutations $(j_1,..., j_{d_i})$ with $j_1= d_i$, so the subspace $W_{i}^{j_1}$ is determined uniquely. Then the map $X^{A_1}_{\alpha_i}\rightarrow X_{\alpha_i}^{\emptyset}$ is an isomorphism. For $l\geqslant 2$, we let $F_i^{j_l,...,j_2}$ denote the map $X_{\alpha_i}^{A_{l}} \rightarrow X_{\alpha_i}^{A_{l-1}}$, and we have $F_i= F_i^{j_2}\circ....\circ F_i^{j_{d_i},...,j_2}$. In this notation, the first superscript from the left indicates the space that the map forgets.

When the partition $\lambda$ is small, we have a more illustrative way to denote the decompositions of $F_i$. For example, with $\alpha= (5,3,3,1,3,1,1)$, the map $F_3$ can be decomposed as follows
$$F_3: X_{\alpha}\xrightarrow{F_3^{1,2}} X_{(5,\Box,3,1,3,1,1)}\xrightarrow{F_3^{2}} X_{(5,\Box,\Box,1,3,1,1)}\cong X_{\alpha_3}= X_{(3,1,1,1)},$$
or
$$F_3: X_{\alpha}\xrightarrow{F_3^{2,1}} X_{(5,3,\Box,1,3,1,1)}\xrightarrow{F_3^{1}} X_{(5,\Box,\Box,1,3,1,1)}\cong X_{\alpha_3}= X_{(3,1,1,1)}.$$
Here we use square boxes to indicate which spaces were forgotten.
\end{rem}
The next section makes use of reduction maps to describe the variety $\fix$ for $\lambda$ having equal parts.

\subsection{The case when $\lambda$ has equal parts}
In this section, we prove some general results which do not depend on the number of parts of $\lambda$. As applications, we obtain geometric descriptions and finite models of $\fix$ for certain $\lambda$.

\subsubsection{Cases where $F_i$ is the projection from a tower of projective bundles}
Recall that we write $w_i$ for the dimension of the weight space $V_i$, and write $d_i$ for the multiplicity of the weight $i$ in the tuple $\alpha$. Let $2M-1$ denote the highest weight of $T_e$-action on $V_\lambda$.
\begin{lem} \label{equal part}
    Assume that $w_{2M-1}= w_{2M-3}$ and $\alpha$ is a standard tuple. Then the morphism $F_{2M-1}: X_\alpha\rightarrow X_{\alpha_{2M-1}}$ is the projection from a tower of projective bundles. These projective bundles are associated to $A_e$-equivariant (and $Q_e$-equivariant) vector bundles.
\end{lem}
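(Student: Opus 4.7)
Write $i=2M-1$ for brevity. The plan is to decompose $F_{i}\colon X_{\alpha}\to X_{\alpha_{i}}$ into a sequence of elementary maps using \Cref{decompositions}, choosing the order of forgetting so that each elementary map is the projection of an $A_{e}$- and $Q_{e}$-equivariant projective bundle. I would factor $F_{i}=F_{i}^{+}\circ F_{i}^{-}$, where $F_{i}^{-}$ forgets the subspaces $W_{-i}^{j}$ for $1\leqslant j\leqslant d_{-i}$ and $F_{i}^{+}$ subsequently forgets the subspaces $W_{i}^{j}$ for $1\leqslant j\leqslant d_{i}$.

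For $F_{i}^{+}$ I would forget the subspaces in the order $j=1,2,\ldots,d_{i}$ (bottom up). When the map forgets $W_{i}^{j}$, with $W_{i}^{1},\ldots,W_{i}^{j-1}$ already forgotten and $W_{i}^{j+1},\ldots,W_{i}^{d_{i}}$ together with $W_{-i}^{d_{-i}}$ still present in the source, the fiber parameterizes a $1$-dimensional extension $W_{i}^{j}/W_{i}^{j-1}$ inside the $2$-dimensional quotient $W_{i}^{j+1}/W_{i}^{j-1}$ subject to the containment $W_{i}^{j}\supset eW_{2M-3}^{j'_{l}}$ coming from \Cref{alternative}(2). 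Since $i=2M-1$ is the top weight, no constraint comes from above. Standardness of $\alpha$, non-emptiness of $X_{\alpha}$, and the isomorphism $e\colon V_{2M-3}\to V_{2M-1}$ provided by $w_{2M-1}=w_{2M-3}$ together force the forced subspaces to interact with the $2$-dimensional ambient quotient so that each fiber is either $\mathbb{P}^{1}$ or a single point, and never a higher-dimensional Grassmannian. The very last step, forgetting $W_{i}^{d_{i}}$ while $W_{-i}^{d_{-i}}$ is still present, is an isomorphism, because $W_{i}^{d_{i}}$ is uniquely determined from $W_{-i}^{d_{-i}}$ via the Lagrangian condition together with the isomorphism $e^{i}\colon V_{-i}\to V_{i}$.

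The treatment of $F_{i}^{-}$ is analogous. Forget $W_{-i}^{j}$ bottom up in $j=1,\ldots,d_{-i}$; each intermediate step is again a $\mathbb{P}^{1}$ or $\mathbb{P}^{0}$ bundle by the same analysis applied to the constraints coming from the $W_{-(2M-3)}^{\bullet}$ subspaces. The final step (forgetting $W_{-i}^{d_{-i}}$, now with all $W_{i}^{\bullet}$ already absorbed and the Lagrangian condition translated into an isotropy condition on $W_{-i}^{d_{-i}}$) parameterizes isotropic subspaces of $V_{-i}$ containing prescribed subspaces from below; this orthogonal Grassmannian bundle further decomposes into a tower of projective bundles via iterated $1$-dimensional isotropic extensions, and the assumption $w_{-i}=w_{-(i-2)}$ forces each successive would-be quadric bundle to degenerate to a linear projective bundle because the isotropic extension direction is pinned down by the growth of the forced subspaces. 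Equivariance is automatic throughout: $Q_{e}$ centralises the $\mathfrak{sl}_{2}$-triple $(e,h,f)$ and so acts linearly on each weight space $V_{\pm(2M-1)}$ and preserves every forced subspace $eW_{\pm(2M-3)}^{\bullet}$.

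The main obstacle is the combinatorial bookkeeping: verifying that at every intermediate forgetting step the fiber is $\mathbb{P}^{1}$ or a point (rather than a positive-dimensional Grassmannian), and checking that the final orthogonal-Grassmannian step for $W_{-i}^{d_{-i}}$ genuinely degenerates to a tower of projective bundles rather than a tower of honest quadric bundles. Both reductions rest on the interplay between standardness of $\alpha$, non-emptiness of $X_{\alpha}$ (which excludes precisely those configurations of $\alpha$ that would otherwise force an empty fiber), and the isomorphism $e\colon V_{2M-3}\to V_{2M-1}$ supplied by the hypothesis $w_{2M-1}=w_{2M-3}$.
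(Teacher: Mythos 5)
Your overall strategy — decompose $F_{2M-1}$ into elementary forgetting maps and show each is a projective bundle projection, using the isomorphism $e\colon V_{2M-3}\to V_{2M-1}$ supplied by $w_{2M-1}=w_{2M-3}$ — is the same as the paper's, but several of your key claims about the individual fibers are wrong, and your choice of forgetting order for the negative weight spaces produces a map that is \emph{not} a projective bundle.

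First, the claim that each elementary fiber is "$\mathbb{P}^1$ or a single point" is false. When you forget $W_{2M-1}^{l}$ with $W_{2M-1}^{l+1}$ still present (and $W_{2M-1}^{l-1}$ already forgotten, as in the bottom-up order you chose), the constraints from \Cref{alternative} are $eW_{2M-3}^{l'}\subset W_{2M-1}^{l}\subset W_{2M-1}^{l+1}$ with $l'$ depending on the relative position of the weights $2M-1$ and $2M-3$ in $\alpha$. The fiber is $\mathrm{Gr}(l-l',\,l+1-l')\cong\mathbb{P}^{l-l'}$, and $l-l'$ can be any nonnegative integer (e.g., relative order $(2M-1,2M-1,2M-1,2M-3,2M-3,2M-3)$ gives a $\mathbb{P}^{2}$ at the step $l=3$). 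The paper handles this correctly by taking $\mathbb{P}(\mathcal{W})$ for the rank-$(l+1-l')$ tautological quotient bundle $\mathcal{W}=\cW_{2M-1}^{l+1}/e\cW_{2M-3}^{l'}$; there is no $\mathbb{P}^1$ restriction and no role for the quantity you call "the 2-dimensional quotient $W_i^{j+1}/W_i^{j-1}$" — indeed that expression is not even a vector bundle on the target, because $W_i^{j-1}$ has already been forgotten in your order.

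Second, your bottom-up order for the negative weight spaces does not work. At your final step you forget $W_{1-2M}^{d_{1-2M}}$ with all of $W_{2M-1}^{\bullet}$ and $W_{1-2M}^{1},\ldots,W_{1-2M}^{d_{1-2M}-1}$ already gone, so the only remaining constraints are that $W_{1-2M}^{d_{1-2M}}$ is a $d_{1-2M}$-dimensional isotropic subspace of $e^{-1}W_{3-2M}^{j''}$. Since $e^{-1}W_{3-2M}^{j''}$ is itself isotropic, the isotropy condition is automatic, and the fiber is the \emph{full} Grassmannian $\mathrm{Gr}(d_{1-2M},\,j'')$, which is not a projective space whenever $2\leqslant d_{1-2M}\leqslant j''-2$. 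Your concluding sentence acknowledges a quadric/Grassmannian shows up and waves it away, but no argument is given. The paper avoids the problem entirely by forgetting $W_{1-2M}^{p},\,W_{1-2M}^{p-1},\ldots,W_{1-2M}^{1}$ \emph{top-down}: at each step $W_{1-2M}^{j-1}$ is still present, so the fiber is the set of $j$-dimensional subspaces sandwiched between $W_{1-2M}^{j-1}$ and $e^{-1}W_{3-2M}^{j''}$, which is $\mathbb{P}^{j''-j}$ (the isotropy again coming for free from $e^{-1}W_{3-2M}^{j''}$ being isotropic). This choice of order is the actual content of the lemma and cannot be circumvented.

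The equivariance remarks are fine, and your observation that the Lagrangian condition makes one of the two end steps an isomorphism (forgetting $W_{2M-1}^{d_{2M-1}}$ with $W_{1-2M}^{d_{1-2M}}$ present, or vice versa) is correct and matches the paper, though you contradict yourself about the order of the composition $F_{i}^{+}\circ F_{i}^{-}$ — your written decomposition has $F_{i}^{-}$ applied first, but your descriptions of the fibers assume the opposite.
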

\begin{proof}
    \textbf{Step 1}: We first give the proof of the lemma when $d_{1-2M}= 0$. In this case, we have $d_{2M-1}= w_{2M-1}$ (see Condition 3 in \ref{alternative}). Then $F_{2M-1}$ is the map that forgets the spaces $W_{2M-1}^{j}$ for $1\leqslant j\leqslant d_{2M-1}= w_{2M-1}$. We think of $F_{2M-1}$ as the composition of maps that forget the spaces $W_{2M-1}^{1}$, $W_{2M-1}^{2},..., W_{2M-1}^{w_{2M-1}}$ in the order from left to right. In the notation of \Cref{decompositions}, we have $F_{2M-1}= F_{2M-1}^{-1+w_{2M-1}}\circ ....\circ F_{2M-1}^{1,2,...,-1+w_{2M-1}}$. The lemma follows from the claim that for each $1\leqslant l\leqslant w_{2M-1}$, the morphism $F_{2M-1}^{l,...,-1+w_{2M-1}}$ below is the projection from a projective bundle. 
    $$F_{2M-1}^{l,...,-1+w_{2M-1}}: X_{\alpha_{2M-1}}^{\{l,l+1,...,w_{2M-1}\}} \rightarrow X_{\alpha_{2M-1}}^{\{l+1,...,w_{2M-1}\}}$$ 
    The map $F_{2M-1}^{l,...,-1+w_{2M-1}}$ forgets the space $W_{2M-1}^{l}$, and the conditions to recover this subspace $W_{2M-1}^{l}$ are $W_{2M-1}^{l}\subset W_{2M-1}^{l+1}$ and $W_{2M-1}^{l}\supset eW_{2M-3}^{l'}$ for certain $l'$ that depends on $\alpha$ and $l$ (see Conditions 1 and 2 in \Cref{alternative}). Since we consider the case $w_{2M-1}= w_{2M-3}$, we have that $e$ is an isomorphism between the two weight spaces $V_{2M-1}$ and $V_{2M-3}$. Therefore, we can realize the tautological vector bundle $\cW_{2M-3}^{l'}:=(X^{l+1,...,w_{2M-1}}_{\alpha_{2M-1}}, W_{2M-3}^{l'})$ as a vector subbundle of the tautological vector bundle $\cW_{2M-1}^{l+1}:=(X^{l+1,...,w_{2M-1}}_{\alpha_{2M-1}}, W_{2M-1}^{l+1})$. Let $\cW$ be the quotient vector bundle $(X^{l+1,...,w_{2M-1}}_{\alpha_{2M-1}}, W_{2M-1}^{l+1}/ (eW_{2M-3}^{l'}))$.
    Then the map $F_{2M-1}^{l,...,-1+w_{2M-1}}$ is the projection from $\bP(\cW)$, the projectization of $\cW$.

    Consider the action of $A_e$ (or $Q_e$) on $X_\alpha$. The two tautological vector bundles $\cW_{2M-3}^{l'},\cW_{2M-1}^{l+1}$ admit natural equivariant structures. Therefore, $\bP(\cW)= \bP(\cW_{2M-3}^{l'}/\cW_{2M-1}^{l+1})$ is the projectization of an equivariant vector bundle.

    \textbf{Step 2}: Now assume that $d_{1-2M}= p$ for some $0< p\leqslant \frac{w_{2M-1}}{2}$. First, we can decompose $F_{2M-1}$ as $F^{+}_{2M-1}\circ F^{-}_{2M-1}$ in which the first map forgets the spaces $W^{j}_{2M-1}$ for $1\leqslant j\leqslant w_{2M-1}- p$ and the second map forgets the spaces $W^{j}_{1-2M}$ for $1\leqslant j\leqslant p$. Next, we think of $F^{-}_{2M-1}$ as the composition of maps that forget the spaces $W_{1-2M}^{p},..., W_{1-2M}^{1}$ from left to right. Since $\alpha$ is a standard tuple, the constraints to recover $W^{j}_{1-2M}$ depend only on the relative position of the weights $3-2M$ and $1-2M$. Arguing as in Step 1, we have $F^{-}_{2M-1}$ is a composition of projections from projective bundles (we use the fact that $e^{-1}: V_{3-2M}\rightarrow V_{1-2m}$ is a vector space isomorphism here). Next, given $W_{1-2M}^{p}$, the space $W_{2M-1}^{w_{2M-1}-p}$ is determined as the orthogonal complement of $e^{2M-1} W_{1-2M}^{p}$ in $W_{2M-1}$. We then view $F_{2M-1}^{+}$ as the composition of maps that forget the spaces $W_{2M-1}^{1}$, $W_{2M-1}^{2},..., W_{2M-1}^{w_{2M-1}-p}$ in the order from left to right. The proof is concluded by repeating the same argument as in Step 1.
\end{proof}

Two other general results that can be proved similarly are the following lemmas.
\begin{lem} \label{no negative}
    If the tuple $\alpha$ does not contain any negative weights, the variety $(\fix)_{\alpha}$ is isomorphic to a tower of projective bundles over a point. And the corresponding categorical finite models of $X_\alpha$ consist only of ordinary points fixed by $Q_e$. 
\end{lem}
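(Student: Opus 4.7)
The plan is to prove the lemma by induction on the maximum weight $2M-1$ appearing in $\alpha$, using the reduction map $F_{2M-1}$ from \Cref{ Reduction maps F} to successively strip off the top weight. First I would observe that, since $d_{-i} = 0$ for all $i > 0$, Condition 3 of \Cref{alternative} forces $d_i = w_i$ for all $i > 0$, so every flag $U^\bullet \in X_\alpha$ satisfies $U^n = \bigoplus_{i>0} V_i =: L$. This $L$ is a Lagrangian subspace stable under $e$, so the isotropy conditions become vacuous and $X_\alpha$ parameterizes $e$-stable complete flags in $L$ with the prescribed weight grading; in particular, the induced tuple $\alpha_{2M-1}$ again has no negative weights, so the induction is well-posed.

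For the base case $M = 1$, the tuple is $\alpha = (1,1,\ldots,1)$, and $eV_1 \subset V_3 = 0$ makes every complete flag in $V_1$ automatically $e$-stable. Thus $X_\alpha$ is the full flag variety of the $Q_e$-representation $V_1$, which is a $Q_e$-equivariant tower of projective bundles over a point, and by iterating \Cref{projective bundle} its categorical finite model consists only of ordinary $Q_e$-fixed points.

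For the inductive step I would apply $F_{2M-1}: X_\alpha \to X_{\alpha_{2M-1}}$. The target satisfies the inductive hypothesis and is already a $Q_e$-equivariant tower of projective bundles over a point, so it suffices to show that $F_{2M-1}$ itself has this structure. Following Step 1 of the proof of \Cref{equal part}, I would decompose $F_{2M-1}$ as in \Cref{decompositions} into the composition of single-step maps forgetting $W_{2M-1}^\ell$ one at a time in increasing order of $\ell$. At each step the reconstruction is constrained by $eW_{2M-3}^{\ell'} \subset W_{2M-1}^\ell \subset W_{2M-1}^{\ell+1}$, and the fiber is the projectivization of $W_{2M-1}^{\ell+1}/eW_{2M-3}^{\ell'}$, realized globally as the $Q_e$-equivariant projectivization of the quotient of tautological bundles $\cW_{2M-1}^{\ell+1}/e\cdot \cW_{2M-3}^{\ell'}$.

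The hard part will be to ensure that these fibers have constant dimension, since unlike in \Cref{equal part} we no longer have $w_{2M-1} = w_{2M-3}$, so $e|_{V_{2M-3}}$ may have a nontrivial kernel and $\dim eW_{2M-3}^{\ell'}$ could in principle jump across the base. I expect to handle this by arguing that, for standard $\alpha$ with no negative weights, the cascading $e$-stability constraints propagating up from lower weights force $W_{2M-3}^{\ell'}$ to already contain the portion of $\ker(e|_{V_{2M-3}})$ coming from Jordan blocks too short to reach weight $2M-1$, so that $\dim eW_{2M-3}^{\ell'}$ is purely combinatorial in $\alpha$. Once this is established, iterating \Cref{projective bundle} through $F_{2M-1}$ and then through the inductive tower for $X_{\alpha_{2M-1}}$ produces the required tower structure, and the categorical finite model consists entirely of ordinary $Q_e$-fixed points.
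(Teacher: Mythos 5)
Your plan strips the top weight using $F_{2M-1}$, while the paper strips the bottom weight using $F_1$, and this is not a cosmetic difference. When one forgets $W_1^l$ in decreasing order of $l$, the constraint becomes $W_1^{l-1}\subset W_1^l\subset e^{-1}W_3^{l'}$, and because $e\colon V_1\to V_3$ is \emph{surjective} (each $v_3^i$ has the preimage $v_1^i$), the space $e^{-1}W_3^{l'}$ has the constant dimension $l'+(w_1-w_3)$. The fiber is therefore always $\bP\bigl(e^{-1}W_3^{l'}/W_1^{l-1}\bigr)\cong\bP^{l'+w_1-w_3-l}$, globally the projectivization of a $Q_e$-equivariant quotient of tautological bundles, and the induction closes. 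Going the other way, $F_{2M-1}$ imposes $eW_{2M-3}^{\ell'}\subset W_{2M-1}^\ell\subset W_{2M-1}^{\ell+1}$, and since $e\colon V_{2M-3}\to V_{2M-1}$ has kernel of dimension $w_{2M-3}-w_{2M-1}>0$, the dimension of $eW_{2M-3}^{\ell'}$ can jump — precisely the problem you flag.

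The proposed fix does not hold: for $\alpha$ with no negative weights the cascading $e$-stability constraints from below do \emph{not} force $W_{2M-3}^{\ell'}$ to contain $\ker\bigl(e|_{V_{2M-3}}\bigr)$. Take $\lambda=(4,4,2)$ and $\alpha=(3,1,3,1,1)$, so $2M-1=3$, $w_1=3$, $w_3=2$, and $\ker\bigl(e|_{V_1}\bigr)=\bC v_1^3$. The only nontrivial constraint on $W_3^1$ when one forgets it is $eW_1^1\subset W_3^1\subset V_3$. Here $W_1^1$ ranges over lines in the $2$-plane $e^{-1}W_3^1$, which need not contain $v_1^3$; over the closed locus $W_1^1=\bC v_1^3$ the fiber is $\bP(V_3)\cong\bP^1$, while generically the fiber is a single point. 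So $F_3$ is a blow-up, not a projective bundle projection — exactly the phenomenon that Section 8.4 has to handle once negative weights appear, and it does not vanish for all-positive $\alpha$ when you strip from the top. The remedy is to switch direction and use $F_1$ as the paper does; the rest of your outline (the observation that $U^n=\bigoplus_{i>0}V_i$, the induction on the number of distinct positive weights, the base case, the $Q_e$-equivariance of the tautological bundles, and the invocation of \Cref{projective bundle}) then goes through.
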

\begin{proof}
    We claim that $F_1$ realizes $X_\alpha$ as a tower of projective bundles over $X_{\alpha_1}$, the conclusion of the lemma then follows by induction. Decompose $F_1$ as $F_1^{1}\circ ...\circ F_1^{w_1,...,2,1}$. We consider the map $F_1^{l,...,2,1}: X_{\alpha_1}^{\{l,...,1\}}\rightarrow X_{\alpha_1}^{\{l-1,...,1\}}$. The condition to recover the space $W_1^{l}$ for an element of $X_{\alpha_1}^{\{l-1,...,1\}}$ is that $W_1^{l-1}\subset W_1^l\subset e^{-1}W_3^{l'}$ for some $l'$ depends on $\alpha$. Arguing as in Step 1 of the proof of \Cref{equal part}, we obtain that $F_1^{l,...,2,1}$ is the projection from a $\bP^{l'+ w_1- w_3- l}$-bundle. Furthermore, this projective bundle is the projectization of a $Q_e$-equivariant vector bundle. Therefore, we obtain the corresponding finite model of $(\fix)_\alpha$ by repeatedly applying \Cref{projective bundle}.
\end{proof}

Consider the map $F_i^j$ that forgets a single vector space $W_i^j$ from $X_\alpha$. We further assume that in $\alpha$, between the $j$-th weight $i$ and the $j+1$-th weight $i$, there is no weight $i-2$ or $i+2$ (if $\alpha$ is standard, it is equivalent to that these two weights $i$ are consecutive in $\alpha$).
\begin{lem} \label{forget only one}
    In the above context, the map $F^{j}_i$ is the projection from a $\bP^1$-bundle that is associated to an $A_e$-equivariant (and $Q_e$-equivariant) vector bundle. 
\end{lem}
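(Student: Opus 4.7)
The strategy is parallel to Step 1 of the proof of \Cref{equal part}: enumerate which of the conditions in \Cref{alternative} actually constrain $W_i^j$ (with all other $W$'s fixed), then show that under the hypothesis they carve out a $\bP^1$ inside $\bP(W_i^{j+1}/W_i^{j-1})$ (here $W_i^0:=0$), and finally globalize via tautological bundles. The assumption that a $(j+1)$-th weight $i$ exists in $\alpha$ guarantees $j<d_i$, so we stay away from the orthogonal-complement constraint at the top of the flag in $V_i$.

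First, I list the relevant constraints on $W_i^j$. Condition 1 gives the filtration $W_i^{j-1}\subset W_i^j\subset W_i^{j+1}$. Condition 2 contributes two kinds of constraint: (a) $eW_{i-2}^{j'}\subset W_i^j$, where $j'$ is the number of weights $i-2$ preceding the $(j+1)$-th weight $i$ in $\alpha$; and (b) for each $a$, $eW_i^{j_a}\subset W_{i+2}^{a}$, where $j_a$ is the number of weights $i$ preceding the $(a+1)$-th weight $i+2$. Condition 3 requires $W_i^j$ to be isotropic when $i<0$ (cf. \Cref{isotropic}), but this is automatic from $W_i^j\subset W_i^{j+1}$ since $W_i^{j+1}$ is already isotropic.

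Now the hypothesis that no weight $i-2$ or $i+2$ sits between the $j$-th and $(j+1)$-th weights $i$ in $\alpha$ does two things. For (a), the integer $j'$ attached to $W_i^j$ coincides with the analogous integer attached to $W_i^{j-1}$, so the inclusion $eW_{i-2}^{j'}\subset W_i^j$ is implied by the already-imposed $eW_{i-2}^{j'}\subset W_i^{j-1}$ together with $W_i^{j-1}\subset W_i^j$. For (b), having $j_a=j$ would force the $(a+1)$-th weight $i+2$ to lie strictly between the $j$-th and $(j+1)$-th weights $i$, which is excluded; hence no $a$ gives $j_a=j$, and each surviving constraint $eW_i^{j_a}\subset W_{i+2}^{a}$ with $j_a\ge j+1$ is implied by the corresponding constraint on $W_i^{j+1}$ and $W_i^j\subset W_i^{j+1}$. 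What remains is only the sandwich $W_i^{j-1}\subset W_i^j\subset W_i^{j+1}$, so the fiber of $F_i^j$ is $\bP(W_i^{j+1}/W_i^{j-1})\cong \bP^1$.

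To globalize, realize $F_i^j$ as the projectization $\bP(\cW_i^{j+1}/\cW_i^{j-1})$ of the rank-$2$ quotient of the tautological subbundles over the target $X_{\alpha_i}^{\{1,\ldots,d_i\}\setminus\{j\}}$. Both tautological bundles carry canonical $A_e$- and $Q_e$-equivariant structures, since both groups act on $V_\lambda$ preserving the weight spaces and commuting with $e$, so they preserve the subspaces $W_i^{j\pm 1}$ fiberwise; consequently the quotient, and hence the $\bP^1$-bundle, is $A_e$- and $Q_e$-equivariant. The main bookkeeping hurdle is keeping track of the indices $j'$ and $j_a$ in condition 2 in order to verify rigorously that the hypothesis eliminates exactly the obstructing constraints — the statement of the lemma is tailor-made for this cancellation.
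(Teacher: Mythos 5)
Your proof is correct and takes essentially the same route as the paper's one-line proof: reduce the constraints from \Cref{alternative} to the sandwich $W_i^{j-1}\subset W_i^j\subset W_i^{j+1}$ and identify the fiber as $\bP^1$. You have simply spelled out the bookkeeping that the paper leaves implicit, namely that the hypothesis rules out any constraint of type (a) or (b) from Condition~2 involving $W_i^j$ (one small imprecision: constraints $eW_i^{j_a}\subset W_{i+2}^a$ with $j_a\neq j$ are not constraints on $W_i^j$ at all, so there is nothing to be ``implied by'' — they simply live on the target — but this does not affect the conclusion).
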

\begin{proof}
    With the assumption that there is no weight $i-2$ or $i+2$ between the $j$-th weight $i$ and the $j+1$-th weight $i$, the condition to recover $W_i^{j}$ is simply $W_i^{j-1} \subset W_i^{j}\subset W_i^{j+1}$. Therefore, we obtain a $\bP^1$-bundle.
\end{proof}

\subsubsection{Geometry of $\cB_{(2x,2x,...,2x)}^{gr}$}
We consider the case where the partition $\lambda$ of $e\in \fsp_{2n}$ takes the form $(2x, 2x,..., 2x)$ with $x= \frac{n}{k}$, here $k$ is the number of parts of $\lambda$. Let $\alpha$ be a weight tuple of $V_\lambda$. Let $\beta= \alpha_{\{1\}}$ be the tuple obtained from $\alpha$ by picking out the weights $1$ and $-1$ in $\alpha$, keeping their orders. For example, if $\alpha= (3,1,3,3,1,-1,-1,-3)$, then $\beta= (1,1,-1,-1)$. Applying \Cref{equal part} repeatedly, we have the following corollary. 
\begin{cor} \label{equal partition}
    The variety $X_\alpha$ is a tower of projective bundles over the variety $X_\beta$. Furthermore, these projective bundles are the projectizations of $O_k$-equivariant vector bundles. 
\end{cor}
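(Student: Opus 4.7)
The plan is to reduce the statement to an iterated application of Lemma~\ref{equal part}. First, by Proposition~\ref{standard}, we may assume without loss of generality that the tuple $\alpha$ is standard, since replacing $\alpha$ by a standard tuple yields an isomorphic variety. The crucial numerical input is that for $\lambda = (2x, 2x, \ldots, 2x)$ with $k$ parts, the dual partition is $\lambda^{\intercal} = (k, k, \ldots, k)$ with $2x$ parts, so every weight space satisfies $\dim V_{2j-1} = k$ for $|2j-1| \leqslant 2x - 1$. In particular, $w_{2M-1} = w_{2M-3} = k$, which is precisely the hypothesis of Lemma~\ref{equal part}.

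I would then induct on $x$ (equivalently, on the number of columns $M$ of the basis diagram). For the base case $x = 1$, the only weights appearing in $V_\lambda$ are $\pm 1$, so $\alpha = \beta$ and there is nothing to prove. For the inductive step, Lemma~\ref{equal part} yields that $F_{2M-1}: X_\alpha \to X_{\alpha_{2M-1}}$ is the projection from a tower of projective bundles associated to $Q_e$-equivariant vector bundles. Since $\lambda$ has only one distinct part (of multiplicity $k$), we have $Q_e \cong O_k$, so these bundles are $O_k$-equivariant. Now $X_{\alpha_{2M-1}}$ is itself the variety associated (via the construction of Section~\ref{subsect 8.2.2}) to a smaller nilpotent element $e'$ whose basis diagram is obtained from that of $e$ by deleting the extreme columns of weights $\pm(2M-1)$. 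Consequently, $e'$ has partition $(2x-2, 2x-2, \ldots, 2x-2)$, still with $k$ equal parts, and the associated reductive centralizer is again $O_k$. By induction, $X_{\alpha_{2M-1}}$ is a tower of $O_k$-equivariant projective bundles over $X_{(\alpha_{2M-1})_{\{1\}}} = X_\beta$, where the last equality holds because picking out the $\pm 1$ weights commutes with picking out weights in $\{-2M+3, \ldots, 2M-3\}$.

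Composing the tower $X_\alpha \to X_{\alpha_{2M-1}}$ produced by Lemma~\ref{equal part} with the tower $X_{\alpha_{2M-1}} \to X_\beta$ from the inductive hypothesis gives the desired tower of $O_k$-equivariant projective bundles $X_\alpha \to X_\beta$. The only mild subtlety is ensuring that the $O_k$-equivariant structures match across the induction; this is immediate because the $O_k$-action is the same multiplicity action throughout (one and the same $O_k$ acts on each of the weight spaces $V_i$, and the tautological subbundles used to build the projective bundles at each stage are sub-bundles of trivial $O_k$-bundles over the base). I do not anticipate a substantive obstacle here — the content of the corollary is essentially a bookkeeping consequence of Lemma~\ref{equal part} combined with the preservation of the equal-part structure under passing to $\alpha_{2M-1}$.
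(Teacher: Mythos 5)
Your proof is correct and follows the same route the paper takes, namely an induction that repeatedly applies Lemma~\ref{equal part} to peel off the highest weight $\pm(2M-1)$; the paper records exactly this in the one-line remark preceding the corollary. Your additional bookkeeping (reducing to a standard tuple via Proposition~\ref{standard}, observing that the standardization swaps never touch a pair of $\pm 1$ weights so $\beta$ is unchanged, and identifying $X_{\alpha_{2M-1}}$ with the graded Springer fiber of the element with partition $((2x-2)^k)$ whose reductive centralizer is still $O_k$) is accurate and fills in the details the paper leaves implicit.
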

We proceed to describe $X_\beta$. Assume that $\beta$ consists of $l$ weights $-1$ and $k- l$ weights $1$ for some $1\leqslant l\leqslant \frac{k}{2}$. We show that $X_\beta$ is a tower of projective bundles over the orthogonal Grassmannian $\OG(l,k)$ as follows. Consider the projection $p$ from $X_\beta$ to $\OG(l,k)$ that sends a flag in $X_\beta$ to its intersection with the weight space $V_{-1}$. Now we regard $p$ as the map forgetting $W_1^{1},...,W_1^{k-l}, W_{-1}^{1},..., W_{-1}^{l-1}$ in order from left to right. Then $p$ is the projection from a tower of projective bundles; the details of the argument are similar to Step 1 of \Cref{equal part}. 

Similar arguments apply for the case where $\lambda$ has equal parts that are odd. We have the following proposition.

\begin{pro}\leavevmode
\begin{enumerate}
    \item Assume that $\lambda$ is of the form $((2x)^k)$. Let $l$ be the multiplicity of weight $-1$ in $\alpha$. Then $X_\alpha$ is a tower of projective bundles over $\OG(l,k)$.
    \item Assume that $\lambda$ is of the form $((2x+1)^{2k})$. Then $X_\alpha$ is a tower of projective bundles over $LG(\bC^{2k})$. Here, $LG(\bC^{2k})$ denotes the Lagrangian grassmannian of a symplectic vector space of dimension $2k$.
\end{enumerate}
    
\end{pro}
Recall that we have discussed the existence and uniqueness of the finite model of $\OG(l,k)$ in \Cref{finite model partial flag} and \Cref{finite model OG}. In summary, for $k=2$, $\OG(1,2)$ is two points; for $k\geqslant 3$, the finite model of $\OG(l,k)$ is unique, consisting of two types of orbits. If $2l<k$, the two types of orbits are the ordinary orbit fixed by $O_k$ and the special orbit fixed by $SO_k$. If $2l=k$, the two types of orbits are the ordinary orbit fixed by $SO_k$ and the special orbit fixed by $SO_k$. The finite model of $X_\alpha$ then consists of copies of the finite model of $\OG(l,k)$. 
\subsubsection{Applications}
We give some examples as illustrations of the results in Section 8.3.2. 
\begin{exa} \label{base case equal}
This example describes $\cB_{(2^k)}^{gr}$ and the corresponding varieties $X_\beta$ for $k= 2,3,4$. 
    \begin{enumerate}
        \item The variety $\cB_{(2,2)}^{gr}$ has three connected components. One of them is $X_{(1,1)}\cong \bP^1$. The other two come from $X_{(1,-1)}= \OG(1,2)$, which consists of two points.
        \item The variety $\cB_{(2,2,2)}^{gr}$ has three connected components. They are $X_{(1,1,1)}= (\bP^2, \bP^1)$, $X_{(1,1,-1)}= (\bP^1, \OG(1,3))$ and $X_{(1,-1,1)}= \OG(1,3)$.
        \item The variety $\cB_{(2,2,2,2)}^{gr}$ has eight connected components that come from six varieties $X_\beta$. They are $X_{(1,1,1,1)}\cong (\bP^3, \bP^2, \bP^1)$, $X_{(1,1,1,-1)}\cong (\bP^2,\bP^1, \OG(1,4))$, $X_{(1,1,-1,1)}\cong (\bP^1,\bP^1, \OG(1,4))$, $X_{(1,-1,1,1)}$ $\cong (\bP^1, \OG(1,4))$, $X_{(1,1,-1,-1)}$ $\cong (\bP^1, \bP^1, \OG(2,4))$ and $X_{(1,-1,1,-1)}\cong (\bP^1, \OG(2,4))$. Here, each of the last two varieties has two connected components.        
    \end{enumerate}
\end{exa}

Consider the case $\lambda= (2x_1, 2x_2)$. From \Cref{distinguished}, we see that a connected component of $\fix$ is isomorphic to a connected component of $\cB_{(2x',2x')}^{gr}$ for some $x'\leqslant x_2$. By \Cref{equal part}, a connected component of $\cB_{(2x',2x')}^{gr}$ is a tower of projective bundles over some connected component of $\cB_{(2,2)}^{gr}$. Because the dimensions of weight spaces of $\lambda$ are at most $2$, the only non-trivial bundle we have is $\bP^1$-bundle. Therefore, we have the following result.
\begin{pro} \label{2 row}
    Consider $\lambda= (2x_1, 2x_2)$ and a tuple of weights $\alpha$. The variety $(\fix)_\alpha$ is a single connected component of $\cB_{(2x_1, 2x_2)}$ if and only if the weights of $\alpha$ are all positive. Otherwise, $(\fix)_\alpha$ has two connected components that are permuted by $Q_e$. In either case, a connected component of $(\fix)_\alpha$ is a tower of $\bP^1$-bundles. 
\end{pro}
We have similar results for the cases $\lambda= (2x_1, 2x_1, 2x_1)$ and $\lambda= (2x_1, 2x_1, 2x_1, 2x_1)$. 
\begin{pro} \label{3 row equal}
    Consider $\lambda =(2x_1, 2x_1, 2x_1)$ and a tuple of weights $\alpha$. If $\alpha$ consists only of positive weights, $(\fix)_\alpha$ is a tower of projective bundles over a point. Otherwise, $(\fix)_\alpha$ is a tower of projective bundles over the variety $\OG(1,3)$.
\end{pro}

\begin{pro} \label{4 row equal}
    Consider $\lambda= (2x_1, 2x_1, 2x_1, 2x_1)$ and a tuple of weights $\alpha$. Recall that we write $w_{-1}$ for the multiplicity of the weight $-1$ in $\alpha$. We have three cases:
    \begin{enumerate}
        \item If $w_{-1}= 0$, then $X_\alpha$ is a tower of projective bundles over a point. 
        \item If $w_{-1}= 2$, then $X_\alpha$ is a tower of projective bundles over $\OG(1,4)$.
        \item If $w_{-1}= 2$, then $X_\alpha$ is a tower of projective bundles over $\OG(2,4)$.
    \end{enumerate}
    In the first two cases, $X_\alpha$ is a connected component of $\fix$. In the last case, $X_\alpha$ consists of two connected components of $\fix$. 
\end{pro}

In terms of finite models, 
\begin{enumerate}
    \item $Y_{(2x_1, 2x_2)}$ has two types of orbits as shown in \Cref{numeric 2 row}.
    \item The $O_3$-finite model of $\OG(1,3)$ consists of two points, both stabilized by $O_3$. One of them is ordinary; the other is special with respect to the unique Schur multiplier of $O_3$. Then \Cref{3 row equal} explains why $Y_{(2x_1,2x_1,2x_1)}$ consists of two types of orbits as found in \Cref{3 rows}.
    \item The $O_4$-finite model $Y_{(2x_1,2x_1,2x_1,2x_1)}$ has three different types of orbits coming from finite models of $\OG(1,4)$ and $\OG(2,4)$ (see \Cref{small OG}). This can be worked out from the system of equations in Section 5.3 without using geometric arguments. 
\end{enumerate}

\subsection{Techniques to describe reduction maps}
We have described $\fix$ when $\lambda$ has equal parts in the previous section. When $\lambda$ has distinct parts, a new phenomenon is that reduction maps $F_i$ may involve blow-up maps. Section 8.4.1 is devoted to techniques related to blow-ups. The results up to Section 8.4.2 are enough to describe the reduction maps for the case $\lambda$ having $3$ rows. Section 8.4.3 discusses more technical results that we need for the case $\lambda$ having $4$ rows. 

\subsubsection{Blow-ups}
Let $V$ be a vector space of dimension $w$. Fix an integer $1\leqslant j\leqslant w-1$. Let $X\cong \bP^{w-1} \times \Gr(j, w) $ be the variety that parameterizes pairs of subspaces $(W^1, W^j \subset V)$ where $\dim W^l= l$ for $l= 1, j$. Let $X'$ be the variety that parameterizes triples of subspaces $(W^1, W^j, W^{j+1} \subset V)$ such that $\dim W^{j+1}= j+1$ and $W^{j+1}\supset W^1\cup W^j$. We consider the projection $p: X'\rightarrow X$ that forgets the subspace $W^{j+1}$ in the triple.
\begin{lem} \label{main blow up}
   The morphism $p: X'\rightarrow X$ is the blow-up along the smooth subvariety $Y\subset X$ which parametrizes pairs $(W^1\subset W^j)$.
\end{lem}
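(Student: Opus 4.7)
The plan is to identify $p: X' \to X$ with the standard construction of a blow-up as the vanishing locus of a regular section of a vector bundle on a projective bundle. On $X = \bP^{w-1} \times \Gr(j,w)$, let $\cW^1$ and $\cW^j$ denote the tautological subbundles, and set $\cQ := (V \otimes \cO_X)/\cW^j$, a vector bundle of rank $w-j$. The natural composition $\varphi: \cW^1 \hookrightarrow V \otimes \cO_X \twoheadrightarrow \cQ$ vanishes at a point $(W^1, W^j)$ precisely when $W^1 \subset W^j$, so the zero locus of $\varphi$, equivalently of the corresponding section $s$ of $\cF := \cQ \otimes (\cW^1)^\vee$, is exactly the subvariety $Y$.

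Next, I would check that $s$ is a regular section, i.e.\ that $Y$ has codimension equal to $\operatorname{rank}(\cF) = w-j$. Since $Y$ is a $\bP^{j-1}$-bundle over $\Gr(j,w)$ (sending $(W^1 \subset W^j) \mapsto W^j$), it is smooth of dimension $(j-1) + j(w-j)$, while $\dim X = (w-1) + j(w-j)$, giving codimension $w-j$ as needed; in particular $Y$ is smooth.

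Then I would invoke the following classical fact: for a smooth variety $X$ and a rank-$r$ bundle $\cF$ with a regular section $s$ cutting out a smooth codimension-$r$ subvariety $Y$, the incidence variety
$$\widetilde{X} := \{(x,\ell) \in \bP(\cF) : s(x) \in \ell\},$$
with its projection to $X$, is canonically isomorphic to $\operatorname{Bl}_Y X$. Since tensoring by a line bundle does not alter projectivization, $\bP(\cF) = \bP(\cQ)$, and a point of $\widetilde{X}$ over $(W^1, W^j)$ is the datum of a line $\ell \subset V/W^j$ containing the image of $W^1$. Taking $W^{j+1}$ to be the preimage of $\ell$ in $V$ sets up a canonical isomorphism between $\widetilde{X}$ and $X'$ over $X$, which yields the desired conclusion.

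The main point to justify is the classical identification of $\widetilde{X}$ with $\operatorname{Bl}_Y X$. I plan to verify this via the universal property of the blow-up: the pullback $p^{-1}(Y) = \bP(\cF|_Y)$ is a Cartier divisor on $\widetilde{X}$, locally cut out by a single component of $s$ in trivializations of $\cF$, and any morphism to $X$ turning the ideal of $Y$ into an invertible sheaf lifts uniquely to $\widetilde{X}$. Alternatively, working in local coordinates where $\cF$ is trivial and $s = (s_1,\ldots,s_r)$ cuts out $Y$ regularly, $\widetilde{X}$ is the subvariety of $X \times \bP^{r-1}$ defined by the $2\times 2$ minors of the matrix with rows given by $s$ and by the homogeneous coordinates on $\bP^{r-1}$, which is the classical affine presentation of the blow-up.
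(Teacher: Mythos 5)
Your proof is correct, and it takes a genuinely different route from the paper's. The paper works in Plücker coordinates: it embeds $X$ in a product of projective spaces, writes down the $\binom{w}{j+1}$ bilinear equations $f_l$ cutting out $Y$, realizes $\Bl_Y X$ as the closure of the graph $X\setminus Y \to \bP^{\binom{w}{j+1}-1}$, observes that these $f_l$'s satisfy the Plücker relations so the closure lands in $X\times \Gr(j+1,w)$, and then identifies this closure with $X'$ by a birationality and dimension argument. You instead realize $Y$ intrinsically as the zero locus of the regular section $s$ of $\Hom(\cW^1, \cQ)$ (with $\cQ = (V\otimes\cO_X)/\cW^j$ the quotient bundle), use the standard incidence-variety description $\Bl_Y X \cong \{(x,\ell)\in\bP(\cQ): s(x)\in\ell\}$, and then read off the isomorphism with $X'$ by taking $W^{j+1}$ to be the preimage of $\ell$ in $V$. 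Your approach is coordinate-free and makes the key point — that $X'$ sits naturally inside the projectivized normal bundle $\bP(\cQ)$ — transparent, at the modest cost of invoking (or reproving, via the Rees-algebra/Koszul presentation) the regular-section description of a blow-up; the paper's version is more elementary in its toolkit but requires the Plücker bookkeeping. Both proofs rest on the same dimension count showing $Y$ is smooth of codimension $w-j$, and both handle birationality the same way.
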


\begin{proof}
    First, the restriction of $p$ to $X\setminus Y$ is an isomorphism. The variety $Y$ can be regarded as a projective $\bP^{j-1}$-bundle over $\Gr(j, w)$, so it has codimension $w-j$ in $X$. The fiber $p^{-1}(Y)$ is a $\bP^{w-j-1}$-bundle over $Y$. Hence, $p: X'\rightarrow X$ is birational. Next, we explain why $X'\cong \Bl_Y X$.

    We have an embedding $X\hookrightarrow \bP^{{w \choose j}- 1}\times \bP^{w-1}=: X_1$ by the Plucker embedding. Let $a_i$ and $b_{i'}$ be the homogeneous coordinate functions of the two projective spaces $\bP^{{w \choose j}- 1}$ and $\bP^{w-1}$. The condition $W^1\subset W^j$ is equivalent to $W^1\wedge W^j= 0$. So, the defining relation of $Y$ inside $X$ is given by $w\choose j+1$ functions $f_l's$ which are linear combinations of $a_ib_{i'}$. The graph of the functions $f_l's$ gives us a map $X\setminus Y\xrightarrow{g} (X\setminus Y)\times \bP^{{w\choose j+1} - 1}$. The classical definition of blow-up tells us that the blow-up variety $\Bl_{Y}X$ can be realized as the closure of the image of $g$ in $(X\setminus Y)\times \bP^{{w\choose j+1} - 1}$. 
    
    By the way we define the functions $f_l's$, they satisfy the Plucker relations of the Plucker embedding $\Gr(j+1, w)\hookrightarrow \bP^{{w\choose j+1} - 1}$. Hence, the image of $g$ lives in $X\times \Gr(j+1, w)\subset X\times \bP^{{w\choose j+1} - 1}$. This image of $g$ can be realized as the variety that parameterizes the triple of spaces $(W^1\nsubseteq W^j, W^{j+1}= W^1\cup W^j)$, an open subset of $X'$. Therefore, we get a closed embedding $\Bl_{Y}X\hookrightarrow X'$. This closed embedding is birational and hence is an isomorphism.
\end{proof}
Fix a nondegenerate bilinear form of $V$. By taking orthogonal complements, we obtain a dual version of \Cref{main blow up} as follows.
\begin{cor} \label{intersection}
    Let $X$ be the variety that parameterizes pairs $(W^{w-j}, W^{w-1})$ and $X'$ be the variety that parameterizes triples $(W^{w-j-1}\subset W^{w-j}\cap W^{w-1})$. Then $X'\cong \Bl_Y X$ where $Y$ is the variety that parameterizes $(W^{w-j}\subset W^{w-1})$.
\end{cor}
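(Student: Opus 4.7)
The plan is to derive this corollary directly from \Cref{main blow up} by transporting the setup through the duality $W \mapsto W^\perp$ induced by the fixed nondegenerate bilinear form on $V$. Concretely, I would define the map
\[
\phi \colon (W^{w-j}, W^{w-1}) \longmapsto \bigl((W^{w-1})^\perp,\ (W^{w-j})^\perp\bigr),
\]
which is a regular isomorphism from the $X$ of the corollary to the $X \cong \mathbb{P}^{w-1}\times \Gr(j,w)$ of \Cref{main blow up}, since taking perp sends a subspace of dimension $d$ to one of dimension $w-d$, and is involutive.

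Next I would lift $\phi$ to the level of triples. Sending
\[
(W^{w-j-1} \subset W^{w-j}\cap W^{w-1}) \;\longmapsto\; \bigl((W^{w-1})^\perp,\ (W^{w-j})^\perp,\ (W^{w-j-1})^\perp\bigr),
\]
I would check that the inclusion condition $W^{w-j-1}\subset W^{w-j}\cap W^{w-1}$ is equivalent, after taking perps, to the two inclusions $(W^{w-1})^\perp \subset (W^{w-j-1})^\perp$ and $(W^{w-j})^\perp \subset (W^{w-j-1})^\perp$; that is exactly the condition defining $X'$ in \Cref{main blow up} (where the top-dimensional space, here of dimension $j+1$, contains the two smaller ones). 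Moreover, the map $p\colon X'\to X$ forgetting $W^{w-j-1}$ corresponds under these bijections to the map forgetting $(W^{w-j-1})^\perp = \tilde W^{j+1}$, and the center $Y = \{W^{w-j}\subset W^{w-1}\}$ corresponds to $\{\tilde W^1 \subset \tilde W^j\}$, the center appearing in \Cref{main blow up}.

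Having matched all four data $(X, X', Y, p)$ with those of \Cref{main blow up}, the corollary follows: $X'\cong \Bl_{Y} X$ because the corresponding statement holds on the dual side. The one thing worth verifying carefully (and the only place where a misstep is possible) is the translation of the inclusion conditions between a subspace and its orthogonal complement, namely the elementary fact that $U\subset W$ iff $W^\perp \subset U^\perp$ for subspaces of a vector space equipped with a nondegenerate form; once this is noted, everything else is formal. There is no substantial obstacle, and no calculation beyond the bookkeeping of dimensions and inclusions.
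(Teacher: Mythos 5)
Your proof is correct and follows exactly the paper's intended route: the paper's only comment before the corollary is ``Fix a nondegenerate bilinear form of $V$. By taking orthogonal complements, we obtain a dual version of \Cref{main blow up},'' and your argument simply makes that duality explicit, verifying that orthogonal complementation identifies the four pieces of data $(X,X',Y,p)$ with those of \Cref{main blow up}. The details you flag as worth checking (dimension bookkeeping and the reversal of inclusions under $W\mapsto W^\perp$) are indeed the only content, and they check out.
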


Another lemma that we will use frequently is the following. 

\begin{lem} \label{transversal}
Let $X$ be a smooth variety over $\mathbb{C}$, and $Y, Z$ be smooth subvarieties of $X$. Assume further that $Y$ and $Z$ intersect transversally. Then we have a Cartesian square
$$\begin{tikzcd} 
    \text{Bl}_{Y\cap Z}Z \arrow{r}\arrow{d} \& \text{Bl}_Y X \arrow{d}\\
    Z\arrow{r} \& X
     \end{tikzcd} $$
\end{lem}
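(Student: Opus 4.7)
The plan is to reduce the assertion to an equality of ideal sheaves on $Z$ and then invoke the Proj description of blow-ups. Recall that $\text{Bl}_Y X = \text{Proj}_X \bigoplus_{n \geqslant 0} \mathcal{I}_Y^n$, and that the fiber product with any morphism $g: V' \to X$ is computed by tensoring the Rees algebra with $\mathcal{O}_{V'}$ before passing to Proj. Applied to the inclusion $Z \hookrightarrow X$, the square in the lemma will be Cartesian as soon as I verify
$$\mathcal{I}_Y^n \cdot \mathcal{O}_Z \;=\; \mathcal{I}_{Y \cap Z,\,Z}^n \qquad \text{on } Z, \text{ for every } n \geqslant 1.$$

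I would check this locally. At a point $p \in Y \cap Z$, transversality $T_p Y + T_p Z = T_p X$ together with the smoothness of $Y$ and $Z$ allows me to choose regular parameters $(y_1, \ldots, y_b, z_1, \ldots, z_c, w_1, \ldots, w_a)$ for $\mathcal{O}_{X, p}$ with $Y = V(y_1, \ldots, y_b)$ and $Z = V(z_1, \ldots, z_c)$, where $b = \text{codim}_X Y$, $c = \text{codim}_X Z$, and $a = \dim(Y \cap Z)$. In these coordinates $\mathcal{O}_{Z, p} = \mathcal{O}_{X, p}/(z_1, \ldots, z_c)$, the image of $\mathcal{I}_Y$ in $\mathcal{O}_Z$ is the ideal $(y_1, \ldots, y_b) = \mathcal{I}_{Y \cap Z, Z}$, and because $y_1, \ldots, y_b$ form a regular sequence on the smooth variety $Z$, all higher powers match on the nose.

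The main point to handle carefully, though not a severe one, is ensuring that restricting $\mathcal{I}_Y^n$ to $Z$ does not produce parasitic torsion that would artificially enlarge the Proj; this is precisely the role played by transversality. Conceptually, transversality yields the vanishing $\text{Tor}_i^{\mathcal{O}_X}(\mathcal{O}_Y, \mathcal{O}_Z) = 0$ for $i > 0$, so the natural surjection $\mathcal{I}_Y^n \otimes_{\mathcal{O}_X} \mathcal{O}_Z \to \mathcal{I}_Y^n \cdot \mathcal{O}_Z$ is an isomorphism and the Rees algebra construction commutes with pullback to $Z$. Once this is in place, the Cartesian square in the statement is a formal consequence of the Proj description of blow-ups.
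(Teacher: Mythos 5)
Your argument is correct, but it takes a genuinely different route from the paper's. The paper constructs the canonical closed immersion $\text{Bl}_{Y\cap Z}Z \hookrightarrow \text{Bl}_Y X \times_X Z$ coming from the surjection of graded algebras, then observes that \emph{both} sides are birational to $Z$ (on the right side via the dimension count $\dim p_2^{-1}(Y\cap Z) = \dim Z - 1$), and concludes that a birational closed immersion of varieties is an isomorphism. That shortcut avoids computing the pullback of the Rees algebra at all. Your proof, by contrast, directly verifies that the Rees algebra base-changes correctly using regular parameters adapted to $Y$ and $Z$, which is a perfectly valid alternative. One thing to phrase more carefully: the identity you highlight, $\mathcal{I}_Y^n\cdot\mathcal{O}_Z = \mathcal{I}_{Y\cap Z,Z}^n$, is automatic and holds even without transversality, since taking powers commutes with taking the image of an ideal under a ring map. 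The nontrivial point is that the canonical surjection $\mathcal{I}_Y^n\otimes_{\mathcal{O}_X}\mathcal{O}_Z \to \mathcal{I}_Y^n\cdot\mathcal{O}_Z$ is injective for all $n$, because the fiber product $\text{Bl}_Y X\times_X Z$ is $\text{Proj}$ of the full tensor product $\bigoplus_n \mathcal{I}_Y^n\otimes\mathcal{O}_Z$, not of its image in $\mathcal{O}_Z$. You do raise this via Tor-vanishing, but note that $\text{Tor}_1^{\mathcal{O}_X}(\mathcal{O}_Y,\mathcal{O}_Z)=0$ only handles $n=1$ directly; what you really need is $\text{Tor}_1^{\mathcal{O}_X}(\mathcal{O}_X/\mathcal{I}_Y^n,\mathcal{O}_Z)=0$ for all $n$, and the cleanest justification is precisely your regular-parameters picture: $y_1,\ldots,y_b$ remain a regular sequence after passing to $\mathcal{O}_Z$, so the Koszul resolution of $\mathcal{O}_Z$ tensored with $\mathcal{O}_X/\mathcal{I}_Y^n$ stays exact. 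In other words your local computation is the load-bearing step, not a confirmation of a tautology; with that clarification the proof is complete.
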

\begin{proof}
Let $\mathcal{I}$ and $\mathcal{J}$ be the two ideal sheaves which correspond to the embedding of $Y\xhookrightarrow{f} X$ and $ Z\xhookrightarrow{g} X$. First, we have a natural surjective map of sheaves on $Z$, $g^*\mathcal{I}\rightarrow \mathcal{I}(\mathcal{O}_X/\mathcal{J})=: \mathcal{I}'$. This map induces a closed embedding $\textbf{Proj}(\oplus \mathcal{I}'^n) \rightarrow (\textbf{Proj}(\oplus (g^*\mathcal{I})^n))$. In other words, we get a closed embedding $i: \text{Bl}_{Y\times_X Z}Z\hookrightarrow$ $g^*\text{Bl}_Y X$.

We have two natural projections $p_1: \text{Bl}_{Y\times_X Z}Z\rightarrow Z$ and $p_2: g^*\text{Bl}_Y X\rightarrow Z$ that satisfy $p_1= p_2\circ i$. Since $Y$ and $Z$ intersect transversally, $Z\times_X Y$ is a proper subvariety of $Z$. Hence, the projection $p_1$ is birational. In particular, $p_1$ is an isomorphism over the open set $Z\setminus Z\times_X Y$. Next, the projection $p_2$ is an isomorphism over the same open set, and the subvariety $p_2^{-1}(Y\cap Z)$ has dimension $\dim (Y\cap Z)+ \dim X- \dim Y- 1= \dim Z- 1$. Hence, $p_2$ is birational. Thus, the closed embedding $i: \text{Bl}_{Y\times_X Z}Z\rightarrow g^*\text{Bl}_Y X$ is birational and is an isomorphism between the two varieties. 
\end{proof}

Next, we give some examples of how these results are applied in studying reduction maps. 

\begin{exa} \label{blow up six six four}
    Consider the case $\lambda= (6,6,4)$, we want to show that the following morphism is a blow-up map
    $$F_3: X_{(5,3,5,3,1,1,3,1)}\rightarrow X_{(3,3,1,1,1)}.$$
    The basis diagram of $V_\lambda$ is as follows.
    $$\ytableausetup{centertableaux, boxsize= 3em}
    \begin{ytableau}
    \none & v_{3}^{3} &v_{1}^{3} &v_{-1}^{3}& v_{-3}^{3} &\none\\
    v_{5}^{2} &v_{3}^{2} &v_{1}^{2} &v_{-1}^{2} &v_{-3}^{2}&v_{-5}^{2}\\
    v_{5}^{1} &v_{3}^{1} &v_{1}^{1} &v_{-1}^{1} &v_{-3}^{1} &v_{-5}^{1}\\
    \end{ytableau}\\    
    $$    
As in \Cref{alternative}, we have an embedding $X_{(3,3,1,1,1)}\hookrightarrow \Gr^{(4,4,2)}_{(3,3,1,1,1)}$ $= \Gr(1,2)\times \Gr(1,3)\times \Gr(2,3)$. Here, $\Gr(1,2)$ pararmetrizes $W_{3}^{1}\subset V_3= \bC^2$; $\Gr(1,3)$ and $\Gr(2,3)$ pararmetrize $W_{1}^{1}, W_{1}^{2} \subset V_1= \bC^3$. Let $Y'$ be the subvariety of $\Gr^{(4,4,2)}_{(3,3,1,1,1)}$ defined by the condition $W_1^{2}= e^{-1}W_3^{1}$. The variety $X_{(3,3,1,1,1)}$ is defined in $\Gr^{(4,4,2)}_{(3,3,1,1,1)}$ by the condition $W_1^{1}\subset W_1^{2}$, so it intersects transversally with $Y'$ defined by the conditions $W_1^{2}= e^{-1}W_3^{1}$. This intersection is isomorphic to $X_{(3,1,1,3,1)}$, which is embedded into $X_{(3,3,1,1,1)}$ as follows. In the alternative description in \Cref{alternative} of $X_{(3,3,1,1,1)}$, we add the constraint that $eW_{1}^{2}= W_{3}^1$. 

Consider the map $F_3: X_{(5,3,5,3,1,1,3,1)}\rightarrow X_{(3,3,1,1,1)}$. Since $W_3^{2}$ is determined by $eW_{1}^2$, $F_3$ is the map that forgets $W_3^{1}$. And to recover $W_3^{1}$, the condition is $W_3^{1}\subset e^{-1}W_5^{1}\cap W_3^{2}$ $= e^{-1}W_5^{1}\cap eW_1^{2}$. This description of $X_{(5,3,5,3,1,1,3,1)}$ and the result of \Cref{intersection} for $V= V_3$ (so $\dim V= 2)$ and $j=1$ give us a Cartesian square

$$\begin{tikzcd} 
    X_{(5,3,5,3,1,1,3,1)} \arrow{r}\arrow{d} \& \text{Bl}_{Y'} \Gr^{(4,4,2)}_{(3,3,1,1,1)} \arrow{d}\\
    X_{(3,3,1,1,1)}\arrow{r} \& \Gr^{(4,4,2)}_{(3,3,1,1,1)}
     \end{tikzcd} $$
Then we have an isomorphism $X_{(5,3,5,3,1,1,3,1)}\cong \Bl_{X_{(3,1,1,3,1)}}X_{(3,3,1,1,1)}$ by applying \Cref{transversal}. 
\end{exa}

\begin{exa} \label{first blowup}
    We consider $\lambda= (4,4,2)$, $\alpha= (3,1,3,1,-1)$ and the morphism $$X_{(3,1,3,1,-1)}\xrightarrow{F_3} X_{(1,1,-1)}.$$
    The basis diagram of $V_\lambda$ is as follows.
    $$\ytableausetup{centertableaux, boxsize= 3em}
    \begin{ytableau}
    \none &v_{1}^{3} &v_{-1}^{3}&\none\\
    v_{3}^{2} &v_{1}^{2} &v_{-1}^{2} &v_{-3}^{2}\\
    v_{3}^{1} &v_{1}^{1} &v_{-1}^{1} &v_{-3}^{1} \\
    \end{ytableau}\\    
    $$    
    Note that $W_3^{2}$ is determined by $eW_{1}^{2}$, so $F_3$ is the map that forgets $W_3^{1}$. The constraint is $W^2:= e^{-1}W_3^{1}\supset W_1^{1}$. In other words, we want to find a $2$-dimensional vector space $W^2\subset V_1$ such that $W^2$ contains both $W_1^{1}$ and $\bC v_{1}^{3}$. This fits the context of \Cref{main blow up}. Arguing similarly to \Cref{blow up six six four}, we have that $F_3$ is a blow-up map. The exceptional set in $X_{(1,1,-1)}$ is characterized by the constraint $W_1^{1}= \bC v_{1}^{3}$, and we can realize it as $X_{(1,3,1,-1,3)}$ for the following reasons. In the alternative description in \Cref{alternative} of $X_{(1,3,1,-1,3)}$, we must have $W_1^{1}= \bC v_{1}^{3}$, and $W_3^{1}$ is uniquely determined by $eW_{1}^{2}$. Hence, we have an embedding $X_{(1,3,1,-1,3)}\hookrightarrow X_{(1,1,-1)}$, and $X_{(3,1,3,1,-1)}\cong \Bl_{X_{(1,3,1,-1,3)}} X_{(1,1,-1)}$.
\end{exa}

\subsubsection{Reducible tuples}
We first introduce the notion of reducible tuples $\alpha$. Recall that in \Cref{reduction maps}, a reduction map is a morphism $F: (\fix)_\alpha\xrightarrow{} (\mathcal{B}_{e'}^{gr})_{\alpha'}$.
\begin{defin} [Reducible tuples] \label{reducible tuples}
    We say that a tuple $\alpha$ is \textit{reducible with respect to a reduction map $F$} if the morphism $F$ satisfies the following conditions.
    \begin{enumerate}
        \item $F$ is a composition of projections from towers of projective bundles and blow-ups along smooth subvarieties.
        \item In addition, we require these projective bundles to be associated to $A_e$-equivariant vector bundles, and the exceptional sets of the blow-ups to be $A_e$-stable. 
    \end{enumerate}
     We say that $\alpha$ is \textit{reducible} if such a reduction map $F$ exists.
\end{defin}
The following example gives a recollection of reducible tuples and the corresponding reduction maps that we have come across in this section.
\begin{exa}\label{collection of reducibles}
We have
    \begin{enumerate}
        \item If $e$ is distinguished, any tuple $\alpha$ is reducible with respect to distinguished reduction maps (\Cref{distinguished reduction}).  
        \item Consider $\lambda$ such that $\dim V_{2m+1}= \dim V_{2m+3}+ 1$ for some $m> 0$. Assume that $\alpha$ contains a weight $2m+1$ that does not have any weight $2m-1$ or $2m+3$ in front of it. Then $\alpha$ is reducible (\Cref{distinguished reduction}).  
        \item If $w_{2M-1}= w_{2M-3}$, then $\alpha$ is reducible with respect to $F_{2M-1}$ (\Cref{equal part}).
        \item If $\alpha$ does not have any negative weights, $\alpha$ is reducible with respect to $F_1$ (\Cref{no negative}).
        \item Assume that we have a weight $i$ such that the map $F_i: X_\alpha\rightarrow X_{\alpha_i}$ is the map that forgets a single space $W_i^{j}$. Assume further that the $j$-th and $j+1$-th weight $i$ are next to each other in $\alpha$, then $\alpha$ is reducible with respect to $F_i$ by \Cref{forget only one}.
        \item For the following $\lambda$, any tuple $\alpha$ is reducible.  
        \begin{itemize}
            \item $\lambda= (2x_1, 2x_2)$, see \Cref{2 row}.
            \item $\lambda= (2x_1, 2x_1, 2x_1)$, see \Cref{3 row equal} and the discussion before it.
            \item $\lambda= (2x_1, 2x_1, 2x_3)$ or $(2x_1, 2x_3, 2x_3)$ with $x_1\geqslant x_3+ 2$. These are special cases of \Cref{equal part}.
            \item $\lambda= (4,2,2)$ and $\lambda= (4,4,2)$ (\Cref{ four two two} and \Cref{four four two}).            
        \end{itemize}
    \end{enumerate}
\end{exa}

\subsubsection{Other techniques and reduction diagrams}
We give three examples that describe some features of the reduction maps for the case where $\lambda$ has $4$ parts. In each example, we give the details of a new technique for describing reduction maps $F_i$.

The following example demonstrates the importance of the choice of the reduction maps $F_i$. In particular, when a tuple $\alpha$ is not reducible with respect to $F_{2m-1}$, we consider $F_{2m-3}$ or $F_{2m+1}$.
\begin{exa} \label{use another reduction map}
    We consider $\lambda$ of the form $(2m+2, 2m+2, 2m+2, 2m), m\geqslant 2$, then we have $\dim V_{2m+1}= 3$ and $\dim V_{2m-1}= \dim V_{2m-3}= 4$. Assume that the relative position of the three weights $2m+1$, $2m-1$ and $2m-3$ in $\alpha$ is $(2m+1, 2m-1, 2m-1,2m+1, 2m+1, 2m-1,2m-3, 2m-3, 2m-3, 2m-1, 2m-3)$. We first claim that $F_{2m-1}$ may not be surjective; this is already the case when $m=2$. 
    
    For $m=2$, we have the map
    $$F_3: X_{(5,3,3,5,5,3,1,1,1,3,1)}\rightarrow X_{(3,3,3,1,1,1,1)}.$$
    The corresponding basis diagram is
    $$\ytableausetup{centertableaux, boxsize= 3em}
\begin{ytableau}
\none & \none&v_{3}^{4} &v_{1}^{4} &v_{-1}^{4} &v_{-3}^{4}   &\none &\none \\
\none & v_{5}^{3} &v_{3}^{3} &v_{1}^{3} &v_{-1}^{3} &v_{-3}^{3}   &v_{-5}^{3} &\none\\
\none & v_{5}^{2} &v_{3}^{2} &v_{1}^{2} &v_{-1}^{2} &v_{-3}^{2}   &v_{-5}^{2} &\none\\
 \none & v_{5}^{1} &v_{3}^{1} &v_{1}^{1} &v_{-1}^{1} &v_{-3}^{1}   &v_{-5}^{1} &\none\\
\end{ytableau}\\
$$
    From \Cref{alternative}, elements of $X_{(5,3,3,5,5,3,1,1,1,3,1)}$, described by sets of subspaces $\{W_i^{j}\}$, must satisfy conditions $W_3^{3}= eW_1^{3}$ and $W_3^{2}= e^{-1}W_{5}^{1}\supset \bC v_3^{4}$. So $eW_1^{3}\supset \bC v_3^4$, or $W_1^3\supset \bC v_1^{4}$. This condition $W_1^3\supset \bC v_1^{4}$ is carried over to the image of $F_3$ in $X_{(3,3,3,1,1,1,1)}$, but we do not have such a restriction for elements of $X_{(3,3,3,1,1,1,1)}$. Thus, the map $F_3$ is not surjective.   

    Returning to the general case, we notice that the relative position of $2m-1$ and $2m-3$ is $(2m-1, 2m-1, 2m-1, 2m-3, 2m-3, 2m-3, 2m-1, 2m-3)$. We claim that the map $F_{2m-3}$ makes $\alpha$ a reducible tuple. Note that the process of recovering the spaces $W_{2m-3}^{j}$ for $1\leqslant j\leqslant 4$ depends only on the relative position of $2m-1, 2m-3, 2m-5$ in $\alpha$. Since we have the condition $W_3^{2m-3}= e^{-1} W_3^{2m-1}$, to describe $F_{2m-3}$, it is left to understand how to recover $W_{2m-3}^{1}$ and $W_{2m-3}^2$. In other words, we have $F_{2m-3}= F_{2m-3}^{2}\circ F_{2m-3}^{1,2}$ (see \Cref{decompositions} for notation). And regardless of the relative position of the weights $2m-5$ and $2m-3$, we have that both $F_{2m-3}^{2}$ and $F_{2m-3}^{1,2}$ are projections from a projective $\bP^l$-bundle for some $l$. Therefore, in this example, although $\alpha$ may not be reducible with respect to $F_{2m-1}$, it is reducible with respect to $F_{2m-3}$.  
\end{exa}
The next example demonstrates that, in some cases, it is necessary to slightly modify the tuple $\alpha$ first and then consider the reduction maps $F_i$. 
\begin{exa} \label{swap}
    Consider $\lambda$ of the form $(2m+4, 2m+4, 2m+2, 2m)$. Let $\alpha$ be a standard tuple such that the relative position of the weights $2m+1, 2m-1$ and $2m-3$ is $(2m+1, 2m-1, 2m-1, 2m+1, 2m+1, 2m-1, 2m-3, 2m-3, 2m-1, 2m-3, 2m-3)$. 
    
    We first treat the case $m= 2$. The four positive weight spaces are $V_7, V_5, V_3$ and $V_1$ with the corresponding dimensions $4,4,3$ and $2$. Below is the basis diagram in this case 
    $$\ytableausetup{centertableaux, boxsize= 3em}
\begin{ytableau}
\none & \none&v_{3}^{4} &v_{1}^{4} &v_{-1}^{4} &v_{-3}^{4}   &\none &\none \\
\none & v_{5}^{3} &v_{3}^{3} &v_{1}^{3} &v_{-1}^{3} &v_{-3}^{3}   &v_{-5}^{3} &\none\\
v_{7}^{2} & v_{5}^{2} &v_{3}^{2} &v_{1}^{2} &v_{-1}^{2} &v_{-3}^{2}   &v_{-5}^{2} &v_{-7}^{2}\\
 v_{7}^{1} & v_{5}^{1} &v_{3}^{1} &v_{1}^{1} &v_{-1}^{1} &v_{-3}^{1}   &v_{-5}^{1} &v_{-7}^{1}\\
\end{ytableau}\\
$$

    Let $\alpha$ be $(7, 5, 3, 3, 7, 5, 5, 3, 1, 1, 3, 1, -1)$, then $\alpha_3= (5,3,3,5,3,1,1,1,-1)$. We claim that the map $F_3: X_{\alpha}\rightarrow X_{\alpha_3}$ is not surjective. The argument is as follows. For an element of $X_{(7, 5, 3, 3, 7, 5, 5, 3, 1, 1, 3, 1, -1)}$, by the description in \Cref{alternative}, we have the following relations: $W_3^{3}\supset W_{3}^2= e^{-1}W_5^{1}$ and $W_{3}^3\supset eW_{1}^{2}$. Now $eW_{1}^{2}$ and $e^{-1}W_5^{1}$ are two $2$-dimensional vector spaces that both lie in $W_3^{3}$, a $3$-dimensional vector space. Hence, their intersection is nontrivial. This condition is carried over to the image of $F_3$ in $X_{(5,3,3,5,3,1,1,1,-1)}$ as $\dim (e^{-1}W_3^{1}\cap W_{1}^{2})\geqslant 1$. We do not have such a restriction for the elements of $X_{(5,3,3,5,3,1,1,1,-1)}$. Similarly, when we consider $F_5: X_{(7, 5, 3, 3, 7, 5, 5, 3, 1, 1, 3, 1, -1)}\rightarrow X_{(5,3,3,5,3,1,1,3,1,-1)}$, the conditions that $W_{3}^2\supset v_3^{4}$ is carried to the image, so $F_5$ is not surjective. 

    As we have seen, neither $F_5$ nor $F_3$ makes $\alpha$ reducible. The highlight of this example is that we introduce a reduction map $s: X_\alpha \rightarrow X_{\alpha'}$, we call it the \textit{swapping map}. Then we show that $X_{\alpha'}$ is reducible with respect to $F_5$, so $X_\alpha$ is reducible. 

    In particular, we swap the positions of the first weight $5$ and the first weight $3$ (both from the left), to obtain $\alpha'= (7, 3, 5, 3, 7, 5, 5, 3, 1, 1, 3, 1, -1)$. Elements of $X_{\alpha'}$ satisfy the condition that $W_3^{1}$ is annihilated by $e$, so $W_3^{1}= \bC v_3^{4}$. Consequently, we have that $$X_{\alpha'}\cong X_{(7,\Box, 5,3, 7, 5, 5, 3, 1, 1, 3, 1, -1)} \cong  X_{(7, 5,\Box , 3, 7, 5, 5, 3, 1, 1, 3, 1, -1)}$$ (see the notation for the boxes in \Cref{decompositions}). And for $X_\alpha$, the map that forgets $W_3^{1}$ gives the projection from a $\bP^1$-bundle: $X_\alpha \rightarrow X_{(7, 5,\Box , 3, 7, 5, 5, 3, 1, 1, 3, 1, -1)}$. Thus, we have $X_\alpha\cong (\bP^1, X_{\alpha'})$. Next, we have $X_{\alpha'}\cong X_{(7, 5, 3, 7, 5, 5, 3, 1, 1, 3, 1, -1)}=: X_{\alpha''}$ by distinguished reduction (see \Cref{distinguished reduction}). Then the map $F_5: X_{\alpha''}\rightarrow X_{\alpha''_5}= X_{(5, 3, 5, 3, 1, 1, 3, 1, -1)}$ is the projection from a projective $\bP^1$-bundle. In summary, after a chain of reduction maps, we have $X_\alpha \cong (\bP^1, \bP^1, X_{(5, 3, 5, 3, 1, 1, 3, 1, -1)})$. 
\end{exa}
    In general, consider a weight $2m-1>0$ such that $\dim V_{2m-1}= \dim V_{2m+1}+1$, and a tuple $\alpha$ such that in $\alpha$ the first two weights $2m-1$ follow after the first weight $2m+1$. Let $\alpha$ be the tuple obtained from $\alpha$ by swapping the position of the first $2m+1$ and the first $2m-1$.
    \begin{defin} \label{swap map}
        We have a \textit{swapping map}, $s_{2m-1, 2m+1}: X_\alpha \rightarrow X_{\alpha'}$, replacing the subspace $W_{2m-1}^1$ of $X_\alpha$ to be ker $(e)\cap V_{2m-1}$. Then it realizes $X_\alpha$ as $(\bP^1, X_{\alpha'})$ by \Cref{forget only one}.
    \end{defin}
    
The next example describes $F_i$ as the composition of two blow-ups. We will see that the description of the centers of the blow-ups is slightly more complicated than what we had in the case where $\lambda$ has $3$ parts.
\begin{exa} \label{double blow up}
    (This example provides details for case V-3 in the proof of \Cref{subcollections}).
    
    We consider $\lambda= (6,6,6,4)$ and $\alpha= (5,3,5,5,3,1,3,1,3,1,1)$. In the notation of \Cref{decompositions}, we consider the following decomposition of $F_3$    $$X_{(5,3,5,5,3,1,3,1,3,1,1)}\xrightarrow{F_3^{3,2,1}}  X_{(5,3,5,5,3,1,\Box,1,3,1,1)} \xrightarrow{F_3^{2,1}}  X_{(5,3,5,5,\Box,1,\Box,1,3,1,1)} \xrightarrow{F_3^{1}}  X_{(5,\Box,5,5,\Box,1,\Box,1,3,1,1)}= X_{\alpha_3}$$
    First, the map $F_3^{3,2,1}$ forgets the space $W_3^3$. To recover this subspace $W_3^3$, the constraint is $W_3^3\supset W_3^{2}\cup eW_1^{2}$. As we see that both $W_3^{2}$ and $eW_1^{2}$ contain $eW_1^{1}$, we can rewrite the constraint for $W_3^3$ as $W_3^3/ eW_1^{1}\supset W_3^{2}/ eW_1^{1} \cup eW_1^{2}/ eW_1^{1}$. This condition fits in the context of \Cref{main blow up} where $V$ is $V_3/eW_1^{1}$. We deduce that $F_3^{3,2,1}$ is a blow-up map. The center $Z$ of this blow-up is the subvariety of $X_{(5,3,5,5,3,1,\Box,1,3,1,1)}$ that satisfies $W_3^{2}= eW_1^{2}$. Hence, the variety $Z$ is isomorphic to $X_{(5,3,5,5,3,1,1,\Box,3,1,1)}$. In addition, we have an isomorphism $X_{(5,3,5,5,3,1,1,\Box,3,1,1)} \cong X_{(5,3,5,5,3,1,1,3,1,3,1)}$, since the space $W_3^{3}$ in the latter is determined by $eW_1^{3}$. In summary, we have shown that $X_\alpha \cong \Bl_{X_{(5,3,5,5,3,1,1,3,1,3,1)}} X_{(5,3,5,5,3,1,\Box,1,3,1,1)}$ and that the blow-up morphism is $F_3^{3,2,1}$. 

    Next, we consider the map $X_{(5,3,5,5,3,1,\Box,1,3,1,1)} \xrightarrow{F_3^{2,1}}  X_{(5,3,5,5,\Box,1,\Box,1,3,1,1)}$. This map $F_3^{2,1}$ forgets $W_3^{2}$, and the constraint to recover $W_3^2$ is $W_3^2\supset W_3^{1}\cup eW_1^{1}$. Similarly to the previous paragraph, an application of \Cref{main blow up} and \Cref{transversal} gives us that $F_3^{2,1}$ is a blow-up map. The center of this blow-up is the subvariety of $X_{(5,3,5,5,\Box,1,\Box,1,3,1,1)}$ that satisfies $W_3^{1}= eW_1^{1}$. This subvariety is isomorphic to $X_{(5,3,1,5,5,\Box,\Box,1,3,1,1)}$, which is isomorphic to $X_{(3,1,3,3,1,1,1)}$ since we can now disregard the spaces $W_3^{j}$ and then replace the weights $5$ by the weights $3$. For the last map in the decomposition of $F_3$, it is straightforward that $X_{(5,3,5,5,\Box,1,\Box,1,3,1,1)} \xrightarrow{F_3^{1}}  X_{(5,\Box,5,5,\Box,1,\Box,1,3,1,1)}$ is the projection from a $\bP^1$-bundle. Therefore, the tuple $\alpha$ is reducible.
\end{exa}

Next, we introduce some notation which describes properties of reduction maps in a more convenient way.
\begin{notation}\label{reduction diagrams} 
    We write $X\xrightarrow{\bP^k} Y$ for the projection from a $\bP^k$-bundle. We write $X\xrightarrow {\Bl (Z)} Y$ for the blow-up map along $Z\subset Y$. 
\end{notation} 

\begin{exa} \label{Example reduction diagrams}
We give some examples of describing reduction maps using diagrams (some of the examples come from the next subsections). 
\begin{enumerate}
    \item Consider $\alpha= (5,3,3,1,1,5,3,1,5,3,1)$. Note that $W_3^{2}$ and $W_3^{3}$ are determined by $eW_1^{2}$ and $eW_1^{2}$. Hence $F_3$ is the map forgetting $W_3^{1}$, and it is the projection from a $\bP^1$-bundle. Using the notation in \Cref{decompositions} and the notation we just introduced, we have the reduction diagram that describes $X_\alpha$ as follows.
    $$X_{(5,3,3,1,1,5,3,1,5,3,1)}\cong X_{(5,3,3,1,1,5,\Box,1,5,3,1)}\cong X_{(5,3,\Box,1,1,5,\Box,1,5,3,1)}\xrightarrow{\bP^1}X_{(5,\Box,\Box,1,1,5,\Box,1,5,3,1)}= X_{\alpha_3}$$
Here, how we have decomposed $F_3$ can be read from the diagram. At each step, we forget a subspace $W_3^{j}$, and replace the $j$-th weight $3$ by a box.
    \item We have    $$X_{(5,3,3,5,5,3,1,1,3,1,1)}\xrightarrow{\bP^1}X_{(3,5,3,5,5,3,1,1,3,1,1)}\cong X_{(5,3,5,5,3,1,1,3,1,1)}.$$
    The first morphism $s_{3,5}$ is explained in \Cref{swap map}. The second morphism, which is an isomorphism, comes from a distinguished reduction map (see \Cref{distinguished reduction}). 
    \item In \Cref{reduction six six four}, case I.3b, we have 
    $$X_{(5,3,5,3,1,1,3,1)}\xrightarrow{\Bl(X_{(3,1,1,3,1)})} X_{(5,\Box,5,3,1,1,3,1)}\cong X_{(5,\Box,5,\Box,1,1,3,1)}= X_{(3,3,1,1,1)}.$$
    \item In \Cref{one negative}, we have    $$X_{(3,3,1,3,1,-1,-3,1)}\xrightarrow{\Bl(X_{(1,1,-1,1)})}X_{(3,3,1,3,\Box,-1,-3,1)}\xrightarrow{\bP^1} X_{(3,3,\Box,3,\Box,-1,-3,1)}\cong X_{(1,1,1,-1)}.$$
    \item When the description is more complicated like in \Cref{swap} we can specify the weight tuples, and the reduction steps with their properties as follows.

    $$X_{(7, 5, 3, 3, 7, 5, 5, 3, 1, 1, 3, 1, -1)}\xrightarrow[s_{3,5}]{\bP^1} X_{(7, 3, 5, 3, 7, 5, 5, 3, 1, 1, 3, 1, -1)} \xrightarrow[d]{\cong} X_{(7, 5, 3, 7, 5, 5, 3, 1, 1, 3, 1, -1)} $$
    $$\xrightarrow[F_5]{\bP^1} X_{(5, 3, 5, 3, 1, 1, 3, 1, -1)}$$
    Here we write $d$ for the distinguished reduction map.
    \item When the description of the center of the blow-up is not immediate like in \Cref{double blow up}. We include more details in the diagram as follows.
    $$X_{(5,3,5,5,3,1,3,1,3,1,1)}\xrightarrow{\Bl(X_{(5,3,5,5,3,1,1,\Box,3,1,1)}\cong X_{(5,3,5,5,3,1,1,3,1,3,1)})}  X_{(5,3,5,5,3,1,\Box,1,3,1,1)} $$
    $$\xrightarrow{\Bl(X_{(5,3,1,5,5,\Box,\Box,1,3,1,1)}\cong X_{(3,1,3,3,1,1,1)})}  X_{(5,3,5,5,\Box,1,\Box,1,3,1,1)} \xrightarrow{\bP^1}  X_{(5,\Box,5,5,\Box,1,\Box,1,3,1,1)}= X_{\alpha_3}$$    
\end{enumerate}
\end{exa}
\subsection{Geometry of $\cB_{(2x_1,2x_2,2x_3)}^{gr}$}
We have described $\cB_{(2x_1,2x_1,2x_1)}^{gr}$ in \Cref{3 row equal}. By distinguished reductions, the two cases left to consider are $(2x_1=2x_2> 2x_3)$ and $(2x_1>2x_2= 2x_3)$. If $w_{2M-1}= w_{2M-3}$ (equivalently $x_1- x_3\geqslant 2$), then $\alpha$ is reducible by \Cref{equal part}. Thus, our consideration reduces to the case $x_1- x_3= 1$. In other words, we have $\lambda= (2m+2, 2m, 2m)$ or $\lambda= (2m+2, 2m+2, 2m)$ for some $m\geqslant 1$. 
\subsubsection{Base cases}
This first subsection describes the components of $\fix$ in the smallest cases where $m=1$ (base cases).
\begin{exa} \label{ four two two}
    Consider $\lambda = (4,2,2)$. Thanks to \Cref{no negative}, we are left with the cases where $\alpha$ contains the weight $-1$. All standard tuples $\alpha$ with weight $-1$ are $(3,1,1,-1)$, $(3,1,-1,1)$, $(1,-1,3,1)$, $(1,3,1,-1)$. First, we recall the basis diagram of $V_\lambda$ below. 
    $$\ytableausetup{centertableaux, boxsize= 3em}
    \begin{ytableau}
    \none &v_{1}^{3} &v_{-1}^{3}&\none\\
    \none &v_{1}^{2} &v_{-1}^{2} &\none\\
    v_{3}^{1} &v_{1}^{1} &v_{-1}^{1} &v_{-3}^{1} \\
    \end{ytableau}\\    
    $$

We now describe the varieties $X_\alpha= (\fix)_\alpha$ using the notation in \Cref{alternative}. First, the space $W_{3}^1$ is $\bC v_{3}^{1}$. Next, the space $W_{1}^{2}$ is determined by the orthogonal complement of $W_{-1}^{1}$. Hence, our focus is the two spaces $W_{1}^{1}$ and $W_{-1}^{1}$. Recall that we use sequences of vectors $(u_1, u_2, u_3, u_4)$ as expressions for flags in $\fix$.
    \begin{itemize}
        \item We have $X_{(3,1,-1,1)}\cong X_{(1,-1,1)}\cong \OG(1,3)$ and  $X_{(3,1,1,-1)}\cong X_{(1,1,-1)} \cong (\bP_1,\OG(1,3))$. The isomorphisms that we use in both cases are $F_3$. And the varieties $X_{(1,-1,1)}$ and $X_{(1,1,-1)}$ are described in \Cref{equal partition}. 
        \item For $\alpha= (1,-1,3,1)$, we require $e^{-1}\bC u_1= \bC u_2$ and $u_1$ to be annihilated by $e$. Therefore, $\bC u_2$ is an isotropic line in $\Span(v_{-1}^{2}, v_{-1}^{3})$. We get $X_{(1,-1,3,1)}\cong \OG(1,2)$, which consists of two points.
        \item $\alpha= (1,3,1,-1)$. Note that $u_1$ is annihilated by $e$, so it belongs to the intersection $\Span(v_{1}^{2}, v_{1}^3)\cap (eu_4)^\perp$. This intersection always has dimension $1$ when $u_4\notin \bC v_{-1}^{1}$. This is the case since $u_4$ is isotropic in $V_{-1}$. Here, we use the dot product with respect to the basis $\{v_{-1}^{1}, v_{-1}^{2},v_{-1}^{3}\}$. As a consequence, we have $X_{(1,3,1,-1)}\cong \OG(1,3)$.
    \end{itemize}
    The case $\alpha= (1,-1,3,1)$ gives an explanation for the appearance of 2-point orbits that have the stabilizer $\{1,z_1,z_{23}, z_{123}\}$ in $Y_e$. This type of orbit did not appear in the case $\lambda= (2x_1,2x_1,2x_1)$. 
\end{exa}
We now proceed to the case $\lambda= (4,4,2)$. The technical details are similar, so we will be brief in our explanation. 
\begin{exa} \label{four four two}
    Consider $\lambda = (4,4,2)$. Thanks to \Cref{no negative}, we only need to describe the variety $X_\alpha= (\fix)_\alpha$ when $\alpha$ contains the weight $-1$ or $-3$. We use the expression by eigenvectors $(u_1,u_2,...,u_5)$ of $T_e$ for a flag in $\fix$. The basis diagram of $V_\lambda$ is as follows.
    $$\ytableausetup{centertableaux, boxsize= 3em}
    \begin{ytableau}
    \none &v_{1}^{3} &v_{-1}^{3}&\none\\
    v_{3}^{2} &v_{1}^{2} &v_{-1}^{2} &v_{-3}^{2}\\
    v_{3}^{1} &v_{1}^{1} &v_{-1}^{1} &v_{-3}^{1} \\
    \end{ytableau}\\    
    $$    
We have two main cases. 
    \begin{itemize}
        \item Case 1: The tuple $\alpha$ contains the weight $-3$. The cases are $(1,3,1,-1,-3)$, $(3,1,1,-1,-3)$, $(3,1,-1,-3,1)$. In these cases, we have $W_{-1}^{1}= eW_{-3}^{1}$, and $W_{-3}^{1}\in \OG(1,V_{-3})= \OG(1,2) $ because $\dim V_{-3}= 2$. Once $W_{-3}^{1}$ is determined, we have $W_{3}^{1}=e^3 W_{-3}^{1}$. Therefore, it is left to determine $W_1^{1}$. 
        
        Working out the details, we get $X_{(1,3,1,-1,-3)}\cong X_{(3,1,-1,-3,1)}\cong \OG(1,2)$ and $X_{(3,1,1,-1,-3)}\cong (\bP^1, \OG(1,2))$.
        \item Case 2: The tuple $\alpha$ does not contain the weight $-3$. We write $(p_1,p_2,...,p_5)$ for $\alpha$.
        \begin{itemize}
            \item Case 2.1: If $p_1=1$, we have $U_1= \bC v_{1}^{3}$. Then $X_\alpha$ is isomorphic to $X_{\alpha'}$ where $\alpha'$ is $(p_2,p_3,p_4,p_5)$.
            \item Case 2.2: If $p_1= p_2= 3$, we have $X_\alpha \cong (\bP^1, X_{(p_3,p_4,p_5)})$. The isomorphism is given by the morphism $F_3$. The varieties $X_{(p_3,p_4,p_5)}\subset \cB_{(2,2,2)}^{gr}$ have been discussed in \Cref{equal partition}. We get that $X_\alpha$ is a tower of projective bundles over a point.
            \item Case 2.3: If $\alpha= (3,1,-1,3,1)$, then $\bC u_1$ and $\bC u_2$ are determined by $e^2 \bC u_3$ and $e\bC u_3$ (note that $e^2 \bC u_3\neq 0$ because $u_3$ is isotropic in $V_{-1}$). Therefore, $X_{\alpha}\cong \OG(1,3)$.
        \end{itemize}         
    \end{itemize}

The last case $\alpha= (3,1,3,1,-1)$ was discussed in \Cref{first blowup}. There, we have shown that $X_\alpha$ is isomorphic to a blow-up, $\Bl_{X_{(1,3,1,-1,3)}} X_{(1,1,-1)}$. We have $X_{(1,1,-1)}\cong (\bP^1, \OG(1,3))$, a projective $\bP^1$-bundle over $\OG(1,3)$. The variety $X_{(1,3,1,-1,3)}$ is two points (how we embed it in $X_{(1,1,-1)}$ is explained in \Cref{first blowup}). Therefore, $X_\alpha$ is the blow-up along two points of $(\bP^1, \OG(1,3))$.

\end{exa}

\subsubsection{Reductions to base cases}

The main result of this section is \Cref{geometric 3 rows}. A key ingredient of the proof is the following proposition.
\begin{pro} \label{3 row main}
    Consider the cases $\lambda= (2m+2,2m+2,2m)$ or $\lambda= (2m+2,2m,2m)$. Let $\alpha$ be a standard tuple of weights. Then $\alpha$ is reducible.
\end{pro}
\begin{proof}
    The proposition is proved when $m= 1$ in \Cref{four four two} and \Cref{ four two two}. Consider $m> 1$. Here, we present the proof for the case $\lambda= (2m+2, 2m+2, 2m)$. The case $\lambda= (2m+2,2m,2m)$ is proved with the same techniques and is somewhat simpler. 
    
    When $\lambda= (2m+2, 2m+2, 2m)$, we have $n= 3m+ 1$. Consider a standard $n$-tuple of weights $\alpha= (p_1,...,p_n)$. Thanks to \Cref{no negative}, we focus on the cases where $\alpha$ has negative weight(s). We use the notation $U^\bullet$ for the flags in $X_\alpha$. Recall that we write $d_i$ for the multiplicities of the weights $i$ in $\alpha$. In other words, $d_i= \dim U^\bullet\cap V_i$.

    \textbf{Step 1.} We first show that $\alpha$ is reducible if it contains the weight $-3$. By \Cref{isotropic}, the spaces $U^\bullet \cap V_{-1}$ (resp. $U^\bullet \cap V_{-3}$) are isotropic in $V_{-1}$ (resp. $V_{-3}$). As the dimensions of both two weight spaces $V_{-1}, V_{-3}$ are $3$, we get $d_{-1}= d_{-3}= 1$. Then $d_1= d_3= 3-1= 2$. We now consider the reduction map $F_1: X_\alpha \rightarrow X_{\alpha_1}$ (see \Cref{ Reduction maps F} for a description of $\alpha_1$). 
    
    First, $W_{-1}^{1}$ is determined by $eW_{-3}^{1}$, then $W_{1}^{2}$ is determined as the orthogonal complement of $eW_{-1}^{1}$ in $V_{1}$. Hence, the fiber of $F_1$ depends on how we determine the space $W_{1}^{1}\subset W_{1}^{2}$. By a case-by-case consideration of relative positions of the weights $3,1,-1,-3$ in $\alpha$, we see that $F_1$ is either an isomorphism or the projection from a $\bP^1$-bundle. The details are similar to the case where $\lambda$ has equal parts (Section 8.3).

    \textbf{Step 2.} We now consider the case where $\alpha$ has exactly one negative weight, $-1$ with multiplicity $1$. Since we consider $m>1$, we have that $d_{1-2m}$ and $d_{-1-2m}$ are both $0$. Hence, we have $d_{2m+1}= 2$ and $d_{2m-1}= 3$. Because $U^1$ is annihilated by $e$, we have $p_1= 2m+1$ or $2m- 1$. 
    
    If $p_1= 2m-1$, we have $U^1= \bC v_{2m-1}^{3}$. By distinguished reduction, the map sending $U^\bullet$ to $U^\bullet/U_1$ is an isomorphism between $X_\alpha$ and $X_{(p_2,...,p_n)}$. Hence, $\alpha$ is reducible if $p_1= 2m-1$. 
    
    If $p_1= 2m+1$, then $p_2= 2m+1$ or $2m-1$ for $\alpha$ is standard. Assume $p_1=p_2= 2m+1$. Similarly to Case 2.2 in \Cref{four four two}, we have $X_\alpha \cong (\bP^1, X_{(p_3,...,p_n)})$. Therefore, it is left to prove that $\alpha$ is reducible when $p_1= 2m+1$ and $p_2= 2m-1$. 
    
    We show that $\alpha$ is reducible by considering the map $F_{2m-1}$. In the context of \Cref{Cartesian}, we let $A$ be $\{2m-1\}$ and $B$ be $\{2m-3, 2m+1\}$ and $C$ be $\{1,3,..., 2m-5\}$ (it is possible that $C=\emptyset $). We use the second expression of the Cartesian square in \Cref{Cartesian}. In this expression, we have several different nilpotent elements $e_D$ for subsets $D\subset \{1,3,..., 2M-1\}$. The process of obtaining the associated partition $\lambda_D$ is discussed at the beginning of \Cref{subsect 8.2.2}. Working out the details, we have $\lambda_{A\cup B}= (6,6,4)$ and $\lambda_B= (4,4,2)$. Plugging in the second diagram in \Cref{Cartesian}, we have a Cartesian square:

    $$\begin{tikzcd} 
    (\fix)_\alpha \arrow{r}{i_{A\cup B,C}}\arrow{d}{p_{B\cup C = F_{2m-1}}} \& (\mathcal{B}^{\text{gr}}_{(6,6,4)})_{\alpha_{A\cup B}}\times (\mathcal{B}^{\text{gr}}_{e_C})_{\alpha_C} \arrow{d}{p^{A\cup B}_{B}\times \Id= F_3\times \Id}\\
     (\mathcal{B}^{\text{gr}}_{e_{B\cup C}})_{\alpha_{B\cup C}} \arrow{r}{i_{B, C}} \& (\mathcal{B}^{\text{gr}}_{(4,4,2)})_{\alpha_{B}}\times (\mathcal{B}^{\text{gr}}_{e_{C}})_{\alpha_{C}} \\
     \end{tikzcd} $$
    In this diagram, the map $p_B^{A\cup B}$ on the right is just $F_3$. Our first aim is to show when $\lambda= (6,6,4)$, the map $F_3$ is reducible. Then we explain how the reducibility of $F_{2m-1}$ follows.

    \textbf{Step 3}. Thanks to Step 1 and Step 2, we have restricted ourselves to the case of a standard tuple $\alpha$ that has exactly one negative weight $-1$, and $\alpha$ has the form $(2m+1, 2m-1,...)$. As a consequence, the induced partition $\alpha_{A\cup B}$ is a tuple of $8$ weights that have the form $(5,3...)$ with at most one negative weight (which is $-1$). To finish the proof of the proposition, we verify the following two statements.
    \begin{claim} \label{claim six six four}
    For $\lambda= (6,6,4)$, any standard tuple $\alpha$ having the form $(5,3,...)$ with at most one negative weight is reducible with respect to $F_3$.
    \end{claim}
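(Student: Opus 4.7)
The plan is to proceed by a case analysis on standard $8$-tuples $\alpha = (5, 3, p_3, \ldots, p_8)$ for $\lambda = (6,6,4)$, where the weight spaces satisfy $\dim V_{\pm 5} = 2$ and $\dim V_{\pm 3} = \dim V_{\pm 1} = 3$. The assumption of at most one negative weight forces the multiplicity profile $(d_5, d_3, d_1, d_{-1})$ to be either $(2,3,3,0)$ or $(2,3,2,1)$, with $d_{-3} = d_{-5} = 0$; this yields a finite and moderate list of standard tuples that we will treat in turn.

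A key starting observation is that $d_3 = 3 = \dim V_3$ forces $W_3^3 = V_3$, so $F_3$ effectively only forgets $W_3^1$ and $W_3^2$. Decomposing $F_3 = F_3^{1} \circ F_3^{2,1}$ in the sense of \Cref{decompositions}, the constraints for recovering each $W_3^j$ come from Rule 2 of \Cref{alternative} and take the form of a lower bound $eW_1^{j'} \subset W_3^j$ together with an upper bound $W_3^j \subset e^{-1}W_5^{k'}$ (with one or both possibly trivial); which bounds are active is dictated entirely by the positions of the weights $1, 3, 5$ in $\alpha$. At each atomic step, \Cref{forget only one} gives a projective bundle projection when only one bound is effective, while \Cref{main blow up} combined with \Cref{transversal} gives a blow-up along a smooth subvariety when both a nontrivial upper and a nontrivial lower bound constrain $W_3^j$ inside a proper subspace of $V_3$. \Cref{blow up six six four} already realizes $\alpha = (5,3,5,3,1,1,3,1)$ as the blow-up of $X_{(3,3,1,1,1)}$ along $X_{(3,1,1,3,1)}$, and this is the template: in every blow-up case, the center identifies with a fixed-point variety $X_{\alpha'}$ in one of the smaller Springer fibers $\mathcal{B}^{\mathrm{gr}}_{(4,4,2)}$ or $\mathcal{B}^{\mathrm{gr}}_{(4,2,2)}$, both of which were already described in \Cref{four four two} and \Cref{ four two two}. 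The $A_e$- and $Q_e$-equivariance of the bundles and centers is automatic, since every bundle in question is a tautological quotient of an equivariant vector bundle on the ambient product of Grassmannians.

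The main obstacle is not any single geometric step but the combinatorial bookkeeping needed to complete the case list: for each standard $\alpha = (5, 3, \ldots)$ with the above multiplicity profile, one must choose the order in which to forget $W_3^1$ and $W_3^2$, recognize each atomic step as a projective bundle or a blow-up, and exhibit the center of any blow-up as an explicit $X_{\alpha'}$. The most delicate subcases are those containing the weight $-1$, where Rule 3 of \Cref{alternative} couples the recovery of $W_3^j$ to $W_{-1}^k$ via the symplectic pairing and can force the upper and lower bounds on $W_3^j$ to coincide on a smooth locus, producing a genuine blow-up center of positive dimension rather than a purely projective-bundle reduction.
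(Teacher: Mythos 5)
Your plan tracks the paper's proof in \Cref{reduction six six four}: a case analysis over standard $8$-tuples $\alpha = (5,3,\ldots)$, decomposing $F_3$ into atomic ``forget one $W_3^j$'' steps, and identifying each step as a projective-bundle projection or a blow-up via \Cref{forget only one}, \Cref{main blow up}, \Cref{intersection}, and \Cref{transversal}, with \Cref{blow up six six four} serving as the template.

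Two of your auxiliary claims are off and would throw you off course if you actually ran the cases. First, it is not true that ``in every blow-up case, the center identifies with a fixed-point variety $X_{\alpha'}$ in one of the smaller Springer fibers $\mathcal{B}^{\mathrm{gr}}_{(4,4,2)}$ or $\mathcal{B}^{\mathrm{gr}}_{(4,2,2)}$.'' In the paper's subcase I.4.b, $\alpha = (5,3,5,3,1,3,1,1)$, the decomposition $F_3 = F_3^{2}\circ F_3^{1,2}$ exhibits $F_3^{1,2}$ as the blow-up of $X_{\alpha_3}^{2}\cong (\bP^1, X_{(3,3,1,1,1)})$ along $X_{(5,3,3,1,5,3,1,1)}$, which is a fixed-point variety attached to the \emph{same} partition $(6,6,4)$. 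This does not threaten the claim --- \Cref{reducible tuples} only requires the centers to be smooth $A_e$-stable subvarieties --- but your picture of the blow-up centers terminating directly in $(4,4,2)$ or $(4,2,2)$ base cases is misleading; that identification appears only after reducing the center further. Second, your assertion that ``the most delicate subcases are those containing the weight $-1$,'' because Rule 3 of \Cref{alternative} couples $W_3^j$ to $W_{-1}^k$ and thereby produces genuine blow-up centers, is not what happens. The paper's Case II notes explicitly that the constraints for recovering $W_3^1$ and $W_3^2$ come only from Rules 1 and 2 and involve $W_1^1, W_1^2, W_5^1$ --- never $W_{-1}^1$ directly --- so swapping the space $W_1^3$ for $W_{-1}^1$ changes nothing about the structure of $F_3$. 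Blow-ups arise in Case I (no negative weight at all) just as they do in Case II (see subcases I.3.b and I.4.b); they are caused by the intersection/union conditions $W_3^j \subset W_3^{j+1}\cap e^{-1}W_5^{k'}$ and $W_3^j \supset W_3^{j-1}+ eW_1^{j'}$ inside the $3$-dimensional $V_3$ (fitting \Cref{main blow up} and \Cref{intersection}), not by the symplectic pairing of Rule 3.
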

    \begin{claim}  \label{claim reducible}
    Consider the tuples $\alpha$ in \Cref{claim six six four}. In the above diagram, if the induced tuple $\alpha_{A\cup B}$ is reducible with respect to $F_3$, the tuple $\alpha$ is reducible with respect to $F_{2m-1}$.
    \end{claim}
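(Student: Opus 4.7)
The plan is to deduce reducibility of $\alpha$ with respect to $F_{2m-1}$ by base-changing the assumed reducible decomposition of $F_3$ along the closed embedding $i_{B,C}$ appearing in the Cartesian square of \Cref{Cartesian}. Since that square identifies $F_{2m-1}$ with the pullback of $F_3 \times \mathrm{Id}$, any factorization of $F_3$ as a composition of $A_e$-equivariant projective bundle projections and blow-ups along smooth $A_e$-stable centers induces, by iterated base change at each intermediate variety, a parallel factorization of $F_{2m-1}$. Equivariance is transported correctly because the action of $A_e$ on $X_\alpha$ restricts coordinatewise to the product $(\cB^{gr}_{(6,6,4)})_{\alpha_{A\cup B}} \times (\cB^{gr}_{e_C})_{\alpha_C}$.

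Next, I would check that each elementary step in the factorization is preserved under base change. For a projective bundle projection $\mathbb{P}(\cV) \to Y$, the base change along a morphism $f \colon Z \to Y$ is $\mathbb{P}(f^*\cV) \to Z$, and the equivariant structure on $\cV$ pulls back automatically; so projective bundle steps cause no difficulty. For a blow-up along a smooth center $W \subset Y$, I would invoke \Cref{transversal}: if $Z \to Y$ meets $W$ transversally, the base change is the blow-up of $Z$ along $W \cap Z$, which is again smooth and $A_e$-stable.

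The main obstacle is therefore verifying transversality at each blow-up step. The crucial observation that makes this tractable is that the defining relations of $i_{B,C}$ and of the blow-up centers appearing in the decomposition of $F_3$ involve disjoint collections of constraints. Concretely, $i_{B,C}$ is cut out by the relations $eW_i^j \subset W_{i+2}^{j'}$ for indices crossing between $\pm B$ and $\pm C$, which only concern the interface between weights $2m-5$ and $2m-3$ of the original $V_\lambda$. By contrast, the blow-up centers arising in the reduction of $F_3$ on $(\cB^{gr}_{(6,6,4)})_{\alpha_{A\cup B}}$ are defined by coincidences among the subspaces $W_{\pm 1}^j, W_{\pm 3}^j, W_{\pm 5}^j$ of the smaller flag variety, which under the inclusion $V_{\lambda_{A\cup B}} \hookrightarrow V_\lambda$ correspond to relations purely among $W_{\pm(2m-3)}^j, W_{\pm(2m-1)}^j$, and $W_{\pm(2m+1)}^j$ in $X_\alpha$. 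Since these two families of relations involve separate "subspace variables", the transversality at each step reduces to a direct dimension count on the product of Grassmannians $\Gr^\lambda_\alpha$ of \Cref{alternative}.

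Finally, I would conclude by noting that after all base changes, the resulting decomposition of $F_{2m-1}$ is itself a composition of $A_e$-equivariant projective bundle projections and blow-ups along smooth $A_e$-stable centers, which by definition establishes reducibility of $\alpha$ with respect to $F_{2m-1}$. The argument is independent of the explicit form of the reducible decomposition of $F_3$, so it will apply uniformly once \Cref{claim six six four} is proved case by case.
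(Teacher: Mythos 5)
Your proposal follows essentially the same strategy as the paper: base-change the reducible factorization of $F_3$ along the Cartesian square of \Cref{Cartesian}, observe that projective-bundle steps pull back automatically, and handle blow-up steps via \Cref{transversal} by checking transversality of $i_{B,C}(X_{\alpha_{B\cup C}})$ with the blow-up center inside $X_{\alpha_B} \times X_{\alpha_C}$. The paper does precisely this, explicitly verifying transversality in the two cases (3.b and 4.b of \Cref{reduction six six four}) where a blow-up actually occurs, and identifying the center's intersection with $Z$ as a smooth variety of the form $X_{\alpha_{B\cup C}'}$.

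One imprecision to flag: the blow-up center $Y$ lives in the \emph{target} $X_{\alpha_B}\times X_{\alpha_C}$ of $F_3\times\mathrm{Id}$, which no longer sees the weight $2m-1$ (that weight sits in $A$, not $B\cup C$). Tracing the relabeling, the center is cut out by a relation between $W_{\pm(2m+1)}$ and $W_{\pm(2m-3)}$, not among all three of $W_{\pm(2m-3)}, W_{\pm(2m-1)}, W_{\pm(2m+1)}$ as you state. More importantly, the defining relations of $Y$ and of $Z = i_{B,C}(X_{\alpha_{B\cup C}})$ are \emph{not} in disjoint variables -- they both involve $W_{\pm(2m-3)}$ (the $B$--$C$ interface uses $W_{2m-5}$ and $W_{2m-3}$; the center uses $W_{2m+1}$ and $W_{2m-3}$). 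Transversality still holds because each family of relations has nonzero differential in a direction the other does not touch ($W_{2m+1}$ for $Y$, $W_{2m-5}$ for $Z$), but your appeal to "separate subspace variables, hence a direct dimension count" is stated slightly too strongly and should be replaced by the independence of the normal directions, as the paper effectively does by exhibiting $Y\cap Z$ as a smooth $X_\beta$.
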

    The proof for \Cref{claim six six four} requires a case-by-case consideration; we give the details in \Cref{reduction six six four} below. For \Cref{claim reducible}, since the square is Cartesian, if $F_3$ is the projection from a tower of projective bundles, then $F_{2m-1}$ is. Hence, it is left to show that $F_{2m-1}$ is a blow-up along a smooth subvariety when $F_3$ is. This is an application of \Cref{transversal}, the verification of transversality is given after \Cref{reduction six six four}.   
\end{proof}

\begin{exa}{(Proof of \Cref{claim six six four})} \label{reduction six six four}

    Consider the case $\lambda= (6,6,4)$. We want to study the properties of the map $F_3$ for certain tuples $\alpha$ to finish the proof of \Cref{3 row main}. As analyzed in Step 3 of the proof of \Cref{3 row main}, we consider $\alpha$ of the form $(5,3,...)$ with at most one negative weight.
    
    To describe $F_3$, we need to further decompose the map as in \Cref{decompositions}. Here, we explain how the decomposition in \Cref{decompositions} works for $F_3$. The map $F_3$ forgets the spaces $W_{3}^{1}$ and $W_{3}^{2}$ (because $W_3^{3}$ is just $V_3$). Therefore, we have two ways to write $F_3$ as compositions: $F_3^{2}\circ F_3^{1,2}$ or $F_3^{1}\circ F_3^{2,1}$. The rule is that $F_3^{i,j}$ first forget the space $W_3^{i}$ and then $F_3^{j}$ forget the space $W_3^{j}$. For the sources and the targets of the maps in the decomposition, see \Cref{decompositions} for more details. We will show that $\alpha$ is reducible with respect to the map $F_3$ by case-by-case consideration. Recall that we only need to consider the cases $\alpha$ has $p_1= 5, p_2=3$.
    
    \textbf{Case I}. The tuple $\alpha$ consists only of nonnegative weights. Note that we have shown in \Cref{no negative} that $\alpha$ is reducible with respect to $F_1$, here we specifically focus on $F_3$.
    
    We consider the cases based on the relative position of the weights $3$ and $1$ in $\alpha$. All possible relative positions are $(3,3,3,1,1,1)$, $(3,3,1,3,1,1)$, $(3,3,1,1,3,1)$, $(3,1,3,3,1,1)$, $(3,1,3,1,3,1)$. Recall that we have restricted ourselves to considering the case $\alpha$ being standard (see \Cref{standard tuple} and \Cref{standard}).
    \begin{enumerate}
        \item If the relative position is $(3,1,3,1,3,1)$, then $F_3$ is an isomorphism. If the relative position is $(3,3,3,1,1,1)$, the decomposition $X_{\alpha}\xrightarrow{F_3^{2,1}}X_{\alpha_3}^{1}\xrightarrow{F_3^{1}} X_{\alpha_3}$ gives us the projection from a projective bundle in each step. The argument is similar to the proof of \Cref{no negative} (note that the ranks of the projective bundles depend on the relative positions of the weights $5$ and $3$). Thus, in these two cases, $\alpha$ is reducible by $F_3$.
        \item Suppose that the relative position is $(3,1,3,3,1,1)$. The standard tuples $\alpha$ we consider are $(5,3,1,5,3,3,1,1)$ and $(5,3,1,3,5,3,1,1)$.
        \begin{enumerate}
            \item Consider $\alpha = (5,3,1,5,3,3,1,1)$. Since $W_3^{1}$ is determined by $eW_{1}^{1}$, the map $F_3$ is the map that forgets the space $W_3^{2}$. We get $X_{(5,3,1,5,3,3,1,1)} \cong (\bP^1, X_{\alpha_3}= X_{(3,1,3,1,1)})$ and $\alpha$ is reducible with respect to $F_3$.
            \item Consider $\alpha = (5,3,1,3,5,3,1,1)$. Since $W_3^{1}$ is determined by $eW_{1}^{1}$ and $W_3^{2}$ is determined by $e^{-1} W_{5}^{1}$, the map $F_3$ is an isomorphism. Therefore, $\alpha$ is irreducible with respect to $F_3$.
        \end{enumerate}
        \item Suppose that the relative position is $(3,3,1,1,3,1)$. The standard tuples $\alpha$ we consider are $(5,3,3,1,1,5,3,1)$ and $(5,3,5,3,1,1,3,1)$.
        \begin{enumerate}
            \item Consider $\alpha= (5,3,3,1,1,5,3,1)$. Since $W_{3}^2$ is determined by $e^{-1}W_5^{1}$ (or $eW_{1}^2$), the map $F_3$ is the map that forgets $W_{3}^1$. We find that $X_{(5,3,3,1,1,5,3,1)} \cong (\bP^1, X_{\alpha_3}= X_{(3,1,1,3,1)})$, and $\alpha$ is reducible with respect to $F_3$.
            \item Consider $\alpha = (5,3,5,3,1,1,3,1)$. This case was considered in \Cref{blow up six six four}, the map $F_3$ gives us a realization $X_\alpha\cong \Bl_{X_{(3,1,1,3,1)}} X_{(3,3,1,1,1)}$. Here, the variety $X_{(3,1,1,3,1)}$ is embedded into $X_{(3,3,1,1,1)}$ as follows. In the alternative description in \Cref{alternative} of $X_{(3,3,1,1,1)}$, we add the constraint that $eW_{1}^{2}= W_{3}^1$.
            \end{enumerate}
        \item If the relative position is $(3,3,1,3,1,1)$. The standard tuples $\alpha$ we consider are $(5,3,5,3,1,3,1,1)$ and $(5,3,3,1,5,3,1,1)$.
        \begin{enumerate}
            \item Consider $\alpha= (5,3,3,1,5,3,1,1)$. Since $W_{3}^2$ is determined by $e^{-1}W_5^{1}$, we get $X_{(5,3,3,1,5,3,1,1)}\cong (\bP^1, X_{(3,1,3,1,1)})$.             
            \item Consider $\alpha= (5,3,5,3,1,3,1,1)$. We decompose $F_3$ as $X_{\alpha}\xrightarrow{F_3^{1,2}}X_{\alpha_3}^{2}\xrightarrow{F_3^{2}} X_{\alpha_3}$. Here, the variety $X_{\alpha_3}^{2}$ is obtained by recovering the space $W_{3}^{2}$. And the condition is $W_{3}^{2}\supset eW_1^{1}$, so $X_{\alpha_3}^{2}\cong (\bP^1, X_{\alpha_3})$. Next, for given $X_{\alpha_3}^{2}$, to recover $W_{3}^{1}$, the condition is $W_{3}^{1}\subset W_{3}^{2}\cap e^{-1} W_{5}^{1}$. This fits the context of \Cref{intersection} for $w=3$ and $j=1$. Arguing as in \Cref{blow up six six four}, we have $X_\alpha \cong \Bl_{X_{(5,3,3,1,5,3,1,1)}} X_{\alpha_3}^{2}$. So $F_3$ is a composition of the projection from a projective bundle and a blow-up map. 
        \end{enumerate}
    \end{enumerate}
    
    \textbf{Case II}. The tuple $\alpha$ contains the weight $-1$ with multiplicity $1$.\\
    This case is similar to Case I for the following reasons. When we describe the map $F_3$, the task is to recover the spaces $W_{3}^1$ and $W_{3}^2$. Recall that in the description in \Cref{alternative}, these spaces are determined by their relations with $W_{1}^{1}, W_{1}^{2}$ and $W_{5}^{1}$. Hence, the property of the map $F_3: X_\alpha \rightarrow X_{\alpha_3}$ does not change when we add the space $W_{-1}^{1}$ and remove the space $W_{1}^{3}$. For example, the properties of $F_3$ for the relative position $(3,3,1,3,1,-1)$ or $(3,3,1,-1,3,1)$ are the same for the case where the relative position is $(3,3,1,3,1,1)$.
\end{exa}

With the descriptions of the exceptional sets from \Cref{reduction six six four} (Cases 3.b and 4.b), we are ready to prove \Cref{claim reducible}.
\begin{proof}[Proof of \Cref{claim reducible}]
    We explain that for $\alpha_{A\cup B}= (5,3,5,3,1,1,3,1)$ (Case 3.b), the diagram in the proof of \Cref{3 row main} satisfies the transversality condition in \Cref{transversal}. The explanation for $\alpha_{A\cup B}= (5,3,5,3,1,3,1,1)$ (Case 4.b) is similar. First, recall that we have a diagram $$\begin{tikzcd} 
    (\fix)_\alpha \arrow{r}{i_{A\cup B,C}}\arrow{d}{p_{B\cup C = F_{2m-1}}} \& (\mathcal{B}^{\text{gr}}_{(6,6,4)})_{\alpha_{A\cup B}}\times (\mathcal{B}^{\text{gr}}_{e_C})_{\alpha_C} \arrow{d}{p^{A\cup B}_{B}\times \Id= F_3\times \Id}\\
     (\mathcal{B}^{\text{gr}}_{e_{B\cup C}})_{\alpha_{B\cup C}} \arrow{r}{i_{B, C}} \& (\mathcal{B}^{\text{gr}}_{(4,4,2)})_{\alpha_{B}}\times (\mathcal{B}^{\text{gr}}_{e_{C}})_{\alpha_{C}} \\
     \end{tikzcd} $$

    In the context of \Cref{transversal}, we consider $X= X_{\alpha_B}\times X_{\alpha_C}$ and $Z= X_{B\cup C}$. Here $\alpha_B$ is $(3,3,1,1,1)$, and the variety $Y$ is isomorphic to $X_{(3,1,1,3,1)}\times X_{\alpha_C}$. It is defined in $X$ by adding the condition $W_3^{1}= eW_{1}^{2}$. Similarly, we deduce that the variety $Y\cap Z$ is defined in $Z$ by adding the condition $W_{2m-1}^{1}= eW_{2m-3}^{2}$, the shift of weights here comes from how we define $i_{B,C}$ and the induced tuples (see \Cref{induced tuple}). In terms of \Cref{alternative}, we see that $Y\cap Z$ is isomorphic to a variety $X_{\alpha_{B\cup C}'}$ where $\alpha_{B\cup C}'$ can be obtained from $\alpha_{B\cup C}$ by changing the subtuple $(2m-1, 2m-1, 2m-3, 2m-3, 2m-3)$ of $\alpha_{B\cup C}$ to $(2m-1, 2m-3, 2m-3, 2m-1, 2m-1)$ and keeping other weights intact. Since $X_{\alpha_{B\cup C}'}$ is smooth, the intersection $Y\cap Z$ is smooth. Moreover $Y$ is defined by adding relations between $\{W_{2m-1}^{j}\}$ and $\{W_{2m-3}^{l}\}$, while $Z$ are defined by adding relations between $\{W_{2m+1}^{j}\}$ and $\{W_{2m-1}^{l}\}$ (see \Cref{alternative}). Therefore, $Y$ and $Z$ intersect transversally.  
\end{proof}

As \Cref{claim six six four} and \Cref{claim reducible} are verified, \Cref{3 row main} is proved. In the process, we have shown that we can choose the reduction maps so that the centers of the blow-ups have the forms $X_{\beta}$ for some tuples $\beta$.

Next, we define a collection $\cC_3$ of $A_e$-varieties as follows. We consider a vector space $V$ of dimension $3$ with a basis $(v_1, v_2, v_3)$. Equip $V$ with the dot product. Consider $A_e$ as the group of diagonal matrices with entries $1$ or $-1$, then $A_e$ is realized as a subgroup of $O(V)$. We have natural actions of $A_e$ on the varieties $\OG(1, \Span(v_2, v_3))$, $\OG(1, \Span(v_1, v_2))$ and $\OG(1,V)$. These are the base varieties for the collection $\cC_3$.
\begin{defin} \label{collection C_3}
    The collection $\cC_3$ is the minimal collection of $A_e$-varieties containing $4$ varieties $\pt$, $\OG(1,V)$, $\OG(1, \Span(v_2, v_3))$, $\OG(1, \Span(v_1, v_2))$ and stable under the following operations.
\begin{itemize}    
    \item If we have $X\in \cC_3$, then $\bP(\cV)\in \cC_3$ for any $A_e$-equivariant vector bundle $\cV$ over $X$.
    \item For an $A_e$-equivariant embedding $Z\hookrightarrow X$ with $X, Z\in \cC_3$, we have the blow-up $\Bl_Z X$ belonging to $\cC_3$.
\end{itemize}
\end{defin}
With the geometric analysis of $(\fix)_\alpha$ in this subsection, we have proved the following theorem.
\begin{thm} \label{geometric 3 rows}
For $\lambda= (2x_1, 2x_2, 2x_3)$, the variety $(\fix)_\alpha$ belongs to $\cC_3$. In other words, we can obtain $X_\alpha$ from the following process after a finite number of steps. We start with $X_0 \subset \{\pt, \OG(1,\Span(v_1, v_2)),$ $ \OG(1,\Span(v_2, v_3)), \OG(1,3)\}$, then at each step we either build a projective bundle or blow-up along a subvariety isomorphic to some $X_{\beta}$. Moreover, each of the steps can be done $A_e$- equivariantly.     
\end{thm}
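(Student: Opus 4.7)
The plan is to prove the theorem by induction on $n = x_1+x_2+x_3$, using the reducibility statements established earlier in the section as the main engine. The inductive hypothesis is that for every partition $\lambda' = (2x_1',2x_2',2x_3')$ with $x_1'+x_2'+x_3' < n$, and every standard tuple $\beta$, the variety $X_\beta$ lies in $\mathcal{C}_3$. The base of the induction is given by the smallest cases $\lambda = (2,2,2)$, $(4,2,2)$, and $(4,4,2)$, which are treated explicitly in \Cref{base case equal}, \Cref{ four two two}, and \Cref{four four two}; in each of these cases the connected components of $\fix$ are either points, towers of projective bundles over points, towers of projective bundles over one of $\OG(1,2), \OG(1,3)$, or a blow-up of $(\bP^1,\OG(1,3))$ along two points. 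All of these constructions stay within $\mathcal{C}_3$ by the closure axioms of \Cref{collection C_3}.

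For the inductive step, I would first invoke \Cref{distinguished} to reduce to the case where $\lambda$ is not distinguished; a distinguished reduction map realises $(\fix)_\alpha$ as an iterated fixed-point locus for a smaller, non-distinguished nilpotent element, and the smaller variety lies in $\mathcal{C}_3$ by induction. Once $\lambda$ is non-distinguished with three parts, the possibilities are $\lambda = (2x,2x,2x)$, covered by \Cref{3 row equal} (yielding towers of projective bundles over $\pt$ or over $\OG(1,3) = \OG(1,V)$), the cases $x_1 - x_3 \geq 2$ where \Cref{equal part} gives a tower of $A_e$-equivariant projective bundles $F_{2M-1}\colon X_\alpha \to X_{\alpha_{2M-1}}$ over a variety associated with a partition of smaller size, and finally the cases $\lambda = (2m+2,2m+2,2m)$ and $\lambda = (2m+2,2m,2m)$ treated in \Cref{3 row main}. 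In each of the last two families, \Cref{3 row main} (together with the explicit descriptions carried out in \Cref{reduction six six four}) produces a reduction map that is a composition of $A_e$-equivariant projective bundle projections and blow-ups whose centres are of the form $X_{\beta}$ for a standard tuple $\beta$ belonging to a strictly smaller partition (one obtains this by removing boxes from the basis diagram, or by observing that the centres coming from the ``$(6,6,4)$-analysis'' correspond to reduced basis diagrams via the Cartesian square of \Cref{Cartesian}). By induction both the target and the centres lie in $\mathcal{C}_3$, and the closure of $\mathcal{C}_3$ under equivariant projective bundles and equivariant blow-ups places $X_\alpha$ in $\mathcal{C}_3$.

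The main subtlety — and the place where one has to be careful — is ensuring that the centres of the blow-ups that appear in the reduction chain really are isomorphic (as $A_e$-varieties) to some $X_\beta$ of strictly smaller type $(2x_1',2x_2',2x_3')$ with $x_1'+x_2'+x_3' < n$, so that the induction actually bites. For the cases listed in \Cref{reduction six six four} (e.g.\ the blow-up centres $X_{(3,1,1,3,1)}$ inside $X_{(3,3,1,1,1)}$ and $X_{(5,3,3,1,5,3,1,1)}$ inside $X_{\alpha_3}^{(2)}$), this is verified directly, and the transversality argument in the proof of \Cref{claim reducible} promotes these descriptions from the $(6,6,4)$-model up to the general $(2m+2,2m+2,2m)$-case via \Cref{transversal}. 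One also needs to check that the $A_e$-equivariance in \Cref{collection C_3} is respected at each step: the projective bundles arising from \Cref{equal part}, \Cref{no negative}, \Cref{forget only one}, and \Cref{3 row main} come from $A_e$-equivariant (in fact $Q_e$-equivariant) vector bundles by construction, and the subvarieties $X_\beta$ appearing as blow-up centres are $A_e$-stable because they are cut out inside $X_{\alpha_i}$ by $A_e$-invariant conditions of the form $W_i^{j} = eW_{i-2}^{j'}$. Putting these ingredients together completes the induction, and the resulting chain of reductions exhibits $X_\alpha$ as built from an element of $\{\pt, \OG(1,\Span(v_1,v_2)), \OG(1,\Span(v_2,v_3)), \OG(1,V)\}$ by a finite sequence of $A_e$-equivariant projective bundles and blow-ups along subvarieties $X_\beta \in \mathcal{C}_3$, which is exactly the assertion of the theorem.
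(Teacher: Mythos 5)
Your proposal follows the paper's approach: it assembles the same reduction lemmas (\Cref{distinguished}, \Cref{3 row equal}, \Cref{equal part}, \Cref{no negative}, \Cref{forget only one}, \Cref{3 row main}, and the transversality promotion via \Cref{Cartesian} and \Cref{transversal}) into an explicit induction over the base cases \Cref{base case equal}, \Cref{ four two two}, \Cref{four four two}, which is the natural way to make precise the paper's summary ``with the geometric analysis of $(\fix)_\alpha$ in this subsection, we have proved the following theorem.''

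One point to tighten: your induction is on $n = x_1+x_2+x_3$, and you justify termination by asserting that blow-up centres ``really are isomorphic to some $X_\beta$ of strictly smaller type with $x_1'+x_2'+x_3' < n$.'' This is not literally true in all cases. In \Cref{reduction six six four}, case I.4b yields $X_{(5,3,5,3,1,3,1,1)} \cong \Bl_{X_{(5,3,3,1,5,3,1,1)}} X_{\alpha_3}^{2}$, and the centre $X_{(5,3,3,1,5,3,1,1)}$ is a component of $\cB^{gr}_{(6,6,4)}$ — the \emph{same} partition, hence the same $n$, as the source — so your inductive hypothesis on $n$ does not apply to it directly. The argument still closes, because case I.4a independently exhibits $X_{(5,3,3,1,5,3,1,1)}$ as a $\bP^1$-bundle over $X_{(3,1,3,1,1)}$, which does have strictly smaller $n$; but to formalize this you need either a finer complexity measure on tuples $\alpha$ (rather than bare $n$), or to process the finite list of ends in \Cref{reduction six six four} in an order respecting the dependency I.4b $\to$ I.4a. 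This is a small bookkeeping adjustment, not a flaw in the strategy.
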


From \Cref{projective bundle} and \Cref{finite model blow up}, the varieties in the collection $\cC_3$ admit categorical finite models. Thus, \Cref{geometric 3 rows} gives the verification of our main theorem (\Cref{main}) for the case $\lambda$ has $3$ rows. Recall that in \Cref{unique} we have explained that if finite models $Y_\alpha$ exist, they are unique. Therefore, the finite model $Y_e$ of $\spr$ (and of $\fix$) is the union of the $Y_\alpha$'s where $\alpha$ runs through the set of weight tuples with respect to $\lambda$. We now proceed to the discussion of finite models $Y_\alpha$ and $Y_e$. The result agrees with the numerical results that we obtained in \Cref{subsect 4.2} as follows.

The finite model $Y_{(2x_1,2x_1,2x_1)}$ was discussed after \Cref{4 row equal}. We now consider the case where $\lambda$ has the form $(2x_1, 2x_1, 2x_3)$. Recall that a reduction map $F$ from $(\fix)_\alpha$ has the target $(\cB_{e'})_{\alpha'}$ where $e'$ is smaller than or equal to $e$ (see \Cref{reduction maps}). Specifically, the reduction maps that we use in the proof of \Cref{3 row main} have the property that they preserve the type of partitions as follows. When $\lambda= (2x_1, 2x_1, 2x_3)$, the element $e'$ has the associated partition $\lambda'= (2x_1', 2x_1', 2x_3')$ for some $x_1'\leqslant x_1$ and $x_3'\leqslant x_3$. Moreover, if $F$ involves a blow-up along some $(\cB_{e''})_{\beta}$ then $e''$ has associated partition $\lambda''= (2x_1'', 2x_1'', 2x_3'')$ for some $x_1''\leqslant x_1$ and $x_3''\leqslant x_3$. We then deduce that a type of centrally extended orbit appears in $Y_{(2x_1,2x_1,2x_3)}$ if and only if it appears in $Y_{(4,4,2)}$. In \Cref{four four two}, we have seen that the finite model $Y_{(4,4,2)}$ has three types of $A_e$-orbits (or $Q_e$-orbits). Hence $Y_{(2x_1,2x_1,2x_3)}$ has the same three types of orbit. They are the trivial orbit, the 2-point orbit fixed by $z_{12}$ and the special orbit with the unique Schur multiplier of $A_e/\{z_{123},1\}$. 

Similarly, we obtain the following table that records the types of centrally extended orbits in $Y_{\lambda}$ for different types of $\lambda$ with $3$ parts. All notation is from \Cref{subsect 4.2}. Here $a_0$ is the multiplicity of the trivial orbit, $a_1$ is the multiplicity of the 2-point orbit fixed by $z_1$, $a_{12}$ is the multiplicity of the 2-point orbit fixed by $z_{12}$, and $s$ is the multiplicity of the special orbit. 

\begin{center}
\begin{tabular}{ |c|c|c|c|c|c| } 
 \hline
 $\lambda$ & non-zero multiplicities\\
 \hline
 $(2x_1, 2x_1, 2x_1)$ & $a_0, s$\\
 \hline
 $(2x_1, 2x_1, 2x_3), x_1> x_3$ & $a_{12}, a_0, s$\\
 \hline
 $(2x_1, 2x_3, 2x_3), x_1> x_3$ & $a_{1}, a_0, s$\\
 \hline
 $(2x_1, 2x_2, 2x_3), x_1> x_2> x_3$ & $a_{1}, a_{12}, a_0, s$\\
 \hline 
 \end{tabular}
\end{center}
The result in this table agrees with the numerical caculations before \Cref{not same fix point}. For example, when $\lambda= (2k,2j,2i)$, we have the formula $a_{1}= \frac{{i+j+k\choose i}EC(2j,2k)- {i+j+k\choose j}EC(2i,2k)}{2}$ which gives us $a_1= 0$ when $i=j$. This corresponds to $\lambda= (2x_1,2x_1,2x_1)$ or $\lambda= (2x_1,2x_1,2x_3)$ in the table.
\begin{rem} \label{slogan 3 rows}
    As in \Cref{geometric 3 rows}, the varieties $(\fix)_\alpha$ belong to $\cC_3$. The collection $\cC_3$ is built up from four varieties $\pt$, $\OG(1,\Span(v_1,v_2))$, $\OG(1,\Span(v_2,v_3))$ and $\OG(1,3)$. And for a generic partition $\lambda= (2x_1, 2x_2, 2x_3)$ with $x_1> x_2> x_3$, the finite model $Y_\lambda$ has $4$ types of orbit $\bO_0$, $\bO_1$, $\bO_{12}$, $\bO_{0}^{+}$. Here, we explain the relations between these two observations in the following table. 

\begin{center}
\begin{tabular}{ |c|c|c|c|c|c| } 
 \hline
Generators & $\pt$ & $\OG(1,\Span(v_1,v_2)) \cong 2\pt$& $\OG(1,\Span(v_2,v_3)) \cong 2\pt$ & $\OG(1,3) \cong \bP^1$\\
 \hline
Finite models & $\bO_0$ & $\bO_1$ & $\bO_2$& $\bO_0^{+}\cup$ $\bO_0$\\
\hline
 \end{tabular}
\end{center}   
\end{rem}
\subsection{Geometry of $\cB_{(2x_1,2x_2,2x_3,2x_4)}^{gr}$} 
In Section 8.6.1, we first state the main result of this section, \Cref{geometric main 4 rows}. We then discuss the interpretation of this theorem in terms of finite models of $\fix$. The rest of the section (and Appendix A) is devoted to proving \Cref{geometric main 4 rows}. We first reduce our consideration to $X_\alpha$ with $\alpha$ having at most $3$ negative weights in Section 8.6.2. We explain the reduction steps in Section 8.6.3, and the base cases are studied in Appendix A.
\subsubsection{Main results}
Consider $\lambda$ having the form $(2x_1, 2x_2, 2x_3, 2x_4)$. Recall from \Cref{4 row Uniqueness} that the possibilities for orbits in $Y_\lambda$ are $\bO_{0}$, $\bO_{12}$,  $\bO_{23}$,  $\bO_{1,2}$, $\bO_{1,23}$, $\bO_{3,12}$, $\bO_{12,23}$, $\bO_{0}^{1}$, $\bO_{0}^{12}$, $\bO_{0}^{123}$, $\bO_{2,3}^+$, $\bO_{12,23}^+$. For details, see \Cref{Notation for types of $A_{e}^{'}$ orbits}. An application of the result in this subsection is that we do not have orbits $\bO_{2,3}^{+}$ or $\bO_{0}^{12}$ in $Y_e$. Then it follows that we can solve for the multiplicites of the other ten orbits in $Y_\lambda$ (see \Cref{4 row Uniqueness} and \Cref{can solve equations}). In other words, the numerical invariants that we obtain in Section 3 are enough to determine the structure of $Y_\lambda$. These results follow from the main theorem of this subsection, a $4$-row version of \Cref{geometric 3 rows}. 

To state the theorem, we first define the collection $\cC_4$ in a similar way to how we define the collection $\cC_3$ (see \Cref{collection C_3}). Consider a vector space $V$ of dimension $4$ with a basis $(v_1, v_2, v_3, v_4)$. Equip $V$ with the dot product. We realize $A_e$ as the group of diagonal matrices with diagonal entries $1$ or $-1$ in $O(V)$.

\begin{defin} \label{collection C_4}    
    First, we consider the following varieties: $\pt$, $\OG(1, \Span(v_1, v_2))$, $\OG(1, \Span(v_2, v_3))$, $\OG(1, \Span(v_3, v_4))$, $\OG(1, \Span(v_2, v_3, v_4))$, $\OG(1, \Span(v_1, v_2, v_3))$, $\OG(1, V)$, $\OG(1, \Span(v_1,v_2))\times \OG(1, \Span(v_3,v_4))$, $\OG(1, \Span(v_1,v_4))\times \OG(1, \Span(v_2,v_3))$ and $\OG(2,V)$. The action of $A_e$ on $V$ induces natural actions of $A_e$ on these ten varieties. The collection $\cC_4$ is the minimal collection of $A_e$-varieties containing these ten varieties and stable under the following operations.
\begin{itemize}
    \item If we have $X\in \cC_4$, then $\bP(\cV)\in \cC_4$ for any $A_e$-equivariant vector bundle $\cV$ over $X$.
    \item For an $A_e$-equivariant embedding $Z\hookrightarrow X$ with $X, Z\in \cC_4$, we have the blow-up variety $\Bl_Z X$ belonging to $\cC_4$.
\end{itemize}
\end{defin}
Then we have our main theorem. 
\begin{thm} \label{geometric main 4 rows}
    For $\lambda= (2x_1, 2x_2, 2x_3, 2x_4)$, the varieties $(\fix)_\alpha$ belong to $\cC_4$. 
\end{thm}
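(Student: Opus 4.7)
The plan is to follow the same strategy as in the three-row case (\Cref{geometric 3 rows}), but with a substantially more delicate case analysis. By \Cref{distinguished reduction} and its corollary \Cref{distinguished}, I may assume $\lambda$ is not distinguished, so some parts of $\lambda$ coincide. By \Cref{equal part}, if $w_{2M-1}=w_{2M-3}$ then $F_{2M-1}\colon X_\alpha\to X_{\alpha_{2M-1}}$ realizes $X_\alpha$ as a tower of $A_e$-equivariant projective bundles over a $3$-row Springer fiber component, which lies in $\cC_3\subset\cC_4$ by \Cref{geometric 3 rows}. Thus we may reduce to the shapes $(2m+2,2m+2,2m+2,2m)$, $(2m+2,2m+2,2m,2m)$, $(2m+2,2m,2m,2m)$, and to the ``doubly-stepped'' shape $(2m+4,2m+4,2m+2,2m)$ (and its symmetric analogues); the partitions with all four parts equal are already handled by \Cref{4 row equal}.

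The main work is to show: for each of these remaining $\lambda$ and each standard good tuple $\alpha$, there is a reduction map in the sense of \Cref{reducible tuples} whose source is $X_\alpha$ and whose target lies in $\cC_4$. I would first stratify by the number $r$ of negative weights in $\alpha$. Using \Cref{no negative}, the case $r=0$ gives towers of projective bundles over a point. For $r\geqslant 1$, mimicking Step~1 of the proof of \Cref{3 row main}, one shows that when $r$ is close to its maximum the corresponding $d_{-i}$ are all forced to be maximal, so the spaces $W^j_{-i}$ are determined by isotropy and the map $F_1$ is again a tower of projective bundles. This cuts off the cases with ``too many'' negative weights and leaves the range $1\leqslant r\leqslant 3$, as indicated in the excerpt.

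Inside this range, one fixes an index $i$ and studies $F_i\colon X_\alpha\to X_{\alpha_i}$. The key combinatorial observation is that the behavior of $F_i$ depends only on the relative order of the weights $i-2,i,i+2$ in $\alpha$, together with the dimensions of the three weight spaces involved. Using the Cartesian diagram of \Cref{Cartesian} (with $A=\{i\}$, $B=\{i-2,i+2\}$, $C$ the rest), the analysis of $F_i$ on the big variety reduces to the analysis of $F_3$ on the $3$-row auxiliary partition $\lambda_{A\cup B}$, exactly as in \Cref{claim reducible}; transversality for \Cref{transversal} is verified as in the proof following \Cref{reduction six six four}, since the two sets of relations cut out by $Y$ and $Z$ involve disjoint pairs of adjacent weights. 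When a direct $F_i$ is not surjective (e.g.\ as in \Cref{use another reduction map} and \Cref{swap}), I switch to a neighboring weight or precompose with a swapping map $s_{2m-1,2m+1}$ from \Cref{swap map}; and when the constraints defining the fiber of $F_i$ are of the form $W\subset W_1\cap W_2$, I decompose $F_i$ into a sequence of two blow-ups using \Cref{main blow up}/\Cref{intersection}, precisely as illustrated in \Cref{double blow up}. Throughout, the centers of the blow-ups are described as loci where one tautological space coincides with another forced subspace; such loci are always of the form $X_\beta$ for a smaller standard tuple $\beta$, giving the inductive hypothesis.

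The base of the induction consists of the ten varieties listed in \Cref{collection C_4}. These arise as $X_\alpha$ for the small partitions $(2,2,2,2)$, $(4,2,2,2)$, $(4,4,2,2)$, $(4,4,4,2)$ (and their $2k$-analogues with $k=1$), where direct inspection via \Cref{alternative} identifies each $X_\alpha$ with one of the ten listed varieties or with a projective bundle/blow-up over one of them; this explicit list is what Appendix~A establishes. The main obstacle is not any single step but the sheer combinatorial volume of cases: with four rows the number of standard good tuples with $1\leqslant r\leqslant 3$ is much larger than in the three-row situation, and several subcases (e.g.\ those of the type of \Cref{double blow up} where $F_i$ is a composition of two blow-ups along nested centers) require choosing the decomposition of $F_i$ in \Cref{decompositions} carefully so that each intermediate center is again some $X_\beta$ in $\cC_4$. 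Once these subcases are checked one by one, the induction closes and \Cref{geometric main 4 rows} follows; the $A_e$-equivariance of all vector bundles and blow-up centers is automatic because the relations defining $X_\alpha$ in $\Gr^\lambda_\alpha$ are $A_e$-invariant.
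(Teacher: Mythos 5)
Your high-level outline agrees with the paper's (cut down the number of negative weights via \Cref{one negative}, analyze $F_i$ as a composite of projective-bundle projections and blow-ups, transport the local analysis via \Cref{Cartesian}, and finish with base cases handled in Appendix~A), but there is a load-bearing error in the reduction step. Taking $A=\{2m-1\}$, $B=\{2m+1,2m-3\}$ with $m=x_4$, the auxiliary partition $\lambda_{A\cup B}$ is a \emph{four}-row partition---one of $(6,6,6,4)$, $(6,6,4,4)$, $(6,4,4,4)$---because all four rows of $\lambda$ contribute a box in the weight-$(2m-3)$ column. It is not a three-row partition, and the Cartesian square does \emph{not} reduce $F_{2m-1}$ to \Cref{claim six six four}, which concerns $F_3$ for $(6,6,4)$. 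The four-row analogue of that claim is precisely the heart of the paper's proof of \Cref{subcollections}: a fresh case-by-case verification indexed by eight ``beginnings'' (relative positions of the weights $5,3$) and twelve ``ends'' (relative positions of $3,1$) that $F_3$ for $\lambda_{A\cup B}$ is a reduction map, with several subcases (e.g.\ V--1, V--2, parts of III) where $F_3$ fails to be surjective and one must switch to $F_{2m-3}$ or precompose with a swapping map. Your write-up would skip this analysis entirely.

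Two related slips: \Cref{equal part} does not land you on a three-row component (when $w_{2M-1}=w_{2M-3}$, the target $X_{\alpha_{2M-1}}$ still corresponds to a four-row partition obtained by trimming row one, so you cannot invoke \Cref{geometric 3 rows} there). And the paper does not pass directly from general $\lambda$ to the ten generators of $\cC_4$; it introduces six intermediate collections generated by $X_\alpha$ for $\lambda\in\{(6,6,6,4),(8,6,6,4),(8,8,6,4),(6,6,4,4),(8,6,4,4),(6,4,4,4)\}$, proves membership via \Cref{subcollections}, and then in Appendix~A reduces these generators to the ten base varieties, passing through further partitions such as $(6,4,4,2)$, $(6,6,4,2)$, $(6,4,2,2)$. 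Your proposed base list $(2,2,2,2),(4,2,2,2),(4,4,2,2),(4,4,4,2)$ omits the intermediate layer, so the induction does not close as stated.
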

The next remark is similar to \Cref{slogan 3 rows}. We give the readers a brief idea of how the ten orbits appear from the categorical finite models of the ten generators of $\cC_4$.  

\begin{rem} \label{types of orbits}
   The ten varieties that generate $\cC_4$ are quadrics and orthogonal grassmannians. Their finite models with respect to $O(V')$-actions (for corresponding subspaces $V'\subset V$) are explained in \Cref{quadrics}. These spaces $V'$  are stable under $A_e$-actions, so we have maps $A_e\rightarrow O(V')$. Pulling back the Schur multipliers from $O(V')$ to $A_e$, a simple deduction gives us the following table of $A_e$-finite models.
    \begin{center}
\begin{tabular}{ |c|c|c|c|c|c| } 
 \hline
Varieties & Finite models\\
\hline
$\pt$ & $\bO_0$\\
\hline
$\OG(1, \Span(v_1, v_2))\cong 2\pt$ & $\bO_{12,3}$\\
\hline
$\OG(1, \Span(v_2, v_3))\cong 2\pt$ & $\bO_{1,23}$\\
\hline
$\OG(1, \Span(v_3, v_4))\cong 2\pt$ & $\bO_{1,2}$\\
\hline
$\OG(1, \Span(v_2, v_3, v_4))\cong \bP^1$ & $\bO_0^{1}\cup \bO_0$\\
\hline
$\OG(1, \Span(v_1, v_2, v_3))\cong \bP^1$ & $\bO_0^{123}\cup \bO_0$\\
\hline
$\OG(1,V)\cong \bP^1\times \bP^1$ & $\bO_{12,23}^{+}\cup \bO_0\cup \bO_0$\\
\hline
$\OG(1, \Span(v_1,v_2))\times \OG(1, \Span(v_3,v_4))\cong 4 \pt$ & $\bO_{12}$\\
\hline
$\OG(1, \Span(v_2,v_3))\times \OG(1, \Span(v_1,v_4)) \cong 4 \pt$ & $\bO_{23}$\\
\hline
$\OG(2,V)\cong \bP^1\cup \bP^1$ & $\bO_{12,23}\cup \bO_{12,23}^{+}$\\
\hline

 \end{tabular}
\end{center} 
Hence, thanks to \Cref{projective bundle} and \Cref{finite model blow up}, we can construct a categorical finite model of any $X\in \cC_4$ from the ten types of orbits in the table. From \Cref{unique}, the finite models of $(\fix)_\alpha$ for the $4$-row case exist uniquely. Therefore, we see that the types of orbit that can appear in $Y_\lambda$ are precisely those ten. The table below records the types of centrally extended orbits in $Y_{\lambda}$ for different $\lambda$.  
\begin{center}
\begin{tabular}{ |c|c|c|c|c|c| } 
 \hline
 $\lambda$ & The orbits in $Y_\lambda$\\
 \hline
 $(2x_1, 2x_1, 2x_1,2x_1)$ & $\pt$, $\bO_{12,23}^+$, $\bO_{12,23}$ \\
 \hline
 $(2x_1, 2x_1, 2x_1,2x_4), x_1> x_4$ & $\pt$, $\bO_{12,23}^+$, $\bO_{12,23}$, $\bO_{0}^{123}$   \\
 \hline
 $(2x_1, 2x_1, 2x_3, 2x_3), x_1> x_3$ & $\pt$, $\bO_{12,23}^+$, $\bO_{12,23}$, $\bO_{12,3}$, $\bO_{1,2}$, $\bO_{12}$\\
 \hline
 $(2x_1, 2x_2, 2x_2, 2x_2), x_1> x_2$ & $\pt$, $\bO_{12,23}^+$, $\bO_{12,23}$, $\bO_{0}^{1}$\\
 \hline
 $(2x_1, 2x_1, 2x_3,2x_4), x_1> x_3> x_4$ & $\pt$, $\bO_{12,23}^+$, $\bO_{12,23}$, $\bO_{12,3}$, $\bO_{1,2}$, $\bO_{12}$, $\bO_{0}^{123}$\\
 \hline 
 $(2x_1, 2x_2, 2x_2,2x_4), x_1> x_2> x_4$ & $\pt$, $\bO_{12,23}^+$, $\bO_{12,23}$, $\bO_{1,23}$, $\bO_{0}^{1}$, $\bO_{0}^{123}$, $\bO_{23}$\\
 \hline 
 $(2x_1, 2x_2, 2x_3,2x_3), x_1> x_3> x_4$ & $\pt$, $\bO_{12,23}^+$, $\bO_{12,23}$, $\bO_{12,3}$, $\bO_{1,2}$, $\bO_{12}$, $\bO_{0}^{1}$\\
 \hline 
 $(2x_1, 2x_2, 2x_3,2x_4), x_1> x_2> x_3> x_4$ & $\pt$, $\bO_{12,23}^+$, $\bO_{12,23}$, $\bO_{1,23}$, $\bO_{0}^{1}$, $\bO_{0}^{123}$, $\bO_{2,3}$, $\bO_{23}$, $\bO_{1,2}$, $\bO_{12}$\\
 \hline 
 \end{tabular}
\end{center}
\end{rem}
We will see how this table comes from the process of describing the varieties $X_\alpha:= (\fix)_\alpha$. The geometry of $X_\alpha$ is studied in a similar way as in the case $\lambda$ having $3$ parts. We show that almost all tuples of weights $\alpha$ are reducible, so our consideration is reduced to some base cases. Although the technical details are mostly similar to Section 8.5, the case-by-case consideration requires more laboring work. 
\subsubsection{Reductions to the cases $\alpha$ having few negative weights}
First, by distinguished reductions (\Cref{distinguished reduction}), we can assume that $e$ is not distinguished. And by \Cref{no negative}, our focus is restricted to the tuples $\alpha$ with some negative weights. Recall that we write $d_i$ for the multiplicity of weight $i$ in the tuple $\alpha$ (so $d_i= \dim V_{-i}\cap U^\bullet$). And we have $d_i+d_{-i}= \dim V_i$, $d_{-i}\leqslant d_i$ for $i> 0$ (see \Cref{alternative}). In particular, since $\dim V_1= 4$, we have $d_{-1}\leqslant 2$. The next lemma is an analogue of Step 1 in the proof of \Cref{3 row main}.

\begin{lem} \label{one negative}
    Consider the case where $\lambda$ has $4$ parts and consider a standard tuple of weights $\alpha$. In the following cases, the tuple $\alpha$ is reducible.
    \begin{enumerate}
        \item The tuple $\alpha$ has both weights $-1$ and $-3$ with multiplicities one, then it is reducible with respect to $F_{1}$.
        \item The tuple $\alpha$ has both weights $-1$ and $-3$ with multiplicities two, then it is reducible with respect to $F_{1}$.
        \item The tuple $\alpha$ has weight $-1$ with multiplicity two, and weight $-3$ and $-5$ with multiplicity one, then it is reducible with respect to $F_{3}$.
    \end{enumerate}
    As a consequence, any tuple $\alpha$ having weight $-5$ or having weight $-3$ of multiplicity two is reducible.
\end{lem}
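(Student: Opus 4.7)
The plan is to mimic Step 1 of the proof of \Cref{3 row main} and work case-by-case using the alternative description of $X_\alpha$ from \Cref{alternative}. In each of the three situations, the isotropy constraint from \Cref{isotropic} together with the dimension constraint coming from $\lambda$ having only four parts severely limits the possibilities for the negative-weight subspaces of a flag in $X_\alpha$. The goal is to show that after applying the appropriate $F_i$, the only data that must be recovered are some of the positive-weight subspaces, and that their recovery is governed by containment relations of the type studied in \Cref{equal part} and \Cref{forget only one}. Combined with \Cref{decompositions}, this will exhibit $F_i$ as a composition of isomorphisms and projections from $A_e$-equivariant (indeed $Q_e$-equivariant) projective bundles.

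For Case (1), since $d_{-3}=1$ and $e\colon V_{-3}\to V_{-1}$ is injective, $eW_{-3}^{1}$ is a line in $V_{-1}$; the inclusion $eW_{-3}^{1}\subset W_{-1}^{1}$ of Condition (2) of \Cref{alternative} and the equality $d_{-1}=1$ force $W_{-1}^{1}=eW_{-3}^{1}$. Condition (3) then pins down $W_{1}^{d_1}$ as the orthogonal complement of $eW_{-1}^{1}$ in $V_1$, and similarly $W_{3}^{d_3}=(eW_{-3}^{1})^{\perp}\cap V_3$. Thus in the target $X_{\alpha_1}$ of $F_1$, all subspaces $W_{\pm 1}^{j}$ have been forgotten, while $W_{3}^{d_3}$ and $W_{-3}^{1}$ carry all the information needed to reconstruct $W_{-1}^{1}$ and $W_{1}^{d_1}$. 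Exactly as in Step 1 of \Cref{3 row main}, a case analysis on the relative ordering of the weights $\pm 1,\pm 3$ in the standard tuple $\alpha$ shows that each of the remaining recovery steps (realized via the decomposition in \Cref{decompositions}) is either an isomorphism or the projection from an $A_e$-equivariant $\mathbb{P}^{k}$-bundle.

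Case (2) proceeds in the same spirit. The hypothesis $d_{-1}=d_{-3}=2$ forces $\dim V_{\pm 1}=\dim V_{\pm 3}=4$ (so $x_4\geq 2$), and the $\OG$-isotropy from \Cref{isotropic} combined with $e$-equivariance rigidifies the flag $W_{-1}^{1}\subset W_{-1}^{2}$ in terms of $W_{-3}^{1}\subset W_{-3}^{2}$ via $eW_{-3}^{j}\subset W_{-1}^{j'}$ for appropriate $j'$. The top two steps of the positive flags are again pinned down by orthogonal complements, so $F_1$ again breaks into an isomorphism followed by a tower of $A_e$-equivariant projective bundle projections. Case (3) is treated by $F_3$ in place of $F_1$: $d_{-5}=d_{-3}=1$ forces $W_{-3}^{1}=eW_{-5}^{1}$, and then $W_{3}^{d_3}$, $W_{5}^{d_5}$ are pinned down by the orthogonal complement condition, so that after forgetting all of $W_{\pm 3}$ what remains to be recovered are the intermediate subspaces $W_{3}^{j}$ for $j<d_3$, each constrained by $W_{3}^{j-1}\subset W_{3}^{j}\subset W_{3}^{d_3}$ and a containment with some $eW_{1}^{j'}$; the analysis is then identical to the proof of \Cref{equal part}.

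The consequence follows by a short dimension-count argument: a standard $\alpha$ containing the weight $-5$ must also contain $-3$ (since $\alpha$ is standard it cannot jump from $-5$ to $-1$) and the multiplicities $(d_{-1},d_{-3},d_{-5})$ are forced by the dimensions of $V_{-1},V_{-3},V_{-5}$ together with the isotropy bound into one of the configurations covered by (1)--(3); similarly for a standard $\alpha$ with $d_{-3}=2$. The main obstacle I anticipate is not conceptual but combinatorial: the case-by-case verification across all admissible relative orderings of the weights $\pm 1,\pm 3,\pm 5$ in a standard tuple requires care to confirm that every recovery step yields an equivariant projective bundle rather than an extra blow-up, and that all equivariant structures descend compatibly from the tautological bundles on the relevant Grassmannians.
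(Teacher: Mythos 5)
Your framework (case analysis on the low and negative weights, exploiting $e$-stability and isotropy from \Cref{alternative} and \Cref{isotropic}) is the right starting point, but there is a concrete gap in Parts (1) and (3): you claim that after applying $F_1$ or $F_3$, every recovery step is an isomorphism or a projection from an $A_e$-equivariant $\bP^k$-bundle, expecting this to follow ``exactly as in Step 1 of \Cref{3 row main}.'' That step-by-step bundle structure fails in the four-row case. For instance, in Part (1) with $\alpha=(3,3,1,3,1,-1,-3,1)$ the map $F_1^{2,1}$ in the decomposition of \Cref{decompositions} is a blow-up of $(\bP^1, X_{(1,1,1,-1)})$ along a center isomorphic to $X_{(1,1,-1,1)}$, and $X_{(3,1,3,3,1,-1,-3,1)}\cong \Bl_{X_{(1,-1,1,1)}} X_{(1,1,1,-1)}$; in Part (3) the same phenomenon occurs for the relative position $(5,5,3,5,3,1,3,1)$. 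The mechanism is that recovering an intermediate $W_1^{j}$ can be constrained by $eW_1^{j}\subset W_3^{j''}$ with $W_3^{j''}\subsetneq V_3$, which places $W_1^{j}$ inside an intersection of subbundles of non-constant relative position --- exactly the setting of \Cref{main blow up} and \Cref{intersection}, giving a blow-up and not a $\bP^k$-bundle. You flagged precisely this as a potential ``obstacle'' at the end of your proposal but then assumed it would not arise. Since \Cref{reducible tuples} explicitly permits compositions of $A_e$-equivariant blow-ups along smooth $A_e$-stable centers together with equivariant projective bundle projections, the lemma is still true; your proof must be amended to \emph{identify} the blow-up centers as suitable $X_\beta$'s, not to rule blow-ups out.

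A secondary issue is that you work with $X_\alpha$ directly and never reduce to the $4$-column basis diagram; the paper first applies the Cartesian square of \Cref{Cartesian} with $A=\{1\}$, $B=\{3\}$, $C=\{5,\ldots,2M-1\}$, together with \Cref{transversal} to pull blow-ups through the base change. This reduction is what makes the case analysis finite: otherwise the weights $\pm 5, \pm 7,\ldots$ can occur in arbitrarily long subtuples, and your $e$-stability bookkeeping alone does not factor them out. There is also a small slip: condition (3) of \Cref{alternative} gives $W_3^{d_3}=(e^3 W_{-3}^{d_{-3}})^{\perp}$ in $V_3$, not $(eW_{-3}^{1})^{\perp}$. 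Your treatment of Part (2) is sound in spirit --- there $F_1$ really is a tower of projective bundles in every case, essentially because $d_{-1}=2$ forces $W_1^{2}$ to be isotropic and that empties the would-be blow-up centers --- but the same reduction to the $4$-column case is still needed to make it a proof.
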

\begin{proof}
    We give the proof of the first part of the lemma. We claim that it is enough to prove the lemma in the case where the basis diagram of $\lambda$ has $4$ columns. This is done similarly to Step 2 of the proof of \Cref{3 row main}. We choose $A= \{1\}$, $B= \{3\}$ and $C= \{5,...,2m-1\}$ in the context of \Cref{Cartesian} to obtain a Cartesian square. We show that $X_{\alpha_{A\cup B}}$ is reducible with respect to $F_1$, then it follows that $X_\alpha$ is reducible with respect to $F_{1}$. 

    Next, we consider the case where the basis diagram of $\lambda$ has $4$ columns. In other words, there are four weight spaces $V_3, V_1, V_{-1}$ and $V_{-3}$ in $V_\lambda$. Since $\lambda$ has $4$ rows, we have $\dim V_1= \dim V_{-1}= 4$. We further assume that $\dim V_{3}= 4$, the other cases are proved similar (and simpler). As we are proving Part 1, we now consider the case $d_{-1}= d_{-3}= 1$. Since $\alpha$ is standard and we are considering the case $\alpha$ has weights $3$ and $1$ both of multiplicities one, $\alpha$ contains a sequence of $3$ consecutive weights $(1,-1,-3)$. Next, we consider the relative positions of the weights $3$ and $1$, each with multiplicity three.
    \begin{itemize}
        \item We have that $F_1$ is the projection from a tower of projective bundles for the following relative positions of the weights $3$ and $1$: $(3,3,3,1,1,1)$, $(3,3,1,1,3,1)$. The verification is similar to case I of \Cref{reduction six six four}.        
        \item For the relative position $(3,3,1,3,1,1,3)$, we have three possibilities of $\alpha$. The geometric descriptions of the maps $F_1$ in these cases are similar to the cases $\lambda$ having $3$ rows.
        \begin{itemize}
            \item $\alpha= (3,3,1,3,1,1,-1,-3)$. We have $F_1$ is the projection from a tower of projective bundles by decomposing it as $F_1^{1} \circ F_1^{2,1}$. 
            \item $\alpha= (3,3,1,3,1,-1,-3,1)$. In this case, we consider the decomposition $$F_1: X_{(3,3,1,3,1,-1,-3,1)}\xrightarrow{F_1^{2,1}} (\bP^1, X_{(1,1,1,-1)}) \xrightarrow{F_1^{1}} X_{(1,1,1,-1)}.$$ The map $F_1^{2,1}$ is a blow-up along a subvariety of $(\bP^1, X_{(1,1,1,-1)})$ that is isomorphic to $X_{(1,1,-1,1)}$.
            \item $\alpha= (3,3,1,-1,-3,3,1,1)$. We have $F_1= F_1^{2}$, which is the projection from a $\bP^1$-bundle. 
        \end{itemize}  
        \item If the relative position is $(3,1,3,3,1,1)$, the situation is similar to the previous case. Since $W_3^{1}= eW_1^{1}$, the map $F_3$ is the map that forgets $W_3^{2}$. If $\alpha= (3,1,-1,-3,3,3,1,1)$ or $(3,1,3,3,1,1-1,-3)$, then $F_3$ is the projection from a $\bP^1$-bundle. If $\alpha= (3,1,3,3,1,-1,-3,1)$, we have $X_\alpha\cong \Bl_{X_{(1,-1,1,1)}} X_{(1,1,1,-1)}$.
    \end{itemize}
    Thus, Part 1 is verified, Part 2 and Part 3 are proved similarly as follows. For Part 2, we consider the relative positions of weights $\pm 1, \pm 3$ each with multiplicity two. The map $F_1$ is the projection from a tower of projective bundles in all cases. For Part 3, we consider the relative positions of weights $5,3$ (with multiplicity $3$) and weight $1$ (with multiplicity $2$). The map $F_3$ is the projection from a tower of projective bundles, except for the case of $(5,5,3,5,3,1,3,1)$. In this case, $F_3$ is a blow-up. The details of the argument for Part $2$ (resp. Part 3) is similar to the case $\lambda$ having $2$ rows (resp. $3$ rows) that we have elaborated on in previous subsections.
\end{proof}

\subsubsection{Proofs of \Cref{geometric main 4 rows}}
We are now ready to describe $X_\alpha$ in the case where $\lambda$ has $4$ rows. We start by defining six collections of $A_e$-varieties $\cC_4^{(6,6,6,4),0}$, $\cC_4^{(6,6,6,4),1}$ $\cC_4^{(6,6,4,4),0}$, $\cC_4^{(6,6,4,4),1}$, $\cC_4^{(6,4,4,4),0}$, and $\cC_4^{(6,4,4,4),1}$. Similarly to how we define $\cC_4$ in \Cref{collection C_4}, these collections have the properties of being closed under taking projective bundles or blowing up along smooth varieties. And now we list the varieties that generate these collections.
\begin{itemize}
    \item For $i=0, 1$, the collection $\cC_4^{(6,6,6,4),i}$ contains the connected components of $\fix$ for $e$ has the associated partition $\lambda\in $ $\{$ $(6+ 2i,6+ 2i,6+ 2i,4+ 2i)$, $(8+ 2i,6+ 2i,6+ 2i,4+ 2i),(8+ 2i,8+ 2i,6+ 2i,4+ 2i)\}$. 
    \item For $i=0, 1$, the collection $\cC_4^{(6,6,4,4),i}$ contains the connected components of $\fix$ for $e$ has the associated partition $\lambda\in $ $\{$ $(6+ 2i,6+ 2i,4+ 2i,4+ 2i)$, $(8+ 2i,6+ 2i,4+ 2i,4+ 2i)\}$.
    \item For $i=0, 1$, the collection $\cC_4^{(6,4,4,4),i}$ contains the connected components of $\fix$ for $e$ has the associated partition $\lambda\in $ $\{$ $(6+ 2i,4+ 2i,4+ 2i,4+ 2i)\}$.
\end{itemize}

\begin{rem}
    The readers may notice the absence of some small partitions $\lambda$ in the definitions of these collections. For example, we require $\cC_4^{(6,6,6,4),0}$ to contain connected components of $\fix$ for $\lambda= (6,6,6,4)$, but not $(4,4,4,2)$. The reason is that for a tuple of weights $\alpha$ of $\lambda= (2x_1,2x_2,2x_3,2x_4)$, we have $X_\alpha\cong X_{\alpha'}$ for certain $\alpha'$ that comes from $\lambda'= (2x_1+2,2x_2+2,2x_3+2,2x_4+2)$. 
    
    In particular, assuming $x_4= m$, we choose $\alpha'$ to be the tuple such that $\alpha'_{2m+1}= \alpha$, and the relative position of the weights $2m+1$ and $2m-1$ in $\alpha'$ is $(2m+1, 2m-1, 2m+1, 2m-1, 2m+1, 2m-1, 2m+1, 2m-1)$ (if there are weights $1-2m$ in $\alpha$, we add weights $-1-2m$ in $\alpha'$ accordingly). Then we have $F_{2m+1}: X_{\alpha'}\rightarrow X_\alpha$ is an isomorphism.
\end{rem}

Our strategy to prove \Cref{geometric main 4 rows} is to first prove that $X_\alpha$ belongs to some of the collection that we have defined. Then in Appendix A, we show that the above six collections are subcollections of $\cC_4$. Now we consider $\lambda$ of the form $(2x_1, 2x_2, 2x_3, 2x_4)$ with $x_4= m$ and $x_1>x_4$ (the case $\lambda$ having equal parts is addressed in \Cref{4 row equal}). Thanks to \Cref{one negative}, we only need to consider three cases for the negative weights in $\alpha$: $(-1)$, $(-1,-1)$ or $(-1,-1,-3)$, so $d_{-3}= 0$ or $1$. For such a tuple of weights $\alpha$, the following theorem specifies which collection of the above six collections $X_\alpha$ belongs. 

In the context of \Cref{Cartesian}, we choose $A= \{2m-1\}$ and $B= \{2m+1, 2m-3\}$, then $\lambda_{A\cup B}$ has one of the forms $(6,6,6,4)$, $(6,6,4,4)$ or $(6,4,4,4)$.
\begin{thm} \label{subcollections}
    Consider $\lambda$ of the form $(2x_1, 2x_2, 2x_3, 2m)$ with $x_1> m$, and a tuple of weights $\alpha$ as above. Assuming $m\geqslant d_{-3}+2$, we have $X_\alpha \in \cC_4^{\lambda_{A\cup B}, d_{-3}}$.
\end{thm}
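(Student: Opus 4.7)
The plan is to apply the Cartesian square of Proposition \ref{Cartesian} with the prescribed $A = \{2m-1\}$ and $B = \{2m+1, 2m-3\}$, taking $C = \{1, 3, \ldots, 2x_1-1\} \setminus (A \cup B)$. The hypothesis $m \geq d_{-3}+2$ guarantees that $2m-3$ is a valid positive index; the condition $|a-c| \neq 2$ for $a \in A$, $c \in C$ required by Proposition \ref{Cartesian} is then immediate, since $C$ avoids both $2m-3$ and $2m+1$, forcing its elements closest to $2m-1$ to lie at distance at least four. A direct weight-space dimension count for $\lambda = (2x_1, 2x_2, 2x_3, 2m)$ yields $w_{2m-1} = w_{2m-3} = 4$ and $w_{2m+1} \in \{1,2,3\}$ according to how many of $x_1, x_2, x_3$ strictly exceed $m$, which places $\lambda_{A\cup B}$ into one of the three forms $(6,4,4,4)$, $(6,6,4,4)$, or $(6,6,6,4)$ as required.

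Next I would observe that $\alpha_C$ contains no negative weights, since every negative weight of $\alpha$ has absolute value in $\{1,3\} \subset A \cup B$. By Lemma \ref{no negative}, $X_{\alpha_C}$ is then an $A_e$-equivariant tower of projective bundles over a point. Consequently, the right column of the Cartesian square is controlled, up to this tower, by the map $F_3: X_{\alpha_{A\cup B}} \to X_{\alpha_B}$ on the Springer fiber for the small partition $\lambda_{A\cup B}$, and the task reduces to describing this $F_3$ and transferring its structure to $F_{2m-1}$ via the Cartesian property.

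The heart of the proof is a case-by-case analysis of $F_3$ --- an extension to the three partitions $\lambda_{A\cup B} \in \{(6,4,4,4), (6,6,4,4), (6,6,6,4)\}$ of the study performed for $(6,6,4)$ in Example \ref{reduction six six four}. For each admissible relative configuration of the weights $\pm 1, \pm 3, \pm 5$ in $\alpha_{A\cup B}$ (with the negative weights constrained as in $\alpha$), one exhibits $F_3$ as a composition of projections from $A_e$-equivariant projective bundles (using Lemmas \ref{equal part}, \ref{no negative}, \ref{forget only one}) and blow-ups along smooth $A_e$-stable centers (using Lemma \ref{main blow up}, Corollary \ref{intersection}, and the two-step blow-up technique of Example \ref{double blow up}); each blow-up center is itself a component of a smaller Springer fiber and so belongs to $\cC_4^{\lambda_{A\cup B}, d_{-3}}$ either by direct identification with a generator of the subcollection or by induction on the size of $\lambda$. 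Once such a decomposition of $F_3$ is established, the Cartesian property together with Lemma \ref{transversal} --- whose transversality hypothesis is verified as in the proof of Claim \ref{claim reducible}, since the relevant subvarieties are cut out by relations involving disjoint sets of tautological bundles --- yields the same sequence of operations for $F_{2m-1}: X_\alpha \to X_{\alpha_{B\cup C}}$. Induction on $|C|$, with base case $C = \emptyset$ corresponding to $\lambda$ itself lying in the generating family of the subcollection, then places $X_\alpha$ in $\cC_4^{\lambda_{A\cup B}, d_{-3}}$. The principal obstacle is the case analysis of $F_3$: it is parallel in spirit to Example \ref{reduction six six four} but substantially more intricate due to the enlarged weight range, and in several configurations it will require first introducing a swapping map as in Definition \ref{swap map} to reduce to a more tractable tuple before the projective-bundle/blow-up decomposition can be read off.
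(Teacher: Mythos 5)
Your proposal follows the paper's strategy closely: the Cartesian square of Proposition~\ref{Cartesian} with $A=\{2m-1\}$, $B=\{2m+1,2m-3\}$, the reduction of $F_{2m-1}$ to $F_3$ on the three small partitions $(6,6,6,4)$, $(6,6,4,4)$, $(6,4,4,4)$, the transversality check via Lemma~\ref{transversal} as in Claim~\ref{claim reducible}, and a case-by-case decomposition of $F_3$ into projective-bundle projections and blow-ups. This is exactly the paper's argument.

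One factual slip worth noting: you assert that $\alpha_C$ has no negative weights because ``every negative weight of $\alpha$ has absolute value in $\{1,3\}\subset A\cup B$.'' But $A\cup B=\{2m-3,2m-1,2m+1\}$, and $\{1,3\}\subset A\cup B$ holds only when $m=2$. For $m\geqslant 3$ the weight $1$ (and for $m\geqslant 4$ also $3$) lies in $C$, so $\alpha_C$ does inherit the negative weights of $\alpha$. This is harmless for the Cartesian-square transfer, since the right column is just $F_3\times\mathrm{Id}$ and its local structure (projective bundle or blow-up) is independent of $X_{\alpha_C}$; but the intermediate claim that $X_{\alpha_C}$ is a tower of projective bundles over a point, and the appeal to Lemma~\ref{no negative} for it, is incorrect in general. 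Relatedly, the negative weights of $\alpha$ land in $\alpha_{A\cup B}$ precisely for $m\leqslant 3$, which is why the paper's ``list of ends'' uses four weights $1$ (corresponding to $m\geqslant 3$ with $d_{-3}=0$) and handles smaller $d_{2m-3}$ by truncation afterward. Your organizing ``induction on $|C|$'' is also a mild oversimplification: some blow-up centers (e.g.\ in case V-3 of the paper) are components $X_{\alpha'}$ for the \emph{same} $\lambda$, handled by a different branch of the same case analysis rather than by descent in $|C|$; the paper's induction is effectively on the total size of the basis diagram together with a well-ordering of the tuples coming from the list of beginnings and ends. None of this changes the overall validity of your plan, but the case analysis itself — which you rightly identify as the heart of the matter and leave unexecuted — does require attention to exactly these points, including the use of $F_{2m-3}$ or swapping maps when $F_{2m-1}$ fails to be surjective (cases V-1, V-2, and III).
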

\begin{proof}
We first prove the theorem in the case $\lambda_{A\cup B} =(6,6,6,4)$, (equivalently $x_3-x_4\geqslant 1$). Later, we explain at the end of the proof how the other two cases of $\lambda_{A\cup B}$ can be dealt with similarly. 

We will consider all the relative positions of the weights $2m+1$, $2m-1$ and $2m-3$ in $\alpha$, and we mainly use $F_{2m-1}$ to prove that $\alpha$ is reducible. For some of the cases where $F_{2m-1}$ is not surjective, we consider $F_{2m+1}$, $F_{2m-3}$, or the swapping map (as in \Cref{use another reduction map} and \Cref{swap}). Note that we have $d_{2m-1}=4$ as a consequence of our assumptions on $\alpha$ and $m$.

Similarly to the case where $\lambda$ has $3$ rows, we can study the properties of $F_{2m-1}$ by reducing to $\lambda= (6,6,6,4)$ and studying $F_3$. As $d_{2m-1}= 4$, the tuples $\alpha_{A\cup B}$ (see \Cref{induced tuple} for notation) we consider have the property $d_3= 4$. In the following, we first consider the case where the multiplicity of weight $1$ in $\alpha_{A\cup B}$ is $4$ (correspondingly, $d_{2m-3}=4$ in $\alpha$). The case $d_{2m-3}= 2,3$ is explained at the end of the proof.

Similarly to \Cref{reduction six six four}, we consider possibilities for the relative positions of the weights $5$, $3$ and $1$ in $\alpha_{A\cup B}$. To systematically cover the cases, we first give the list of beginnings (relative positions of $5$ and $3$) and ends (relative positions of $3$ and $1$).

\begin{center}
\begin{tabular}{c|c}
 \textbf{List of beginnings} &  \textbf{List of ends}\\
 
\textbf{I.}     5,3,3,5,3,5,3 &    \textbf{1.} 3,3,1,1,3,1,3,1\\
\textbf{II.}     5,5,3,3,3,5,3 &  \textbf{2.}  3,3,1,1,3,3,1,1\\ 
\textbf{III.}    5,3,3,5,5,3,3 &  \textbf{3.}  3,3,1,3,1,3,1,1\\ 
\textbf{IV.}     5,3,5,3,3,5,3 &  \textbf{4.}  3,3,1,3,1,1,3,1\\
\textbf{V.}      5,3,5,5,3,3,3 &  \textbf{5.}  3,3,1,3,3,1,1,1\\
\textbf{VI.}     5,5,3,5,3,3,3 &  \textbf{6.}  3,3,3,1,3,1,1,1\\
 \textbf{VII.}   5,5,3,3,5,3,3 &  \textbf{7.}  3,3,3,1,1,3,1,1\\
\textbf{VIII.}   5,3,5,3,5,3,3 &  \textbf{8.}  3,3,3,1,1,1,3,1\\   
 & \textbf{9.} 3,1,3,3,3,1,1,1\\
 & \textbf{10.}3,1,3,3,1,3,1,1\\
 & \textbf{11.} 3,1,3,3,1,1,3,1\\
 & \textbf{12.} 3,1,3,1,3,3,1,1\\
\end{tabular}
         
\end{center}
As we now consider $\lambda= (6,6,6,4)$, we write $\alpha$ instead of $\alpha_{A\cup B}$ for the sake of notation simplicity. For example, Case I-12 corresponds to $\alpha= (5,3,1,3,1,5,3,5,3,1,1)$. We omit the trivial end ($3,1,3,1,$ $3,1,3,1$) because $F_3$ is clearly an isomorphism in this case. And we have omitted the cases where the beginnings have the form $(3,...)$ since distinguished reduction maps (\Cref{distinguished reduction}) are applicable in these cases. We now consider other cases of $\alpha$.

\textbf{Case I}, the relative position of $5$ and $3$ is $(5,3,3,5,3,5,3)$. In this case, the subspaces $W_3^{2}$ and $W_3^{3}$ are uniquely determined by $e^{-1}W_5^{1}$ and $e^{-1}W_5^{2}$. So the map $F_3$ is the map that forgets $W_3^{1}$. If the relative position of $3$ and $1$ has the forms $(3,1,...)$, then $W_{3}^{1}= eW_{1}^1$; $F_3$ is an isomorphism. If the relative position of $3$ and $1$ has the forms $(3,3,...)$, then $F_3$ is the projection from a projective bundle by \Cref{forget only one}.

\textbf{Case II}, the relative position of $5$ and $3$ is $(5,5,3,3,3,5,3)$, so $W_3^{3}$ is determined by $e^{-1}W_5^{2}$. We have two subcases.
\begin{itemize}
    \item If the relative position of the weights $3$ and $1$ takes the form $(3,1,3,...)$ or $(3,3,1,1,...)$, then $W_3^{1}$  (resp. $W_3^{2}$) is determined. In these cases, the map $F_3$ is the map that forgets $W_3^{2}$ (resp. $W_3^{1}$), which is the projection from a projective bundle by \Cref{forget only one}.
    \item In the other cases, the relative position of the weights $3$ and $1$ takes the form $(3,3,1,3,...)$. We decompose $F_3$ as $F_3^{1}\circ F_3^{2,1}$ first by forgetting $W_3^{1}$ and then forgetting $W_3^{2}$. Both $F_3^{2,1}$ and $F_3^{1}$ are projections from projective $\bP^1$-bundles.
\end{itemize}    

\textbf{Case IV} (we left Case III as the last case), the relative position of $5$ and $3$ is $(5,3,5,3,3,5,3)$. In this case, $W_3^{3}$ is determined by $e^{-1}W_5^{2}$. 

If the ends have the form $(3,1,...)$, then $W_3^{1}$ is determined. We get $F_3$ is the map that forgets $W_3^{2}$. Then $\alpha$ is reducible by \Cref{forget only one}. If the ends have the form $(3,3,1,1...)$, then $W_3^{2}$ is determined, and $F_3$ is the map that forgets $W_3^{1}$. The condition to recover this space is $W_3^{1}\subset W_3^{2}\cap e^{-1}W_{5}^1$. This fits the context of \Cref{intersection}, so $F_3$ is a blow-up map as follows.
$$X_{(5,3,5,3,1,1,3,5,3,1,1)}\xrightarrow{\Bl(X_{(5,\Box,3,1,1,5,3,5,3,1,1)}\cong X_{(3,5,3,1,1,5,3,5,3,1,1)}) } X_{(5,\Box,5,3,1,1,3,5,3,1,1)}$$
$$X_{(5,3,5,3,1,1,3,1,5,3,1)} \xrightarrow{\Bl(X_{(5,\Box,3,1,1,5,3,1,5,3,1)}\cong X_{(3,5,3,1,1,5,3,1,5,3,1)}) } X_{(5,\Box,5,3,1,1,3,1,5,3,1)}$$

We are left with Cases 3, 4, 5, 6, 7 and 8. For Cases III-6 (7 or 8), all the restrictions for $W_3^{1}$ and $W_3^{2}$ are $W_3^{1}\subset e^{-1}W_5^{1}$ and $W_3^{1}\subset W_3^{2}\subset W_3^{3}$. We then have $F_3$ is the projection from a tower of projective bundles by first forgetting $W_3^{1}$ and then $W_3^{2}$. 

The last three cases are considered below. The results are similar, $F_3= F_3^{1,2,3}\circ F_3^{2,3}$ is a composition of a blow-up map and a projective $\bP^1$-bundle projection. In summary, the condition to recover $W_3^{2}$ is $eW_{1}^{1}\subset W_3^{2}\subset W_3^{3}$, so $F_3^{2,3}$ is the projection from a $\bP^1$-bundle. Given $W_3^{2}$, the condition to recover $W_{3}^{1}$ is $W_{3}^{1}\subset W_{3}^{2}\cap e^{-1}W_{5}^1$. Both $W_{3}^{2}$ and $e^{-1}W_{5}^1$ are subspaces of $W_3^{3}$, a three dimensional vector space. This fits the context of \Cref{intersection}, so $F_3^{1,2,3}$ is a blow-up map; details are given below.
\begin{itemize}
    \item IV-3, $\alpha= (5,3,5,3,1,3,1,5,3,1,1)$. The corresponding reduction diagram is
    $$X_{(5,3,5,3,1,3,1,5,3,1,1)}\xrightarrow{\Bl(X_{(3,1,3,1,3,1,1)})} X_{(5,\Box,5,3,1,3,1,5,3,1,1)}\xrightarrow{\bP^1} X_{(5,\Box,5,\Box,1,3,1,5,3,1,1)}= X_{\alpha_3}.$$
    \item IV-4, $\alpha= (5,3,5,3,1,3,1,1,5,3,1)$. The corresponding reduction diagram is
    $$X_{(5,3,5,3,1,3,1,1,5,3,1)}\xrightarrow{\Bl(X_{(3,1,3,1,1,3,1)})} X_{(5,\Box,5,3,1,3,1,1,5,3,1)}\xrightarrow{\bP^1} X_{(5,\Box,5,\Box,1,3,1,1,5,3,1)}= X_{\alpha_3}.$$
    \item IV-5, $\alpha= (5,3,5,3,1,3,5,3,1,1,1)$. The corresponding reduction diagram is
    $$X_{(5,3,5,3,1,3,5,3,1,1,1)}\xrightarrow{\Bl(X_{(3,1,3,3,1,1,1)})} X_{(5,\Box,5,3,1,3,3,5,3,1,1,1)}\xrightarrow{\bP^1} X_{(5,\Box,5,\Box,1,3,5,3,1,1,1)}= X_{\alpha_3}.$$
\end{itemize}

\textbf{Case V}, the relative position of $5$ and $3$ is $(5,3,5,5,3,3,3)$. 
\begin{itemize}
    \item If the end takes the form $(3,3,1,1,...)$ (in particular V-1 and V-2), the map $F_3$ is not surjective. We shift our focus from $F_{2m-1}$ (which corresponds to $F_3$) to $F_{2m-3}$ (which corresponds to $F_1$). Then the two ends $1$ and $2$ become the beginnings $I$ and $III$. In these cases, $\alpha$ is reducible if $m> 2$ with respect to $F_{2m-3}$.    
    \item V-3, we have the reduction diagrams as follows (see details in \Cref{double blow up}).  $$X_{(5,3,5,5,3,1,3,1,3,1,1)}\xrightarrow{\Bl(X_{(5,3,5,5,3,1,1,\Box,3,1,1)}\cong X_{(5,3,5,5,3,1,1,3,1,3,1)})}  X_{(5,3,5,5,3,1,\Box,1,3,1,1)} $$
    $$\xrightarrow{\Bl(X_{(5,3,1,5,5,\Box,\Box,1,3,1,1)}\cong X_{(3,1,3,3,1,1,1)})}  X_{(5,3,5,5,\Box,1,\Box,1,3,1,1)} \xrightarrow{\bP^1}  X_{(5,\Box,5,5,\Box,1,\Box,1,3,1,1)}= X_{\alpha_3}$$
    \item V-4, we have the reduction diagrams as follows.
    $$ X_{(5,3,5,5,3,1,3,1,1,3,1)} \cong X_{(5,3,5,5,3,1,\Box,1,1,3,1)} \xrightarrow{\Bl(X_{(5,3,1,5,5,\Box,\Box,1,1,3,1)}\cong X_{(3,1,3,3,1,1,1)})} $$ 
    $$X_{(5,3,5,5,\Box,1,\Box,1,1,3,1)} \xrightarrow{\bP^1} X_{(5,\Box,5,5,\Box,1,\Box,1,1,3,1)}= X_\alpha  $$
    \item V-5, $\alpha= (5,3,5,5,3,1,3,3,1,1,1)$. We have $X_\alpha \xrightarrow{\bP^1} X_{(5,3,5,5,3,1,\Box,3,1,1,1)}\cong$ $ X_{(5,3,5,5,3,1,\Box,1,1,3,1)}\cong$ $ X_{(5,3,5,5,3,1,3,1,1,3,1)}$. The last variety is $X_\alpha$ of case V-4.
    \item V-6,7,8. Case V-6, $\alpha= (5,3,5,5,3,3,1,3,1,1,1)$, the reduction diagram is
    $$X_{(5,3,5,5,3,3,1,3,1,1,1)}\xrightarrow{\bP^1} X_{(5,3,5,5,\Box,3,1,3,1,1,1)} \xrightarrow{\Bl(X_{(3,5,3,5,5,3,1,3,1,1,1)}\cong X_{(\Box,5,\Box,5,5,3,1,3,1,1,1)})} $$    
    $$X_{(5,\Box,5,5,\Box,3,1,3,1,1,1)}\xrightarrow{\bP^2} X_{(5,\Box,5,5,\Box,\Box,1,3,1,1,1)}= X_{\alpha_3}.$$
    For V-7 (resp. V-8), we have a similar diagram with the last map being the projection from a $\bP^1$-bundle (resp. an isomorphism).
    \item If the end has the form $(3,1,3,3,...)$,in particular cases V-9,10,11, we have $W_3^{1}= eW_1^{1}$. Therefore, the map $F_3$ is the map that forgets $W_3^{2}$ and $W_3^{3}$. By first forgetting $W_3^{3}$ and then forgetting $W_3^{2}$ (using $F_3^{2,1} \circ F_3^{3,2,1}= F_3$ in the notation of \Cref{ Reduction maps F}), we get $F_3$ is projection from a tower of projective bundles.
    \item If the end is $(3,1,3,1,3,3,1,1)$, case V-12, the map $F_3$ is the map that forgets $W_3^{3}$, and $X_\alpha \xrightarrow{\bP^1} X_{\alpha_3}$.
    
\end{itemize}
For the next three cases (Cases VI, VII, and VIII), the general strategy is to decompose $F_3$ as $F_3^{1,2,3}\circ F_3^{2,3}\circ F_3^{3}$. The map $F_3^{3}$ is an isormorphism or the projection from a projective bundle. The other two maps may involve blow-ups.

\textbf{Case VI}. The relative position of $5$ and $3$ is $(5,5,3,5,3,3,3)$.
\begin{itemize}
    \item Consider the ends that have the form $(3,1,...)$, in particular VI-9,10,11,12. Similarly to V-9,10,11, we have that $F_3$ is the projection from a tower of projective bundles.
    \item VI-8, $\alpha= (5,5,3,5,3,3,1,1,1,3,1)$. The corresponding reduction diagram is
    $$X_\alpha \xrightarrow{\Bl(X_{(5,5,\Box,3,5,3,1,1,1,3,1)}\cong \Bl_{X_{(3,3,1,1,1,3,1)}} X_{(3,3,3,1,1,1,1)})} X_{(5,5,\Box,5,3,3,1,1,1,3,1)}$$
    $$\xrightarrow{\bP^2}X_{(5,5,\Box,5,\Box,3,1,1,1,3,1)} \cong X_{(5,5,\Box,5,\Box,\Box,1,1,1,3,1)}= X_{\alpha_3}.$$
    For VI-7 (resp. VI-6), the reduction diagrams are similar; we replace the last isomorphism ($F_{3}^{3}$) with the projection from a $\bP^1$-bundle (resp. $\bP^2$-bundle).
    \item VI-4, $\alpha= (5,5,3,5,3,1,3,1,1,3,1)$. Then the corresponding reduction diagram is
    $$X_\alpha \xrightarrow{\Bl(X_{(5,5,\Box,3,1,5,3,1,1,3,1)}\cong \Bl_{X_{(3,3,1,1,1,3,1)}} X_{(3,3,1,3,1,1,1)})} X_{(5,5,\Box,5,3,1,3,1,1,3,1)}$$
    $$ \xrightarrow{\bP^1} X_{(5,5,\Box,5,\Box,1,3,1,1,3,1)} \cong X_{(5,5,\Box,5,\Box,1,\Box,1,1,3,1)}= X_{\alpha_3}.$$
    For VI-3 (resp. VI-5), the reduction diagrams are similar; we replace the last isomorphism with the projection from a $\bP^1$-bundle (resp. $\bP^2$-bundle).
    \item VI-1, $\alpha= (5,5,3,5,3,1,1,3,1,3,1)$. Then the corresponding reduction diagram is
    $$X_\alpha \cong X_{(5,5,3,5,\Box,1,1,\Box,1,3,1)}\xrightarrow{\Bl(X_{(3,3,1,1,3,1,1)})} X_{(5,5,\Box,5,\Box,1,1,\Box,1,3,1)}= X_{\alpha_3}.$$
    For VI-2, we have $X_\alpha= X_{(5,5,3,5,3,1,1,3,3,1,1)}\xrightarrow{\bP^1} X_{(5,5,3,5,3,1,1,\Box,3,1,1)}\cong X_{(5,5,3,5,3,1,1,3,1,3,1)}$, the last variety is $X_\alpha$ in case VI-1.
\end{itemize}

\textbf{Case VII}. The relative position of $5$ and $3$ is $(5,5,3,3,5,3,3)$.
\begin{itemize}
    \item VII-1,2, the space $W_3^{2}$ is determined and the map that forgets $W_3^{1}$ is the projection from a $\bP^1$-bundle. Also, the map that forgets $W_3^3$ (with $W_3^{2}$ given) is an isomorphism in case VII-1 and is the projection from a $\bP^1$-bundle in case VII-2. So we have $F_3$ is the projection from a tower of projective bundles.
    \item VII-4, $\alpha= (5,5,3,3,1,5,3,1,1,3,1)$. The corresponding reduction diagram is $$ X_\alpha \xrightarrow{\bP^1} X_{(5,5,\Box,3,1,5,3,1,1,3,1)}
    \xrightarrow{\Bl(X_{(3,3,1,1,1,3,1)})} X_{(5,5,\Box,\Box,1,5,3,1,1,3,1)}\cong X_{(5,5,\Box,\Box,1,5,\Box,1,1,3,1)}= X_{\alpha_3}.$$
    For VI-3 (resp. VI-5), the reduction diagrams are similar; we replace the last isomorphism with the projection from a $\bP^1$-bundle (resp. $\bP^2$-bundle).
    \item VII-8, $\alpha= (5,5,3,3,5,3,1,1,1,3,1)$. The corresponding reduction diagram is
    $$X_\alpha \xrightarrow{\bP^1} X_{(5,5,\Box,3,5,3,1,1,1,3,1)}
    \xrightarrow{\Bl(X_{(3,3,1,1,1,3,1)})} X_{(5,5,\Box,\Box,5,3,1,1,1,3,1)}\cong X_{(5,5,\Box,\Box,5,\Box,1,1,1,3,1)}= X_{\alpha_3}.$$
    For VII-7 (resp. VII-6), the reduction diagrams are similar; we replace the last isomorphism with the projection from a $\bP^1$-bundle (resp. $\bP^2$-bundle). 
    \item VII-11, $\alpha= (5,5,3,1,3,5,3,1,1,3,1)$. The corresponding reduction diagram is 
    $$X_\alpha \xrightarrow{\Bl(X_{(3,3,1,1,1,3,1)})} X_{(5,5,3,1,\Box,5,3,1,1,3,1)} \cong X_{(5,5,\Box,1,\Box,5,\Box,1,1,3,1)}= X_{\alpha_3}.$$
    For VII-10 (resp. VII-11), the reduction diagrams are similar; we replace the last isomorphism with the projection from a $\bP^1$-bundle (resp. $\bP^2$-bundle). 
    \item For VII-12, the spaces $W_3^{1}$ and $W_3^{2}$ are determined, the map $F_3$ is the map that forgets $W_3^{3}$. It is the projection from a $\bP^1$-bundle.
\end{itemize}

\textbf{Case VIII}. The relative position of $5$ and $3$ is $(5,3,5,3,5,3,3)$. 
\begin{itemize}
    \item  VIII-1, $\alpha= (5,3,5,3,1,1,5,3,1,3,1)$. Then the corresponding reduction diagram is
    $$X_{\alpha} \xrightarrow{\Bl(X_{(3,1,1,3,3,1,1)})} X_{(5,\Box,5,3,1,1,5,3,1,3,1)} \cong X_{(5,\Box,5,\Box,1,1,5,\Box,1,3,1)}= X_{\alpha_3}.$$
    For VIII-2, the diagram is similar; we replace the last isomorphism with the projection from a $\bP^1$-bundle.    
    \item  VIII-4, then $\alpha= (5,3,5,3,1,5,3,1,1,3,1)$. The corresponding reduction diagram is
    $$X_\alpha \xrightarrow{\Bl(X_{(5,\Box,3,5,1,5,3,1,1,3,1)})\cong X_{(3,5,3,1,5,5,3,1,1,3,1)})} X_{(5,\Box,5,3,1,5,3,1,1,3,1)}$$
    $$\xrightarrow{\Bl(X_{(3,3,1,1,1,3,1)})} X_{(5,\Box,5,\Box,1,5,3,1,1,3,1)} \cong X_{(5,\Box,5,\Box,1,5,\Box,1,1,3,1)}= X_{\alpha_3}.$$
    For VIII-3 (resp. VIII-5), the reduction diagrams are similar; we replace the last isomorphism with the projection from a $\bP^1$-bundle (resp. $\bP^2$-bundle).
    \item  VIII-8, then $\alpha= (5,3,5,3,5,3,1,1,1,3,1)$. The corresponding reduction diagram is
    $$X_\alpha \xrightarrow{\Bl(X_{(5,\Box,3,5,5,3,1,1,1,3,1)}\cong X_{(5,3,5,5,3,1,1,1,3,1)})} X_{(5,\Box,5,3,5,3,1,1,1,3,1)} $$
    $$\xrightarrow{\Bl(X_{(3,3,1,1,1,3,1)})} X_{(5,\Box,5,\Box,5,3,1,1,1,3,1)}\cong X_{(5,\Box,5,\Box,5,\Box,1,1,1,3,1)}=X_{\alpha_3}.$$
    For VIII-7 (resp. VIII-6), the reduction diagrams are similar; we replace the last isomorphism with the projection from a $\bP^1$-bundle (resp. $\bP^2$-bundle). 
    \item  VIII-11, then $\alpha= (5,3,1,5,3,5,3,1,1,3,1)$. The corresponding reduction diagram is 
    $$X_{\alpha} \xrightarrow{\Bl(X_{(3,1,3,1,1,3,1)})} X_{(5,3,1,5,\Box,5,3,1,1,3,1)} \cong X_{(5,\Box,1,5,\Box,5,\Box,1,1,3,1)}=X_{\alpha_3}.$$
    For VIII-10 (resp. VIII-11), the reduction diagrams are similar; we replace the last isomorphism with the projection from a $\bP^1$-bundle (resp. $\bP^2$-bundle).
    \item For VIII-12, we have $W_3^{1}$ and $W_3^{2}$ are determined, then $F_3$ is the map that forgets $W_3^{3}$. Thus, it is the projection from a $\bP^1$-bundle.
\end{itemize}    
\textbf{Case III}, the relative position of $5$ and $3$ is $(5,3,3,5,5,3,3)$. In this case, $W_3^{2}$ is determined by $e^{-1}W_5^{1}$. We proceed by considering the following subcases.
\begin{itemize}
    \item If there is no weight $1$ between the last two weights $3$, $F_3$ is the projection from a tower of projective bundles.
    \item If there is one weight $1$ between the last two weights $3$, we decompose $F_3$ as $F_3^{3,1,2}\circ F_3^{1,2}\circ F_3^{2}$. Since $W_3^{2}= e^{-1}W_5^{1}$, the map $F_3^{2}$ is an isomorphism. Next, if there is no weight $1$ between the first two weights $3$, $F_3^{1,2}$ is the projection from a $\bP^1$-bundle. If there is a weight $1$ between the first two weights $3$, $F_3^{1,2}$ is an isomorphism. It is left to describe $F_3^{3,1,2}$. In particular, if the end is $(3,3,1,1,3,1,3,1)$, then $F_3^{3,1,2}$ is an isomorphism. For other cases, the condition to recover $W_3^{3}$ fits the context of \Cref{main blow up}, so it is a blow-up map as follows.
    $$X_{(5,3,3,5,5,3,1,3,1,1,1)}\xrightarrow{\Bl( X_{(5,3,3,1,5,5,\Box,3,1,1,1)}\cong X_{(5,3,3,1,5,3,5,3,1,1,1)})} X_{(5,3,3,5,5,\Box,1,3,1,1,1)}$$

    $$X_{(5,3,1,3,5,5,3,1,3,1,1)}\xrightarrow{\Bl( X_{(5,3,1,3,1,5,5,\Box,3,1,1)}\cong X_{(5,3,1,3,1,5,3,5,3,1,1)})} X_{(5,3,1,3,5,5,\Box,1,3,1,1,1)}$$

    $$X_{(5,3,3,1,5,5,3,1,3,1,1)}\xrightarrow{\Bl( X_{(5,3,3,1,1,5,5,\Box,3,1,1)}\cong X_{(5,3,3,1,1,5,3,5,3,1,1)})} X_{(5,3,3,1,5,5,\Box,1,3,1,1,1)}$$

    \item If there are two or three weights $1$ between the last two weights $3$, the map $F_3$ is not surjective. If $m>2$, we consider $F_{2m-3}$, the beginning (the relative position of the weights $2m-1$ and $2m-3$) now becomes one of the following cases: II, IV, and VII. We have shown that $\alpha$ is reducible in these cases. Now, we consider $m=2$.
    \begin{itemize}
        \item If there is no weight $1$ between the first two weights $3$, let $\alpha'$ be the tuple obtained from $\alpha$ by swapping the first weight $5$ and the first weight $3$. We then have $X_\alpha \cong (\bP^1, X_{\alpha'})$ (see \Cref{swap map}).
        \item If there is a weight $1$ between the first two weights $3$, the tuple $\alpha$ has the form $(5,3,1,3,5,...)$. Changing the first four weights to obtain $\alpha'$ of the form $(3,5,3,1,5,..)$, we have $X_\alpha\cong X_{\alpha'}$. 
    \end{itemize}
     In both cases, let $\alpha''$ be the tuple obtained by removing the first weight $3$ in $\alpha'$. By a distinguished reduction map, $X_\alpha' \cong X_{\alpha''}$. For $\alpha''$, the situation now is $\dim V_3=\dim V_5= 3$. Therefore, $X_{\alpha''}$ is reducible with respect to $F_5$ (the details are discussed in the case where $\lambda$ has $3$ parts). 

\end{itemize}
So far, we have shown that for $\lambda = (2x_1, 2x_2, 2x_3, 2x_4)$ and $x_3> x_4= m$ (equivalently $\lambda_{A\cup B}= (6,6,6,4)$), and for a tuple of weights $\alpha$ with the multiplicity of weight $2m-3$ being four, the tuple $\alpha$ is reducible with respect to $F_{2m-1}$ or $F_{2m-3}$. We further claim that this statement holds for the case $d_{2m-3}= 3$ or $d_{2m-3}= 2$ using similar arguments. The change here is that in the list of ends we have three or two weights $1$ (instead of four). Note that in our arguments, only $W_1^{j}$ and the dimension of $eW_1^{j}$ play a role. So, from an end with $d_{2m-3}$ weights $1$, we can add $4-d_{2m-3}$ weights $1$ to its end. And then apply the argument of the case $d_{2m-3}= 4$. For example, the end $(3,3,1,1,3,3)$ gives the same constraint as the end $(3,3,1,1,3,3,1,1)$ does in terms of studying $F_3$.
 
In summary, for $\lambda = (2x_1, 2x_2, 2x_3, 2x_4)$, $x_3> x_4= m$, and for $\alpha$ that do not have weight $-3$, the tuple $\alpha$ is reducible if $m\geqslant 2$. Next, if $x_3-x_4> 1$ (resp. $x_2-x_3>1$ or $x_1-x_2>1$), then $\alpha$ is reducible with respect to $F_{2x_3-1}$ (resp. $F_{2x_2-1}$ or $F_{2x_1-1}$); the descriptions of these reduction maps were given in the cases where $\lambda$ has $3$ (resp. $2$ or $1$) parts. Moreover, in our arguments, all centers of blow-ups have the forms $(\cB^{gr}_{\lambda'})_{\alpha'}$ with $\lambda'_{A\cup B}= (6,6,6,4)$. Therefore, by induction, we get $X_\alpha\in \cC_4^{(6,6,6,4),0}$ for $\alpha$ with $d_{-3}= 0$. Similar arguments apply for the case where the weight $-3$ has multiplicity one, and the theorem is proved for the case $\lambda_{A\cup B}$= $(6,6,6,4)$.

Consider the other two cases of the theorem, $\lambda_{A\cup B}= (6,6,4,4)$ and $\lambda_{A\cup B}= (6,4,4,4)$. The change when we consider $F_{2m-1}$ is that $\dim V_{2m+1}$ is now $2$ or $1$ (instead of $3$). For the case-by-case consideration, we have the same list of ends and fewer numbers of beginnings. For each beginning in these two cases, we can turn them into a beginning in the case $\dim V_{2m+1}= 3$ by adding $3- \dim V_{2m+1}$ weights $5$ to the beginning of these beginnings. For example, from $(5,3,5,3,3,3)$, we have $(5,5,3,5,3,3,3)$. Our arguments depend only on the dimensions of $e^{-1}W_5^{j}$, and this process of adding weights $5$ does not change these dimensions. Therefore, similar arguments apply to describe the properties of $F_3$.
\end{proof}

Recall that our main theorem of this subsection, \Cref{geometric main 4 rows}, states that the varieties $(\fix)_\alpha$ belong to the collection $\cC_4$ when the associated partition of $e$ has four rows. Therefore, with \Cref{subcollections} verified, to prove \Cref{geometric main 4 rows}, it is left to show that the collections $\cC_4^{\lambda_{A\cup B},i}$ are subcollections of $\cC_4$. From the way we define these collections, the task is to show that the generators of the collections $\cC_4^{\lambda_{A\cup B},i}$ belong to $\cC_4$. In other words, we want to show that the varieties $(\fix)_\alpha$ belong to $\cC_4$ for certain small $e$. 

The techniques to prove that these $X_\alpha$ can be built by taking projective bundles and blow-ups from $\OG(j,d), 2\leqslant d\leqslant 4, j\leqslant \frac{d}{2}$ are similar to the techniques we used in the proof of \Cref{subcollections}. We list the geometric descriptions of these generators in Appendix A. 

Recall that we have two cases for the multiplicity of weight $-1$ in $\alpha$, either $1$ or $2$. In general, the geometry of $X_\alpha$ is somewhat simpler when the weight $-1$ in $\alpha$ has multiplicity $2$. For example, there are fewer blow-ups involved in the process of constructing $X_\alpha$. As the details are saved for the Appendix, the next remark only gives a heuristic explanation for this phenomenon in terms of finite models.
\begin{rem} \label{2 is easier than 1}
    We consider the case where $\lambda$ has $4$ parts and the weight $-1$ in $\alpha$ has multiplicity $2$. We have a morphism $\pi_{-1}: X_\alpha\rightarrow \OG(2,4)$ which sends a flag $U^\bullet$ to the space $W_{-1}^2= U^\bullet \cap V_{-1}$. The variety $\OG(2,4)$ has two connected components, each isomorphic to $\bP^1$. The group $A_e$ acts transitively on this set of components as follows, $z_i$ permutes the two components and $z_{ij}$ fixes each component. The morphism $\pi_{-1}$ is $A_e$-equivariant, so $X_\alpha$ must have at least two connected components if it is not empty. As a consequence, the finite model of $X_\alpha$ can only contain orbits that have stabilizers lying in the subgroup $A_e^{+}\subset A_e$ generated by $\{z_{ij}\}_{1\leqslant i\leqslant j\leqslant 4}$. 

    From \Cref{finite model blow up}, if our process of building $X_\alpha$ involves a blow-up along some variety $X_{\alpha'}$, the categorical finite model $Y_{\alpha'}$ is a subset of the categorical finite model $Y_\alpha$. Therefore, provided that the points of $Y_\alpha$ cannot be fixed by elements $z_i\notin A_e^{+}$, we have the same restriction on $Y_{\alpha'}$. This condition limits the possibilities of the centers of the blow-ups (if there is one) when we describe $X_\alpha$.    
\end{rem}

A corollary of this remark is that we can prove \Cref{can solve equations} without studying the geometry of $X_\alpha$ when the weight $-1$ has multiplicity $2$ in $\alpha$. Indeed, the key ingredient of the proof of \Cref{can solve equations} is that the two orbits $\bO_{0}^{1,2}$ and $\bO_{2,3}^+$ do not appear in $Y_e$. And both of these orbits are fixed by $z_2,z_3\in A_e$, so they cannot appear in the finite models of $X_\alpha$ when $-1$ has multiplicity $2$ in $\alpha$. Therefore, for \Cref{can solve equations}, it is enough to study in detail the geometry of $X_\alpha$ when $-1$ has multiplicity $1$ in $\alpha$.

\section{Predictions and conjectures for general nilpotent elements $e$}
Throughout Section 2 to Section 8, we have assumed that the Lie algebra $\fg$ is $\fsp(2n)$ and the associated partition $\lambda$ consists only of even parts. The first subsection explains how we can derive the finite model $Y_e$ for a general $e\in \fsp_{2n}$ from the finite model $Y_{e_0}$ for an even nilpotent element $e_0\in \fsp_{2n'}$, $n'\leqslant n$. The second subsection treats the cases $\fg= \fso_{2n+1}$ and $\fg= \fso_{2n}$. The last subsection consists of predictions and conjectures related to finite models $Y_e$.

\subsection{Finite models of general nilpotent elements $e\in \fsp_{2n}$} \label{subsect 9.1}
We consider $e\in \fsp_{2n}$, and we no longer assume the associated partition $\lambda$ of $e$ only has even parts. We write $\lambda= \lambda_{even}+ \lambda_{odd}$ where $\lambda_{even}$ (resp. $\lambda_{odd}$) consists of even (resp. odd) parts of $\lambda$. Similarly to \Cref{A_e action}, we have a direct sum decomposition $V_\lambda= V_{\lambda_{even}}\oplus V_{\lambda_{odd}}$. We view $\lambda_{even}$ and $\lambda_{odd}$ as the associated partitions of two nilpotent elements $e_{0}\in \fsp(V_{\lambda_{even}})$ and $e_{1}\in \fsp(V_{\lambda_{odd}})$. In the notation of \Cref{subsect 2.1}, we have three reductive groups $Q_e$, $Q_{e_0}$ and $Q_{e_1}$ as the reductive parts of the centralizers of $e, e_0$ and $e_1$ in the corresponding symplectic groups. We have $Q_e\cong Q_{e_0}\times Q_{e_1}$.

Consider a reductive group $H_1$ and a $H_1$-centrally extended set $Y$. Assume that we have a group $H_2$ such that the pullback morphism of Schur multipliers $M(H_1)\rightarrow M(H_1\times H_2)$ is an isomorphism. Then we have a natural $H_1\times H_2$-centrally extended structure on $Y$ as follows. At the level of sets, $H_2$ acts trivially on the points of $Y$; and when we take the Schur multipliers into account, the central extensions are read from the pullback isomorphism $M(H_1)\rightarrow M(H_1\times H_2)$. We call this $H_1\times H_2$-centrally extended structure the lift of the original $H_1$-centrally extended structure on $Y$. On the other hand, assume that we have two groups $H_1$, $H_2$ satisfying the same condition, and a $H_1\times H_2$-centrally extended set $Y$ such that $H_2$ acts trivially on $Y$. Then this $H_1\times H_2$-centrally extended structure can be obtained by lifting the natural $H_1$-centrally extended structure on $Y$.

Let $\ell_1$ be the number of distinct parts in $\lo$. According to \cite[Chapter 5]{collingwood1993nilpotent}, the group $Q_{e_1}$ takes the form $\prod_{j=1}^{\ell_1} Sp_{2i_j}$ for some $i_j$. Applying the second part of \Cref{easy schur} to the factors of $Q_{e_1}$ one by one, we obtain $M(Q_{e_0}\times Q_{e_1})= M(Q_{e_0})$. In addition, $Q_{e_1}$ is connected, so $Q_{e_1}$ acts trivially on $Y_e$. Therefore, the $Q_e$-centrally extended structure of $Y_e$ is fully recovered from the $Q_{e_0}$-centrally extended structure of $Y_e$ by the lifting process explained in the above paragraph.

From \cite[Chapter 5]{collingwood1993nilpotent}, we have that in $\lambda_{odd}$, each part has multiplicity even. Assuming that $\lo$ has $2k_1$ parts, we write $\lo$ as $(b_1, b_1, b_2, b_2,...,b_{k_1}, b_{k_1})$. Next, we explain the relation between $Y_e$ and $Y_{e_0}$ as two $Q_{e_0}$-centrally extended sets.

The decomposition $V_\lambda= V_{\lambda_{even}}\oplus V_{\lo}$ can be viewed as a mod-2 weight decomposition of $T_e$. Similarly to \Cref{subsect 3.2}, we have a description of $\fix$ as the disjoint union of $n \choose |\lambda_{odd}|$ copies of $\cB_{\lambda_{even}}^{gr} \times \cB_{\lambda_{odd}}^{gr}$. Since the group $Q_{e_1}$ is a product of symplectic groups, the $Q_{e_1}$-centrally extended structure of $Y_{e_1}$ is trivial. Therefore, $Y_{e_1}$ consists of $\chi(\cB_{\lambda_{odd}})$ trivial points. The Euler characteristic of $\cB_{\lambda_{odd}}$ can be calculated as in \Cref{subsect 3.2}. If $\lambda_{odd}= (b_1,b_1,....,b_{k_1}, b_{k_1})$, then $\chi(\cB_{\lambda_{odd}})=$ $2^{b_1+...+ b_{k_1}}\times$ ${|\lambda_{odd}| \choose b_1,...,b_{k_1}}$. Therefore, we have the following proposition.

\begin{pro} \label{separate odd even}
    The $Q_{e_0}$-centrally extended set $Y_e$ consists of $2^{b_1+...+ b_{k_1}}\times$ $ n\choose {b_1, b_2, ..., b_{k_1}}$ disjoint copies of $Y_{e_0}$.
\end{pro}

\subsection{The cases $\fso_{2n}$ and $\fso_{2n+1}$} \label{subsect 9.2}
In this subsection, we consider a nilpotent element $e$ of $\fso_{2n}$ or $\fso_{2n+1}$. Let $\lambda$ be the associated partition of $e$. We return to the case where $\lambda$ has up to $4$ parts. Similarly to the case $e\in \fsp_{2n}$, we consider a vector space $V_\lambda$ equipped with a symmetric bilinear form. We have the groups $A_e\subset Q_e\subset O(V_\lambda)$ as in \Cref{subsect 2.1}. To be consistent with the notation when $\fg= \fsp_{2n}$, we consider the Springer fiber $\cB_e$ as the variety of maximal isotropic flags in $V_\lambda$ stabilized by $e$ (this notation may be different from the standard notation in the sense that our Springer fibers may have two connected components permuted by the group of components of $O(V)$). Recall that we have defined a set of numerical invariants $\{F_a^{A'}(X)\}$ for any $A_e$-variety $X$ in \Cref{numeric F}. We have the following theorem.
\begin{thm} \label{can solve equation 2}
    In the above setting, the $A_e$ (or $Q_e$)-centrally extended structure of the finite set $Y_e$ is determined by the numerical invariants $\{F_a^{A'}(\spr)\}$.
\end{thm}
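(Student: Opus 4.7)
The plan is to mimic the strategy that produced \Cref{can solve equations} in type C and thread it through a reduction from types B and D to the type C case treated throughout Sections 2--8. First I would establish a type B/D analogue of \Cref{separate odd even}: namely, for $e \in \fso_{2n}$ (resp.\ $\fso_{2n+1}$) decompose the associated partition $\lambda = \lambda_{\mathrm{bad}} + \lambda_{\mathrm{good}}$ according to the parity of the parts, where $\lambda_{\mathrm{good}}$ consists of parts whose multiplicities are necessarily even (odd parts in type D, even parts in type B) and so contribute a product of symplectic groups to $Q_e$, and $\lambda_{\mathrm{bad}}$ consists of the remaining parts. As in \Cref{subsect 9.1}, the symplectic factors of $Q_e$ are connected and have trivial Schur multiplier contribution by \Cref{easy schur}, so the $Q_e$-centrally extended structure of $Y_e$ is fully recovered by lifting from the structure relative to the reductive part $Q_{e_0}$ associated to $\lambda_{\mathrm{bad}}$; the analogue of the multiplicity count in \Cref{separate odd even} then identifies $Y_e$ with a disjoint union of copies of $Y_{e_0}$.

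Next, I would note that this reduction preserves the number of rows, so the case ``$\lambda$ has at most $4$ parts'' is reduced to the parallel case for the residual element $e_0$, which now lives in a smaller orthogonal Lie algebra whose partition has the ``even-multiplicity good'' parts stripped out. At this point the set-up is formally the same as in type C (even parts only) modulo replacing $\Sp$ by $\SO$: the groups $A_{e_0}$ and $A_\chi$ have exactly the same $(\cyclic{2})^{\oplus \bullet}$ shape, the Schur multiplier computations of Section 2.3 go through with only cosmetic changes, and \Cref{recursive} (whose proof uses only the integral/localization theorem and the product-of-Springer-fibers description of fixed loci $\spr^{a,a'}$) applies verbatim, so the numerical invariants $F_a^{A'}$ can be computed by the same recursion from Euler characteristics of smaller Springer fibers.

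With this in hand, the argument proceeds in parallel with \Cref{4 row Uniqueness} and \Cref{can solve equations}: use the explicit Springer correspondence in types B/D (\cite[Section 13.3]{carter1993finite}) to list which characters of $A_{e_0}$ occur in $H^*(\spr^s)$ for $s \in A_{e_0}$, apply \Cref{vanish} to cut down the list of possible centrally extended orbits, and write down the system of linear equations whose unknowns are the orbit multiplicities and whose right-hand sides are the $F_a^{A'}$. The key geometric input that closed the type C system was the vanishing $a^{+}_{2,3} = 0$ and $a^{12}_0 = 0$, which came from describing $(\fix)_\alpha$ via the collection $\cC_4$ in Section 8; I would obtain the corresponding B/D vanishings by running exactly the same reduction-map analysis (distinguished reductions, $F_i$ as projective bundles or blow-ups, swapping maps, and Cartesian squares from \Cref{Cartesian}) on $\cB_{e_0}^{gr}$, exploiting that $e_0$ is even and lives in an orthogonal algebra whose weight spaces again decompose as in \Cref{alternative}.

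The main obstacle I anticipate is precisely this last step: the base cases of the analogue of $\cC_4$ will involve orthogonal Grassmannians $\OG(j,V)$ with respect to quadratic forms inherited from $V_\lambda$ in types B and D rather than C, and one has to verify that the categorical finite models of these bases (in particular the spinor-bundle contributions described in \Cref{quadrics}) still fall within the list of ten orbit types of \Cref{types of orbits}, or to enlarge the list appropriately without breaking solvability of the linear system. Once this case-by-case geometric check is done, the determination of the $A_e$- and hence (via the injectivity $M(Q_y)\hookrightarrow M(A_y)$ from \Cref{Schur} and \Cref{Ae determines Qe}) the $Q_e$-centrally extended structure of $Y_e$ from $\{F_a^{A'}(\spr)\}$ follows exactly as in \Cref{can solve equations}.
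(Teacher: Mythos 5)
Your proposal misses the key structural observation that makes the type B/D case fundamentally \emph{simpler} than type C, not comparably hard. After reducing to the case where $\lambda$ consists of four odd parts (which you correctly propose), the eigenspaces of $s\in\{z_1, z_2, z_3, z_{123}\}$ acting on $V_\lambda$ both have odd dimension (e.g.\ for $s=z_1$ the $+1$-eigenspace has dimension $\lambda_1$, odd). Since these two eigenspaces are mutually orthogonal and each carries a nondegenerate symmetric form, an $s$-stable maximal isotropic flag would decompose as a pair of maximal isotropics of total dimension $\tfrac{\lambda_1-1}{2}+\tfrac{\lambda_2+\lambda_3+\lambda_4-1}{2}=n-1<n$, which is impossible. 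Hence $\spr^s=\emptyset$ for $s\in\{z_1,z_2,z_3,z_{123}\}$. By the multiplicity-space description of $V_{(s,\rho)}$ from \Cref{dimensions from F_i}, this forces $\dim V_{(s,\rho)}=0$ for all these $s$. Plugging these zeros into systems (\ref{system 1}) and (\ref{system 2}) instantly collapses both to systems in which five of the six unknowns (including $a_0$, $a_{1,2}$, $a_0^{12}$, $a_{1,23}$, $a_0^{123}$, $a_{2,3}^+$, $a_0^1$, $a_{3,12}$) vanish outright, and the remaining multiplicities are expressed directly in terms of $\dim V_{(s,\rho)}$, i.e.\ in terms of $\{F_a^{A'}\}$. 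No use of $S_i^{A'}$ and no geometric apparatus from Section~8 is needed.

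Your route --- rebuilding the collection $\cC_4$, the reduction maps, and the categorical finite models in the orthogonal setting to extract vanishings like $a^+_{2,3}=0$ --- is far more work than the problem requires, and would not run ``verbatim'' as you suggest. With all parts of $\lambda$ odd, the $T_e$-weight decomposition of $V_\lambda$ contains a zero-weight space $V_0$ of dimension $4$, and every flag in $\fix$ meets $V_0$ in a $2$-dimensional isotropic subspace (cf.\ the remark after \Cref{can solve equation 2} and \Cref{2 is easier than 1}); this has no analogue in the all-even type C case, so the base varieties and reduction-map combinatorics differ substantively. Worse, you were hunting for the \emph{same pair} of vanishings that closed the type C system; in types B/D the actual vanishings are much more numerous and of a different character, and they come for free from $\spr^s=\emptyset$, not from the geometry of $\fix$. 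The obstacle you flag at the end is real if one insists on your route, but the paper's point is precisely that one need not take it.
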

\begin{proof}
    When $\lambda$ has one, two or three parts, the proof of the theorem is essentially similar to the case of $\fsp_{2n}$ (see \Cref{numeric 2 row} and \Cref{subsect 4.2}). For the case $\lambda$ has $4$ parts, the proof is much simpler than in the case $\fsp_{2n}$, details are given below.

    Similarly to the case $\fsp_{2n}$ discussed in Section 9.1, if the partition $\lambda $ has even parts, the centrally extended finite set $Y_e$ can be obtained by taking the disjoint union of $Y_{e'}$ for smaller $e'$. Therefore, we consider $\lambda\in \fso_{2n}$ to have $4$ odd parts. 
    
    As in \Cref{4 row Uniqueness}, we study the characters of $A_e\cong (\cyclic{2})^{\oplus 4}$ that appear in the total cohomologies $H^*(\spr^s)$, $s\in A_e$. The group $A_e$ is generated by four elements $z_1,z_2,z_3,z_4$ (see \Cref{subsect 2.1}), and we consider $A_e' =A_e/ \{z_{1234},1\}$. Following \cite[Section 13.3]{carter1993finite}, we find that the characters of $A_{e}^{'}$ that appear in $K_0(\spr)$ are $(1,1,1)$, $(-1,-1,1)$, $(1,-1,-1)$, $(-1,-1,-1)$, $(1,1,-1)$, $(-1,1,1)$. This is precisely the list of characters we get in type C (\Cref{4 row Uniqueness}). Furthermore, we observe that $\spr^{s}= \emptyset$ for $s\in \{z_1, z_2, z_3, z_{123}\}$. Therefore, $\dim V_{(s, \rho)}= 0$ for $s\in \{z_1, z_2, z_3, z_{123}\}$ and $\rho$ such that $\rho(z_i)= 1$. As a consequence, the system of equations in this case can be obtained by plugging $\dim V_{(z_i, \rho)}= 0$ into the two systems (\ref{system 1}) and (\ref{system 2}) in \Cref{4 row Uniqueness}. In other words, we have
    
    \begin{equation} \label{system 3}
  \left\{
    \begin{aligned}
&a_{1,2}+ a_{0}^{12}= \dim V_{(z_1,(1,1,-1))}=0\\
&a_0+ a_{1,2}= \dim V_{(z_2,(1,1,1))}=0\\
&a_0+ a_{3,12}= \dim V_{(z_3,(1,1,1))}=0\\
&a_0+ a_{12,23}= \dim V_{(z_{13},(1,1,1))}\\
&a_{12}+a_{3,12}=\dim V_{(z_{12},(1,1,1))} 
    \end{aligned}
  \right.
\end{equation}
and 
\begin{equation} \label{system 4}
  \left\{
    \begin{aligned}
&a_{1,23}+ a_{0}^{123}= \dim V_{(z_1,(1,-1,-1))}= 0\\
&a_0^{123}+ a_{2,3}^{+}= \dim V_{(z_2,(-1,1,-1))}= 0\\
&a_0^{1}+a_{2,3}^+= \dim V_{(z_3,(1,-1,1))}= 0\\
&a_{23}+ a_{1,23}= \dim V_{(z_{23},(1,1,1))}- \dim V_{(z_{13},(1,1,1))}\\
&a_{12,23}^{+} + a_{0}^{123}= \dim V_{(z_{12},(-1,-1,1))}+ \dim V_{(z_{13},(1,1,1))}- \dim V_{(z_{12},(1,1,1))}
    \end{aligned}
  \right.
\end{equation}
Therefore, we get $a_{12,23}= \dim V_{(z_{13},(1,1,1))}$, $a_{12}=\dim V_{(z_{12},(1,1,1))}$, $a_{23}= \dim V_{(z_{23},(1,1,1))}- \dim V_{(z_{13},(1,1,1))}$ and $a_{12,23}^{+}= \dim V_{(z_{12},(-1,-1,1))}+ \dim V_{(z_{13},(1,1,1))}$ $- \dim V_{(z_{12},(1,1,1))}$. All other multiplicities are $0$, we have up to four types of $A_e$-centrally extended orbits in $Y_e$.
\end{proof}
\begin{rem}
    It is noted that the proof of this theorem can be obtained without mentioning the numerical invariants $S_i^{A'}$. As shown above, the main reason is that we have $\dim V_{(s, \rho)}= 0$ for more pairs $(s,\rho)$. The heuristic reason for this numerical fact comes from the geometry of $\fix$ as follows. Recall that the variety $\fix$ is defined by the fixed point loci of $T_e$ in $\spr$, here $T_e$ is the torus coming from an $\fsl_2$-triple (see \Cref{subsect 6.1}). As $T_e$ acts on $V_\lambda$, we have the corresponding weight decomposition of $V_\lambda$. Since all parts of $\lambda$ are odd, the dimension of the weight space $V_0$ is $4$. Furthermore, an isotropic flag $U^\bullet$ in $\fix$ must satisfy $\dim W_0= \dim (U^n\cap V_0)= 2$. Therefore, from \Cref{2 is easier than 1}, we see that any variety $(\fix)_\alpha$ has at least two connected components. So, the stabilizers of the orbits in $Y_e$ are subgroups of the subgroups of $A_e$ generated by the elements $z_{ij}$, $1\leqslant i<  j\leqslant 4$. As a consequence, we have less nonzero multiplicities.
\end{rem}

Here, we state a conjecture that generalizes \Cref{can solve equations} and \Cref{can solve equation 2}. Let $e$ be a nilpotent element in $\fsp_{2n}$ (or $\fso_{2n}$, $\fso_{2n+1}$). Let $c$ be the corresponding two-sided cell. At the end of \Cref{subsect 5.1}, we have a realization $J_c\cong K_0(Sh^{Q_e}(Y_e\times Y_e))$ and the left cell modules are of the forms $K_0(Sh^{Q_e}(Y_e\times \bO))$ where $\bO$ is a $Q_e$-centrally extended orbit that appears in $Y_e$. Hence, for each type of $Q_e$-centrally extended orbit $\bO$ in $Y_e$, we have a corresponding cell module $\cC_\bO$. 

We consider the case where $e$ is distinguished, so $A_e= Q_e$. 
\begin{conj} \label{combi conjecture} 
In the Grothendieck group of the category of $J_c$-modules, the classes of the cell modules $K_0(Sh^{Q_e}(Y_e\times \bO))$ for non-isomorphic $\bO$'s are linearly independent.
\end{conj}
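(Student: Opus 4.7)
The plan is to work in $K_0(J_c\text{-mod})\otimes\bQ$, which by semisimplicity of $\bQ\otimes J_c$ has the basis $\{[V_{(s,\rho)}]\}_{(s,\rho):\rho(s)=1}$ from \Cref{subsect 5.1}. For each orbit type (i.e., conjugacy class of pairs $(A_y,\psi_y)$ with $A_y\subset A_e$ and $\psi_y\in H^2(A_y,\bC^\times)$) I would compute the character of $\cC_\bO$ and read off the decomposition $[\cC_\bO]=\sum_{(s,\rho)}m_{\bO,(s,\rho)}[V_{(s,\rho)}]$. Linear independence of the $[\cC_\bO]$ is then equivalent to full column rank of the matrix $M=(m_{\bO,(s,\rho)})$.

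The first concrete step is to identify the support of each row of $M$. Using the bijection of \cite[Proposition 10.3]{etingof2009fusion} between orbit types $\bO=(A_y,\psi_y)$ and Lagrangian subspaces $L(\bO)\subset A_e\times A_e^*$ recalled in (\ref{L}), together with the interpretation of $\cC_\bO=K_0(\Sh^{A_e}(Y_e\times \bO))$ as a standard module for the pointed semisimple algebra $J_c\otimes\bQ$, I expect to show $m_{\bO,(s,\rho)}\neq 0$ if and only if $(s,\rho)\in L(\bO)$, and further that the nonzero entry is governed by a projective representation dimension depending only on $\bO$. This mirrors the formula $\dim V_{(s,\rho)}=\#\{\bO:(s,\rho)\in L(\bO)\}$ of \Cref{equality of dimensions}, which can be rewritten as the identity $M^t\mathbf{1}=(\dim V_{(s,\rho)})$; combined with the observation that the simple $V_{(s,\rho)}$ is obtained by inducing from a Lagrangian containing $(s,\rho)$, this essentially forces the support description above.

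Given the support, linear independence reduces to a combinatorial statement about the family $\{L(\bO)\}_\bO$ of Lagrangians in the symplectic $\bF_2$-space $A_e\times A_e^*$: the indicator vectors of these Lagrangians, weighted by the positive multiplicities $m_{\bO,(s,\rho)}$, should be linearly independent in $\bQ^{A_e\times A_e^*}$. Here I would attempt a triangularity argument: order orbit types $\bO$ by the codimension of the stabilizer $A_y$, breaking ties by some total order on 2-cocycles $\psi_y$. For each $\bO$, one should identify a pair $(s_\bO,\rho_\bO)\in L(\bO)$ which fails to lie in $L(\bO')$ for every $\bO'$ strictly earlier in the order; this is plausible because the Lagrangian $L(A_y,\psi_y)$ "remembers" the pair $(A_y,\psi_y)$, and its minimal generators can be used to witness distinctness. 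If such witnesses exist, $M$ is upper triangular with nonzero diagonal and the conjecture follows. For the non-distinguished case, one would first use the reduction argument of \Cref{subsect 9.1} (and the fact that $Y_e$ is obtained from $Y_{e_0}$ by trivial lifts along the connected factor of $Q_e$) to reduce to the distinguished situation treated above.

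The main obstacle I foresee is the combinatorial triangularity: when two orbit types have the same stabilizer but different central extensions, their Lagrangians are Lagrangian complements of the common isotropic $A_y\subset A_e$ that can share many elements, so producing a distinguishing pair $(s_\bO,\rho_\bO)$ requires control over the Pontryagin-dual structure of $\psi_y$. If a direct triangular argument fails, an alternative is to invoke a Mackey-type formula expressing $\cC_\bO$ as an induction from the centralizer of a Lagrangian, and to deduce linear independence from the orthogonality of distinct induced projective characters; this is closer in spirit to the module-category viewpoint of \cite{bezrukavnikov2001tensor} but requires handling the non-trivial Schur multipliers carefully.
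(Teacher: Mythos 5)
This statement is a conjecture in the paper; the authors do not prove it, they only note that it can be verified case-by-case when the partition has at most four rows, and in Remark 9.6 they record what happens in the five-row case. So any complete proof you produce would be new. Your plan---expand each cell module $\cC_\bO$ in the basis $\{[V_{(s,\rho)}]\}$ of $K_0(J_c\text{-mod})\otimes\bQ$, argue that the support of $[\cC_\bO]$ is exactly the Lagrangian $L(\bO)$, and then establish linear independence of the resulting rows by triangularity---is a sensible framework, but both of its two substantive steps are open. The support claim is plausible from the Drinfeld-double heuristic and is consistent with the identity $[J_c]=\sum_\bO m(\bO)[\cC_\bO]=\sum(\dim V_{(s,\rho)})[V_{(s,\rho)}]$ together with \Cref{equality of dimensions}, but you do not derive it, and it needs a genuine argument (the module category picture of \cite{bezrukavnikov2001tensor} would be the place to do it).

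The more serious gap is the triangularity step. An ordering by codimension of $A_y$ with ties broken by the cocycle cannot work in general, because your argument never uses any information about which orbit types actually occur in $Y_e$, and the statement is simply false for the full family of Lagrangians once $A_e$ is large. Concretely, for $k\geqslant 4$ parts one works with $A_e'\cong(\cyclic{2})^{\oplus(k-1)}$, so the ambient symplectic space $A_e'\times(A_e')^*$ has $2^{2(k-1)}$ elements while the number of Lagrangians is $\prod_{i=1}^{k-1}(2^i+1)$, which already for $k-1=3$ is $135>64$; the indicator vectors (and a fortiori any nonnegative vectors with those supports) of the full set of Lagrangians are necessarily linearly dependent, so no ordering can produce a witness for every $\bO$. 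This is exactly the content of Remark 9.6: in the five-row case the $36$ numerically permissible orbit types give linearly dependent cell module classes, and only the $25$ orbit types conjectured to actually appear form a basis of the span. So the conjecture is inseparable from the geometric input that constrains which Lagrangians occur in $Y_e$ (the set $S_e$ of Section 9.3, or the exceptional-collection analysis of Sections 7--8), and a proof must use that input. Your proposal, as written, treats the problem as a free-standing combinatorial fact about Lagrangians in a symplectic $\bF_2$-space, and that version is false; the Mackey/orthogonality alternative you mention has the same defect, since it likewise does not see which $\bO$'s arise.
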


When the partition $\lambda$ has less than or equal to four parts, we have listed the types of orbits that appear in $Y_e$. It can be checked that \Cref{combi conjecture} is true. Note that even for the case where $\lambda$ has $4$ rows, the validity of the conjecture is highly non-trivial. It requires geometric understanding of $\fix$ in Section 8 to exclude certain types of orbits (see the discussion after \Cref{2 is easier than 1}). 

In the following, we discuss some consequences that will follow if \Cref{combi conjecture} is true. Write $m(\bO)$ for the multiplicity of the $A_e$-centrally extended orbit $\bO$ in $Y_e$. In the Grothendieck group of $J_c$-modules, we have $[J_c]= [Sh^{Q_e}(Y_e\times Y_e)]$ $=\sum m(\bO)[\cC_\bO]$. Because $J_c$ is semisimple over $\bQ$, we have $[J_c]= $ $\sum (\dim V_{(s,\rho)})[V_{(s,\rho)}]$. Therefore, we have $\sum m(\bO)[\cC_\bO]$ $=\sum (\dim V_{(s,\rho)})[V_{(s,\rho)}]$. Then the linear independency of the left cell modules in \Cref{combi conjecture} implies that we can solve for $m(\bO)$ in terms of $\dim V_{(s,\rho)}$. Next, we have seen that the numbers $\{\dim V_{(s,\rho)}\} $ can be computed from $\{F_a^{A'}(\spr)\}$ as in the proof of \Cref{dimensions from F_i}. Therefore, another consequence is that the multiplicities of the orbits in $Y_e$ can be fully recovered from the set of numerical invariants $\{F_a^{A'}(\spr)\}$. 

\begin{rem}
    Our motivation for Statement 1 of the conjecture comes from the result of \cite[Lemma 6.16]{Losev_2014} which studies cell modules in the case of finite Weyl groups. 
\end{rem}

\subsection{Predictions and conjectures}

This subsection consists of conjectures related to the centrally extended finite set $Y_e$. In Section 8, we have seen that the orbits that appear in $Y_e$ for general $e$ are the orbits that appear in $Y_{e'}$ for some small $e'$ (base cases). We first generalize this phenomenon to a conjecture.

Recall that we write $\lambda$ for the partition of $e$ and $\lambda^\intercal$ for the dual partition. As discussed in \Cref{subsect 9.1}, we can reduce our consideration to the case where $\lambda$ consists only of even (resp. odd) parts for $e\in \fg$ of type C (resp. B,D). If $\fg$ is of type C, all parts of $\lambda^\intercal$ have even multiplicities. If $\fg$ is of type $B$ or $D$, the largest part of $\lambda^\intercal$ has odd multiplicity and the other parts have even multiplicities.

We then form a partition $\lambda'$ as follows. From $\lambda^\intercal$, if a part has multiplicity $2l+1$ or $2l+2$ for some $l\geqslant 0$, we remove $2l$ of these parts. After this process, we obtain a partition $\lambda_\text{red}$, let $\lambda'= \lambda_\text{red}^\intercal$. Then we have a corresponding nilpotent element $e'\in \fg'$ with partition $\lambda'$ where $\fg'$ is of the same type as $\fg$. We give some examples below.
\begin{exa}  \leavevmode
    \begin{itemize}
        \item If $\lambda= ((2a)^{2k})$, then $\lambda'= (2^{2k})$.
        \item If $\lambda= (2a=2a> 2b)$, then $\lambda'= (4,4,2)$.
        \item If $\lambda= (2a> 2b >2c> 2d)$, then $\lambda'= (8,6,4,2)$.
        \item If $\lambda= (2a+1=2a+1> 2b+1=2b+1> 2c+1)$, then $\lambda'= (5,5,3,3,1)$.
    \end{itemize}
\end{exa}

It is straightforward to check that $Q_e\cong Q_{e'}$ and $A_e\cong A_{e'}$. We can then state our conjecture.

\begin{conj}
    An $Q_e$-centrally extended orbit appears in $Y_e$ if and only if it appears in $Y_{e'}$.
\end{conj}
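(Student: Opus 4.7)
The plan is to prove the conjecture by showing that each connected-component cluster $(\fix)_\alpha$ reduces, via the same type of geometric operations used in Section 8, to a corresponding variety $(\cB_{e'}^{gr})_{\alpha'}$, and then transferring this comparison to centrally extended orbits via the categorical finite model framework of Section 7. The key observation is that passing from $\lambda$ to $\lambda'$ corresponds, at the level of the basis diagram, to stripping $2l$ repeated columns of the same width whenever that width appears with multiplicity $2l+1$ or $2l+2$. Under the resulting identification $A_e \cong A_{e'}$ and $Q_e \cong Q_{e'}$, the stripped columns contribute only trivial $Q_e$-equivariant data.

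First, I would formalize the column-stripping procedure as an $A_e$-equivariant reduction map. For every good tuple $\alpha$ for $e$, one constructs a tuple $\alpha'$ for $e'$ and a map $R_\alpha:(\fix)_\alpha \to (\cB_{e'}^{gr})_{\alpha'}$ assembled from the basic reductions $F_i$ of Example \ref{ Reduction maps F}. Whenever two adjacent columns satisfy $\dim V_i = \dim V_{i-2}$, Lemma \ref{equal part} guarantees that $F_i$ realizes its source as a tower of projective bundles associated to $A_e$-equivariant vector bundles over its target. Iterating this reduction removes exactly those columns that the partition operation $\lambda \mapsto \lambda'$ removes. For $\lambda$ of type C where all column-multiplicities in $\lambda^\intercal$ are even, one removes pairs until each width occurs twice; for types B and D, the top row is then reduced to a single column in the same manner.

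Next, I would control the exceptional loci. In general, some $F_i$ are not pure projective bundle projections but involve blow-ups (see Examples \ref{blow up six six four} and \ref{double blow up}), and the centers of these blow-ups are themselves varieties of the form $X_\beta$. I would argue inductively that the blow-up centers themselves admit reduction maps to some $(\cB_{e''}^{gr})_{\beta'}$ where the partition of $e''$ also reduces to $\lambda'$ (or to a simpler configuration already covered by an inner induction on the total number of boxes in the basis diagram). Then Lemma \ref{projective bundle} and Lemma \ref{finite model blow up} imply that the categorical finite model of $(\fix)_\alpha$ is built inductively out of disjoint copies of the categorical finite model of $(\cB_{e'}^{gr})_{\alpha'}$. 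Combined with uniqueness of finite models (as in \Cref{unique}, whose extension beyond four rows is implicit in the program), this gives the "only if" direction: every $Q_e$-centrally extended orbit appearing in $Y_e$ already appears in $Y_{e'}$.

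For the converse, I would exhibit, for each $\alpha'$-cluster of $\cB_{e'}^{gr}$, a specific $\alpha$-cluster of $\cB_e$ whose reduction map $R_\alpha$ is a pure tower of projective bundles. A natural candidate is the tuple obtained from $\alpha'$ by inserting, in canonically determined positions, pairs of weights corresponding to the stripped columns; standardness is easy to enforce by a further swap (\Cref{standard}). Because each orbit type in the finite model of the base is preserved when we build a projective bundle on top (\Cref{projective bundle}), every orbit in $Y_{e'}$ then appears in $Y_e$. The main obstacle is the very issue the author flags at the end of Section 9: extending the Section 8 reducibility analysis to partitions with five or more parts requires controlling the base varieties of the analogue of $\mathcal{C}_4$. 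The blow-up centers that arise can be described in small cases by hand, but a uniform structural description — so that the induction on partition size closes — is not yet available. Producing such a description, and verifying that the induced reductions are genuinely $A_e$-equivariant with $A_e$-stable centers, is the crux of the argument.
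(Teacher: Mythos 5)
This statement is a conjecture in the paper, not a proved result: the author explicitly states that it has been verified only when $\lambda$ has $2$, $3$, or $4$ parts, when $\lambda$ has equal parts, and when $\lambda$ has two distinct parts, while the general case is left open. So there is no ``paper proof'' to compare against; the question is whether your plan actually supplies the missing argument, and it does not.

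Your strategy --- strip repeated columns via the $F_i$, control the exceptional loci by induction, then read off the centrally extended orbits through categorical finite models --- is exactly the route the paper follows in the proved cases (Sections 8.3--8.6, Lemmas \ref{equal part}, \ref{projective bundle}, \ref{finite model blow up}). But you stop precisely where the paper's knowledge stops. Two things in particular are under-supported. First, you invoke ``uniqueness of finite models (as in \Cref{unique}, whose extension beyond four rows is implicit in the program)'' --- but Corollary \ref{unique} is proved only for $\leq 4$ rows, and the paper is explicit (Remark \ref{same numerics unique}, the discussion around Conjecture \ref{union}) that neither existence nor uniqueness of finite models is known in general; treating this as ``implicit'' smuggles in a substantial part of the conjecture. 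Second, and as you yourself flag, the inductive control of blow-up centers requires a structural description of the base varieties for partitions with $\geq 5$ parts, i.e.\ an analogue of $\cC_4$; the paper states this is precisely the unresolved technical difficulty, and your proposal offers no new idea to produce it. There is also a minor overstatement: Lemma \ref{equal part} gives a projective-bundle reduction only for $i = 2M-1$ (the top column), not for arbitrary adjacent equal columns; the paper's own analysis (e.g.\ Example \ref{use another reduction map}, the case analysis in Theorem \ref{subcollections}) shows that for interior repetitions one sometimes needs $F_{2m-3}$, swapping maps, or genuine blow-ups, and the map $F_{2m-1}$ can even fail to be surjective. In short, the proposal is a faithful restatement of the intended program, not a proof; the crux you identify at the end is a genuine gap, and no idea in the proposal closes it.
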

In this paper, we have proved the conjecture in the cases where $\lambda$ has $2,3$ or $4$ parts. For $\lambda$ having equal parts, the conjecture follows from the discussion in Section 8.3. Using similar techniques, one can verify that the conjecture is true for the cases where $\lambda$ has two distinct parts.

In the rest of this subsection, under the assumption that \Cref{union} is true, we describe a list of $Q_e$-centrally extended orbits that appear in $Y_e$. This list is expected to be the complete list of $Q_e$ orbits in $Y_e$ when $\lambda$ has $5$ parts. However, when $\lambda$ has $6$ parts or more, there is evidence that the list is not exhaustive.

We first consider $e\in \fsp_{2n}$. As discussed in \Cref{subsect 9.1}, we can reduce our consideration to the case where $\lambda$ consists only of even parts. 

Now, we recall the setting at the beginning of \Cref{subsect 2.1}. We consider $\lambda= (2x_1,...,2x_k)$, a partition of $2n$. We have groups $A_e\subset Q_e\subset Sp(V_\lambda)$, here $Q_e$ takes the form $\prod_{i=1}^{\ell} O_{{i_m}- {i_{m-1}}}$ where $\ell$ is the number of distinct parts in $\lambda$. The group $A_e$ is isomorphic to $(\cyclic{2})^{\oplus k}$ with the generators $z_1,z_2,..., z_k$. Consider a vector space $V$ of dimension $k$, and a basis $u_1,...,u_k$. Equipping $V$ with the standard dot product, we then have an embedding $A_e\hookrightarrow O(V)$ sending $z_i$ to the diagonal matrix with entries $a_{jj}= (-1)^{\delta_{ij}}$. 

For $1\leqslant i< j\leqslant k$, let $V_{[i,j]}$ be the vector subspace of $V$ spanned by $v_i, v_{i+1},..., v_{j}$ and write $\Span(z_i,...,z_j)$ for the subgroup of $A_e$ generated by $z_i,...,z_j$. We then have an embedding $\Span(z_i,...,z_j)$ $\hookrightarrow O(V_{[i,j]})\cong O_{j-i+1}$. We also have the projection $A_e\rightarrow \Span(z_i,...,z_j)$ that sends $v_l$ to $1$ if $l\notin [i,j]$ and sends $v_l$ to $v_l$ if $l\in [i,j]$. Hence, we obtain a composition $A_e\rightarrow \Span(z_i,...,z_j) \hookrightarrow O_{j-i+1}$. We now give notation for the $A_e$-centrally extended orbits induced by these embeddings.

\begin{notation} \label{basic orbits}
\begin{itemize} We label some of the orbits as follows.
    \item Let $\text{pt}$ denote a point with trivial $A_e$-action (no Schur multiplier).
    \item Consider $j>i$ and $j-i$ even, let $[i,j]$ denote the point with the Schur multiplier given by pulling back the unique nontrivial Schur multiplier of $O_{j-i+1}$ along the map $A_e\rightarrow $ Span$(z_i,...,z_j)$ $\hookrightarrow O_{j-i +1}$.
    \item Consider $j>i$ and $j-i$ odd; let $\pm [i,j]$ denote the two points permuted by $A_e$ without Schur multiplier, where the stabilizer of each point is the preimage of $\SO_{j-i+1}$ under the map $A_e\rightarrow $ Span$(z_i,...,z_j)$ $\hookrightarrow O_{j-i +1}$. If $j-i\geqslant 3$, let $[i,j]^{\pm}$ denote two points with the same stabilizer, while the centrally extended structure is given by pulling back the unique Schur multiplier of $\SO_{j-i+1}$.    
\end{itemize}   
\end{notation}
\begin{rem}
Our notation can be interpreted in the context of \Cref{Schurz2} as follows. Consider a subgroup $A'$ of $A_e$. The group $H^2(A', \mathbb{C}^\times)$ is identified with the space of 2-forms on $A'$. In terms of 2-forms, the Schur multiplier $\omega_{[i,j]}$ of the orbit $[i,j]$ satisfies $\omega_{[i,j]}(z_h, z_l)= 1$ if $i\leqslant h\neq l\leqslant j$ and $0$ in other cases. For the orbit $[i,j]^\pm$, the corresponding 2-form is obtained by pulling $\omega_{[i,j]}$ back to the subgroup of $A'$ generated by $z_h$ with $h<i$ or $h>j$ and $z_{m}z_l$ with $i\leqslant m,l\leqslant j$.
\end{rem}

Note that the parity of $j-i$ is already specified when we write $[i,j]$, $\pm[i,j]$, or $[i,j]^\pm$. And it is natural to regard $[i,i]$ as $\text{pt}$. Next, we define the notion of \textit{good multiplication}. Consider two types of $A_e$-centrally extended orbits, $\mathcal{O}_1$ and $\mathcal{O}_2$. Associated to these orbits, we have two pairs $(A_i, \psi_i)$ where $A_i\subset A_e$, $\psi_i \in H^2(A_i, \bC^\times)$ for $i=1, 2$. Write $A_{1,2}$ for $A_1\cap A_2$. Let $\psi_{1,2}$ be the 2-form on $A_{1,2}$ that is given by $\psi_1|_{A_{1,2}}+ \psi_2|_{A_{1,2}}$. 
\begin{defin} \label{orbit multiplication}
    We define the orbit $\cO_1\times \cO_2$ to be the centrally extended orbit stabilized by $A_{1,2}$ with the Schur multiplier $\psi_{1,2}$. Consider $\cO_1$ and $\cO_2$ that are given by intervals as in \Cref{basic orbits}. The multiplication is called \textit{good multiplication} if the two intervals are disjoint or if one interval contains the other. 
\end{defin}

This multiplication is clearly commutative. Let $S_e$ be the set of centrally extended orbits generated by good multiplication from $[i,j]$, $\pm[i,j]$, and $[i,j]^\pm$ for $1\leqslant i\leqslant j\leqslant k$. When it is clear, we will drop the "$\times$" sign. For example, we write $[i,j][j+1,h]$ for $[i,j]\times [j+1,h]$.
\begin{pro} \label{relation}
We have the following relations between the generators.
\begin{itemize}
    \item $[i,i]= \text{pt}$ for all $1\leqslant i\leqslant k$. And $[i,i]\times \mathcal{O}= \mathcal{O}$.
    \item $[i,j]\times [i,j]= \text{pt}$, $\pm[i,j]\times \pm [i,j]= \pm [i,j]$ \text{ and } $[i,j]^\pm\times [i,j]^\pm= \pm [i,j]$.
    \item $[i,j+h]\times [i,j]\times [j+1, j+h]^\pm= \pm [j+1, j+h]$ and $[i,j+h]^\pm\times [i,j]^\pm\times [j+1, j+h]^\pm= \pm[i,j] \pm [j+1, j+h]$ for any $h\geq 2$ even. 
    \item $[i,j]^\pm\times[i, j-1]= \pm[i,j]$ and $[i,j]^\pm\times[i+1, j]= \pm[i,j]$.
\end{itemize}
\end{pro}
\begin{proof}
    The verification of these relations is straightforward by working with the corresponding forms.
\end{proof}
\begin{rem} \label{last relation}
The last relation can be rewritten as $[i,j+h]\times [j+1, j+h]^\pm= [i,j]\pm [j+1, j+h]$ or $[i,j+h]\times \pm[j+1, j+h]= [i,j] [j+1, j+h]^\pm$. This means that we can always substitute an inclusion of an even interval in an odd interval with the same beginning or end by two disjoint intervals.
\end{rem}
When $e$ is distinguished, let $S_e$ be the set of orbits that are generated by good multiplications from $[i,j]$, $\pm[i,j]$, and $[i,j]^\pm$ for $1\leqslant i,j\leqslant k$. When $e$ is not distinguished, we define $S_e$ as follows. We write the associated partition $\lambda$ in the form $[{2x'_{1}}^{j_1},...,{2x'_{l}}^{j_l}]$ with $x'_{1}>...> x'_{l}$ and $j_1+...+ j_l= k$. We restrict the set of generators to $[i,j]$, $\pm[i,j]$ and $[i,j]^\pm$ so that there exists $p<q$ satisfying $i= j_1+...+j_p+1$ and $j= j_1+...+j_q$.
\begin{conj} \label{5-row conjecture}
Assume that the partition of $e\in \fsp_{2n}$ has five parts. The centrally extended orbits that appear in $Y_e$ are precisely the orbits in the set $S_e$. 
\end{conj}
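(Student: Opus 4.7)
The plan is to extend the program of Sections 7 and 8 from $4$-row partitions to $5$-row partitions, reducing the conjecture to a geometric description of the connected components of $\fix$ together with a restriction coming from representation theory.

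First, I would construct an analogue $\cC_5$ of the collections $\cC_3$ and $\cC_4$. As in \Cref{collection C_4}, the generators should be a finite list of base varieties built from a $5$-dimensional orthogonal space $V$ with a $T_e$-adapted basis $(v_1,\dots,v_5)$: the various orthogonal Grassmannians $\OG(j,V_{[i_1,i_2]})$ and products of such corresponding to the intervals $[i,j]\subseteq[1,5]$, together with a few products $\OG(j_1,V')\times\OG(j_2,V'')$ of orthogonal Grassmannians on complementary subspaces. The key point is that the categorical finite models of these base varieties, computed as in \Cref{quadrics} and \Cref{finite model partial flag} via spinor bundles and relative spinor bundles, produce exactly the "basic" centrally extended orbits $[i,j]$, $\pm[i,j]$ and $[i,j]^{\pm}$ of \Cref{basic orbits}. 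The multiplication rule in \Cref{finite model of a product} matches precisely the good multiplication of \Cref{orbit multiplication}, so the finite models of arbitrary iterated projectivizations and blow-ups of objects in $\cC_5$ will land in $S_e$.

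Second, I would prove by case analysis (as in \Cref{3 row main} and \Cref{subcollections}) that every $(\fix)_\alpha$ lies in $\cC_5$. As in the $4$-row case, distinguished reductions (\Cref{distinguished reduction}), the reducibility results of \Cref{collection of reducibles}, the swapping maps (\Cref{swap map}), and the Cartesian-square reduction of \Cref{Cartesian} should allow me to reduce to a finite list of "base" partitions with $5$ parts (analogous to $(6,6,6,4)$, $(6,6,4,4)$, $(6,4,4,4)$ in dimension $4$). Each remaining case is handled by decomposing a reduction map $F_i$ into a composition of projective-bundle projections and blow-ups along smooth centers, with the centers being $A_e$-stable and of the form $(\cB_{e''}^{gr})_\beta$ for smaller data, so that \Cref{projective bundle} and \Cref{finite model blow up} give the categorical finite model inductively. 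Uniqueness of the finite model as in \Cref{unique} then identifies the resulting union with $Y_e$; the ingredients are already in place since \Cref{Schur} and the combinatorics of Section 4 generalize to $5$ rows.

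Third, I would show no extra orbits outside $S_e$ can appear. This uses the character-vanishing restrictions of Section 5: for each pair $(s,\rho)\in A_e\times A_e^{*}$ with $\rho(s)=1$, the dimension $\dim V_{(s,\rho)}$ is computed via \Cref{dimensions from F_i}, and any centrally extended orbit in $Y_e$ whose label $L(A',\psi)$ (see (\ref{L})) contains $(s,\rho)$ with $\dim V_{(s,\rho)}=0$ must be absent by \Cref{vanish}. Using the explicit Springer correspondence for symbols of $5$-row type C partitions (as in \Cref{subsect 4.1}), one verifies that the only characters $\rho$ surviving in $H^*(\spr^s)$ correspond exactly to the pairs $(A',\psi)$ whose form $\psi$ is a good product of $\omega_{[i,j]}$'s in the sense of \Cref{relation}.

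The main obstacle will be the second step: the case-by-case analysis of reduction maps becomes substantially heavier for $5$ rows because the number of standard tuples and relative positions grows sharply, and several new phenomena arise, for example multi-step blow-ups analogous to \Cref{double blow up} whose centers themselves must be shown to lie in $\cC_5$. In particular, controlling the base varieties so they remain in the list described above (and do not force new generators outside those giving $S_e$) is delicate; this is exactly the difficulty the author flags at the end of Section 1 as the reason the general case with six or more parts is left open, and it is what prevents a direct extension beyond five rows. A complementary route, which I would pursue in parallel, is to verify \Cref{combi conjecture} in the $5$-row distinguished case and deduce the list of orbits numerically from $\{F_a^{A'}(\spr)\}$ via \Cref{recursive}, cross-checking the geometric count against the representation-theoretic one.
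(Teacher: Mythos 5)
This statement is an unproved conjecture in the paper: Section~9.3 states \Cref{5-row conjecture} with the note that the author plans to return to it, and the paper offers only partial evidence, not a proof. The evidence consists of \Cref{evidence}, which shows that every orbit in $S_e$ does appear in the finite model of some $(\fix)_\alpha$, and a remark identifying \Cref{5-row conjecture} with \Cref{combi conjecture} in the five-row distinguished case. So there is no ``paper proof'' to compare your proposal against.

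Your steps~1 and~2 are a faithful extrapolation of the four-row argument, and the paper itself implicitly endorses that plan; you also rightly flag that controlling the base varieties and the multi-step blow-up centers is the bottleneck, which is exactly what the paper cites as the obstruction for six or more parts. However, your step~3 overclaims. You assert that the character-vanishing constraints of Section~5 (via \Cref{vanish} and \Cref{dimensions from F_i}) already pin down the surviving centrally extended orbits to be exactly those in $S_e$, matched to good products of the $\omega_{[i,j]}$'s. This is false: the remark immediately following \Cref{5-row conjecture} records that the Section~5 numerical constraints allow a strictly larger set $S'_e$ of $36$ orbit types, whereas $S_e$ has only $25$. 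Hence the character-theoretic vanishing alone cannot exclude the $11$ extra candidates; the exclusion must come from the geometric classification (your step~2), or equivalently from \Cref{combi conjecture} (linear independence of the cell module classes), and that is precisely the open content of the conjecture. In other words, the only part of your proposal that could close the argument is the part you yourself identify as the hard open case, so the proposal cannot be completed as stated without resolving the same difficulty the paper leaves open.
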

\begin{rem}
    When $\lambda$ has five parts, the set $S_e$ has $25$ orbits. The numerical constraints in Section 5 give us a list $S'_{e}$ of $36$ possible orbits. Let $S'\subset K_0(J_c)$ be the span of the left cell modules that correspond to the orbits in $S'_{e}$. It can be checked that the left cell modules that correspond to the orbits in $S_e$ form a basis of $S'$. In other words, \Cref{5-row conjecture} is equivalent to \Cref{combi conjecture} in this case.  
\end{rem}
\begin{rem}
    When $\lambda$ has six parts or more, it can be shown that the set $S_e$ does not exhaust the list of centrally extended orbits that appear in $Y_e$. In particular, we can find certain irreducible modules $V_{(s,\rho)}$ of $K_0(\Sh^{A_e}(Y_e\times Y_e)$ such that there is no orbit in $S_e$ associated with the pair $(s,\rho)$. On the other hand, from \Cref{equality of dimensions}, there must be some orbit in $Y_e$ that is associated with $(s,\rho)$.
\end{rem}

We give examples for the cases where $\lambda$ has $2, 3$ or $4$ rows, demonstrating how the set $S_e$ gives us the list of orbits that appear in $Y_e$.
\begin{exa} We list the cases below.
\begin{enumerate}
    \item When $\lambda= (2x_1, 2x_2)$. There are two types of orbit: $\pm[1,2]$ and $\text{pt}$.
    \item Consider $\lambda= (2x_1, 2x_2, 2x_3)$.
    \begin{enumerate}
        \item When $x_1= x_2= x_3$, we have two types of orbit $\text{pt}$ and $[1,3]$.
        \item When $x_1< x_2= x_3$, we have $\text{pt}$, $[1,3]$, and $\pm [1,2]$. Similarly, when $x_1= x_2< x_3$, we have $\text{pt}$, $[1,3]$, and $[2,3]$.
        \item For the case where $\lambda$ has distinct parts, we have $\text{pt}$, $\pm[1,2]$, $\pm[2,3]$ and $[1,3]$.
    \end{enumerate}
    \item Consider $\lambda= (2x_1, 2x_2, 2x_3,2x_4)$.
    \begin{enumerate}
        \item When $x_1=x_2=x_3< x_4$, we have $\text{pt}$, $[1,4]^\pm$, $\pm[1,4]$, $[2,4]$ and $[1,4]^\pm[2,4]$ and $\pm[1,4][2,4]$. The case $x_1<x_2=x_3= x_4$ is similar.
        \item When $x_1 =x_2< x_3= x_4$, we have $\pm[1,2]$, $\pm[3,4]$, $\pm[1,4]$, $[1,4]^\pm$ and $\text{pt}$.
        \item When $x_1< x_2= x_3< x_4$, we have $\pm[2,3]$, $\pm[1,4]\pm[2,3]$, $\pm[1,4]$, $[1,4]^\pm$, $[1,3]$, and $[2,4]$.
        \item When $x_1<x_2<x_3= x_4$, we have $\pm[1,4]$, $[1,4]^\pm$, $\pm[3,4]$, $\pm[1,2]$, $[2,4]$, $\pm[1,2]\pm[3,4]$, and $\text{pt}$. Similarly for $x_1=x_2< x_3< x_4$.
        \item For the distinguished case, we take the union of orbit types from all previous cases. There are $10$ of them: $\pm[1,4]$, $[1,4]^\pm$, $\pm[3,4]$, $\pm[1,2]$, $[2,4]$, $\pm[1,2]\pm[3,4]$, $\pm[2,3]$, $[1,3]$, $\pm[2,3]\pm[1,4]$ and $\text{pt}$.
    \end{enumerate}
\end{enumerate}
\end{exa}

In the rest of this subsection, we explain the geometric origin of the orbits in $S_e$.
\begin{lem} \label{evidence}
    For each element $s\in S_e$, we have a tuple of weights $\alpha$ such that the variety $(\fix)_\alpha$ has a finite model containing an orbit corresponding to $s$.
\end{lem}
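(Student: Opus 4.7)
The plan is to construct, for each $s \in S_e$, an explicit weight tuple $\alpha(s)$ together with a short chain of reductions exhibiting an orbit of type $s$ in the finite model of $(\fix)_{\alpha(s)}$. I would proceed in three stages: (i) the basic generators $[i,j]$, $\pm[i,j]$, $[i,j]^{\pm}$ from \Cref{basic orbits}; (ii) good multiplications over pairwise disjoint intervals; (iii) nested configurations, handled via \Cref{relation}.

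For a basic generator supported on the interval $[i,j]$, the key observation is that with only a few carefully placed negative weights in $\alpha$, the reduction maps $F_k$ collapse $(\fix)_\alpha$ onto an orthogonal Grassmannian on the subspace $V_{[i,j]}:=\Span(v_{-1}^{i},\dots,v_{-1}^{j})\subset V_{-1}$, possibly with a tower of projective bundles on top (which only adds trivial orbits to the finite model). Concretely, in the distinguished case, when $j-i+1$ is odd (resp.\ even) I would take a standard tuple whose unique negative weight $-1$ appears with multiplicity one (resp.\ two) at the position forced by \Cref{standard tuple}; the alternative description of \Cref{alternative} then constrains the bottom of the flag to an isotropic line (resp.\ plane) in $V_{[i,j]}$, so after the reductions of Section 8.3 the relevant component becomes $\OG(1,V_{[i,j]})$ (resp.\ a component of $\OG(2,V_{[i,j]})$). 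The categorical finite models of these Grassmannians were computed in \Cref{quadrics} and \Cref{small OG}: each contains exactly one orbit whose Schur multiplier is pulled back from the (half-)spin representation of $O_{j-i+1}$, which by the realization of $M((\cyclic{2})^{\oplus N})$ in \Cref{Schurz2} is precisely the class $[i,j]$ (resp.\ $\pm[i,j]$ or $[i,j]^{\pm}$). In the non-distinguished case, the restriction in the definition of $S_e$ that $i,j$ lie at boundaries of repetition blocks of $\lambda$ is exactly what is needed for the pulled-back $A_e$-action on $V_{[i,j]}$ to factor through the composition $A_e \twoheadrightarrow \Span(z_i,\dots,z_j)\hookrightarrow O(V_{[i,j]})$ described in \Cref{subsect 2.1}.

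For a good multiplication of pairwise disjoint intervals $[i_1,j_1],\dots,[i_m,j_m]$, I would interlace the stage-(i) constructions: place the negative weights prescribed by each interval into positions associated only to the row block $\{i_t,\dots,j_t\}$. Disjointness ensures that the cross-row constraints of \Cref{alternative} decouple, so $(\fix)_\alpha$ factors (up to a tower of projective bundles) as a product of the basic varieties from stage (i). The Kunneth-type finite model of \Cref{finite model of a product} then produces exactly the product centrally extended orbit defined in \Cref{orbit multiplication}. For nested configurations, the relations of \Cref{relation}, and in particular the substitution identity in \Cref{last relation}, rewrite every element of $S_e$ with proper nesting either as a basic orbit or as a product over strictly disjoint intervals, reducing the task to stages (i) and (ii).

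The main obstacle I expect is the non-distinguished case of stage (i): although the reduction steps run in parallel to the distinguished case, one must verify that the $A_e$-equivariant structure lifted through the reductions factors through $O(V_{[i,j]})$ with the correct composition of maps, so that the spinorial Schur multiplier pulls back to $\omega_{[i,j]}$ and not to a different $2$-form. This is essentially a careful bookkeeping argument tracking the chain $A_\chi\hookrightarrow A_e\hookrightarrow Q_e \twoheadrightarrow A_\chi$ of \Cref{subsect 2.1} through each reduction map, and the endpoint condition in the definition of $S_e$ is tailored precisely for this compatibility.
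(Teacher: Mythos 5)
Your stages (i) and (ii) are sound and parallel the paper's proof: for a single generator you realize $(\fix)_\alpha$ as a tower of projective bundles over an orthogonal Grassmannian on $V_{[i,j]}$ (using the descriptions of Section 8.3), and for disjoint intervals you interlace the constructions so that the cross-row constraints of \Cref{alternative} decouple and \Cref{finite model of a product} applies to the resulting product variety.

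The gap is in stage (iii). The claim that the relations of \Cref{relation} and \Cref{last relation} rewrite every element of $S_e$ with proper nesting as a basic orbit or a product over strictly disjoint intervals is false. Those relations only handle very specific nesting patterns: intervals of consecutive lengths, intervals sharing an endpoint, or factorizations at a shared boundary. A strictly nested pair with neither endpoint shared, such as $[1,5]\times[2,4]$ (for $k\geqslant 5$, $\lambda$ distinguished), survives in the reduced form: the corresponding $2$-form $\omega_{[1,5]}+\omega_{[2,4]}$ is nonzero at $(z_1,z_5)$ but vanishes at $(z_2,z_3)$, which rules out any expression as $\omega_{[a,b]}$ or as a sum of $\omega$'s on pairwise disjoint intervals, and the chain $[2,4]\subset[1,5]$ has length $2\leqslant \tfrac{1}{2}(5-1+1)$, so it is already reduced. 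What the paper actually does in this situation is an extra inductive step that your outline omits: after reducing to a unique maximal interval $[1,k]$, it peels off that interval, applies the induction hypothesis to the residual expression $s'\in S_{e'}$ for the smaller element $e'$ obtained by deleting the two central columns of the basis diagram, and then uses the reduction map $F_1$ to realize $X_\alpha$ as a tower of projective bundles over a quadric bundle over $X_{\alpha'}$. The quadric-fibration semiorthogonal decomposition of \Cref{quadric fibration} is then what produces the orbit $s'\times[1,k]$ in the finite model. Without this quadric-fibration step your induction cannot reach reduced forms with genuine nesting, so the argument as written does not cover all of $S_e$.
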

To prove this lemma, we first introduce the reduced forms of an element $s\in S_e$.
\begin{defin}
First, since the multiplication is commutative, any element $s\in S_e$ can be represented by a union of $[i,j]$, $\pm[i,j]$, and $[i,j]^\pm$ such that for arbitrary two intervals, they are disjoint or one contains the other. Let $w$ be such an expression of $s$. We count the number of pairs $([i_1, j_1], [i_2, j_2])$ in $w$ such that $[i_1, j_1]\subseteq [i_2, j_2]$. Let this number be $p(w)$. We define the reduced forms of $s$ as the expressions with the smallest $p(w)$. From \Cref{relation}, a reduced form $R(s)$ of $s$ has the following properties:
\begin{itemize}
\item Each $[i,j]$, $\pm[i,j]$ or $[i,j]^\pm$ appears at most once in $R(s)$. There is no $[i,i]$ in $R(s)$ unless $s= \text{pt}$.
\item $\pm[i,j]$ and $[i,j]^\pm$ do not appear at the same time in $R(s)$.
\item Any chain of inclusion $[i_1, j_1]\subset... \subset [i_l, j_l]$ in $R(s)$ has length less than or equal to $\frac{1}{2}$ $(j_l-i_l+1)$.
\end{itemize}

The first two properties easily follow from the first two properties in \Cref{relation}. The restriction on the lengths of the chains comes from the last property in \Cref{relation}. Indeed, any chain of inclusion $[i_1, j_1]\subset... \subset [i_l, j_l]$ with length larger than $\frac{1}{2}$ $(j_l-i_l+1)$ must contain two intervals with lengths' difference $1$ or $0$. Thus, we can perform the substitution mentioned in \Cref{last relation} or use the last property of \Cref{relation} to reduce the quantity $p$.
\end{defin}

Before going into the details of the proof of \Cref{evidence}, we briefly explain the ideas to the readers. First, an orbit $\bO$ given by a chain of inclusions of intervals can be found in the finite model of a tower of quadric bundles $\cQ_\bO$. For example, the generators $[i,j]$, $[i,j]^{\pm}$ (or $\pm[i,j]$) come from the variety $\OG(1,V_{[i,j]})$ (or $\OG(\frac{j-i+1}{2},V_{[i,j]})$). The product of two intervals comes from a quardric bundle over a quadric, and so on. Then we choose $\alpha$ so that $X_\alpha$ is a tower of projective bundles over $\cQ_\bO$.  
Next, when there are $l>1$ disjoint chains of intervals in $s$, we view $s$ as a product $\bO_1\times ...\times \bO_l$. We construct $\alpha_1, \alpha_2,...,\alpha_l$ so that the orbit $\bO_i$ belongs to the finite model of $X_{\alpha_i}$. The tuple $\alpha$ is then assembled from $\alpha_1, \alpha_2,...,\alpha_l$ in a certain order so that $X_\alpha\cong \prod_{i=1}^{l} X_{\alpha_i}$.
\begin{proof}
Consider an element $s$ in $S_e$ and write $s$ in a reduced form $R(s)$. We will construct the connected components with respect to $s$ by induction on the maximal length $l$ of the inclusion chains in $R(s)$. Note that this induction process does not depend on $e$. Consider an element $e$ with the associated partition $(2x_1> 2x_2> ...>2x_k)$. We write $[2w_1-1,...,2w_l-1]^h$ for the chain of weights obtained by adjoining $h$ chains of weights $(2w_1-1,...,2w_l-1)$. If there is no interval in $R(s)$, the corresponding orbit is a point. In this case, consider $\alpha= ([2x_1-1,...,1], [2x_2,...,1],..., [2x_k- 1,...,1])$, then the corresponding variety $X_\alpha$ is a single point.

Next, consider $s\in S_e$ which is represented by an interval $[i,j]$ (or $\pm [i,j], [i,j]^\pm$). In terms of the basis diagram (see \Cref{basis diagram}), our strategy is to first exhaust the boxes from the $i$-th row to the $j$-th row and then fill in the boxes from other rows. For $[i,j]$ and $[i,j]^\pm$, consider $\alpha'= ([2x_i-1,...,2x_{i+1}+ 1], [2x_{i+1}-1,...,2x_{i+2}+1]^2,..., [2x_{j-1}-1,..., 2x_j+ 1]^{j-i},[2x_{j}-1,...3,1]^{j-i},[-1,-3,...,1-2x_{j}])$. For $\pm[i,j]$, we change the last two chains to $[2x_{j}-1,...3,1]^{(j-i+1)/2},[-1,-3,...,1-2x_{j}]^{(j-i+1)/2}$. Then the variety $X_{\alpha'}$ is a tower of projective bundles over $\OG(1,V_{[i,j]})$ (or $\OG(\frac{j-i+1}{2}, V_{[i,j]})$, so we have a finite model of $X_{\alpha'}$ that contains the orbits $[i,j]$, $[i,j]^\pm$ (or $\pm [i,j])$. Next, we add more weights to $\alpha'$ to get $\alpha$ so that $X_\alpha\cong X_{\alpha'}$. This is done by adding the chain $([2x_{j+1}-1,...,1],...,[2x_{k}-1,...,1])$ to the beginning of $\alpha'$ and adding the chain $[2x_1-1,..., 1],...,[2x_{i-1}-1,...,1]$ to the end. The following is an example of this construction.
\begin{exa}
    Consider $\lambda= (8,6,4,2)$, we have $k=4$. We have the corresponding basis diagram as follows.
   
    $$\ytableausetup{centertableaux, boxsize= 3em}
\begin{ytableau}
\none & \none & \none &v_{1}^{4} &v_{-1}^{4} &\none   &\none &\none \\
\none & \none &v_{3}^{3} &v_{1}^{3} &v_{-1}^{3} &v_{-3}^{3}   & \none &\none\\
\none & v_{5}^{2} &v_{3}^{2} &v_{1}^{2} &v_{-1}^{2} &v_{-3}^{2}   &v_{-5}^{2} &\none\\
 v_{7}^{1} & v_{5}^{1} &v_{3}^{1} &v_{1}^{1} &v_{-1}^{1} &v_{-3}^{1}   &v_{-5}^{1} &v_{-7}^{1}\\
\end{ytableau}\\
$$
    \begin{enumerate}
        \item The orbit $s= [1,4]^\pm$ appears in the finite model $Y_\alpha$ of $(\fix)_\alpha$ for $\alpha= (7,5,5,3,3,3,1,1,1,-1)$. 
        \item The orbit $s= \pm [1,4]$ appears in the finite model $Y_\alpha$ of $(\fix)_\alpha$ for $\alpha= (7,5,5,3,3,3,1,1,-1,-1)$. 
        \item For the orbit $s= [2,4]$, we first disregard the first row of the diagram, and let $\alpha'= (5,3,3,1,1,-1)$. Adding the chain $(7,5,3,1)$ to the end of $\alpha'$, we have $\alpha= (5,3,3,1,1,-1,7,5,3,1)$. With this arrangement of weights, the spaces $W_1^{3}$ (resp. $W_3^{3}$, $W_5^{2}$) are uniquely determined by adding the vectors $v_1^{1}$ (resp. $v_3^{1}$, $v_5^{1}$) to the spaces $W_1^{2}$ (resp. $W_3^{2}$, $W_5^{1}$). Hence $X_\alpha\cong X_{\alpha'}$, and it is straightforward that $X_{\alpha'}\cong (\bP^1, \bP^1, \OG(1, \Span(\{v_1^{j}\}_{2\leqslant j\leqslant 4})))$.
        \item Similarly to the case $s= [2,4]$, for $s= [1,3]$, our construction adds the weight $1$ to the beginning, giving $\alpha= (1,7,5,5,3,3,3,1,1,-1)$. And we have $X_\alpha \cong X_{\alpha'}\cong (\bP^1, \bP^1, \bP^2, \bP^1, \OG(1,  \Span(\{v_1^{j}\}_{1\leqslant j\leqslant 3})))$.
        \item For the orbit $s= [2,3]$, we have $\alpha'= (5,3,3,1,-1)$. As described in \Cref{subsect 9.2}, we have $X_{\alpha'}\cong (\bP^1, \OG(1, \Span(v_1^{2}, v_1^{3})))$.
        Adding the weights of the first row and the last row, we have $\alpha= (1,5,3,3,1,-1,7,5,3,1)$. By distinguished reduction, we first have $X_\alpha\cong X_{\alpha''}$ where $\alpha''$ is obtained by removing the first weight $1$ in $\alpha$. An argument similar to the case $s=[2,4]$ gives us $X_{\alpha''}\cong X_{\alpha'}$.      
    \end{enumerate}

Our geometric description of $(\fix)_\alpha$ can be realized in the setting of \Cref{basic orbits} as follows. We identify the weight space $V_1$ with $V$. The subspaces $V_{[i,j]}$ are then realized as the span of the basis vectors $\{v_1^{h}\}_{i\leqslant h\leqslant j}$.
\end{exa}
We say that an interval in a reduced expression of $s$ is maximal if this interval is not contained in another interval. When we mention the maximal interval, we will omit the notation $\pm$ because we only focus on the length of the interval. We show that if $s$ has a reduced form that contains two or more disjoint maximal intervals, then we can assemble $\alpha$ from smaller tuples. 

For simplicity of notation, we explain the case $s= s_1 \times s_2$ where $s_1$ and $s_2$ have unique maximal intervals $[i_1, j_1]$ and $[i_2, j_2]$, the other cases are similar. Assume $j_1< i_2$, recall that we have the vector space $V_\lambda$ corresponds to $e$ and a basis $(v_i^{j})$ at the beginning of Section 8. We then have a direct sum decomposition $V_\lambda= V_{\lambda_1}\oplus V_{\lambda_2}$ where $V_{\lambda_1}= \Span(x_i^{j})_{j\leqslant j_1}$ and $V_{\lambda_2}= \Span(x_i^{j})_{j> j_1}$. In terms of basis diagrams, $V_{\lambda_1}$ is the span of the vectors in the first $j_1$ rows and $V_{\lambda_2}$ is the span of the vectors in the last $k-j_1$ rows. With these two subdiagrams, we obtain two corresponding nilpotent elements $e_l\in \fsp(V_{\lambda_l})$ for $l= 1, 2$. As $s_1$ and $s_2$ both have lengths smaller than $s$, by the induction hypothesis, we have two tuples of weights $\alpha_1$ and $\alpha_2$ such that $(\cB_{e_1})_{\alpha_1}$ and $(\cB_{e_1})_{\alpha_2}$ have finite models that contain the oribts $s_1$ and $s_2$. We take $\alpha= (\alpha_2, \alpha_1)$, then $(\fix)_\alpha\cong (\cB_{e_1})_{\alpha_1} \times (\cB_{e_2})_{\alpha_2}$. As a consequence, $(\fix)_\alpha$ admits a finite model that contains the orbit $s= s_1\times s_2$.

The above argument reduces our consideration to the case $s$ has a unique maximal interval $[i,j]$. Furthermore, if $i>1$ or $j< k$ we can think of $[i,j]$ as $[i,j][1,1]...[i-1,i-1][j+1, j+1]...[k,k]$, and the argument from the previous paragraph applies. Hence, it suffices to consider that $s$ has a unique maximal interval $[1,k]$. Taking away $[1,k]$, we consider the rest as an expression of $s'\in S_{e'}$ in which $e'$ has the basis diagram obtained by removing two middle columns from the basis diagram of $e$ (corresponding to the weights $1$ and $-1$). The induction hypothesis gives us a tuple of weights $\alpha'$ corresponding to the pair $(e', s')$. From $\alpha'$, we substitute all weights $2i+1$ by $2i+3$ if $i\geqslant 0$ and by $2i-1$ if $i< 0$. We then add a pair of weights $[1,-1]$ to the left of each weight $-3$, and add the chain of weights $[1^{w_1- 1- 2d_{-3}}, -1]$ to the end of $\alpha'$. Here $w_1= \dim V_1$ and $d_{-3}$ is the multiplicity of weight $-3$ in $\alpha$ (which is the multiplicity of weight $-1$ in $\alpha'$). This construction of $\alpha$ together with the map $X_\alpha \xrightarrow{F_1} X_{\alpha'}$ realizes $X_\alpha$ as a tower of projective bundles over a quadric bundle over $X_{\alpha'}$. Let $Y_{\alpha'}$ be a finite model of $X_{\alpha'}$ that contains the orbit $s'$. Then we see from \Cref{quadric fibration} that $X_\alpha$ has a finite model that contains $Y_{\alpha'}\times [i,j]$ (or $[i,j]^\pm$, $\pm[i,j]$) as a subset. The model $Y_{\alpha'}$ contains the orbit $s'$, so $X_\alpha$ has a finite model that contains the orbit $s$.
\end{proof}

For $e\in \fso_{2n}, \fso_{2n+1}$, we consider the case where $\lambda$ consists only of odd parts. If $\lambda$ has an odd number of parts (so $e\in \fso_{2n+1}$), the set $S_e$ is defined similarly. When $\lambda$ has an even number of parts (so $e\in \fso_{2n}$), we change the set $S_e$ to the set $\{s\in S_e, R(s) \text{ contains the interval } \pm[1,k] \text{ or } [1,k]^\pm\}$ (here $k$ is the number of parts in $\lambda$). 

Lastly, we state a conjecture about the geometry of $\fix$ itself. Let Con$(\fix)$ denote the set of connected components of $\fix$. This is an $A_e$-set. Forgetting the centrally extended structures of the orbits in $S_e$, we obtain a set $R_e$ of ordinary $A_e$-orbits. \Cref{evidence} has shown that all the $A_e$-orbits in $R_e$ appear in Con$(\fix)$. 
\begin{conj} \label{stab conj}
    The set $R_e$ gives the complete list (up to isomorphism) of $A_e$-orbits that appear in Con$(\fix)$.
\end{conj}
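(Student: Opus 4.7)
The plan is to establish Conjecture~\ref{stab conj} by combining the inclusion $R_e \subseteq \mathrm{Con}(\fix)/A_e$ already provided by \Cref{evidence} with a combinatorial analysis of the connected components of each stratum $(\fix)_\alpha$. First I would reformulate $R_e$ in purely combinatorial terms: forgetting central extensions collapses $\pm[i,j]$ and $[i,j]^\pm$ to the same underlying $A_e$-orbit (the two-point orbit whose stabilizer is the preimage of $\SO_{j-i+1}$), while $[i,j]$ with $j-i$ even becomes a trivial orbit and disappears from the picture. Hence an orbit in $R_e$ is determined by a collection of pairwise disjoint odd-length intervals $\{[i_m,j_m]\}$ in $\{1,\ldots,k\}$, with stabilizer generated by the $z_l$ for $l$ not covered by any interval together with the products $z_{i_m}z_{i_m+1}\cdots z_{j_m}$; the task is to show every $A_e$-orbit on $\mathrm{Con}(\fix)$ has this form.

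The induction runs on the number of parts $k$ of $\lambda$ (after reducing to pure even or pure odd partitions as in \Cref{subsect 9.1} and \Cref{subsect 9.2}) and, within each $k$, on $\sum x_i$. The geometric input is that both projective bundles and blow-ups along smooth centers induce $A_e$-equivariant bijections on sets of connected components. Consequently, whenever $(\fix)_\alpha$ admits an $A_e$-equivariant reduction in the sense of \Cref{reducible tuples} to a base variety of the form $\prod_m \OG(l_m, V_{[i_m,j_m]})$ with disjoint intervals, we have $\pi_0((\fix)_\alpha) \cong \prod_m \pi_0(\OG(l_m, V_{[i_m,j_m]}))$ as $A_e$-sets, and a direct computation shows each factor contributes either a single trivial orbit (if $2l_m < j_m-i_m+1$) or a two-point orbit with precisely the stabilizer dictated by the odd interval $[i_m,j_m]$ (if $2l_m = j_m-i_m+1$). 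Together with the base-case verifications already carried out in \Cref{geometric 3 rows} and \Cref{geometric main 4 rows}, this handles all $\lambda$ reducible to such products.

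The main obstacle is exactly the one that blocks the full determination of $Y_e$ for partitions with six or more parts: the case-by-case reductions of \Cref{reduction six six four} and \Cref{subcollections} do not extend uniformly to arbitrary $\lambda$, so one cannot at present guarantee an $A_e$-equivariant reduction of every $(\fix)_\alpha$ to a product of orthogonal Grassmannians. A cleaner complementary route, which I would pursue in parallel, is to bypass explicit reductions by parameterizing the $T_e$-fixed points directly via symplectic standard tableaux on the basis diagram of \Cref{basis diagram}, identifying $\pi_0((\fix)_\alpha)$ as the equivalence classes under the local moves that connect two fixed points in a common Białynicki-Birula cell. The $A_e$-action on $\pi_0$ then becomes explicit: each $z_i$ acts by a sign change on the $i$-th row, and the stabilizer of a component is the subgroup of $A_e$ preserving its equivalence class.

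The hard part will be verifying, in this combinatorial model, that every such stabilizer is generated by products $z_{i_m}\cdots z_{j_m}$ over disjoint odd intervals — equivalently, that the length restriction on chains encoded in \Cref{relation} holds on the geometric side. This amounts to ruling out stabilizers that mix rows of different parities in a manner inconsistent with the interval structure, and will likely require a careful analysis of how the symplectic form pairs the positive and negative weight spaces $V_i$, $V_{-i}$, since it is precisely this pairing (via the orthogonality condition in \Cref{isotropic}) that forces the $\SO$-versus-$O$ dichotomy responsible for the odd-length constraint.
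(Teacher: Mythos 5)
The statement you are trying to prove is labeled a \emph{conjecture} in the paper; the paper establishes only the easy inclusion (via \Cref{evidence}, every orbit in $R_e$ occurs in $\operatorname{Con}(\fix)$) and explicitly leaves the converse open, citing the same obstruction for partitions with six or more parts that blocks the full determination of $Y_e$. So there is no proof in the paper to compare against, and you should state clearly at the outset that you are attempting an open problem rather than reproving a result.

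That said, there is a concrete error in your combinatorial reformulation of $R_e$ that would derail the plan even if all the geometric reductions went through. You assert that an orbit in $R_e$ is determined by a collection of \emph{pairwise disjoint} intervals. But the good multiplications defining $S_e$ (hence $R_e$) also allow nested intervals, and nesting produces genuinely new stabilizers. Take $k=4$ and the orbit $\pm[2,3]\,\pm[1,4]$: in $A_e$ its stabilizer is $\{1, z_1z_4, z_2z_3, z_{1234}\}$, which in $A_e' = A_e/\langle z_{1234}\rangle$ is $\langle z_2z_3\rangle$ — the paper's orbit $\bO_{23}$. No collection of pairwise disjoint intervals yields this stabilizer under your recipe: $\{[2,3]\}$ alone gives $\langle z_1, z_4, z_2z_3\rangle$, $\{[1,2],[3,4]\}$ gives $\langle z_1z_2, z_3z_4\rangle$, $\{[1,4]\}$ gives $\langle z_{1234}\rangle$, and so on. The paper's list of generators for $\cC_4$ includes $\OG(1,\Span(v_2,v_3))\times\OG(1,\Span(v_1,v_4))$ precisely to account for this orbit type, and it appears in the table of \Cref{types of orbits} for every partition with four distinct parts. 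So your formula for the stabilizers of $R_e$-orbits is wrong, and any induction keyed to it would terminate in the wrong combinatorial answer. (There is also a minor notational slip: you write ``odd-length intervals'' after saying intervals with $j-i$ even disappear; the survivors have $j-i$ odd, i.e.\ even length $j-i+1$.)

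The rest of the plan has the same honest gap you flag yourself: beyond five parts there is no known uniform $A_e$-equivariant reduction of $(\fix)_\alpha$ to products of orthogonal Grassmannians, and the alternative route via Bia\l{}ynicki--Birula local moves is not developed enough to evaluate ($T_e$ already fixes all of $\fix$, so one would first need to specify which larger torus's fixed points the tableaux are to parameterize). To make this a credible program you would need at minimum to (a) correct the combinatorial description of $R_e$ to allow nested intervals, matching the relations in \Cref{relation}, and (b) produce either a uniform reduction argument or a direct $\pi_0$-computation that handles arbitrary $k$.
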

This conjecture is motivated by \cite[Section 3]{lusztig2024constructible}. In particular, \Cref{stab conj} for $e\in \fsp_{2n}$ with the associated partition $((2d)^2, (2d-2)^2,...,(2)^2)$ will imply the conjecture in \cite[Section 3]{lusztig2024constructible}.

\appendix
\section{Appendix} \label{appendix}
In A.1, we show a detailed analysis of three cases of $X_\alpha$ as representatives. These cases include almost all typical characteristics of varieties $X_\alpha$ for small $e$. For the base cases, the techniques are similar (and simpler).The reduction diagrams of the base cases are given in A.2.
\subsection{Some examples of $X_\alpha$ for $\lambda$ having $4$ parts}

Recall that in the definition of $\cC_4$ (see \Cref{collection C_4}), we use an abstract vector space $V$ with a basis $\{v_1,v_2,v_3,v_4\}$. In our examples, $V$ is realized as the weight space $V_{-1}$ with the basis $v_{-1}^1, v_{-1}^2,v_{-1}^3,v_{-1}^4$.
\begin{exa} \label{base case 1}
    Consider $\lambda= (8,6,6,4)$ and $\alpha= (5,3,7,5,5,3,1,1,-1,3,1,3)$. The basis diagram of $V_\lambda$ is given below.
    $$\ytableausetup{centertableaux, boxsize= 3em}
\begin{ytableau}
\none & \none&v_{3}^{4} &v_{1}^{4} &v_{-1}^{4} &v_{-3}^{4}   &\none &\none \\
\none & v_{5}^{3} &v_{3}^{3} &v_{1}^{3} &v_{-1}^{3} &v_{-3}^{3}   &v_{-5}^{3} &\none\\
\none & v_{5}^{2} &v_{3}^{2} &v_{1}^{2} &v_{-1}^{2} &v_{-3}^{2}   &v_{-5}^{2} &\none\\
 v_{7}^{1} & v_{5}^{1} &v_{3}^{1} &v_{1}^{1} &v_{-1}^{1} &v_{-3}^{1}   &v_{-5}^{1} &v_{-7}^{1}\\
\end{ytableau}\\
$$

    First, we have $X_\alpha \cong (\bP^1, X_{(5,3,7,\Box,5,3,1,1,-1,3,1,3)})$ since the constraint for $W_5^2$ is $W_5^{1}\subset W_5^{2} \subset W_5^{3}$. Now we consider the map $X_{(5,3,7,\Box,5,3,1,1,-1,3,1,3)} \xrightarrow{F_5^{1}} X_{(\Box,3,7,\Box,5,3,1,1,-1,3,1,3)}\cong X_{\alpha_5}$. This map forgets the space $W_5^{1}$, and the constraint to recover $W_5^{1}$ is that $e$ annihilates $W_5^{1}$ and $e^{-1}W_5^{1}\supset W_3^{1}$. In terms of the basis vectors of $V_\lambda$, the first condition is $e^{-1}W_5^{1}\subset \Span(v_3^{2}, v_3^{3}, v_3^{4})$. Now we are in the situation where $e^{-1}W_5^{1}\supset W_3^{1} \cup \bC v_3^{4}$, this fits the setting of \Cref{main blow up} for $V= \Span(v_3^{2}, v_3^{3}, v_3^{4})$. Similar to the explanation of Case 3.b after \Cref{intersection}, we have $F_{5}^{1}$ is a blow up map. The center of this blow-up lives in $X_{(\Box,3,7,\Box,5,3,1,1,-1,3,1,3)}$ by the determining condition $W_3^{1}= \bC v_3^{4}$, so this subvariety is isomorphic to $X_{(3,\Box,\Box,7,5,3,1,1,-1,3,1,3)}$. Next, we have the following chain of isomorphisms by distinguished reductions and $F_5$.
    $$X_{(3,\Box,\Box,7,5,3,1,1,-1,3,1,3)}\xrightarrow{d} X_{(\Box,\Box,7,5,3,1,1,-1,3,1,3)}\xrightarrow{F_5} X_{(5,3,1,1,-1,3,1,3)}\xrightarrow{d} X_{(3,1,1,-1,3,1,3)}$$ 
    Hence, the first reduction diagram is 
    $$X_\alpha \xrightarrow{\bP^1} X_{(5,3,7,\Box,5,3,1,1,-1,3,1,3)}\xrightarrow{\Bl(X_{(3,1,1,-1,3,1,3)})} X_{\alpha_5}.$$
    Next, we consider $X_{\alpha_5}$, here $\alpha_5= (3,5,3,1,1,-1,3,1,3)$. Consider the map $F_3: X_{\alpha_5}\rightarrow X_{(\alpha_5)_3}$, here $(\alpha_5)_3= (3,1,1,-1,1)$ and $X_{(\alpha_5)_3}= X_{(3,1,1,-1,1)}$. Using an argument similar to the previous paragraph, we see that $F_3$ is a blow-up along a subvariety of $X_{(3,1,1,-1,1)}$. This subvariety is determined by the condition $eW_{1}^{2}= 0$. So, we have $X_{\alpha_5}\cong \Bl_{X_{(1,1,-1,3,1)}}X_{(3,1,1,-1,1)}$.
    
    So far we have obtained the expression $X_\alpha\cong (\bP^1,\Bl_{X_{\alpha'}}(\Bl_{X_{\alpha''}} X_{(3,1,1,-1,1)})$ where $\alpha''= (1,1,-1,3,1)$ and $\alpha'= (3,1,1,-1,3,1,3)$. We show that the three varieties involved in this expression are in $\cC_4$, then it follows that $X_\alpha\in \cC_4$. For $X_{(3,1,1,-1,1)}$ and $X_{(1,1,-1,3,1)}$, their reduction diagrams are as follows.
    $$X_{(3,1,1,-1,1)}\cong X_{(1,1,-1,1)}\xrightarrow{\bP^1} X_{(\Box,1,-1,1)}\xrightarrow{\bP^1} X_{(\Box,\Box,-1,1)}\cong \OG(1,4)$$
    $$X_{(1,1,-1,3,1)}\xrightarrow{\bP^1} X_{(\Box,1,-1,3,1)}\xrightarrow{\bP^1} X_{(\Box,\Box,-1,3,1)}\cong \OG(1,\Span(v_{-1}^{2}, v_{-1}^{3}, v_{-1}^{4}))= OG(1,3)$$
    Here the variety $\OG(1,3)$ parametrizes the isotropic lines of $V_{-1}$ which are annihilated by $e^2$. The variety $\OG(1,\Span(v_{-1}^{2}, v_{-1}^{3}, v_{-1}^{4}))$ is one of the generators of $\cC_4$. Therefore, the two above diagrams prove that $X_{(3,1,1,-1,1)}$ and $X_{(1,1,-1,3,1)}$ are in $\cC_4$.

    Next, consider the variety $X_{\alpha'}= X_{(3,1,1,-1,3,1,3)}$. The condition $e^{-1}W_3^{1}= W_1^{2}$ is equivalent to that $W_{1}^{2}$ contains the vector $v_1^{4}$. And we need $W_{-1}^1$ to be the orthogonal complement of $e^{-1}W_1^{3}$, so $W_{-1}^{1}\subset (\bC v_{-1}^{4})^\perp$. In other words, $W_{-1}$ is an isotropic line in $\Span(v_{-1}^{1},v_{-1}^{2},v_{-1}^{3})$. We deduce that $X_{(3,1,1,-1,3,1,3)}\cong (\bP^1, \bP^1, \OG(1,\Span(v_{-1}^{2}, v_{-1}^{3}, v_{-1}^{4})))$. This variety belongs to $\cC_4$ since $\OG(1,\Span(v_{-1}^{2}, v_{-1}^{3}, v_{-1}^{4}))$ is one of the generators of $\cC_4$. This finishes the proof of the statement $X_\alpha\in \cC_4$ for $\alpha= (5,3,7,5,5,3,1,1,-1,3,1,3)$.
\end{exa}
The next two examples can be considered as a slightly modified version of the previous example. They demonstrate the phenomenon that the reduction maps for $X_\alpha$ when $d_{-1}= 2$ are similar to the case $d_{-1}= 1$ (and somewhat simpler).
\begin{exa} \label{base case 2}
    Consider $\lambda= (8,6,6,4)$, and $\alpha= (5,3,7,5,5,3,1,1,-1,-1,3,3)$. The basis diagram for this partition $\lambda$ is given in the previous example. The main feature of this example is that $W_1^{2}$ is identified with $eW_{-1}^2$, so it is an isotropic subspace of $V_1$. This constraint has the effect that some of the birational reduction maps will be an isomorphism instead of a blow-up map. The result is that $X_\alpha$ is a tower of projective bundles over $\OG(2,4)$. Details are presented below.
    
    First, we notice that the relative position of the weights $7,5,3$ is the same as in \Cref{base case 1}. Hence, by a similar argument, we have a description of $F_5$ as follows.
    $$X_\alpha \xrightarrow{\bP^1} X_{(5,3,7,\Box,5,3,1,1,-1,-1,3,3)}\xrightarrow{\Bl(X_{(3,1,1,-1,-1,3,3)})} X_{\alpha_5}$$
    It turns out that the center of the above blow-up, $X_{(3,1,1,-1,-1,3,3)}$, is empty for the following reason. The condition $W_1^{2}= e^{-1}W_3^{1}$ implies that $W_1^{2}$ contains $v_1^{4}$, so $W_{-1}^{2}= e^{-1} W_1^{2}$ contains $v_{-1}^{4}$, a non-isotropic vector. But the vectors of $W_{-1}^2$ are isotropic, so we have a contradiction, i.e. $X_{(3,1,1,-1,-1,3,3)}= \emptyset$. As a consequence, we have $X_\alpha= (\bP^1, X_{\alpha_5})$.
    
    Next, we have $\alpha_5= (3,5,3,1,1,-1,-1,3,3)$. In \Cref{base case 1}, for $\alpha_5= (3,5,3,1,1,3,1,3)$, both spaces $W_3^2$ and $W_3^3$ are determined, and $F_3$ was a blow-up. The difference in this example is that, when we consider $F_3$, the choice of $W_3^3$ yields the projection from a $\bP^1$-bundle. We now explain the details. Arguing as in \Cref{base case 1}, we then have $X_{\alpha_5}\cong (\bP^1, \Bl_{X_{(1,1,-1,-1,3)}}(X_{(3,1,1,-1,-1)})$. The center of this blow-up, $X_{(1,1,-1,-1,3)}$, is empty since we cannot have a two-dimensional isotropic subspace of $\Span(v_{-1}^{2},v_{-1}^{3},v_{-1}^{4})$. So we have $X_{\alpha_5}\cong (\bP^1, X_{(3,1,1,-1,-1)})\cong (\bP^1, \bP^1, \bP^1, \OG(2,4))$, the latter belongs to $\cC_4$.
    
\end{exa}
\begin{exa} \label{base case 3}
    We consider $\lambda= (10,8,8,6)$ and $\alpha= (7,5,9,7,7,5,3,3,1,-1,-3,5,3,1,-1,5)$. This tuple of weights has the property $\alpha_{1}= (5,3,7,5,5,3,1,1,-1,3,1,3)$, the latter was studied in \Cref{base case 1}. In other words, the relative position of the weights $9,7,5,3$ in $\alpha$ is the same as the relative position of $7,5,3,1$ in $\alpha_1$. We have described $F_5$ and $F_3$ for $\alpha_1$. By the same argument, we have the following reduction diagram
    $$X_\alpha\xrightarrow{\bP^1} X_{(7,5,9,\Box,7,5,3,3,1,-1,-3,5,3,1,-1,5)} \xrightarrow{\Bl(X_{5,3,3,1,-1,-3,5,3,1,-1,5})} X_{\alpha_7},$$
    and $X_{\alpha_7}= \Bl_{X_{(3,3,1,-1,-3,5,3,1,-1)}}X_{(5,3,3,1,-1,-3,3,1,-1)}$. It is straightforward that $X_{(5,3,3,1,-1,-3,3,1,-1)}$ $\cong X_{(3,3,1,-1,-3,3,1,-1)}$ $\cong (\bP^1,\bP^2,\bP^1, \bP^1,\OG(2,4))$. This variety is a tower of projective bundles over $\OG(2,4)$, so it belongs to $\cC_4$. We now describe the centers of the blow-ups.
    \begin{enumerate}
        \item For $X_{(3,3,1,-1,-3,5,3,1,-1)}$, we have that $W_{-3}^1$ is determined by $e^{-1}W_{-3}^1$ and $W_{1}^3= (eW_{-1})^\perp$. Therefore, $F_3$ is the map that forgets $W_{3}^{1}$ and $W_3^{2}$. We have 
    $$X_{(3,3,1,-1,-3,5,3,1,-1)}\xrightarrow{\bP^1} X_{(\Box,3,1,-1,\Box,5,\Box,1,-1)}\xrightarrow{F_3^{2}} X_{(1,-1,3,1,-1)}.$$
    We claim that the map $F_3^{2}$ is an isomorphism. One option is to realize the map as a blow-up along an empty variety (similar to \Cref{base case 2}). Here, we give a direct argument. The constraints to recover the subspace $W_3^{2}$ are that $W_3^{2}\subset W_3^{3}$ and that $W_3^{2}$ is annihilated by $e$. In terms of basis vectors, we have $W_3^{2}\subset W_3^{3}\cap \Span(v_3^{2},v_3^{3},v_3^{4})$. Next, we cannot have $W_3^{3}= \Span(v_3^{2},v_3^{3},v_3^{4})$ since $W_3^{3}$ is the orthogonal complement of an isotropic vector. Therefore, there is a unique way to recover $W_3^2$, and the map $F_3^{2}$ is an isomorphism. Consequently, we have $X_{(3,3,1,-1,-3,5,3,1,-1)}\cong (\bP^1, X_{(1,-1,3,1,-1)})$. The variety $X_{(1,-1,3,1,-1)}$ is isomorphic to $\OG(2,4)$, since after choosing $W_{-1}^2$, other spaces are determined uniquely. In particular, $W_{-1}^{1}= W_{-1}^2\cap \Span(v_{-1}^{2},v_{-1}^{3},v_{-1}^{4})$, and $W_{1}^{1}= eW_{-1}^{1}$, $W_{1}^{2}= eW_{-1}^{2}$.

        \item For $X_{(5,3,3,1,-1,-3,5,3,1,-1,5)}$, we have the condition $W_{3}^2= e^{-1}W_5^{1}$, so $v_3^{4}\in W_{3}^2$. A consequence is $v_3^{4}\in W_{3}^3$, which implies $W_{-3}^{1}\subset \Span(v_{-3}^{1},v_{-3}^{2},v_{-3}^{3})$. Then $W_{-1}^{1}= eW_{-3}^{1}\subset \Span(v_{-1}^{1},v_{-1}^{2},v_{-1}^{3})$, so $W_{-1}^{1}$ is determined by $W_{-1}^{2}\cap \Span(v_{-1}^{1},v_{-1}^{2},v_{-1}^{3})$. The choice of $W_3^{2}\subset W_3^{3}$ such that it contains $v_3^{4}$ gives us a $\bP^1$-bundle over $\OG(2,4)$. The choice of $W_3^{1}\subset W_3^{2}$ gives us another $\bP^1$-bundle. Next, note that $W_5^{1}$ and $W_5^{2}$ are determined by $eW_3^{2}$ and $eW_3^{3}$. In summary, we have a chain of isomorphisms
    $$X_{(5,3,3,1,-1,-3,5,3,1,-1,5)}\cong X_{(\Box,3,3,1,-1,-3,\Box,3,1,-1,5)}\cong (\bP^1, \bP^1, \OG(2,4)).$$
    \end{enumerate}
        
    Because the three varieties involved in the description of $X_\alpha$ belong to $\cC_4$, we get $X_\alpha\in \cC_4$.        
\end{exa}
\subsection{Reduction diagrams of base varieties}
Let $B= \{(6,6,6,4), (8,6,6,4), (8,8,6,4), (6,6,4,4), (8,6,4,4), (6,4,4,4)\}$. In this section, we give reduction diagrams for $X_\alpha$ in the following cases. 
\begin{enumerate}
    \item $\lambda\in B-2$, and $\alpha$ has only one weight $-1$.
    \item $\lambda\in B$, case V-1 and V-2 (see the proof of \Cref{geometric main 4 rows}).
    \item $\lambda\in B-2$, $\alpha$ contains weights $-1$ of multiplicities $2$.
\end{enumerate}
Our strategy is similar to the proof of \Cref{geometric main 4 rows}. For each $\lambda$, we only consider the tuples $\alpha$ that we cannot apply distinguished reduction maps (\Cref{distinguished reduction}) or swapping maps (\Cref{swap map}). The order in which we list these tuples is based on the relative positions of the weights $3$ and $1$.  
\subsubsection{Cases $\lambda\in B-2$ and $d_{-1}= 1$}
\textbf{1}. Case $\lambda= (4,2,2,2)$. 
\begin{enumerate}
    \item $\alpha= (1,3,1,1,-1)$ or $\alpha= (1,3,1,-1,1)$.
    $$X_{(1,3,1,1,-1)}\xrightarrow{\bP^1} X_{(1,3,\Box,1,-1)}\xrightarrow{\bP^1} X_{(\Box,3,\Box,1,-1)}=\OG(1,4)$$ $$X_{(1,3,1,-1,1)}\xrightarrow{\Bl(OG(1,3))} X_{(1,3,\Box,-1,1)}=(\bP^1,\OG(1,4))$$
    \item $\alpha= (1,-1,3,1,1)$.
    $$X_\alpha \xrightarrow{\bP^1} \OG(1,3)$$
    \item $\alpha= (1,1,3,1,-1)$.
    $$X_\alpha \xrightarrow{\bP^1} X_{(\Box,1,3,1,-1)}\cong \OG(1,4)$$
    \item $\alpha= (1,1,-1,3,1)$ or $\alpha= (1,-1,1,3,1)$.
    $$X_{(1,1,-1,3,1)}\xrightarrow{\bP^1} X_{(1,-1,1,3,1)}\cong \OG(1,3)$$
\end{enumerate}
\textbf{2}. Case $\lambda= (4,4,2,2)$. 
\begin{enumerate}
    \item $\alpha= (3,1,3,1,1,-1), (3,1,3,1,-1,1), (3,1,-1,3,1,1)$.
    $$X_{(3,1,-1,3,1,1)}\xrightarrow{\Bl((\bP^1, \OG(1,2))} X_{(\Box,1,-1,3,1,1)}\cong (\bP^1, \OG(1,4))$$
    $$X_{(3,1,3,1,-1,1)}\xrightarrow{\Bl(X_{(1,\Box,3,1,-1,1)})} X_{(\Box,1,3,1,-1,1)}\cong (\bP^1,\bP^1, \OG(1,4))$$
    $$X_{(3,1,3,1,1,-1)}\xrightarrow{\Bl(X_{(1,\Box,3,1,1,-1)})} X_{(\Box,1,3,1,1,-1)}\cong (\bP^2,\bP^1, \OG(1,4))$$
    \item $\alpha= (1,3,3,1,1,-1), (1,3,3,1,-1,1)$.
    $$X_{(1,3,3,1,1,-1)}\xrightarrow{\bP^1} X_{(1,\Box,3,1,1,-1)}\cong \Bl_{(\bP^1, \OG(1,2))} (\bP^1, \OG(1,4))$$
    $$X_{(1,3,3,1,-1,1)}\xrightarrow{\bP^1} X_{(1,\Box,3,1,-1,1)}\cong \Bl_{(\OG(1,2)\cup \OG(1,2))} \OG(1,4)$$
    \item $\alpha= (1,-1,3,3,1,1)$.
    $$X_\alpha \xrightarrow{\bP^1} X_{(\Box,-1,\Box,3,1,1)} \xrightarrow{\bP^1} X_{(\Box,-1,\Box,3,\Box,1)}\cong \OG(1,2)$$
    \item $\alpha= (1,3,1,1,-1,3)$, $(1,3,1,-1,1,3)$. Then $W_{1}^{3}$ must contain $v_{1}^{3}$ and $v_{1}^{4}$, so $W_{-1}^1\subset $ Span$(v_{-1}^{1}, v_{-1}^{2})$. Therefore, we have the followings.
    $$X_{(1,3,1,-1,1,3)}\cong (\bP^1, \OG(1,2))$$  
    $$X_{(1,3,1,1,-1,3)}\cong (\bP^1, \bP^1, \OG(1,2))$$
    \item $\alpha= (1,3,1,-1,3,1)$, $(1,3,1,3,1,-1)$. 
    $$X_{(1,3,1,-1,3,1)}\cong \Bl_{\OG(1,2)}\OG(1,4)$$
    $$X_{(1,3,1,3,1,-1)}\xrightarrow{\Bl(\bP^1\cup \bP^1)} X_{(\Box,3,1,3,1,-1)}\xrightarrow{\Bl(\bP^1\cup \bP^1)} X_{(\Box,3,\Box,3,1,-1)}\cong \bP^1\times \OG(1,4)$$
    \item $\alpha= (1,-1,3,1,3,1)$, $X_\alpha\cong X_{(\Box,-1,3,\Box,3,\Box)}\cong \OG(1,2)\times \bP^1$.
\end{enumerate}
\textbf{3}. Case $\lambda= (4,4,4,2)$. First, if $\alpha$ starts with $1$, then $X_\alpha$ is reducible with respect to distinguished reduction map (see \Cref{distinguished reduction}). Next, if $\alpha$ has two consecutive weights $1$ after the first weight $3$, we can use the swapping map in \Cref{swap map} to reduce to case $\lambda= (4,4,4)$. In the following, we consider other cases.
\begin{enumerate}
    \item $\alpha= (3,1,-1,1,3,1,3)$. Then $W_{1}^{3}= e^{-1}W_{3}^{2}$, containing $v_{1}^{4}$. Therefore, $W_{-1}^{1}$ $\subset$ Span$(v_{-1}^{1},v_{-1}^{2},v_{-1}^{3})$. And we have
    $$X_\alpha \cong X_{(\Box,1,-1,1,\Box,1,3)}\cong (\bP^1,\OG(1,3)).$$
    \item $\alpha= (3,3,1,1,1,-1,3)$, $(3,3,1,1,-1,1,3)$, $(3,3,1,-1,1,1,3)$. Similarly to the previous case, we have $W_{-1}^{1}$ $\subset$ Span$(v_{-1}^{1},v_{-1}^{2},v_{-1}^{3})$. 
    $$X_{(3,3,1,1,1,-1,3)} \xrightarrow{(\bP^2, \bP^1)} X_{(3,3,\Box,\Box,1,-1,3)}\xrightarrow{\bP^1} X_{(\Box,3,\Box,\Box,1,-1,3)}\cong \OG(1,3)$$
    $$X_{(3,3,1,1,-1,1,3)} \xrightarrow{(\bP^1, \bP^1)} X_{(3,3,\Box,\Box,-1,1,3)}\xrightarrow{\bP^1} X_{(\Box,3,\Box,\Box,-1,1,3)}\cong \OG(1,3)$$
    $$X_{(3,3,1,-1,1,1,3)} \xrightarrow{\bP^1} X_{(3,3,\Box,-1,\Box,1,3)}\xrightarrow{\bP^1} X_{(\Box,3,\Box,-1,\Box,1,3)}\cong \OG(1,3)$$
    \item $\alpha= (3,1,-1,1,3,3,1)$. Similarly to the previous case, we have $W_{-1}^{1}$ $\subset$ Span$(v_{-1}^{1},v_{-1}^{2},v_{-1}^{3})$. So we have
    $$X_\alpha\xrightarrow{(\bP^1,\bP^1)}  X_{(\Box,\Box,-1,\Box,\Box,3,1)}\cong \OG(1,3).$$
    \item $\alpha= (3,1,3,1,1,-1,3)$, $(3,1,3,1,-1,1,3)$, $(3,1,-1,3,1,1,3)$. Similarly to the previous case, we have $W_{-1}^{1}$ $\subset$ Span$(v_{-1}^{1},v_{-1}^{2},v_{-1}^{3})$. 
    $$X_{(3,1,3,1,1,-1,3)}\xrightarrow{\Bl(X_{(\Box,3,1,1,-1,3)})} X_{(\Box,1,3,1,1,-1,3)}\xrightarrow{(\bP^2, \bP^1) } X_{(\Box,\Box,\Box,\Box,1,-1,3)}\cong (\OG(1,3))$$
    $$X_{(3,1,3,1,-1,1,3)} \xrightarrow{\Bl(X_{(\Box,3,1,-1,1,3)})} X_{(\Box,1,3,1,-1,1,3)} \xrightarrow{(\bP^1, \bP^1)} \OG(1,3)$$
    $$X_{(3,1,-1,3,1,1,3)} \cong X_{(\Box,\Box,-1,3,1,1,3)}\xrightarrow{\bP^1} \OG(1,3)$$
    \item $\alpha= (3,1,3,3,1,1,-1)$, $(3,1,3,3,1,-1,1)$, $(3,1,-1,3,3,1,1)$. 
    $$X_{(3,1,3,3,1,1,-1)}\xrightarrow{\bP^1}X_{(3,1,\Box,3,1,1,-1)}\xrightarrow{\Bl(X_{(\Box,\Box,3,1,1,-1)})} X_{(\Box,1,\Box,3,1,1,-1)}\xrightarrow{(\bP^2, \bP^1)}\OG(1,4)$$
    $$X_{(3,1,3,3,1,-1,1)}\xrightarrow{\bP^1} X_{(3,1,\Box,3,1,-1,1)}\xrightarrow{\Bl(X_{(\Box,\Box,3,1,-1,1)})} X_{(\Box,1,\Box,3,1,-1,1)} \xrightarrow{(\bP^1, \bP^1)}\OG(1,4)$$
    $$X_{(3,1,-1,3,3,1,1)} \xrightarrow{(\bP^1, \bP^1)} X_{(3,1,-1,\Box,3,\Box,1)} \cong \OG(1,4)$$    
    \item $\alpha= (3,3,1,3,1,1,-1)$, $(3,3,1,3,1,-1,1)$, $(3,3,1,-1,3,1,1)$.

    $$X_{(3,3,1,3,1,1,-1)}\xrightarrow{\Bl(X_{(3,3,\Box,1,3,1,-1)})} X_{(3,3,\Box,3,1,1,-1)}\xrightarrow{(\bP^2, \bP^1, \bP^2)} \OG(1,4)$$
    
    $$X_{(3,3,1,3,1,-1,1)}\xrightarrow{\Bl(X_{(3,3,\Box,1,3,-1,1)})} X_{(3,3,\Box,3,1,-1,1)}\xrightarrow{(\bP^1, \bP^1, \bP^2)} \OG(1,4)$$
    
    $$X_{(3,3,1,-1,3,1,1)}\xrightarrow{(\bP^1,\bP^1)} X_{(\Box,\Box,1,-1,3,1,1)}\xrightarrow{\bP^1} \OG(1,4)$$    
    \item $\alpha= (3,3,1,1,3,1,-1)$, $(3,3,1,1,-1,3,1)$, $(3,3,1,-1,3,1,1)$. 

    $$X_{(3,3,1,1,3,1,-1)}\xrightarrow{\bP^1} X_{(3,3,\Box,1,3,1,-1)}\xrightarrow{\Bl(X_{(3,3,\Box,\Box,1,3,-1)}\cong (\bP^1,\OG(1,3))} X_{(3,3,\Box,\Box,3,1,-1)} \xrightarrow{(\bP^1, \bP^2)}\OG(1,4) $$

    $$X_{(3,3,1,1,-1,3,1)}\xrightarrow{\bP^1} X_{(\Box,3,1,1,-1,3,1)} \xrightarrow{\Bl((\bP^1, \OG(1,3)))}  X_{(\Box,\Box,1,1,-1,3,1)}\cong (\bP^1, \bP^1, \OG(1,4))$$

    $$X_{(3,3,1,-1,3,1,1)}\xrightarrow{(\bP^1, \bP^1)} X_{(\Box,\Box,1,-1,3,1,1)}\xrightarrow{} (\bP^1, \OG(1,4))$$
    
    \item $\alpha=$ $(3,1,3,1,3,1,-1)$, $(3,1,3,1,-1,3,1)$, $(3,1,-1,3,1,3,1)$. 

    $$X_{(3,1,3,1,3,1,-1)}\xrightarrow{\Bl(X_{(1, \Box,3,1,3,1,-1)})} X_{(\Box,1,3,1,3,1,-1)} \xrightarrow{\Bl((\bP^1, \bP^1,\OG(1,3))} X_{(\Box,1,\Box,1,3,1,-1)}\cong (\bP^2, \bP^1, \OG(1,4))$$

    $$X_{(3,1,3,1,-1,3,1)} \xrightarrow{\Bl(X_{(1,\Box,3,1,-1,3,1)})} X_{(\Box,1,3,1,-1,3,1)} \xrightarrow{\Bl(\bP^1, \OG(1,3))} X_{(\Box,1,\Box,1,-1,3,1)} \cong (\bP^1, \bP^1, \OG(1,4))$$

    $$X_{(3,1,-1,3,1,3,1)} \xrightarrow{\Bl(X_{(3,1,-1,3,\Box,1,3)})} X_{(3,1,-1,3,\Box,3,1)} \cong (\bP^1, \OG(1,4))$$
\end{enumerate}
\textbf{4}. Case $\lambda= (6,4,2,2)$. We only need to consider the relative position $(3,5,3)$ of weights $5$ and $3$. If there is no weight $1$ in front of the first weight $3$, $\alpha$ is reducible by distinguished reduction. If there are two weights $1$ in front of the first weight $3$, then $W_1^{2}=$ Span$(v_1^{3}, v_1^{4})$. Hence $W_{-1}^{1}\in \OG(1,$Span$(v_{-1}^{1}, v_{-1}^{2})$), and the descriptions of $X_\alpha$ easily follow. Below, we consider the cases where there is precisely one weight $1$ before the first weight $3$.
\begin{enumerate}
    \item $\alpha=$ $(1,3,5,3,1,1,-1)$, $(1,3,5,3,1,-1,1)$, $(1,-1,3,5,3,1,1)$. We have 
    $X_{(1,3,5,3,1,1,-1)} \cong$ $ X_{(1,\Box,3,1,1,-1)}$ where the latter variety has been described in the case $\lambda= (4,4,2,2)$. The other two cases are similar.
    \item $\alpha=$ $(1,3,1,5,3,1,-1)$. Then $X_{(1,3,1,5,3,1,-1)}\cong \Bl_{\OG(1,2)}(\OG(1,4))$. 
    \item $\alpha=$  $(1,3,1,-1,5,3,1)$. Then $X_{(1,3,1,-1,5,3,1)}\cong X_{(1,3,1,-1)}$.
    \item $\alpha=$ $(1,-1,3,1,5,3,1)$, $(1,-1,3,1,5,3,1)$. We have $X_{(1,-1,3,1,5,3,1)}\cong \OG(1,2)\times X_{(3,1,5,3,1)}$ and $X_{(1,-1,3,1,5,3,1)}\cong \OG(1,2)\times X_{(3,1,5,3,1)}$. The variety $X_{(3,1,5,3,1)}$ is a tower of projective bundle by \Cref{no negative}.
\end{enumerate}
\textbf{5}. Case $\lambda= (6,4,4,2)$.
If the first weight of $\alpha$ is $5$, then $X_\alpha$ is reducible by distinguished reduction. If there are two weights $3$ in front of the first weight $3$, then $W_3^{2}=$Span$(v_{3}^{2}, v_3^{3})$, the descriptions of $X_\alpha$ are straightforward. In the following, we consider the relative position $(3,5,3,3)$ of the weights $5$ and $3$. And if there are two consecutive weights $1$ after the first weight $3$, $X_\alpha$ is reducible by the swapping map in \Cref{swap map}. Other cases are given below.
\begin{enumerate}
    
    \item $\alpha=$ $(3,5,3,1,1,1,-1,3)$, $(3,5,3,1,1,-1,1,3)$, $(3,5,3,1,-1,1,1,3)$. In these cases, $W_1^{3}= e^{-1}W_3^{2}$, the latter contains the vector $v_1^{4}$. Hence $W_{-1}^1$ is a subspace of Span$(v_{-1}^{1},v_{-1}^{2},v_{-1}^{3})$. Once $W_{-1}^1$ is determined, we have $W_{1}^3$= $(eW_{-1}^1)^\perp$, $W_3^{2}= eW_{1}^3$, and $W_{3}^1= W_3^{2}\cap $ Span$(v_3^{2}, v_3^{3})$. Therefore, these varieties $X_\alpha$ are towers of projective bundles over $\OG(1,3)$.
    \item $\alpha= (3,1,5,3,1,1,-1,3)$, $(3,1,5,3,1,-1,1,3)$, $(3,1,-1,5,3,1,1,3)$. Similarly to the previous case, $W_{-1}^1\subset $ Span$(v_{-1}^{1},v_{-1}^{2},v_{-1}^{3})$.
    $$X_{(3,1,5,3,1,1,-1,3)}\xrightarrow{\Bl(X_{(3,1,1,-1)})} X_{(\Box,1,5,\Box,1,1,-1,3)}\cong (\bP^1, \bP^1, \OG(1,3))$$
    $$X_{(3,1,5,3,1,-1,1,3)}\xrightarrow{\Bl(X_{(3,1,-1,1)})} X_{(\Box,1,5,\Box,1,-1,1,3)}\cong X_{(1,3,1,-1,1)}$$

    $$X_{(3,1,-1,5,3,1,1,3)}\xrightarrow{\bP^1} X_{(3,1,-1,5,\Box, \Box,1,3)}\cong \OG(1,3)$$
    \item $\alpha= (3,1,5,3,3,1,1,-1)$, $(3,1,5,3,3,1,-1,1)$, $(3,1,-1,5,3,3,1,1)$. 
    $$X_{(3,1,5,3,3,1,1,-1)}\xrightarrow{\bP^1} X_{(3,1,5,\Box,3,1,1,-1)}\xrightarrow{\Bl(X_{(3,1,1,-1)})} X_{(\Box,1,5,\Box,3,1,1,-1)} \cong X_{(1,3,1,1,-1)}$$
    
    $$X_{(3,1,-1,5,3,3,1,1)} \xrightarrow{(\bP^1, \bP^1)} X_{(3,1,-1,5,\Box,3,\Box,1)} \cong \OG(1,3)$$
    \item $\alpha= (3,5,3,1,3,1,1,-1)$, $(3,5,3,1,3,1,-1,1)$, $(3,5,3,1,-1,3,1,1)$.
    $$X_{(3,5,3,1,3,1,1,-1)}\xrightarrow{\Bl(X_{(3,1,5,\Box,3,1,1,-1)})} X_{(3,5,\Box,1,3,1,1,-1)}\xrightarrow{\bP^1}(\bP^2, \bP^1, \OG(1,4))$$

    $$X_{(3,5,3,1,3,1,-1,1)}\xrightarrow{\Bl(X_{(3,1,5,\Box,3,1,-1,1)})} X_{(3,5,\Box,1,3,1,-1,1)} \xrightarrow{\bP^1}(\bP^1, \bP^1, \OG(1,4))$$

    $$X_{(3,5,3,1,-1,3,1,1)}\xrightarrow{\Bl(X_{(3,1,-1,5,\Box,3,1,1)})} X_{(3,5,\Box,1,-1,3,1,1)}\xrightarrow{\bP^1}(\bP^1, \OG(1,4))$$
    
    \item $\alpha= (3,5,3,1,1,3,1,-1)$, $(3,5,3,1,1,-1,3,1)$, $(3,5,3,1,-1,3,1,1)$.
    
    $$X_{(3,5,3,1,1,3,1,-1)}\xrightarrow{\bP^1} X_{(3,5,3,\Box,1,3,1,-1)}\xrightarrow{\Bl(X_{(3,5,3,\Box,\Box,1,-1,3)})} X_{(3,5,3,\Box,\Box,3,1,-1)}\cong (\bP^1, \bP^1, \OG(1,4))$$

    $$X_{(3,5,3,1,1,-1,3,1)}\xrightarrow{\Bl(X_{(\Box,3,5,1,1,-1,3,1)})} X_{(\Box,5,3,1,1,-1,3,1)}\xrightarrow{\Bl((\bP^1, \OG(1,3))} X_{(\Box,5,\Box,1,1,-1,3,1)}\cong X_{(3,1,1,-1,1)} $$

    $$X_{(3,5,3,1,-1,3,1,1)}\xrightarrow{\Bl(X_{(3,1,-1,5,\Box,3,1,1)})} X_{(3,5,\Box,1,-1,3,1,1)}\xrightarrow{\bP^1}  X_{(\Box,5,\Box,1,-1,3,1,1)}\cong X_{(3,1,-1,1,1)}$$
    
    \item $\alpha= (3,1,5,3,1,3,1,-1)$, $(3,1,5,3,1,-1,3,1)$, $(3,1,-1,5,3,1,3,1)$. Similarly to the previous case, these $X_\alpha$ are reducible with respect to $F_3$. 
\end{enumerate}

\textbf{6}. $\lambda= (6,6,4,2)$. If there are two weights $3$ between the two weights $5$, then we can use a swapping map. Hence, we only need to consider the relative position of weight $5$ and weight $3$ is $(5,3,5,3,3)$. In this case, we claim that $X_\alpha$ is reducible using $F_5$. Note that $F_5$ is the map that forgets $W_5^{1}$, and the condition to recover this space is $e^{-1}W_5^{1}\supset W_3^{1}$. In other words, $e^{-1}W_5^{1}$ is a 2-dimensional space containing $\bC v_{3}^{3}$ and $W_3^{1}$, which fits the context of \Cref{main blow up}.

\subsubsection{$\lambda\in B$, case V-1 and V-2}

First, we consider the case where there is no weight $7$. We claim that in these cases, the varieties $X_\alpha$ are reducible with respect to $F_5$. Recall that we are considering the cases with the beginning V. Therefore, the relative positions of the weights $5$ and $3$ are $(5,3,5,5,3,3,3)$. Similarly to the proof of \Cref{3 row main}, we only need to prove that $X_{(3,1,3,3,1,1,1)}$ is reducible with respect to $F_3$. Indeed, we have
$$X_{(3,1,3,3,1,1,1)}\xrightarrow{\bP^1} X_{(3,1,\Box,3,1,1,1)} \xrightarrow{\Bl(X_{(1,1,1)})} X_{(\Box,1,\Box,3,1,1,1)}\cong X_{(1,1,1,1)}.$$
Next, we consider the cases where there is weight $7$ in $V_\lambda$. Similarly to the previous case, to prove that $X_\alpha$ is reducible with respect to $F_5$, we prove that the following varieties are reducible with respect to $F_3$: $X_{(5,3,1,5,3,3,1,1,1)}$ and $X_{(5,3,1,3,5,3,1,1,1)}$. 
$$X_{(5,3,1,5,3,3,1,1,1)}\xrightarrow{\bP^1} X_{(5,3,1,5,\Box,3,1,1,1)}\xrightarrow{\Bl(X_{(3,3,1,1,1)})} X_{(5,\Box,1,5,\Box,3,1,1,1)}\cong X_{(3,1,3,1,1,1)}$$

$$X_{(5,3,1,3,5,3,1,1,1)}\xrightarrow{\Bl(X_{(3,3,1,1,1)})} X_{(5,\Box,1,3,5,3,1,1,1)}\cong X_{(3,1,3,1,1,1)}$$

\subsubsection{Cases $\alpha$ contains two weights $-1$}
There are two relative position of weights $1$ and $-1$: $(1,1,-1,-1)$ and $(1,-1,1,-1)$. In the following, we consider the relative position $(1,-1,1,-1)$, the other case can be dealt with similarly.\\
\textbf{1}. $\lambda= (4,2,2,2)$. We have $X_{(1,-1,3,1,-1)}\cong  \OG(2,4)$.\\
\textbf{2}. $\lambda= (4,4,2,2)$.
\begin{enumerate}
    \item $\alpha= (3,1,-1,3,1,-1)$, $X_\alpha\cong $ $\Bl_{\OG(1,2)\times \OG(1,2)} (\bP^1,\OG(2,4))$.
    \item $\alpha= (3,1,-1,1,-1,3)$, $X_\alpha \cong \OG(1,2)\times \OG(1,2)$.
\end{enumerate}
\textbf{3}. $\lambda= (6,4,2,2)$, $X_{(1,-1,3,5,3,1,-1)}\cong \OG(1,2)\times \OG(1,2)$.

Next, in the following cases, the dimension of $V_3$ is $3$. As we need $W_{1}^{2}$ to be an isotropic subspace of $V_1$, the map $e:W_{1}^{2}\rightarrow V_3$ is injective. This feature makes the description of $X_\alpha$ much simpler than in the cases where the weight $-1$ has multiplicity one in $\alpha$.\\
\textbf{4}. $\lambda= (4,4,4,2)$. 
\begin{enumerate}
    \item $\alpha= (3,1,-1,3,1,-1,3)$, $X_\alpha\cong X_{(1,-1,1,-1)}\cong (\bP^1,\OG(2,4))$.
    
    \item $\alpha= (3,3,1,-1,1,-1,3)$, $X_\alpha \cong (\bP^1, X_{(1,-1,1,-1)})\cong (\bP^1,\bP^1,\OG(2,4))$.
    \item $\alpha= (3,3,1,-1,3,1,-1)$, $X_\alpha\cong (\bP^1, \bP^1, X_{(1,-1,1,-1)})$.
\end{enumerate}
\textbf{5}. $\lambda= (6,4,4,2)$. We only need to consider the case where the relative position of $5$ and $3$ is $(3,5,3,3)$.
\begin{enumerate}
    \item $\alpha= (3,1,-1,5,3,1,-1,3)$, $X_\alpha \cong X_{\alpha_3}$.
    \item $\alpha= (3,5,3,1,-1,1,-1,3)$.
    $$X_\alpha \cong X_{(\Box,5,3,1,-1,1,-1,3)} \cong X_{(3,1,-1,1,-1)}$$
    \item $\alpha= (3,5,3,1,-1,3,1,-1)$.
    $$X_\alpha \xrightarrow{\Bl(X_{(\Box,3,5,1,-1,3,1,-1)})} X_{(\Box,5,3,1,-1,3,1,-1)}\xrightarrow{\bP^1} X_{\alpha_3}$$
\end{enumerate}
\textbf{6}. $\lambda= (6,6,4,2)$. 
\begin{enumerate}
    \item $\alpha= (5,3,1,-1,5,3,1,-1,3)$, $(5,3,1,-1,3,1,-1,5,3)$. In these cases, $F_3$ is an isomorphism, $X_\alpha\cong X_{\alpha_3}$.
    \item $\alpha= (5,3,5,3,1,-1,1,-1,3)$, $(5,3,3,1,-1,1,-1,5,3)$. 
    $$X_{(5,3,5,3,1,-1,1,-1,3)} \xrightarrow{\Bl( X_{(3,5,3,5,1,-1,1,-1,3)})} X_{(5,\Box,5,3,1,-1,1,-1,3)}= X_{\alpha_3}$$
    $$ X_{(5,3,3,1,-1,1,-1,5,3)} \xrightarrow{\bP^1} X_{(3,5,3,1,-1,1,-1,5,3)} $$
    \item $\alpha= (5,3,5,3,1,-1,3,1,-1)$, $(5,3,3,1,-1,5,3,1,-1)$. 
    $$X_{(5,3,5,3,1,-1,3,1,-1)}\xrightarrow{\Bl(X_{(3,5,3,5,1,-1,3,1,-1)})} X_{(5,\Box,5,3,1,-1,3,1,-1)}\xrightarrow{\bP^1} X_{(5,\Box,5,\Box,1,-1,3,1,-1)}= X_{\alpha_3}$$
    $$X_{(5,3,3,1,-1,5,3,1,-1)}\xrightarrow{\bP^1} X_{(5,3,5,3,1,-1,3,1,-1)}$$
\end{enumerate}

\printbibliography
\end{document}